\newtheorem{theorem}{Theorem}[section]
\newtheorem{proposition}[theorem]{Proposition}
\newtheorem{lemma}[theorem]{Lemma}
\newtheorem{question}[theorem]{Question}
\newtheorem{corollary}[theorem]{Corollary} 
\newtheorem{conjecture}[theorem]{Conjecture}
\theoremstyle{remark}
\newtheorem{definition}[theorem]{Definition}
\newtheorem{example}[theorem]{Example}
\newtheorem{remark}{Remark}[section]
\newcommand{\eps}{\epsilon}
\begin{document}

\title{The non-orientable 4-genus for knots with 8 or 9 crossings}

\begin{abstract}    
The non-orientable 4-genus of a knot in the 3-sphere is defined as the smallest first Betti number of any non-orientable surface smoothly and properly embedded in the 4-ball, with boundary the given knot. We compute the non-orientable 4-genus for all knots with crossing number 8 or 9. As applications we prove a conjecture of Murakami's and Yasuhara's, and give computations of the clasp and slicing number of a knot.  
\end{abstract}

\subjclass[2010]{57M25 and 57M27} 

\author{Stanislav Jabuka}
\author{Tynan Kelly}

\email{jabuka@unr.edu}
\email{tbkelly@unr.edu}

\address{Department of Mathematics and Statistics, University of Nevada, Reno NV 89557}

\thanks{The first author gratefully acknowledges support from the Simons Foundation, Grant \#246123.}
\maketitle


\section{Introduction}
\subsection{Background} \label{Introduction}
Knots and the surfaces they bound have been intimately related from the origins of knot theory. The classification of surfaces has made it easy to impart a measure of complexity on the knots that bound them. For instance, the Seifert genus $g_3(K)$ of a knot $K$, defined as the minimal genus of any surface $S$ in $S^3$ with $\partial S=K$, was defined by Seifert \cite{Seifert} already in 1935. There are several other natural choices of surfaces to consider, leading to several flavors of knot genera.  

Work of Fox and Milnor \cite{Fox, FoxProblems, FoxMilnor} led to the definition of the {\em (smooth, oriented) 4-genus (or slice genus) $g_4(K)$} of a knot $K$ as the minimal genus of any smoothly and properly embedded surface $S$ in the 4-ball $D^4$ with $\partial S =K$. The {\em topological (oriented) 4-genus $g^{top}_4(K)$} is defined analogously by requiring that the embedding $S \hookrightarrow D^4$ be locally topologically flat instead of smooth. Note that $g^{top}_4(K) \le g_4(K) \le g_3(K)$. 

In another direction, Clark \cite{Clark} defined the {\em non-orientable 3-genus} or {\em 3-dimensional crosscap number} $\gamma_3(K)$ as the smallest first Betti number of any non-orientable surface $\Sigma \subset S^3$ with $\partial \Sigma =K$. The {\em non-orientable (smooth) 4-genus} or {\em 4-dimensional crosscap number} $\gamma_4(K)$ was defined by Murakami and Yasuhara \cite{MurakamiYasuhara2000} as the minimal first Betti number of any non-orientable surface $\Sigma$ smoothly and properly embedded in $D^4$ and with $\partial \Sigma  = K$. Some authors additionally define $\gamma_4(K)=0$ for any slice knot $K$, but in the interest of a more unifying treatment we adopt the definition from the previous sentence. Just as in the case of oriented surfaces, so too for non-orientable surfaces there is a topological version of this invariant denoted by $\gamma_4^{top}(K)$. The inequalities $\gamma_4^{top}(K) \le \gamma_4(K) \le \gamma_3(K)$ again hold in the non-orientable setting. The oriented and non-orientable genera are easily seen to compare as 
\begin{equation} \label{GeneraInequality}
\gamma_i(K) \le 2g_i(K)+1 \quad \text{ for } i=3, 4,
\end{equation}
with an analogous inequality holding for the topological 4-genera. Indeed if $K$ bounds a properly embedded, smooth, genus $g$ surface $S \subset D^4$, then the surface $\Sigma$ obtained from $S$ by removing a disk  neighborhood of an interior point and replacing it by a M\"obius band, has $\partial \Sigma=K$ and $b_1(\Sigma) = 2g+1$, demonstrating $\gamma_4(K) \le 2g_4(K) +1$.

The subject of study in this present work is the smooth non-orientable 4-genus $\gamma_4$. Having been introduced relatively recently, the literature available on $\gamma_4$ is relatively sparse too. First results go back to work of Viro \cite{Viro} who uses Witt classes of intersection forms of 4-manifolds to obstruct a knot $K$ from bounding a smoothly and properly embedded M\"obius band in $D^4$. He uses his findings to demonstrate that $\gamma_4(4_1)>1$.   

Let $\sigma (K)$ and Arf$(K)$ denote the signature and Arf invariant of $K$. Yasuhara \cite{Yasuhara} proves that if a knot $K$ bounds a M\"obius band in $D^4$, then there exists an integer $x$ such that  
\[
|8x+4\cdot \text{Arf}(K) - \sigma (K)| \le 2.
\]
This proves that $\gamma_4(K)>1$ for any knot $K$ with $\sigma (K) + 4\cdot \text{Arf}(K) \equiv 4\pmod{8}$, the knot $K=4_1$ being one example.

Gilmer--Livingston \cite{GilmerLivingston} use linking forms on the 2-fold branched cover of $K$, Heegaard Floer homology and Casson-Gordon invariants, to show, for instance, that $\gamma_4(4_1\#5_1)=3$, the largest known value for $\gamma_4$ at that time, and still the largest known value for $\gamma_4$ among alternating knots (see however Theorem \ref{8CrossingKnots} below). They also prove the following congruence relation 
\begin{equation} \label{YasuharasCongruence}
\sigma (K) + 4\cdot \text{Arf}(K) \equiv \sigma (W(\Sigma )) - \beta (D^4, \Sigma )\pmod{8}, 
\end{equation} 
valid for any knot $K$ that bounds a non-orientable surface $\Sigma$ smoothly and properly embedded in $D^4$. Here $\sigma (K)$ and Arf$(K)$ are as above, while $W(\Sigma)$ denotes the 2-fold cover of $D^4$ branched along $\Sigma$, and $\sigma (W(\Sigma))$ denotes its signature. Lastly, $\beta (D^4, \Sigma)$ is the Brown invariant \cite{GilmerLivingston, KirbyMelvin} of the pair $(D^4,\Sigma)$. It is easy to show that rk $H_2(W(\Sigma);\mathbb Z) = \text{rk }H_1(\Sigma;\mathbb Z)$ implying the bound $|\sigma (W(\Sigma))|\le \text{rk } H_1(\Sigma ;\mathbb Z)$, while work in \cite{KirbyMelvin} shows the same bound to hold for the Brown invariant (see also Corollary \ref{CorollaryOnTheBrownInvariantOfASurface}). The congruence \eqref{YasuharasCongruence}, along with the discussion of this paragraph, implies again that if $K$ is a knot with $\sigma (K) + 4\cdot \text{Arf}(K) \equiv 4\pmod{8}$, then $K$ cannot bound an embedded M\"obius band in $D^4$. Relation \eqref{YasuharasCongruence} makes frequent appearances throughout this work, and we shall refer to it as the {\em Gilmer--Livingston (congruence) relation}.

Using tools from Heegaard Floer homology, Batson \cite{Batson} is able to show that $\gamma_4$ is an unbounded function. He does so by proving the bound 
\begin{equation} \label{BatsonsBound}
\frac{\sigma(K)}{2} - d(S^3_{-1}(K)) \le \gamma_4(K),
\end{equation}
whose notation we proceed to explain now. For a rational number $r$ we let $S^3_r(K)$ be the manifold resulting from $r$-framed Dehn surgery on $K$, and for an integral homology 3-sphere $Y$, we use $d(Y)$ to denote its Heegaard Floer correction term \cite{OzsvathSzabo2003}. Batson shows that the left-hand side of \eqref{BatsonsBound} equals $k-1$ for the torus knot $K=T_{(2k,2k-1)}$, $k\in \mathbb N$, demonstrating the unboundedness of $\gamma_4$. 

Ozsv\'ath--Stipsicz--Szab\'o \cite{OzsvathStipsiczSzabo} in 2015 define a concordance invariant $v(K)$, derived from their family of concordance invariants $\Upsilon_K(t)$ \cite{OzsvathStipsiczSzabo2014}. They prove the lower bound 
\begin{equation} \label{OSSBound}
\left| \frac{\sigma (K)}{2}-v(K) \right| \le \gamma_4(K),
\end{equation}
and use it to provide another proof of the unboundedness of $\gamma_4$ by demonstrating that $\gamma _4( \#^nT_{(3,4)}) \ge n$, where $\#^nT_{(3,4)}$ is the $n$-fold connected sum of the $(3,4)$ torus knot $T_{(3,4)}$ with itself. The converse inequality $\gamma _4( \#^nT_{(3,4)}) \le n$ is easy to verify by finding an explicit M\" obius band bounded by $T_{(3,4)}$, leading to $\gamma _4( \#^nT_{(3,4)}) = n$.
\subsection{Results and Applications}
As of this writing, the KnotInfo \cite{Knotinfo} knot tables only contain values for $\gamma_4$ for knots with 7 or fewer crossings. Our goal and the main result of this work is to extend this tabulation to include all 70 knots with 8 and 9 crossings. 
\begin{theorem} \label{8CrossingKnots}
The values of $\gamma_4$ for the 21 knots with crossing number 8, are given as follows.  
\[
\begin{array}{lcl}
\gamma_4(K)=1  & \quad\text{ for } \quad & K = 8_3,\, 8_4,\, 8_5,\, 8_6,\, 8_7,\, 8_8,\, 8_9,\, 8_{10},\, 8_{11}, \, 8_{14},\, 8_{16}, \, 8_{19}, \, 8_{20}.  \cr  &&\cr
\gamma_4(K)=2  & \quad\text{ for } \quad & K = 8_1, \, 8_2,\, 8_{12},\, 8_{13},\, \, 8_{15}, \,8_{17}, \, 8_{21}.  \cr  &&\cr
\gamma_4(K)=3  & \quad\text{ for } \quad & K = 8_{18}.
\end{array}
\]
\end{theorem}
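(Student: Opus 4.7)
The plan is to establish matching upper and lower bounds on $\gamma_4(K)$ for each of the 21 knots, organized by the claimed value of the invariant. For the upper bounds I would exhibit explicit non-orientable surfaces in $D^4$: a sequence of $n$ non-orientable (half-twisted) band attachments converting $K$ to the unknot produces a smoothly properly embedded non-orientable surface of first Betti number $n$ bounded by $K$. The task therefore reduces to finding a single M\"obius band for the fourteen knots of the first group, a surface of $b_1=2$ for the six knots of the second group, and a surface of $b_1=3$ for $8_{18}$. The general inequality $\gamma_4(K)\le 2g_4(K)+1$ from \eqref{GeneraInequality} handles several cases immediately; for instance it yields a M\"obius band from the slice disk of $8_{20}$, and it caps the required search everywhere else since $g_4\le 1$ for most 8-crossing knots.

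For the lower bounds I would proceed through a hierarchy of increasingly refined obstructions. The bound $\gamma_4(K)\ge 1$ is automatic under the convention adopted here. To exclude $\gamma_4(K)=1$ for the seven knots in the second and third groups, the first tool is Yasuhara's criterion that a knot bounding a M\"obius band in $D^4$ satisfies $|8x+4\cdot\text{Arf}(K)-\sigma(K)|\le 2$ for some integer $x$; this is a rapid mod-$8$ check against tabulated signatures and Arf invariants. Where Yasuhara does not suffice, Batson's bound \eqref{BatsonsBound} and the Ozsv\'ath--Stipsicz--Szab\'o bound \eqref{OSSBound} are available, both computable from invariants in KnotInfo.

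The central obstacle is excluding $\gamma_4(8_{18})\le 2$. A surface $\Sigma$ with $b_1(\Sigma)=2$ substituted into the Gilmer--Livingston relation \eqref{YasuharasCongruence} forces $\sigma(W(\Sigma))-\beta(D^4,\Sigma)\in[-4,4]$, a strong constraint but not by itself a contradiction when $\sigma(K)+4\cdot\text{Arf}(K)\equiv 0\pmod 8$. In that event I would combine \eqref{YasuharasCongruence} with the $d$-invariant of $S^3_{-1}(8_{18})$ as in Batson, with the linking form on the two-fold branched cover in the spirit of Gilmer--Livingston (whose analogous analysis produced the bound $\gamma_4(4_1\#5_1)\ge 3$), and, if still necessary, with Casson--Gordon invariants. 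The same toolkit should also settle any knot in the $\gamma_4=2$ group for which Yasuhara's mod-$8$ criterion alone is not sharp; the heart of the proof is thus a case-by-case verification that the data attached to each 8-crossing knot, when fed into at least one of these obstructions, matches the claimed lower bound on the first Betti number of any non-orientable surface it can bound in $D^4$.
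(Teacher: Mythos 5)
Your upper-bound strategy (non-orientable band moves building explicit surfaces, plus $\gamma_4\le 2g_4+1$ for $8_{18}$) and your use of Yasuhara's mod-$8$ criterion to get $\gamma_4\ge 2$ for $8_1, 8_2, 8_{12}, 8_{13}, 8_{15}, 8_{17}$ and $8_{18}$ match the paper's route: all six knots in the second group do satisfy $\sigma(K)+4\cdot\mathrm{Arf}(K)\equiv 4\pmod 8$, so the Gilmer--Livingston congruence settles their lower bounds, and the paper's constructive side is exactly band moves to slice knots (or to knots of known $\gamma_4$, or concordances for $8_{10}$, $8_{21}$).

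The genuine gap is the lower bound $\gamma_4(8_{18})\ge 3$, which is the heart of the theorem, and none of the tools you list can deliver it. First, $8_{18}$ has $\sigma+4\cdot\mathrm{Arf}\equiv 4\pmod 8$ (not $0$); the congruence together with $|\beta|\le b_1$ and $|\sigma(W(\Sigma))|\le b_1$ only excludes $b_1=1$, and for a hypothetical $b_1=2$ surface it merely forces $W(\Sigma)$ to be definite. Second, Batson's bound \eqref{BatsonsBound} and the Ozsv\'ath--Stipsicz--Szab\'o bound \eqref{OSSBound} cannot reach $3$ here: $\sigma(8_{18})=0$ and $g_4(8_{18})=1$, and both $d(S^3_{-1}(K))$ and $v(K)$ are controlled by $g_4$, so these bounds top out far below $3$. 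Your remaining fallback (linking forms or Casson--Gordon invariants ``in the spirit of Gilmer--Livingston'') is left entirely unverified, and the paper's framing makes clear this case was not accessible that way. The missing idea is the obstruction of Theorem \ref{TheoremWithLowerBoundOnGamma4WhenSigmaPlus4ArfEqual4}: since $W(\Sigma)$ would be definite, one glues it to the double cover of $D^4$ branched over a checkerboard surface of an alternating diagram (Gordon--Litherland, Theorem \ref{GordonLitherlandTheorem}), obtaining a closed definite $4$-manifold to which Donaldson's Theorem \ref{TheoremA} applies; this forces the rank-$4$ definite Goeritz lattice of $8_{18}$ (a $4$-cycle of vertices of square $-3$) to embed into the diagonal lattice of rank $6$, and Proposition \ref{PropositionAbout8_18} rules this out by an explicit integral computation. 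Without this (or an equivalently strong) argument, your proposal establishes only $\gamma_4(8_{18})\ge 2$, so the third line of the theorem is not proved.
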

\begin{theorem} \label{9CrossingKnots}
The values of $\gamma_4$ for the 49 knots with crossing number 9, are given as follows.  
\[
\begin{array}{lcl}
\gamma_4(K)=1  & \text{ for }  & K =  9_1, \, 9_3, \, 9_4,\, 9_5,\, 9_6, \, 9_7,\, 9_8,\, 9_9, \, 9_{13},\, 9_{15}, \, 9_{17},\, 9_{19},\, 9_{21}, \, 9_{22}, \, 9_{23}, \cr  
&& \hspace*{9.5mm} 9_{25}, \, 9_{26},\, 9_{27}, \, 9_{28}, \, 9_{29}, \, 9_{31}, \, 9_{32},\, 9_{35}, \, 9_{36},\, 9_{41}, \, 9_{42}, \, 9_{43},\, 9_{44}, \, 9_{45}, \cr
&& \hspace*{9.5mm}  9_{46}, \, 9_{47},\, 9_{48}.\cr && \cr
\gamma_4(K)=2  & \text{ for } & K =  9_2,\, 9_{10}, \, 9_{11},\, 9_{12},\,  9_{14}, \, 9_{16},\, 9_{18}, \, 9_{20}, \, 9_{24},\, 9_{30}, \, 9_{33},\, 9_{34},\, 9_{37}, \cr
&& \hspace*{9.5mm} 9_{38},\,  9_{39},\, 9_{40}. \cr && \cr
\gamma_4(K)=3  & \quad\text{ for } \quad & K = 9_{49}.
\end{array}
\]
\end{theorem}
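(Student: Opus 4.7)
The plan is to mirror the strategy used to establish Theorem~\ref{8CrossingKnots}: for each of the 49 knots we independently produce matching upper and lower bounds on $\gamma_4$. Since $\gamma_4(K) \ge 1$ is immediate from the definition (every non-orientable surface has $b_1 \ge 1$), the substantive tasks are (a) upper bounds of $1$ or $2$ valid for every knot on the list, and (b) a sharp obstruction $\gamma_4(K) \ge 2$ for the $17$ knots listed in the second array.

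For the upper bounds I would work with explicit diagrams of the 9-crossing knots and search for short sequences of non-orientable band surgeries that reduce each $K$ to the unknot. Each half-twisted band attached to a surface in $D^4$ contributes $+1$ to $b_1$, so exhibiting one such band proves $\gamma_4(K) \le 1$ and exhibiting two proves $\gamma_4(K) \le 2$. Candidate bands are typically visible in a Conway or rational-tangle decomposition of the standard diagram, and in many cases the bound $\gamma_4(K) \le 1$ can be inherited directly from the 3-dimensional crosscap number $\gamma_3(K) = 1$ via \eqref{GeneraInequality}.

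For the lower bounds, I would attempt, for each of the 17 knots claimed to satisfy $\gamma_4(K) = 2$, the following obstructions in order of increasing complexity: first Yasuhara's parity obstruction, which gives $\gamma_4(K) > 1$ whenever $\sigma(K) + 4\,\text{Arf}(K) \equiv 4 \pmod 8$; then Batson's bound \eqref{BatsonsBound} using $\sigma(K)$ and the Heegaard Floer correction term $d(S^3_{-1}(K))$; then the Ozsv\'ath--Stipsicz--Szab\'o bound \eqref{OSSBound} using $v(K)$; and finally, when none of these suffices, the full Gilmer--Livingston congruence \eqref{YasuharasCongruence} analyzed for a hypothetical M\"obius band $\Sigma$ bounded by $K$, combined with constraints on $\sigma(W(\Sigma))$ and $\beta(D^4,\Sigma)$ derived from the linking form on the 2-fold branched cover of $K$.

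The main obstacle will be this last step. The first three obstructions are essentially numerical, computable from tabulated invariants, and will likely dispatch the majority of the 17 critical knots. For the remainder one must grapple with the full Gilmer--Livingston congruence, which for a hypothetical M\"obius band $\Sigma$ requires ruling out every admissible pair $(\sigma(W(\Sigma)),\beta(D^4,\Sigma))$ compatible with the mod-$8$ value of $\sigma(K) + 4\,\text{Arf}(K)$. Identifying a uniformly effective refinement, presumably via the linking form and Casson--Gordon framework already invoked by Gilmer--Livingston, is where the bulk of the conceptual work is expected to lie; by contrast, the upper-bound side is a bounded search over diagrams and bands, laborious but not deep.
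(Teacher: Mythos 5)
Your overall architecture (explicit non-orientable band moves for upper bounds, tabulated obstructions for lower bounds) matches the paper's strategy in broad outline, and your first obstruction does exactly what the paper does for most of the hard cases: the Gilmer--Livingston/Yasuhara parity condition $\sigma(K)+4\,\mathrm{Arf}(K)\equiv 4 \pmod 8$ disposes of twelve of the seventeen knots with $\gamma_4=2$, and band moves to knots of known $\gamma_4$ give the matching upper bounds (note the paper's moves go to slice knots or to knots with $\gamma_4(K')=1$, not to the unknot, and the $\gamma_3=1$ shortcut applies essentially only to $9_1$ among these knots).

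The genuine gap is in the lower bound for the five remaining knots $9_2$, $9_{12}$, $9_{16}$, $9_{39}$, $9_{40}$, which have $\sigma(K)+4\,\mathrm{Arf}(K)\equiv \pm 2\pmod 8$. For these the congruence \eqref{YasuharasCongruence} by itself obstructs nothing: for a hypothetical M\"obius band $\Sigma$ one has $|\sigma(W(\Sigma))|\le 1$ and $|\beta(D^4,\Sigma)|\le 1$, and the congruence is satisfied by taking $\sigma(W(\Sigma))=\pm 1$, $\beta=\mp 1$; it only forces $W(\Sigma)$ to be definite with intersection form $[\pm\ell]$, $\ell$ a divisor of $\det K$ with square quotient (Corollary \ref{AboutSquareInCaseOfMobiusBand}). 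The decisive step in the paper, which your proposal never identifies, is to glue $W(\Sigma)$ to the double cover of $D^4$ branched over a checkerboard surface of an alternating diagram, whose intersection form is the definite Goeritz form $G$ by the Gordon--Litherland theorem, obtaining a closed definite 4-manifold; Donaldson's diagonalization theorem then demands a lattice embedding $G\oplus[-\ell]\hookrightarrow(\mathbb Z^{\mathrm{rank}(G)+1},-\mathrm{Id})$, and the paper rules this out by an explicit case analysis for each of the five knots (Theorem \ref{TheoremWithLowerBoundOnGamma4WhenSigmaPlus4ArfEqual2}). Your proposed substitutes are either unverified (Batson's bound and the Ozsv\'ath--Stipsicz--Szab\'o bound are not shown, and not known from your argument, to reach $2$ for these particular knots) or explicitly deferred (the ``linking form/Casson--Gordon refinement'' is acknowledged as the place where the conceptual work lies but is not carried out). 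As it stands the proposal therefore does not establish $\gamma_4\ge 2$ for these five knots, which is precisely the non-routine part of the theorem.
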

As already mentioned in Section \ref{Introduction}, the non-orientable slice genus $\gamma_4(K)$ was introduced by Murakami and Yasuhara in their work \cite{MurakamiYasuhara2000}, with the difference that in \cite{MurakamiYasuhara2000} $\gamma_4$ of a slice knot $K$ is defined to be zero. Murakami and Yasuhara observed the inequality $\gamma_4(K) \le 2g_4(K) +1$, the $i=4$ version of \eqref{GeneraInequality}. In Conjecture 2.10 from \cite{MurakamiYasuhara2000} they ask whether this inequality is the best possible bound relating $\gamma_4(K)$ and $g_4(K)$. 
\setcounter{section}{2}
\setcounter{theorem}{9}
\begin{conjecture}[Murakami and Yasuhara \cite{MurakamiYasuhara2000}] There exists a non-slice knot $K$ such that $\gamma_4(K) = 2g_4(K)+1$. 	
\end{conjecture}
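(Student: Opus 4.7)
The plan is to observe that Conjecture~2.10 is an immediate corollary of Theorem~\ref{8CrossingKnots}. Scanning the table, exactly one knot in the range of 8 or 9 crossings attains $\gamma_4 = 3$, namely $K = 8_{18}$. Since $g_4(8_{18}) = 1$ is a standard entry in KnotInfo (the upper bound realised by one band move on the standard diagram, the lower bound forced by $\mathrm{Arf}(8_{18}) = 1$), the knot $8_{18}$ is not slice, and
\[
\gamma_4(8_{18}) \;=\; 3 \;=\; 2\cdot 1 + 1 \;=\; 2g_4(8_{18})+1,
\]
which is exactly the equality sought by Murakami and Yasuhara.

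Concretely my proof has three short steps. First, invoke Theorem~\ref{8CrossingKnots} to extract $\gamma_4(8_{18}) = 3$. Second, record $g_4(8_{18}) = 1$ from the tables (or from an explicit genus-one surface in $D^4$) and note that this forces non-sliceness. Third, assemble the two values to exhibit $8_{18}$ as the desired witness; the general inequality \eqref{GeneraInequality} enters only implicitly, as the automatic upper bound that shows this really is the extremal case.

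The main obstacle, of course, does not lie in this corollary itself --- once the tabulation of Theorem~\ref{8CrossingKnots} is in hand, the conjecture collapses to a one-line check --- but rather in the lower bound $\gamma_4(8_{18}) \ge 3$ that sits inside Theorem~\ref{8CrossingKnots}. The upper bound $\gamma_4(8_{18}) \le 3$ is free from \eqref{GeneraInequality}, so the entire difficulty is to rule out non-orientable surfaces $\Sigma \subset D^4$ with $\partial\Sigma = 8_{18}$ and $b_1(\Sigma) \le 2$. I expect the Gilmer--Livingston congruence \eqref{YasuharasCongruence} to do most of the work: with $\sigma(8_{18}) = 0$ and $\mathrm{Arf}(8_{18}) = 1$ the left-hand side is $4 \bmod 8$, and for $b_1(\Sigma)=2$ the pair $\bigl(\sigma(W(\Sigma)),\beta(D^4,\Sigma)\bigr)$ is pinned to a handful of possibilities by the bounds $|\sigma(W(\Sigma))|,|\beta|\le 2$ which I would try to obstruct case by case. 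Where the congruence leaves a loophole --- for instance, the extremal pair $(\pm 2, \mp 2)$ --- I would fall back on the Heegaard Floer bounds \eqref{BatsonsBound} and \eqref{OSSBound} applied to $8_{18}$ and its mirror to close it off. Once the bound $\gamma_4(8_{18})\ge 3$ is in place, the conjecture then falls out as the short corollary described above.
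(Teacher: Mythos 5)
Your argument is exactly the paper's: the Corollary following the conjecture takes $K=8_{18}$, combines $\gamma_4(8_{18})=3$ from Theorem \ref{8CrossingKnots} with $g_4(8_{18})=1$ (hence $8_{18}$ is not slice, as its Arf invariant is $1$), and checks $3 = 2\cdot 1 + 1$. One aside on your speculation about the hard part: the paper's lower bound $\gamma_4(8_{18})\ge 3$ (Proposition \ref{PropositionAbout8_18}) is not closed off with the Heegaard Floer bounds \eqref{BatsonsBound} or \eqref{OSSBound} --- which would give nothing here since $\sigma(8_{18})=0$ and $8_{18}$ is alternating --- but by using the Gilmer--Livingston congruence only to force $W(\Sigma)$ to be definite and then obstructing, via Donaldson's Theorem \ref{TheoremA}, any embedding of the definite Goeritz forms $G_{\pm}$ of $8_{18}$ into a diagonal form of rank two larger (Theorem \ref{TheoremWithLowerBoundOnGamma4WhenSigmaPlus4ArfEqual4}).
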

\setcounter{section}{1}
\setcounter{theorem}{2}
Theorem \ref{8CrossingKnots} verifies the Murakami-Yasuhara Conjecture. 
\begin{corollary}
There exist non-slice knots $K$, for instance $K=8_{18}$, such that $\gamma_4(K) = 2g_4(K)+1$. Accordingly, the inequality $\gamma_4(K) \le 2g_4(K)+1$ is sharp for some knots and cannot be improved upon. 
\end{corollary}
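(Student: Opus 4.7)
The plan is essentially to combine Theorem \ref{8CrossingKnots} with classical data on the oriented 4-genus of low-crossing knots. Specifically, I would take $K=8_{18}$ and observe that Theorem \ref{8CrossingKnots} gives $\gamma_4(8_{18})=3$. The corollary then reduces to verifying that $g_4(8_{18})=1$, since $2\cdot 1 + 1 = 3$, and that $8_{18}$ is not slice.

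For the oriented 4-genus, I would cite the KnotInfo \cite{Knotinfo} tabulation, which records $g_4(8_{18})=1$. As a sanity check and to make the argument self-contained, the upper bound $g_4(8_{18})\le 1$ follows from the unknotting number $u(8_{18})=2$ (giving a genus-2 Seifert-type bound via band moves, or more directly from an explicit genus-1 surface in $D^4$ obtainable by resolving crossings), while the lower bound $g_4(8_{18})\ge 1$ follows from the fact that $8_{18}$ has nonzero signature or non-vanishing Alexander polynomial norm at $-1$, certifying that $8_{18}$ is not slice. In particular, $8_{18}$ is non-slice, so $g_4(8_{18})\ne 0$.

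Combining $\gamma_4(8_{18})=3$ from Theorem \ref{8CrossingKnots} with $g_4(8_{18})=1$ yields $\gamma_4(8_{18}) = 2 g_4(8_{18}) + 1$, proving the first sentence of the corollary. The second sentence is an immediate logical consequence: any sharpening of the general bound \eqref{GeneraInequality} for $i=4$ would contradict this equality.

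The only potential obstacle is reliance on Theorem \ref{8CrossingKnots} itself, which supplies the nontrivial input $\gamma_4(8_{18})=3$; all remaining ingredients are standard tabulated invariants. Hence the proof of the corollary is essentially a one-line verification once the main theorem is established.
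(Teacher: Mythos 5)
Your proposal is correct and takes essentially the same route as the paper: the corollary is exactly the combination of $\gamma_4(8_{18})=3$ from Theorem \ref{8CrossingKnots} with the tabulated value $g_4(8_{18})=1$ from \cite{Knotinfo} (which the paper itself invokes in Example \ref{CalculatingUsOf8_18}), together with the observation that $g_4\neq 0$ means $8_{18}$ is not slice. Only your non-essential ``sanity check'' asides are shaky---$\sigma(8_{18})=0$, so the signature does not obstruct sliceness (one can instead use Fox--Milnor: $\det 8_{18}=45$ is not a perfect square), and $u(8_{18})=2$ only yields $g_4(8_{18})\le 2$ rather than $\le 1$---but the KnotInfo citation already carries the argument, so these slips do not affect the proof.
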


Recall that the {\em unknotting number} $u(K)$ of a knot $K$ is the minimum number of crossing changes in any diagram of $K$ that renders $K$ unknotted. Similarly, the {\em slicing number} $u_s(K)$ of a knot $K$ is defined as the minimum number of crossing changes in any diagram of $K$ that transforms $K$ into a slice knot. These two quantities fit into the double inequality 
\begin{equation} \label{DoubleInequalityWithG4UsAndU}
g_4(K) \le u_s(K) \le u(K).
\end{equation}
Of these, a proof of the left inequality can be found in \cite{Scharlemann}, while the right inequality is obvious since the unknot is slice. The inequality $u_s(K)\le u(K)$ is a strict inequality for many knots $K$, for instance for any nontrivial slice knot $K$. To show that the inequality $g_4(K) \le u_s(K)$ may also be strict is rather more difficult. The first example of a knot $K$ where this occurs, namely $K=7_4$, was discovered by Livingston \cite{Livingston}. Owens \cite{Owens} and Owens--Strle \cite{OwensStrle}, by relying on gauge-theoretic techniques, are able to calculate $u_s(K)$ for all knots $K$ with 10 or fewer crossings, and find many more examples with $g_4(K)<u_s(K)$. In general however, both $u(K)$ and $u_s(K)$ remain difficult knot invariants to compute. 

The {\em 4-dimensional clasp number $c_4(K)$} of a knot $K$ is the smallest number of double points of any immersed disk in the 4-ball $D^4$, with boundary $K$. The clasp number also fits into a double inequality, namely
\begin{equation} \label{InequalityForG4C4AndUs}
g_4(K) \le c_4(K) \le u_s(K), 
\end{equation}
of which the left one is proved in \cite{Shibuya}, while the right one is obvious. The inequality $g_4(K)\le c_4(K)$ may be strict, an example is given in \cite{MurakamiYasuhara2000}, but we are not aware of a knot $K$ with $c_4(K) < u_s(K)$. The relation between the non-orientable 4-genus $\gamma_4(K)$ and the clasp number $c_4(K)$ was worked out by Murakami-Yasuhara  \cite{MurakamiYasuhara2000}.
\begin{proposition}[Proposition 2.3 in  \cite{MurakamiYasuhara2000}] \label{PropositionOnGamma4AndC4}
For any knot $K$ the following inequality holds.
\begin{equation} \label{BoundOnGamma4ByClaspNumber}
\gamma_4(K) \le 
\begin{cases}
c_4(K)\ ;  & \text{if } c_4(K) \text{ is even and } c_4(K)\ne 2, \\
c_4(K)+1\ ; & \text{otherwise}.
\end{cases}
\end{equation}
\end{proposition}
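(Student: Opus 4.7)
The plan is to start from an immersed disk $D\subset D^4$ with boundary $K$ and exactly $n=c_4(K)$ transverse double points, and to build from $D$ a properly embedded non-orientable surface $\Sigma\subset D^4$ with the desired first Betti number.

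First I would treat the base case $n=0$: here $K$ is slice, so it bounds a smooth disk in $D^4$; remove an open sub-disk and glue in a M\"obius band (embedded in a small interior 4-ball disjoint from the rest of the disk), producing a non-orientable surface with $b_1=1=n+1$.

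For $n\ge 1$, near each double point $x_i$, the immersion $f\colon D\to D^4$ sends a small 4-ball neighborhood to two transverse 2-disks whose boundary is a positive Hopf link on the bounding 3-sphere. The naive resolution (replacing the two disks with a Seifert annulus) drops the Euler characteristic by $2$ per double point and yields an orientable genus-$n$ surface with $b_1=2n$, which is too large. To sharpen the count I would group the double points into pairs, and for each pair replace the two local pieces by a single embedded surface piece in $D^4$: choose a thin tube in $D^4$ joining small neighborhoods of the two double points while avoiding the rest of $D$, then realize the combined resolution by a non-orientable local model with smaller Euler-characteristic cost than two separate annuli. The key local count is that such a pair-resolution contributes $2$ to $b_1$ (rather than $4$), so resolving $\lfloor n/2\rfloor$ pairs, together with a single ordinary tube resolution of the leftover double point when $n$ is odd, produces a surface $\Sigma$ whose $b_1$ equals $n$ (when $n$ is even) or $n+1$ (when $n$ is odd), up to the orientability check.

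The case analysis on parity and on the exceptional value $n=2$ arises from checking global non-orientability of $\Sigma$. When $n\ge 4$ is even, one has at least two pair-resolutions and these can be joined by an orientation-reversing band so that $\Sigma$ is genuinely non-orientable, giving $b_1=n$. When $n=2$ there is only a single pair available; its resolution yields an orientable surface of $b_1=2$, so one must apply a crosscap surgery (remove an interior disk, attach a M\"obius band in a small 4-ball) to force non-orientability, raising $b_1$ to $3=n+1$. When $n$ is odd, the leftover tube combined with the pair-resolutions already forces $b_1=n+1$, matching the claim.

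The main obstacle is the explicit geometric construction of the pair-resolution local model and the verification of its Euler-characteristic count; in particular, one must argue that the framings of the Hopf-link boundaries arising at the two double points can be matched across the connecting tube so that the replacement surface piece is properly embedded in $D^4$ with the asserted topology, and that the orientability of $\Sigma$ behaves as described in the case analysis.
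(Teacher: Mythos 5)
Your key step---the ``pair-resolution'' that removes two double points at a total cost of $2$ to $b_1$---does not exist, and this is precisely where the content of the proposition lies. Make the count explicit: if $N\subset D^4$ is the $4$-ball obtained by joining small balls around the two double points by a tube that avoids the rest of the immersed disk $D$, then $D\cap N$ consists of four disks ($\chi=4$) and $D\cap\partial N$ is a split union of two Hopf links in $\partial N\cong S^3$. For a properly embedded replacement $F\subset N$ with $\partial F=D\cap\partial N$, the resulting surface $\Sigma=(D\setminus N)\cup F$ has $\chi(\Sigma)=1-4+\chi(F)$, so a contribution of only $2$ to $b_1$ from this pair forces $\chi(F)=2$. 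But two circles with odd linking number cannot bound disjoint embedded surfaces (orientable or not) in a $4$-ball, since their mod $2$ linking number equals the mod $2$ intersection number of any such surfaces; hence the two components of each Hopf link must lie on the same component of $F$. Consequently $F$ has at most two components, each with at least two boundary circles, giving $\chi(F)\le 0$, with equality exactly for the two annuli of the naive smoothing you are trying to improve upon (tubing the two resolutions together only lowers $\chi$ further). So no choice of tube, framing, or ``non-orientable local model'' can realize your key count, and the construction as described can never do better than $b_1=2c_4(K)$, which is weaker than the claimed bound for every $c_4(K)\ge 2$. Two smaller points: attaching an orientation-reversing band to an already connected surface raises $b_1$ by one, so even granting the pair move your $n\ge 4$ bookkeeping would not close; and with this paper's convention $\gamma_4\ge 1$, the case $c_4=0$ falls under the ``even and $\ne 2$'' clause, not the $n+1$ clause your base case uses.

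For comparison: the paper itself gives no proof of this proposition---it quotes Proposition 2.3 of Murakami--Yasuhara and credits C.~Livingston for an independently communicated proof---so the argument has to be supplied from that source, and it cannot be a ball-by-ball surgery of the kind you propose. Locally, the only $\chi$-maximizing replacement of the two sheets at a double point is an annulus bounded by the Hopf link (a disjoint disk plus M\"obius band is excluded by the same linking argument), and one of the two such annuli induces anti-parallel orientations, which is enough to make the resulting surface non-orientable; but this only yields $\gamma_4(K)\le 2c_4(K)$. The genuine improvement to $c_4(K)+1$, and to $c_4(K)$ for even $c_4(K)\ne 2$---note the $c_4=2$ exception is sharp, e.g.\ for $8_{18}$ with $\gamma_4=3$ and $c_4=2$---requires using the global structure of the immersed disk, not local models at the double points, so this part of your argument needs to be replaced rather than repaired.
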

This inequality and its proof were independently communicated to us by Chuck Livingston, whose input we gratefully acknowledge. The reason for this detour into exploring $c_4(K)$ and $u_s(K)$ is to demonstrate in the next example that our computation of $\gamma_4(8_{18})$ in conjunction with Proposition \ref{PropositionOnGamma4AndC4} can be used to obtain a proof of the strict inequalities $g_4(8_{18}) < c_4(8_{18})$ and $g_4(8_{18}) < u_s(8_{18})$, facts that were first obtained by Owens-Strle \cite{OwensStrle} using rather different techniques.  
\begin{example} \label{CalculatingUsOf8_18}
The knot $K=8_{18}$ has $\gamma_4(K) = 3$ according to Theorem \ref{8CrossingKnots}. Proposition \ref{PropositionOnGamma4AndC4} implies that $2\le c_4(K)$, and thus $2\le u_s(K)$ by \eqref{InequalityForG4C4AndUs}. Since $u(K) = 2$ we obtain $c_4(K) = u_s(K)=2$ by \eqref{DoubleInequalityWithG4UsAndU} and \eqref{InequalityForG4C4AndUs}, while $g_4(K)=1$. 
\end{example}
\subsection{Organization} In Section \ref{SectionBackground} we provide needed background material. We remind the reader of the definition of the Brown invariant, introduce non-oriented band moves on knot diagrams, and review the Goeritz form of a knot and Donaldson's Diagonalization Theorem. The main results of this section are the obstruction   Theorems \ref{TheoremWithLowerBoundOnGamma4WhenSigmaPlus4ArfEqual2}, \ref{TheoremWithLowerBoundOnGamma4WhenSigmaPlus4ArfEqual4} and \ref{TheoremWithLowerBoundOnGamma4WhenSigmaPlus4ArfEqual0}. Section \ref{SectionWhereWeCalculateStuff} looks at all 70 knots with crossing number 8 or 9, and employs the techniques from Section \ref{SectionBackground} to compute their values of $\gamma_4$, providing proofs of Theorems \ref{8CrossingKnots} and \ref{9CrossingKnots}. 
Section \ref{SectionOnConcludingRemarks} concludes with some observations and open questions. 
\subsection{Acknowledgements}  We wish to thank Pat Gilmer and Chuck Livingston for helpful comments. The first author gratefully acknowledges support from the Simons Foundation, Grant \#246123. 
\section{Background} \label{SectionBackground}
This section describes the techniques used to determined the values of $\gamma_4$ for knots with 8 or 9 crossings. The techniques come in two flavors -- constructive and obstructive. The former takes the form of a non-oriented band move on knot diagrams (described in Section \ref{SectionOnNonOrientedBandMoves}) in such a way that if two knots are related by such a move, their values of $\gamma_4$ differ by at most 1; see Proposition \ref{NonOrientableBandMoveLemma}. The obstructive techniques use Donaldson's celebrated diagonalization theorem for definite 4-manifolds, in combination with a construction of Goeritz. These are described in Section \ref{SectionOnGoeritzAndDonaldson}. 
\subsection{The Brown invariant} This section recalls the definition of the Brown invariant $\beta (D^4, \Sigma)$ of a smoothly and properly embedded non-orientable surface $\Sigma \hookrightarrow D^4$. Our exposition follows that of \cite{KirbyMelvin}. 
 
Let $V$ be a finite dimensional $\mathbb Z_2$-vector space equipped with a nonsingular inner product $\cdot :V\times V\to \mathbb Z_2$, that is, an inner product for which $x\cdot y=0$ for all $y\in V$ implies $x=0$. We call $(V,\cdot)$ {\em even} if $x\cdot x =0$ for all $x\in V$, otherwise we say $(V,\cdot)$ is {\em odd}. Every such inner product space $(V,\cdot)$ can be decomposed as a direct sum of orthogonal subspaces isomorphic to 
\[
P=\mathbb Z_2 x \quad \text{ and } \quad T=\mathbb Z_2y\oplus \mathbb Z_2z
\]
with  $x\cdot x=1=y\cdot z$ and $y\cdot y=0=z\cdot z$. These two irreducible spaces satisfy the isomorphism $P\oplus T \cong P\oplus P\oplus P$, and there are no other relations among them. Accordingly, every inner product space $(V,\cdot)$ is isomorphic to either $mP$ (the $m$-fold orthogonal sum of $P$) or $nT$ (the $n$-fold orthogonal sum of $T$). The former are the odd inner product spaces, the latter the even ones.  

A {\em quadratic form on $(V,\cdot)$} is a function $q:V\to \mathbb Z_4$ with $q(x+y) = q(x) + q(y) +2x \cdot y$ for all $x,y \in V$. Here $\cdot 2:\mathbb Z_2 \to \mathbb Z_4$ is the unique homomorphism sending 1 to 2. Restricting $q$ to the irreducible summands of $(V,\cdot)$ gives a decomposition of $q$ as a sum of quadratic forms on $P$ or $T$. 

The space $P=\mathbb Z_2x$ admits two quadratic forms $q_{-1}$ and $q_1$, defined by $q_i(x) = i$. Similarly, the space $T= \mathbb Z_2 y \oplus \mathbb Z_2z$ admits exactly 4 quadratic forms $q_{0,0}$, $q_{0,2}$, $q_{2,0}$ and $q_{2,2}$, given by $q_{i,j}(y) = i$ and $q_{i,j}(z) = j$. Of these the first three are mutually isomorphic, but are not isomorphic to the fourth one, giving precisely two isomorphism classes of quadratic forms on $T$. 

The relation $P\oplus T \cong 3P$ of inner product spaces induces relations among the quadratic forms $q_{i,j}$ and $q_k$ as: $q_{\pm 1}\oplus q_{0,0} \cong q_{\pm 1}\oplus q_{-1}\oplus q_1$ and $q_{\pm 1}\oplus q_{2,2} \cong 3 q_{\mp 1}$. Of course since $q_{0,0}$ is isomorphic to both $q_{0,2}$ and $q_{2,0}$, we may replace $q_{0,0}$ in the first relation above by either of $q_{0,2}$ or $q_{2,0}$. These relations further imply the relations 
\begin{equation}  \label{RelationsAmongEnhancements}
2q_{0,0}\cong 2q_{2,2} \quad \text{ and } \quad 4q_{-1}\cong 4q_1,
\end{equation}
which lead to the following unique decomposition of a quadratic form $(V,\cdot,q)$:  
\[
q \cong \begin{cases}
\text{Direct sums of } q_{0,0}\text{s and } q_{2,2}\text{s with at most 1 copy of } q_{2,2}\text{ ;} & \text{if } (V,\cdot) \text{ is even,}\\
\text{Direct sums of } q_{-1}\text{s and }q_{1}\text{s with at most 3 copies of } q_{-1}\text{ ;} & \text{if } (V,\cdot) \text{ is odd.}
\end{cases}
\]

We define the {\em Brown invariant $\beta (q)\in \mathbb Z_8$ of $(V,\cdot , q)$} by setting
\[
\beta (q_{0,0}) = \beta(q_{0,2}) = \beta(q_{2,0}) = 0,\quad  \beta (q_{2,2}) = 4,\quad  \beta (q_{-1}) = -1, \quad \beta (q_1) = 1,
\]
and by imposing additivity $\beta(q'\oplus q'') = \beta (q') + \beta (q'')$ under the direct sum of the quadratic forms $q'$ and $q''$. The relations \eqref{RelationsAmongEnhancements} show that the Brown invariant is well defined modulo 8.  

For the next lemma we define the norm $|x|$ for $x\in \mathbb Z_8$ as the smallest absolute value $|y|$ with $x\equiv y\pmod{8}$. For example $|7| = 1$.
\begin{lemma} \label{LemmaOnSizeOfBrownInvariant}
For an odd quadratic inner product space $(V,\cdot, q)$, the inequality 
\[
|\beta(q)|\le \dim _{\mathbb Z_2}V
\]
holds.  
\end{lemma}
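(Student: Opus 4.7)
The plan is to read off the statement directly from the unique decomposition of quadratic forms recorded just before the lemma. Since $(V,\cdot)$ is odd, the inner product space is isomorphic to $mP$ with $m=\dim_{\mathbb{Z}_2}V$, and the displayed normal form tells us that
\[
q \cong a\, q_{-1} \oplus b\, q_{1}, \qquad a\in\{0,1,2,3\},\quad a+b=m.
\]
Thus there are only four isomorphism types of $q$ to consider for each $m$, distinguished by the number of $q_{-1}$ summands.

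Next, I would apply additivity of the Brown invariant together with the assigned values $\beta(q_{-1})=-1$, $\beta(q_1)=1$. This gives
\[
\beta(q) \equiv b-a \equiv m-2a \pmod 8.
\]
At this point the claim is purely arithmetic: I must check that the $\mathbb{Z}_8$-norm of $m-2a$ does not exceed $m$ whenever $a\in\{0,1,2,3\}$ and $b=m-a\ge 0$ (the latter condition forces $m\ge a$).

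The final step is the case-check, which is short. Since $|\beta(q)|$ is by definition the minimum $|y|$ over integer representatives $y\equiv m-2a\pmod 8$, it suffices to bound the particular representative $m-2a$. For $a=0$ this is $m$ itself; for $a=1,2,3$ one checks $|m-2|,|m-4|,|m-6|\le m$ under the respective hypotheses $m\ge 1, 2, 3$ that are forced by $b\ge 0$. In every case $|m-2a|\le m$, hence
\[
|\beta(q)| \le |m-2a| \le m = \dim_{\mathbb{Z}_2}V,
\]
as required. I do not anticipate any real obstacle here: once the uniqueness of the normal form from the excerpt is invoked and additivity of $\beta$ is applied, the bound reduces to an elementary arithmetic inequality involving at most four cases.
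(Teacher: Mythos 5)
Your proof is correct and follows essentially the same route as the paper: decompose $q$ over the odd space $mP$ into one-dimensional summands $q_{\pm1}$, apply additivity of $\beta$, and bound an integer representative of $\beta(q)$ by $m$. The paper skips the normalization $a\le 3$ and the case-check, simply noting that a sum of $m$ terms each equal to $\pm1$ has absolute value at most $m$, but this is only a cosmetic difference.
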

\begin{proof}
Since $(V,\cdot)$ is odd we can write $(V,\cdot)\cong nP$ with $n=\dim _{\mathbb Z_2}V$. Then $q$ is isomorphic to an $n$-fold direct sum of copies of $P_{-1}$ and $P_1$, and its Brown invariant $\beta(q)$ is therefore an $n$-fold sum whose summands are either $-1$ or $1$. It follows that $|\beta(q)|\le n$ as claimed.  	
\end{proof}

Given a non-orientable surface $\Sigma \subset D^4$, smoothly and properly embedded, Guillou and Marin \cite{GuillouMarin} define an odd form $q_\Sigma :H_1(\Sigma ;\mathbb Z_2)\to \mathbb Z_2$ that is quadratic with respect to the linking pairing $\cdot$ on $H_1(\Sigma ;\mathbb Z_2)$. We omit the details of the definition of $q_\Sigma$ as they are not relevant to our subsequent discussion. The {\em Brown invariant $\beta (\Sigma , D^4)\in \mathbb Z_8$} of the embedding $\Sigma \subset D^4$ is defined as the Brown invariant $\beta (q_\Sigma)$ of the quadratic inner product space $(H_1(\Sigma ;\mathbb Z_2), \cdot, q_\Sigma)$. The following is a direct consequence of Lemma \ref{LemmaOnSizeOfBrownInvariant}.
\begin{corollary} \label{CorollaryOnTheBrownInvariantOfASurface}
For a non-orientable surface $\Sigma \subset D^4$, smoothly and properly embedded, the inequality $|\beta (\Sigma , D^4)|\le b_1(\Sigma)$ holds.
\end{corollary}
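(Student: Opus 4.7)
The plan is to combine Lemma \ref{LemmaOnSizeOfBrownInvariant} with the definition of $\beta(\Sigma, D^4)$, the only subtlety being the translation between the $\mathbb{Z}_2$-dimension of $H_1(\Sigma;\mathbb{Z}_2)$ and the first Betti number $b_1(\Sigma)$.

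First I would unwind definitions: the Brown invariant $\beta(\Sigma, D^4)$ is by construction $\beta(q_\Sigma)$, where $q_\Sigma$ is the Guillou--Marin quadratic form on the inner product space $(H_1(\Sigma;\mathbb{Z}_2),\cdot)$. Because $\Sigma$ is non-orientable, the Guillou--Marin form $q_\Sigma$ is odd (this is precisely the property emphasized in the preceding paragraph). Consequently Lemma \ref{LemmaOnSizeOfBrownInvariant} applies verbatim, yielding
\[
|\beta(\Sigma, D^4)| \;=\; |\beta(q_\Sigma)| \;\le\; \dim_{\mathbb{Z}_2} H_1(\Sigma;\mathbb{Z}_2).
\]

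Next I would identify the right-hand side with $b_1(\Sigma)$. Since $\Sigma$ is a compact surface whose boundary is the knot $K\ne\emptyset$, it deformation retracts onto a 1-dimensional CW-complex (a wedge of circles), so $H_1(\Sigma;\mathbb{Z})$ is free abelian of rank $b_1(\Sigma)$ with no torsion. By the universal coefficient theorem we then get $\dim_{\mathbb{Z}_2} H_1(\Sigma;\mathbb{Z}_2) = b_1(\Sigma)$, and inserting this into the displayed inequality gives the claim.

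The argument is essentially bookkeeping once Lemma \ref{LemmaOnSizeOfBrownInvariant} and the Guillou--Marin construction are in hand, so there is no real obstacle. The only point that deserves a sentence of care is the identification $\dim_{\mathbb{Z}_2} H_1(\Sigma;\mathbb{Z}_2) = b_1(\Sigma)$, which relies on $\Sigma$ having non-empty boundary (so that no $\mathbb{Z}_2$-torsion is contributed by a closed non-orientable component) and on the non-orientability of $\Sigma$ to guarantee oddness of $q_\Sigma$.
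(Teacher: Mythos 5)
Your proposal is correct and follows the same route as the paper, which simply records the corollary as a direct consequence of Lemma \ref{LemmaOnSizeOfBrownInvariant} using the oddness of the Guillou--Marin form $q_\Sigma$. Your extra care in identifying $\dim_{\mathbb{Z}_2} H_1(\Sigma;\mathbb{Z}_2)$ with $b_1(\Sigma)$ via the non-empty boundary (so $H_1(\Sigma;\mathbb{Z})$ is free) is correct bookkeeping that the paper leaves implicit.
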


\subsection{Non-oriented band moves} \label{SectionOnNonOrientedBandMoves}
We describe here a move on knots that will be one of our fundamental tools in seeking concrete non-orientable surfaces $\Sigma$, smoothly and properly embedded in $D^4$, with boundary a given knot $K$.  

\begin{definition}
A {\em non-oriented band move} on an oriented knot $K$ is the operation of attaching an oriented band $h=[0,1]\times [0,1]$ to $K$ along $[0,1]\times \partial [0,1]$ in such a way that the orientation of the knot agrees with that of $[0,1]\times \{0\}$ and disagrees with that of $[0,1]\times \{1\}$ (or vice versa), and then performing surgery on $h$, that is replacing the arcs $[0,1]\times \partial [0,1]\subseteq K$ by the arcs $\partial [0,1]\times [0,1]$.  

The resulting knot $K'$ shall be said to have been {\em obtained from $K$ by a non-oriented band move}, and we write $K' = K\#h$ to indicate this operation. 
\end{definition}
Note that if $K'$ was obtained from $K$ by a non-oriented band move and $K'=K\#h$, then the knot $K$ is also obtained from $K'$ by a non-oriented band move and $K=K'\#h'$ where $h'$ is the ``dual band''  of $h$, see Figure~\ref{DualBands}. 
\begin{figure}
\includegraphics[width=12cm]{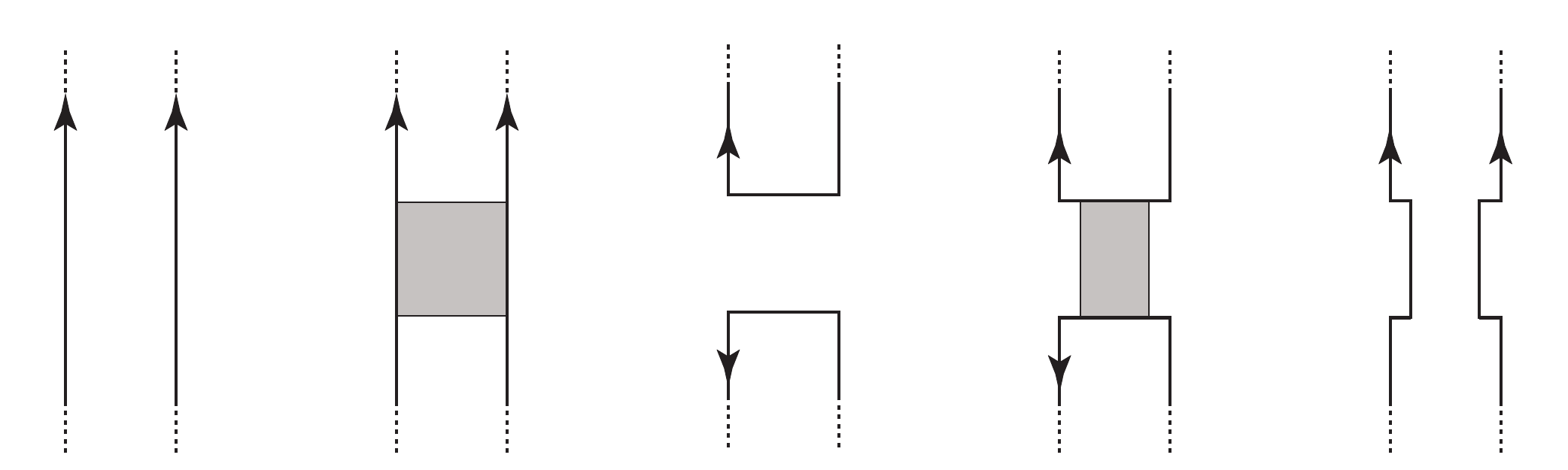}
\put(-320,-15){$K$}
\put(-247,43){$h$}
\put(-200,-15){$K'=K\#h$}
\put(-102,43){$h'$}
\put(-53,-15){$K=K'\#h'$}
\caption{Non-orientable band moves are symmetric: If a knot $K'$ is obtained from a knot $K$ by a non-orientable band move using the band $h$, then $K$ is also obtained from $K'$ by a non-orientable band move using the ``dual band'' $h'$ of $h$. }
\label{DualBands}
\end{figure}

\begin{remark}[On band-move notation]
Before proceeding, we pause to introduce some pictorial notation for band moves. We shall represent a band $h$ in a knot diagram of $K$ by drawing a dotted line representing the core $\{\frac{1}{2}\}\times [0,1]$ of $h$. We shall then use an integer $n$ to indicate the number of half-twists to be introduced into $h$ with respect to the blackboard (or paper) framing, where as is usual $n>0$ corresponds to $n$ right-handed half-twists and $n<0$ corresponds to $|n|$ left-handed half-twists. This framing shall appear in the caption of the figure, where we write $K\stackrel{n}{\longrightarrow} K'$ to indicate that $K'=K\#h$ and $h$ is the band obtained from its core $\{\frac{1}{2}\} \times [0,1]$ by adding the framing $n$. Figure~\ref{HandleLabelingConvention} illustrates this convention. 
\end{remark}
\begin{remark}

We note that in writing $K\stackrel{n}{\longrightarrow} K'$ we mean that the knot $K$ under the indicated non-oriented band move transforms into either the knot $K'$ or its reverse mirror knot $-K'$. In all of our computations we determined $K'$ from its crossing number and its Alexander polynomial, two data points which do not differentiate between $K'$ and $-K'$. Since $\gamma_4(K') = \gamma_4(-K')$ this does not affect our claims. 
\end{remark}
\begin{figure}
\centering
\begin{subfigure}[b]{0.4\textwidth}
        \includegraphics[width=\textwidth]{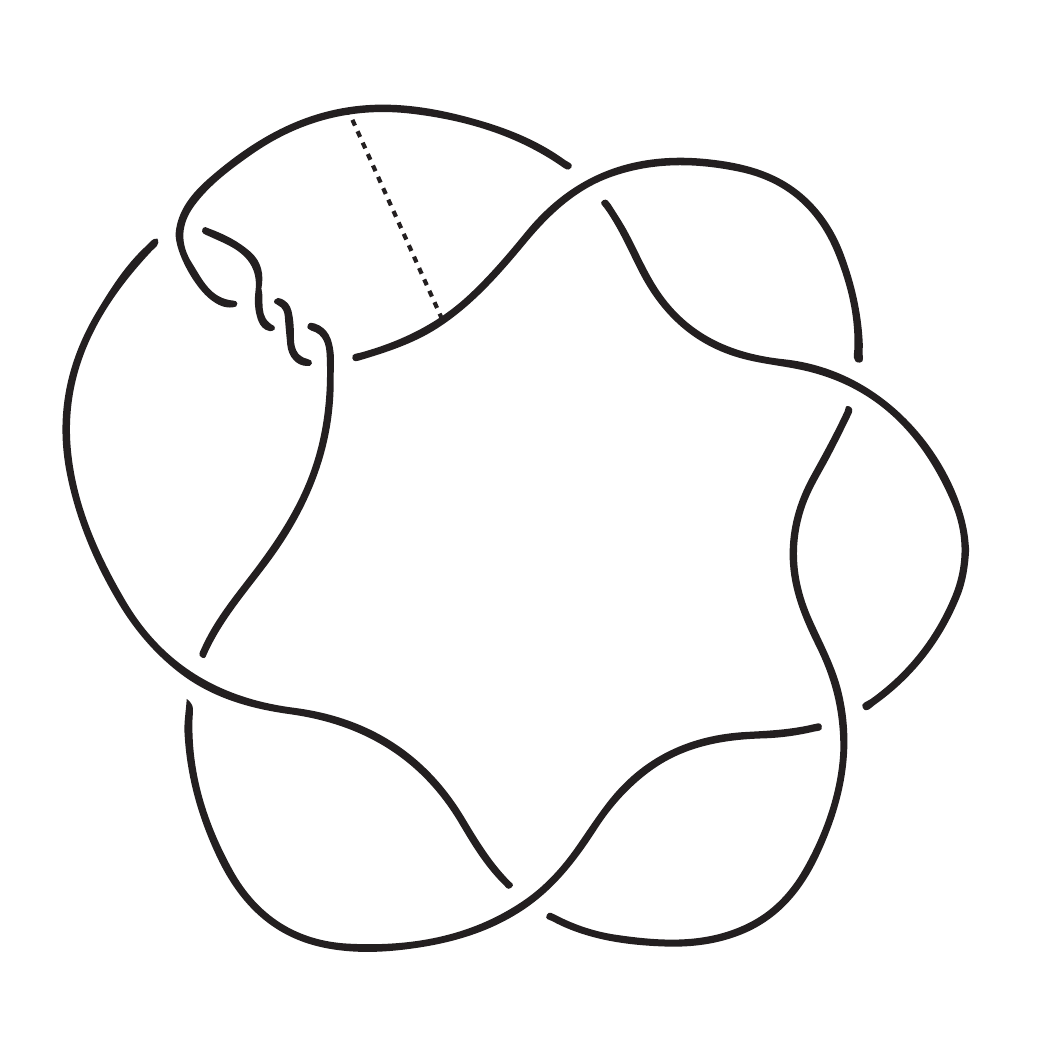}
        \caption{$9_{4}\stackrel{1}{\longrightarrow} 10_3$}
        \label{BandNotation1}
\end{subfigure}
\quad \quad \quad 
\begin{subfigure}[b]{0.4\textwidth}
        \includegraphics[width=\textwidth]{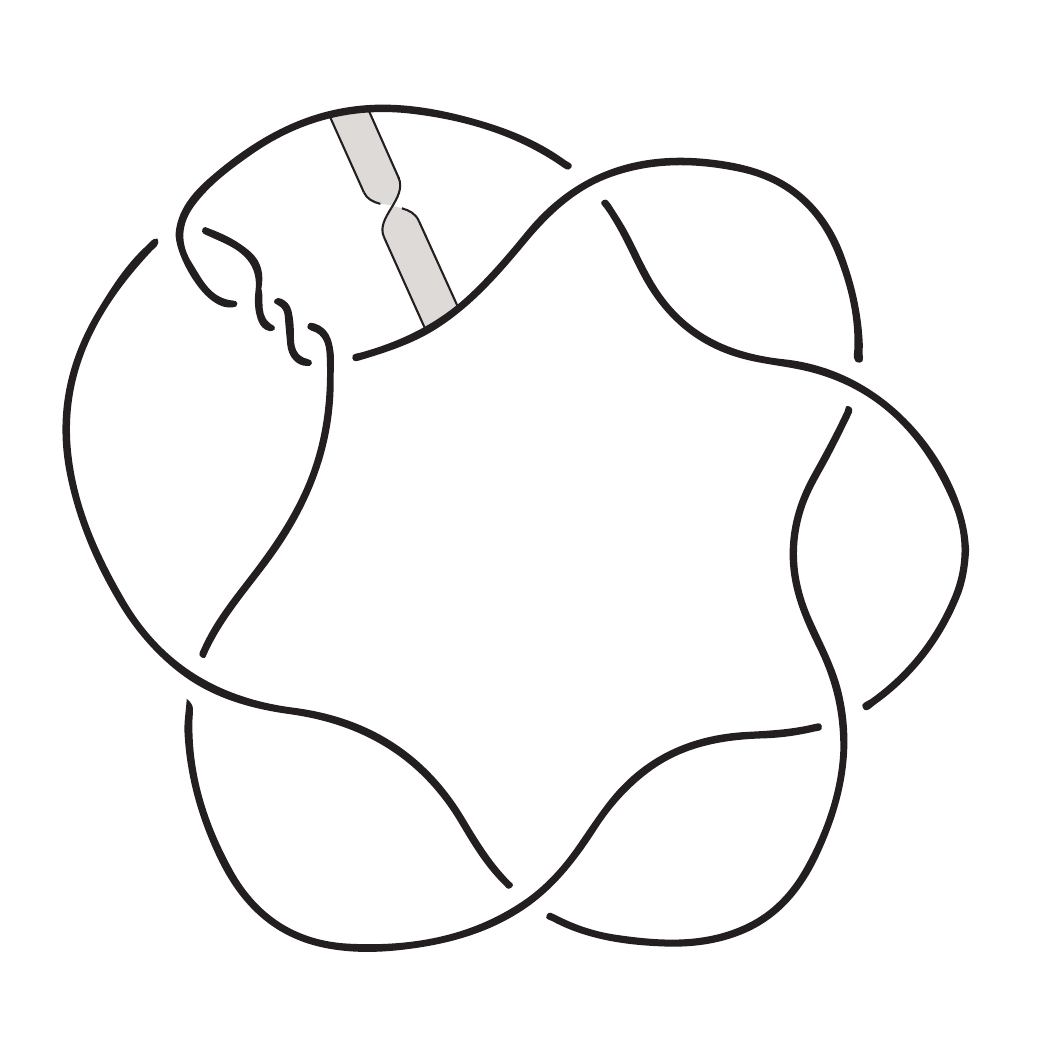}
        \put(-103,140){$h$}
        \caption{$10_{3} = 9_4\#h$}
        \label{BandNotation2}
\end{subfigure}
\vskip3mm
\caption{Our convention for labeling non-oriented bands is illustrated in Subfigure~\ref{BandNotation1}, where the handle $h$ is represented by a dotted line, and its ``framing'' of $1$ is indicated in the caption of that subfigure. The band is fully drawn, including its right-handed half-twist, in Subfigure~\ref{BandNotation2}. The caption of Subfigure~\ref{BandNotation1} is shorthand notation for the caption in Subfigure~\ref{BandNotation2}. As convention dictates, ``positive framings'' correspond to right-handed half-twists, and ``negative framings''  to left-handed half-twists.   }\label{HandleLabelingConvention}
\end{figure}

The following proposition is an easy but very useful observation. 
\begin{proposition} \label{NonOrientableBandMoveLemma}
If the knots $K$ and $K'$ are related by a non-oriented band move, then 
\[
\gamma_4(K) \le \gamma_4(K') +1.
\]
If a knot $K$ is related to a slice knot $K'$ by a non-oriented band move, then $\gamma_4(K)=1$. 
\end{proposition}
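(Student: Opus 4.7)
The plan is to construct a non-orientable surface $\Sigma$ for $K$ by gluing a suitable cobordism onto a non-orientable surface for $K'$. Let $C \subset S^3 \times [0,1]$ be the cobordism from $K'$ to $K$ obtained by taking the product $K' \times [0,1]$ and attaching a 2-dimensional 1-handle along the dual band $h'$ at $S^3 \times \{1\}$. Then $\chi(C) = \chi(K' \times [0,1]) - 1 = -1$. Since $K$ and $K'$ are both knots, the two boundary components of $C$ are circles; the classification of compact surfaces then rules out an orientable $C$ with these invariants (the equation $-1 = 2 - 2g - 2$ has no integer solution for $g$), so $C$ must be non-orientable, specifically a M\"obius band with an open disk removed. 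This is the one place where the non-oriented hypothesis on the band move is genuinely used, and I expect it to be the most delicate step: one should verify that the incompatibility of the band's framing with any orientation of $K'$ (the defining property of a non-oriented band move) is precisely what forces $C$ to be non-orientable.

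Next, I would fix any non-orientable surface $\Sigma' \subset D^4$, smoothly and properly embedded with $\partial \Sigma' = K'$ and $b_1(\Sigma') = \gamma_4(K')$, and glue $\Sigma'$ to $C$ along $K' = \Sigma' \cap C$ inside the diffeomorphism $D^4 \cup_{S^3} (S^3 \times [0,1]) \cong D^4$. The result is a properly embedded surface $\Sigma \subset D^4$ with $\partial \Sigma = K$. A Mayer--Vietoris / inclusion-exclusion computation gives
\[
\chi(\Sigma) = \chi(\Sigma') + \chi(C) - \chi(K') = \chi(\Sigma') - 1,
\]
which, since both $\Sigma$ and $\Sigma'$ have a single circle boundary, translates to $b_1(\Sigma) = b_1(\Sigma') + 1$. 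Non-orientability of $\Sigma$ is automatic because $\Sigma'$ (and equally $C$) embeds as a non-orientable subsurface. Hence $\gamma_4(K) \le b_1(\Sigma) = \gamma_4(K') + 1$.

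For the second assertion, suppose $K'$ is slice and replace $\Sigma'$ by a smooth slice disk $D'$ for $K'$ in $D^4$. The same gluing construction produces $\Sigma := D' \cup_{K'} C$ with $\chi(\Sigma) = 1 + (-1) = 0$ and $\partial \Sigma = K$, so $b_1(\Sigma) = 1$. Although $D'$ is orientable, $\Sigma$ still contains $C$ as a subsurface, so $\Sigma$ is non-orientable. This shows $\gamma_4(K) \le 1$, and since the convention adopted in the paper makes $\gamma_4$ of any knot at least $1$, we conclude $\gamma_4(K) = 1$.
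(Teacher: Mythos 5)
Your construction is correct and is essentially the paper's proof: attaching the trace of the band move to a minimal non-orientable surface (or slice disk) for $K'$ — your cobordism $C$ in the collar $S^3\times[0,1]$ is just the paper's ``attach the band $h$ and push its interior into $D^4$'' phrased as a gluing, and your Euler-characteristic bookkeeping yields the same relation $b_1(\Sigma)=b_1(\Sigma')+1$. The only presentational difference is that you justify non-orientability in the slice-disk case via the classification of surfaces (two knot ends together with $\chi(C)=-1$ force $C$ to be a punctured M\"obius band, a point that hinges on the attaching arcs being incompatible with the orientation of $K'$, not on the band's framing), whereas the paper simply observes that a slice disk with such a band attached is a M\"obius band.
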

\begin{proof}
Let $\Sigma'$ be a non-orientable smoothly embedded surface in $D^4$ with $\partial \Sigma' =K'$ and with $b_1(\Sigma' ) = \gamma_4(K')$. Let $h$ be a band such that $K$ is obtained from $K'$ by a non-oriented band move on $h$, and let $\Sigma$ be the surface in $D^4$ obtained by attaching the band $h$ to $\Sigma'$ along $[0,1]\times \partial [0,1] \subseteq K'$, and pushing the interior of $h$ into $D^4$ so as to make $\Sigma$ properly (and smoothly) embedded in $D^4$. Then $\Sigma $ is a non-orientable surface with $\partial \Sigma =K$ and with $b_1(\Sigma) = b_1(\Sigma ')+1$, and so 
\[
\gamma_4(K) \le b_1(\Sigma ) = b_1(\Sigma ')+1 = \gamma_4(K')+1,
\]
as needed. If $K'$ is slice, the above construction can be repeated by using a slice disk for $\Sigma '$, rendering $\Sigma$ a M\"obius band. 
\end{proof}
\subsection{Goeritz forms and Donaldson's Diagonalization Theorem} \label{SectionOnGoeritzAndDonaldson}
Associated to a projection $D$ of knot $K$ are two ``black-and-white'' checkerboard colorings. Each is a coloring of the regions of the knot projection with black and white colors, so that no two regions sharing an edge receive the same color. There are exactly two such colorings, one in which the unbounded region is colored white, the other in which it is black. 

Associated to either checkerboard coloring of the knot projection $D$ is a bilinear form first described by Goeritz \cite{Goeritz}. Our exposition follows that given by Gordon and Litherland \cite{GordonLitherland}. 
\begin{figure}
\includegraphics[width=10cm]{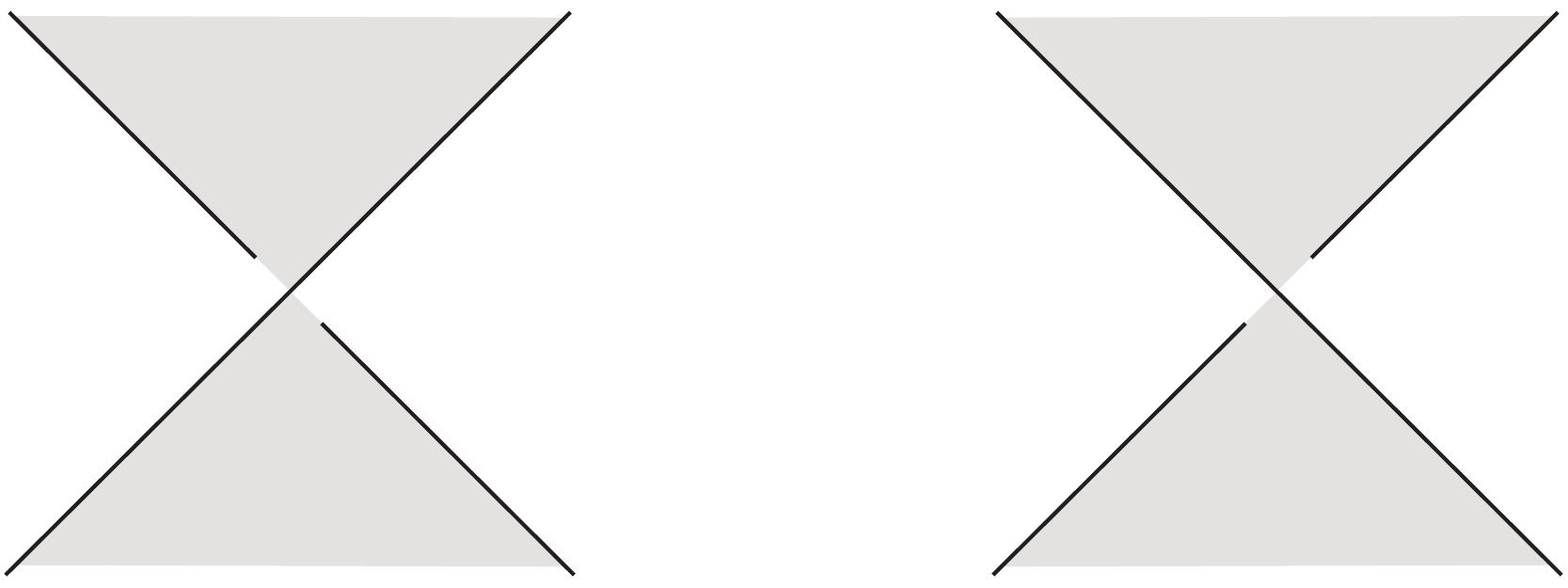}
\put(-250,51){$c$}
\put(-43,51){$c'$}
\put(-250,-15){$\eta(c) = 1$}
\put(-80,-15){$\eta(c') = -1$}
\caption{The weights $\pm 1$ associated to the two different types of crossings $c$ and $c'$.}
\label{CrossingTypesWithWeights}
\end{figure}

Let $X_0, X_1,\dots, X_n$ denote the white regions in the checkerboard coloring of $D$. We associate to every crossing $c$ in $D$ a weight $\eta (c) = \pm 1$ as described in Figure~\ref{CrossingTypesWithWeights}.  Let $P_{i,j}$ be the set of double points in $D$ that are incident to both $X_i$ and $X_j$. For $i,j\in \{0,\dots, n\}$ let $g_{ij}$ be the integer obtained as
\begin{equation} \label{PreGoeritzMatrixCoefficients}
g_{ij} = \begin{cases}
\displaystyle -\sum_{c\in P_{i,j}} \eta(c)\text{ ;} &  i\ne j, \\[20pt]
\displaystyle -\sum _{k\ne i} g_{ik}\text{ ;} & i=j.
\end{cases}
\end{equation} 
Let $G' = [g_{ij}]$ be the $(n+1)\times (n+1)$ matrix comprised of the coefficients $g_{ij}$, we refer to $G'$ as the {\em pre-Goeritz matrix} associated to the above choice of checkerboard coloring of $D$. The {\em Goeritz matrix} $G=[g_{ij}]_{i,j=1,\dots, n}$ is the $n\times n$ matrix obtained from $G'$ by deleting its 0-th row and column. The bilinear form $(\mathbb Z^n, G)$ is symmetric and non-degenerate, indeed $\det G = \det K$.

Let $F$ be a smoothly and properly embedded surface in $D^4$ with $\partial F  = K$, and let $W=W(F)$ be the twofold cover of $D^4$ with branching set $F$. The surface $F$ may be chosen to be either oriented or non-orientable.  Note that the boundary of $W$ is $Y=Y(K)$ -- the twofold cover of $S^3$ with branching set $K$. We denote by $Q_W$ the intersection form on $H_2(W;\mathbb Z)/Tor$. The following result is Theorem 3 in \cite{GordonLitherland}.

\begin{theorem}[Gordon--Litherland, \cite{GordonLitherland}]  \label{GordonLitherlandTheorem}
Let $K$ be a knot and $D$ a projection of $K$. Pick a checkerboard coloring of $D$ and let $n+1$ be the number of white regions. Let $F'$ be the  surface with boundary equal to $K$, obtained from the black regions (with twisted bands added to connect the black disks) and let $F$ be obtained from $F'$ by pushing its interior into $D^4$. With $W=W(F)$ described as above, there is an isomorphism    
\[
(H_2(W;\mathbb Z)/Tor, Q_W)\cong (\mathbb Z^n, G)
\]
of integral, symmetric, bilinear forms. 
\end{theorem}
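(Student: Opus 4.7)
My approach is to exhibit an explicit basis of $H_2(W;\mathbb Z)/\mathrm{Tor}$ coming from the white regions of $D$, and then to compute the intersection form locally, one crossing at a time.

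First, I would realize $W$ concretely. After pushing the interior of $F'\subset S^3$ slightly into $D^4$ to form $F$, the double branched cover $W\to D^4$ is obtained by taking two copies of $D^4$, cutting along $F$, and identifying along the cuts. For each white region $X_i\subset S^3$, $i=0,1,\dots,n$, push its interior slightly into $D^4$ to obtain a disk $\widetilde X_i\subset D^4$ meeting $F$ transversely in one point above each crossing $c\in\bigcup_j P_{i,j}$ and with boundary on $K\subset\partial D^4$. Now lift $\widetilde X_i$ to $W$: the two sheets agree on $\partial \widetilde X_i$ (which lies above the branch set) and at each transverse point of $\widetilde X_i\cap F$, so their union is a closed surface $S_i\subset W$ whose homology class I denote $[S_i]\in H_2(W;\mathbb Z)$.

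Next, I would show that $[S_1],\dots,[S_n]$ freely generate $H_2(W;\mathbb Z)/\mathrm{Tor}$. The 2-chain $\widetilde X_0+\widetilde X_1+\cdots+\widetilde X_n$ covers $D^4$ once away from $F$ and twice along $F$, so its preimage in $W$ is null-homologous and one obtains the single relation $[S_0]+[S_1]+\cdots+[S_n]=0$. An Euler characteristic count using $\chi(W)=2\chi(D^4)-\chi(F)$ together with the CW structure on $F$ coming from the black disks and twisted bands shows that $\mathrm{rk}\,H_2(W)=n$, so this relation is the only one.

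Then I would compute the intersection form at each crossing. For $i\ne j$, $S_i$ and $S_j$ meet only over the crossings in $P_{i,j}$; a local model near such a crossing (the standard twofold branched cover of a 4-ball along a half-twisted band) produces a single transverse intersection point of sign $-\eta(c)$. Summing gives
\[
Q_W([S_i],[S_j])=-\sum_{c\in P_{i,j}}\eta(c)=g_{ij}\qquad(i\ne j),
\]
which matches the off-diagonal entries of \eqref{PreGoeritzMatrixCoefficients}. For the diagonal, the relation $[S_0]+\cdots+[S_n]=0$ yields
\[
Q_W([S_i],[S_i])=-\sum_{j\ne i}Q_W([S_i],[S_j])=-\sum_{j\ne i}g_{ij}=g_{ii},
\]
as desired. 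Thus the map sending the $i$-th basis vector of $\mathbb Z^n$ to $[S_i]$ is the required isomorphism of symmetric bilinear forms.

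\emph{Main obstacle.} The crux is the sign computation at each crossing. One must fix compatible orientations on the disks $\widetilde X_i$ (inherited from $S^3$), on the local 4-ball at a crossing, and on the branched cover, and then check directly from the two pictures in Figure~\ref{CrossingTypesWithWeights} that a crossing with $\eta(c)=+1$ contributes $-1$ to $Q_W([S_i],[S_j])$ while $\eta(c)=-1$ contributes $+1$. Once the off-diagonal signs are pinned down, the diagonal entries follow formally from the single homological relation among the $[S_i]$, so that this one local calculation carries the entire theorem.
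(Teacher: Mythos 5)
First, a point of comparison: the paper does not prove this statement at all — it is imported verbatim as Theorem 3 of \cite{GordonLitherland} — so there is no in-paper argument to measure you against. Gordon and Litherland's own route is different from yours: they identify $H_2(W(F);\mathbb Z)$ with $H_1(F;\mathbb Z)$ and show that under this identification the intersection pairing becomes their linking-type pairing $\mathcal G_F(a,b)=\mathrm{lk}(a,b^{++})$ on the spanning surface; the Goeritz matrix then appears as the matrix of $\mathcal G_F$ in the basis of loops on the black surface encircling the white regions $X_1,\dots,X_n$. Your plan — lift pushed-in white disks to closed classes in $W$ and compute the form crossing by crossing — is a legitimate alternative in spirit, but as written it has concrete gaps.

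The main one is the construction of the closed surfaces $S_i$. The boundary of a white region is \emph{not} contained in $K$: at each incident crossing it must jump from one strand to the other, and that jump arc lies off the knot (and off the branch surface once the interior of $F'$ has been pushed into $D^4$). Consequently $\partial\widetilde X_i$ does not lie in the branch set, the two sheets over $\widetilde X_i$ do not glue along the boundary, and the preimage of $\widetilde X_i$ is not closed as claimed; this is exactly the point where the standard treatments either pass through $H_1(F)$ as Gordon--Litherland do, or build $W(F)$ by an explicit handle decomposition in which the $2$-handles correspond to the white regions. Relatedly, your justification of the relation $[S_0]+\cdots+[S_n]=0$ (``the $2$-chain covers $D^4$ once away from $F$ and twice along $F$'') does not parse — a $2$-chain cannot cover a $4$-ball — though a correct argument (the lifts of the white disks assemble into the preimage of the pushed-in projection sphere, which bounds) is nearby. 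Second, the rank count is not a consequence of $\chi(W)=2\chi(D^4)-\chi(F)$ alone: to conclude $\mathrm{rk}\,H_2(W)=n$ you also need $b_1(W)=0$ (and then $b_3(W)=0$ via duality), and this is a genuine input — the paper itself singles it out as the nontrivial step and quotes Lemma 2 of \cite{Matsumoto} for it (with $b_2(W)=b_1(F)$ quoted from \cite{GilmerLivingston}). Finally, you defer the local sign computation at a crossing, which you rightly call the crux; combined with the failure of the $S_i$ construction, this means the part of the argument that would actually produce the off-diagonal entries of \eqref{PreGoeritzMatrixCoefficients} is not yet in place.
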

\begin{corollary}\label{CorollaryOnBoundingDefiniteFourManifolds}
If a knot $K$ has a projection $D$ with a (positive or negative) definite Goeritz matrix, $G$, then its twofold branched cover $Y(K)$ bounds a smooth, compact (positive or negative) definite 4-manifold $W$.  
\end{corollary}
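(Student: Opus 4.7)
The plan is to apply Theorem~\ref{GordonLitherlandTheorem} directly: the corollary is essentially an immediate translation of that theorem's conclusion.

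First I would, given the knot projection $D$ with definite Goeritz matrix $G$, fix the checkerboard coloring used to produce $G$ and let $F'$ denote the spanning surface of $K$ built from the black regions joined by twisted bands, as in the statement of Theorem~\ref{GordonLitherlandTheorem}. Pushing the interior of $F'$ into $D^4$ yields a smoothly and properly embedded surface $F \subset D^4$ with $\partial F = K$. Take $W := W(F)$, the twofold cover of $D^4$ branched along $F$. This is a smooth, compact 4-manifold, and its boundary is the twofold cover of $S^3 = \partial D^4$ branched along $\partial F = K$, that is, $\partial W = Y(K)$.

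Next I would invoke Theorem~\ref{GordonLitherlandTheorem} to identify the intersection form: there is an isomorphism
\[
(H_2(W;\mathbb{Z})/\mathrm{Tor},\, Q_W) \cong (\mathbb{Z}^n, G)
\]
of symmetric bilinear forms. Since $G$ is assumed positive (respectively negative) definite, $Q_W$ is positive (respectively negative) definite as well, with the same sign of definiteness. Thus $W$ is a smooth, compact, definite 4-manifold bounding $Y(K)$, as required.

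There is essentially no obstacle here, as the content is entirely packaged into Theorem~\ref{GordonLitherlandTheorem}. The only point worth noting is that definiteness of the intersection form on $H_2(W;\mathbb{Z})/\mathrm{Tor}$ suffices to call $W$ a definite 4-manifold in the sense needed to apply Donaldson's Diagonalization Theorem in the sequel, since torsion classes do not contribute to the form.
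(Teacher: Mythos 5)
Your proof is correct and is exactly the argument the paper intends: the corollary is presented as an immediate consequence of Theorem~\ref{GordonLitherlandTheorem}, obtained by taking $W=W(F)$ for the pushed-in black surface $F$, noting $\partial W = Y(K)$, and reading off definiteness of $Q_W$ from the isomorphism with $(\mathbb{Z}^n,G)$. No discrepancies with the paper's approach.
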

If $D$ is an alternating knot projection of a knot $K$ then the Goeritz matrices $G_{\pm}$ associated to either of the two possible checkerboard colorings of $D$ are definite, the subscript in $G_{\pm}$ indicating the type of definiteness for each case. Indeed, by a beautiful result of Greene's \cite{Greene}, this property characterizes alternating knots. We note that all knots with 8 or 9 crossings have alternating diagrams, with the exception of the 11 knots 
\begin{equation} \label{TheNonAlternatinKnots}
8_{19}, 8_{20}, 8_{21}, 9_{42}, 9_{43}, 9_{44}, 9_{45}, 9_{46}, 9_{47}, 9_{48}, 9_{49}.
\end{equation}

Theorem \ref{GordonLitherlandTheorem} points to the importance of understanding the 4-manifold $W(\Sigma)$, and we pause to elucidate some of its algebro-topological properties before continuing. It is not hard to show that $b_2(W(\Sigma)) = b_1(\Sigma)$; see Lemma 1 in \cite{GilmerLivingston}. More difficult is the proof of $b_1(W(\Sigma))=0$, which is given in Lemma 2 in \cite{Matsumoto}. The next proposition, whose proof can be found in \cite{Gilmer, OwensStrle2006}, describes other relevant aspects of the algebraic topology of $W(\Sigma)$.
\begin{proposition}[Gilmer \cite{Gilmer}, Owens-Strle \cite{OwensStrle2006}] \label{AlgebraicTopologicalTheorem}
Let $W$ be a smooth, oriented, compact 4-manifold with $b_1(W)=0$, and with $Y=\partial W$ a rational homology 3-sphere. Let $\ell$ denote the determinant of the intersection pairing 
\[
Q_W:H_2(W;\mathbb Z)/Tor \otimes H_2(W;\mathbb Z)/Tor \to \mathbb Z,
\]
and let $n$ be the order of 
\[
\text{Im}(Tor(H_2(W;\mathbb Z))) \to Tor(H_2(W,Y;\mathbb Z)).
\]
Then $|H_1(Y)| = \ell\cdot n^2$. 
\end{proposition}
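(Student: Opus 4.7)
The plan is to compute $|H_1(Y)|$ by running the long exact sequence of the pair $(W,Y)$ in integer homology, and then using Poincar\'e--Lefschetz duality together with the universal coefficient theorem to translate the relative groups into data intrinsic to $W$. Because $Y$ is a rational homology $3$-sphere we have $H_2(Y;\mathbb Z)=0$ and $|H_1(Y;\mathbb Z)|<\infty$, while $b_1(W)=0$ forces $H_1(W)$ to be finite. The relevant portion of the LES therefore collapses to
\[
0 \to H_2(W) \xrightarrow{j_*} H_2(W,Y) \xrightarrow{\partial} H_1(Y) \xrightarrow{i_*} H_1(W) \to H_1(W,Y) \to 0,
\]
which gives $|H_1(Y)|=|\mathrm{coker}(j_*)|\cdot|\mathrm{Im}(i_*)|$.

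Next, Poincar\'e--Lefschetz duality identifies $H_2(W,Y)\cong H^2(W)$ and $H_1(W,Y)\cong H^3(W)$, and the UCT together with the finiteness of $H_1(W)$ gives $Tor(H_2(W,Y))\cong H_1(W)$, $H_2(W,Y)/Tor \cong \mathrm{Hom}(H_2(W),\mathbb Z)$, and $H_1(W,Y)\cong \mathrm{Ext}(H_2(W),\mathbb Z)\cong Tor(H_2(W))$ (using along the way that $H_3(W)=0$, which follows from another application of PLD plus the finiteness of $H_1(W,Y)$). Under these identifications, $j_*$ splits into its free part -- the adjoint of the intersection form $Q_W:H_2(W)/Tor\to(H_2(W)/Tor)^*$, which has cokernel of order $\ell$ -- and its torsion part $\phi_T:Tor(H_2(W))\to H_1(W)$, whose image has order $n$ by hypothesis. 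Moreover $H_2(Y)=0$ forces $j_*$, and hence $\phi_T$, to be injective, while the QHS hypothesis forces $Q_W$ to be non-degenerate.

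The computation of $|\mathrm{coker}(j_*)|$ is then done by the snake lemma applied to the commutative diagram of ``free/torsion'' short exact sequences
\[
\begin{array}{ccccccccc}
0 & \to & Tor(H_2(W)) & \to & H_2(W) & \to & H_2(W)/Tor & \to & 0 \\
  & & \downarrow \phi_T & & \downarrow j_* & & \downarrow Q_W & & \\
0 & \to & Tor(H_2(W,Y)) & \to & H_2(W,Y) & \to & H_2(W,Y)/Tor & \to & 0,
\end{array}
\]
which produces a short exact sequence $0 \to \mathrm{coker}(\phi_T) \to \mathrm{coker}(j_*) \to \mathrm{coker}(Q_W) \to 0$. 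Combining this with the tail $|\mathrm{Im}(i_*)|=|H_1(W)|/|H_1(W,Y)|$ from the LES, and plugging in the PLD identification $|H_1(W,Y)|=|Tor(H_2(W))|$, the product $|\mathrm{coker}(j_*)|\cdot|\mathrm{Im}(i_*)|$ should rearrange into $\ell\cdot n^2$.

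The main obstacle is the torsion bookkeeping. Two factors controlled by $H_1(W)$ and $Tor(H_2(W))$ appear -- one inside $|\mathrm{coker}(j_*)|$ (through $\mathrm{coker}(\phi_T)$) and one inside $|\mathrm{Im}(i_*)|$ (through $|H_1(W,Y)|$) -- and the content of the proposition is precisely that these combine to yield the square $n^2$ rather than some more complicated expression. Making this work requires exploiting the duality between $Tor(H_2(W))$ and the torsion subgroup of $H^2(W)$ furnished by the $\mathrm{Ext}$ term of the UCT, and using non-degeneracy of $Q_W$ to ensure that no additional contribution is lost in the snake sequence.
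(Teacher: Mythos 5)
Your homological setup is sound and matches what a correct proof needs: the collapsed long exact sequence, the identifications $\mathrm{Tor}(H_2(W,Y))\cong H_1(W)$, $H_1(W,Y)\cong \mathrm{Tor}(H_2(W))$, $H_3(W)=0$, the snake-lemma factorization $|\mathrm{coker}(j_*)|=\ell\cdot|\mathrm{coker}(\phi_T)|$, and $|\mathrm{Im}(i_*)|=|H_1(W)|/|H_1(W,Y)|$. (The paper itself gives no proof, citing Gilmer and Owens--Strle, so the attempt must be judged on its own.) The gap is that you stop exactly at the decisive step: ``should rearrange into $\ell\cdot n^2$'' is not an argument. In fact no extra duality input is needed to finish the bookkeeping beyond what you already have: since $H_2(Y)=0$ makes $\phi_T$ injective, $|\mathrm{Im}\,\phi_T|=|\mathrm{Tor}\,H_2(W)|$, so
\[
|H_1(Y)|=|\mathrm{coker}(j_*)|\cdot|\mathrm{Im}(i_*)|
=\ell\cdot\frac{|H_1(W)|}{|\mathrm{Tor}\,H_2(W)|}\cdot\frac{|H_1(W)|}{|\mathrm{Tor}\,H_2(W)|}
=\ell\cdot|\mathrm{coker}(\phi_T)|^2 ,
\]
the square arising because the two torsion factors are literally equal.

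Carrying this out exposes the real problem: the square is $|\mathrm{coker}(\phi_T)|^2$, not $|\mathrm{Im}(\phi_T)|^2$, and with $n$ defined as in the statement (the order of the image of $\mathrm{Tor}(H_2(W))\to \mathrm{Tor}(H_2(W,Y))$) the asserted identity is false in general: a rational homology ball $W$ with $H_1(W)=\mathbb Z_p$, $H_2(W)=0$, bounding the lens space $L(p^2,pq-1)$, satisfies all hypotheses and has $\ell=1$, $n=1$, yet $|H_1(Y)|=p^2$. The statement your computation actually proves --- and the only one compatible with how the paper uses it in Corollary \ref{AboutSquareInCaseOfMobiusBand}, where merely ``$\ell$ divides $\det K$ with square quotient'' is needed --- takes $n$ to be the order of the cokernel of $\mathrm{Tor}(H_2(W))\to \mathrm{Tor}(H_2(W,Y))$, equivalently the order of the image of $\mathrm{Tor}(H_2(W,Y))$ in $H_1(Y)$ under $\partial$, equivalently $|\mathrm{Im}(H_1(Y)\to H_1(W))|$. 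So to repair your write-up you must (i) actually perform the final computation instead of deferring it, and (ii) observe that it establishes this corrected statement; the proposition as literally quoted cannot be proved because it is not true.
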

\begin{corollary} \label{AboutSquareInCaseOfMobiusBand}
Let $K$ be a knot bounding a M\"obius band $\Sigma \subset D^4$ and let $W=W(\Sigma)$. Then the absolute value of the square of the single generator of $H_2(W;\mathbb Z)/Tor$ equals a natural number $\ell$ that divides $\det K$ with quotient a square. In particular, if $\det K$ is square-free then $\ell=\pm \det K$. 
\end{corollary}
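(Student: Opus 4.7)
The plan is to derive this corollary directly from Proposition \ref{AlgebraicTopologicalTheorem} applied to $W = W(\Sigma)$, once the Möbius-band hypothesis is translated into algebraic data. First I would collect the relevant Betti numbers of $W$. Because $\Sigma$ is a Möbius band, $b_1(\Sigma) = 1$. The two topological facts about $W(\Sigma)$ recalled immediately before Proposition \ref{AlgebraicTopologicalTheorem}, namely $b_1(W) = 0$ (Matsumoto) and $b_2(W) = b_1(\Sigma)$ (Gilmer--Livingston), then give $b_1(W) = 0$ and $b_2(W) = 1$. Consequently $H_2(W;\mathbb Z)/Tor \cong \mathbb Z$ is generated by a single class $\alpha$, the intersection pairing $Q_W$ is represented by the $1\times 1$ matrix $[a]$ with $a = Q_W(\alpha,\alpha)$, and $\ell = |a|$ is precisely the determinant entering Proposition \ref{AlgebraicTopologicalTheorem}.

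Next I would invoke the standard computation that the 2-fold cover $Y = Y(K)$ of $S^3$ branched along $K$ is a rational homology 3-sphere with $|H_1(Y;\mathbb Z)| = \det K$ (immediate from any Seifert-form presentation of $H_1(Y(K))$, and classical). Proposition \ref{AlgebraicTopologicalTheorem} then applies to $W$ and produces a positive integer $n$, the order of the image in the torsion subgroup appearing there, such that
\[
\det K \;=\; |H_1(Y;\mathbb Z)| \;=\; \ell \cdot n^2.
\]
This is exactly the assertion that $\ell$ divides $\det K$ with quotient the perfect square $n^2$, which is the main statement. If in addition $\det K$ is square-free, the only factorization of the above form has $n=1$, so $\ell = \det K$; restoring the sign of the self-intersection, $a = \pm\det K$.

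I do not anticipate any genuine obstacle. Proposition \ref{AlgebraicTopologicalTheorem} does the real arithmetic work, the Betti-number identifications for $W(\Sigma)$ are already cited in the paper, and the identification $|H_1(Y(K))| = \det K$ is classical. The only substantive observation is that the rank-one nature of $H_2(W;\mathbb Z)/Tor$, forced by the Möbius-band hypothesis, collapses the determinant of $Q_W$ to the single self-intersection number $a$; everything else is bookkeeping.
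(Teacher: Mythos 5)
Your proof is correct and follows the same route the paper intends: specialize Proposition \ref{AlgebraicTopologicalTheorem} to $W=W(\Sigma)$ using $b_1(W)=0$, $b_2(W)=b_1(\Sigma)=1$, and $|H_1(Y(K))|=\det K$, so that $\det K=\ell\cdot n^2$ and $\ell=\det K$ when the determinant is square-free. Nothing further is needed.
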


We conclude this section by quoting a beautiful result of Donaldson's. 
\begin{theorem}[Donaldson, \cite{Donaldson}] \label{TheoremA}
	Let $X$ be a smooth, closed, oriented 4-manifold whose intersection form $(H_2(X;\mathbb Z)/Tor, Q_X)$ is definite, then $Q_X$ is diagonalizable over $\mathbb Z$. 
\end{theorem}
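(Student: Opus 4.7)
The plan is to follow Donaldson's original gauge-theoretic strategy. After possibly replacing $X$ by $-X$ I may assume the intersection form is negative definite, and the goal is to produce enough $(-1)$-classes in $H_2(X;\mathbb{Z})$ to diagonalize $Q_X$. The central object is the moduli space $\mathcal{M}$ of gauge-equivalence classes of anti-self-dual $SU(2)$-connections on a principal $SU(2)$-bundle $E\to X$ with $c_2(E)=1$. First I would choose a generic Riemannian metric and invoke the Atiyah--Hitchin--Singer deformation theory to show that, away from its reducible points, $\mathcal{M}$ is a smooth manifold of expected dimension $8c_2(E)-3(1+b_+(X))=5$. Reducible ASD connections correspond precisely to splittings $E=L\oplus L^{-1}$ where $c_1(L)=e\in H^2(X;\mathbb{Z})$ satisfies $e\cdot e=-1$; each such class (up to sign) contributes a single singular point of $\mathcal{M}$ whose link is $\mathbb{CP}^2$.

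Next I would bring in Uhlenbeck's compactness theorem to describe the ends of $\mathcal{M}$: the moduli space has a collar at infinity diffeomorphic to $X\times(0,\eps)$, arising from instantons concentrating at points of $X$. Combining these two inputs, after deleting small neighborhoods of the reducibles and truncating the Uhlenbeck end, one obtains a smooth, oriented, compact cobordism between $X$ and a disjoint union of $r$ copies of $\mathbb{CP}^2$, where $r$ is the number of $(-1)$-classes of $H^2(X;\mathbb{Z})$ up to sign. Taking signatures forces $-b_2(X)=\sigma(X)=-r$, so there are at least $b_2(X)$ classes $e_1,\dots,e_{b_2(X)}$ with $e_i\cdot e_i=-1$.

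The hard part will be the transition from \emph{there are enough $(-1)$-vectors} to \emph{they form an orthonormal basis of the lattice}. For this I would run an elementary lattice argument: by examining inner products of the $e_i$ with one another (using that forms like $(e_i\pm e_j)\cdot(e_i\pm e_j)$ are negative and an even integer offset from $-2$), one extracts from the available supply of $(-1)$-vectors a mutually orthogonal subset of full rank $b_2(X)$, and then a determinant computation shows that the sublattice they span has index $1$ in $H^2(X;\mathbb{Z})$, so they form an integral basis diagonalizing $Q_X$.

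The analytical core --- transversality, orientability, gluing near reducibles, and Uhlenbeck compactness for the Yang--Mills moduli space --- is by far the deepest input and the step I expect to be the main obstacle; the cobordism-to-diagonalization step is comparatively routine linear algebra once the moduli-space picture is in place. I would also note that the statement as phrased makes no simple-connectivity hypothesis, so at the outset one should reduce to the $b_1(X)=0$ case (e.g.\ by surgering out generators of $\pi_1$, which does not change the intersection form) before setting up the moduli space.
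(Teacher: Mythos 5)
The paper does not prove this statement at all --- Theorem \ref{TheoremA} is quoted from Donaldson's 1987 paper \cite{Donaldson} and used as a black box --- so there is no internal argument to compare you with; what you have written is an outline of Donaldson's own proof. As an outline it identifies the right architecture: the five-dimensional moduli space of ASD $SU(2)$-connections with $c_2=1$, cone-on-$\mathbb{CP}^2$ singularities at the reducibles $E=L\oplus L^{-1}$ with $c_1(L)^2=-1$, the Uhlenbeck collar $X\times(0,\eps)$, the resulting compact oriented cobordism, and the lattice endgame (definiteness forces distinct $(-1)$-pairs to be orthogonal, and unimodularity forces the sublattice they span to have index one). One small correction there: the signature count only yields $b_2(X)=|\sigma(X)|\le r$, since the $\mathbb{CP}^2$ links may close the cobordism off with either orientation; the inequality is all you need.

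The genuine gap is in your final remark. Surgering out generators of $\pi_1$ does \emph{not} preserve the intersection form: surgery on a loop that is homologically trivial (or torsion) in $H_1$ raises $b_2$ by $2$ and adds an indefinite hyperbolic summand, so this device only kills the free part of $H_1$. Reducing to $b_1(X)=0$ is legitimate, but the theorem as stated carries no hypothesis on $\pi_1$, and with $\pi_1\ne 1$ the count of reducibles no longer matches the count of $(-1)$-vectors in $H_2(X;\mathbb Z)/Tor$: classes of $H^2(X;\mathbb Z)$ differing by torsion give distinct splittings $L\oplus L^{-1}$ with the same square, so each lattice pair $\{\pm e\}$ contributes $|\mathrm{Tor}\,H^2(X;\mathbb Z)|$ singular points, and ``at least $b_2$ reducibles'' no longer produces $b_2$ distinct $(-1)$-classes in the lattice. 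Handling this, together with the orientation questions for the moduli space in the non-simply-connected setting, is precisely the extra content of the cited 1987 paper beyond Donaldson's original simply-connected theorem, and your sketch silently assumes it away. Invoking the analytic core (transversality, Uhlenbeck compactness, the collar and cone structures, orientability) is acceptable in a sketch, but the $\pi_1$ issue is a logical gap rather than an omitted technicality.
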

\subsection{Lower bounds on $\gamma_4(K)$}
A combination of the Gilmer--Livingston congruence relation \eqref{YasuharasCongruence} and Theorems \ref{GordonLitherlandTheorem} and  \ref{TheoremA} can be used to obtain lower bounds on $\gamma_4(K)$ for certain knots $K$. We distinguish three cases according to whether $\sigma (K) +4\cdot \text{Arf}(K)$ is congruent modulo 8 to 2, 4 or 0. The corresponding lower bounds on $\gamma_4(K)$ are stated in Theorems \ref{TheoremWithLowerBoundOnGamma4WhenSigmaPlus4ArfEqual2}, \ref{TheoremWithLowerBoundOnGamma4WhenSigmaPlus4ArfEqual4} and \ref{TheoremWithLowerBoundOnGamma4WhenSigmaPlus4ArfEqual0} respectively. 

To begin, let $K$ be a knot such that $\sigma (K) +4\cdot \text{Arf}(K) \equiv 2\pmod{8}$ and suppose that $K$ bounds a smoothly and properly embedded M\"obius band $\Sigma$ in $D^4$. Let $W=W(\Sigma)$ be the twofold cover of $D^4$ with branching set $\Sigma$, and note that  Gilmer--Livingston's congruence relation implies that $W$ is positive definite. Corollary \ref{AboutSquareInCaseOfMobiusBand} dictates that the square of the sole generator of $H_2(W(\Sigma))/Tor$ is $\ell$ for some $\ell \in \mathbb N$, such that $\ell$ is a divisor of $\det K$ and with $\det K/\ell$ a square. We capture this statement by writing $Q_{W(\Sigma)}=[\ell]$.

Assume additionally that $K$ has a checkerboard coloring whose associated Goeritz matrix $G$ is negative definite, and let $W(F)$ be the 4-manifold as in Theorem \ref{GordonLitherlandTheorem}. We can then create the smooth, closed, oriented 4-manifold $X$ by gluing $W(\Sigma)$ to $W(F)$ along their boundaries:
\[
X=W(F)\cup _{Y(K)}(-W(\Sigma)).
\]
Then $X$ is negative definite and so by Theorem \ref{TheoremA} its intersection form $Q_X$ must be diagonalizable over $\mathbb Z$. The direct sum $Q_{W(F)}\oplus Q_{W(\Sigma)} = Q_{W(F)}\oplus [-\ell]$ of the intersection forms of $W(F)$ and $W(\Sigma)$ clearly embeds into $Q_X$, a condition that can be explicitly checked for a concrete knot $K$. Conversely, if   $Q_{W(F)}\oplus Q_{W(\Sigma)}$ does not embed into a diagonal form of equal rank, then $K$ cannot bound a M\"obius band in $D^4$ and we conclude that $\gamma_4(K) \ge 2$. We summarize this conclusion in the next theorem where we use the term {\em $1$-definite} as a synonym for positive definite, and similarly $-1$-definite as a substitute for negative definite. 
\begin{theorem}[Case of $\sigma (K) +4\cdot \text{Arf}(K) \equiv \pm 2\pmod{8}$]  \label{TheoremWithLowerBoundOnGamma4WhenSigmaPlus4ArfEqual2}
	Let $K$ be a knot with $\sigma(K) + 4\cdot \text{Arf}(K) \equiv 2\eps\pmod{8}$ for a choice of $\eps \in \{\pm1\}$. Assume that $K$ admits a checkerboard coloring for which the associated Goeritz form $G$ is $-\eps$-definite.
	
	If no embedding exists of $G\oplus [-\ell]$ into the $\eps$-definite diagonal form $(\mathbb Z^{\text{rank}(G)+1},\eps \text{Id})$ for any divisor $\ell\in \mathbb N$ of $\det K$ with $\det K/\ell$ a square, then $\gamma_4(K) \ge 2$.  
\end{theorem}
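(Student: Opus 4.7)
The proof would proceed by contradiction: assume $\gamma_4(K) = 1$, so $K$ bounds a smoothly and properly embedded Möbius band $\Sigma \subset D^4$; the goal is to produce an embedding of $G \oplus [-\ell]$ into the diagonal lattice barred by the standing hypothesis.

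First I would analyze the twofold branched cover $W := W(\Sigma)$. The facts cited before Proposition \ref{AlgebraicTopologicalTheorem} give $b_1(W) = 0$ and $b_2(W) = 1$, while Corollary \ref{AboutSquareInCaseOfMobiusBand} ensures that $Q_W$ is nondegenerate, so $\sigma(W) \in \{\pm 1\}$. Combining the Brown-invariant bound $|\beta(D^4,\Sigma)| \leq 1$ (Corollary \ref{CorollaryOnTheBrownInvariantOfASurface}) with the hypothesis $\sigma(K) + 4\,\text{Arf}(K) \equiv 2\epsilon \pmod{8}$ via the Gilmer--Livingston congruence \eqref{YasuharasCongruence} reduces to a very short case analysis that singles out the pair $(\sigma(W),\beta(D^4,\Sigma)) = (\epsilon,-\epsilon)$. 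Corollary \ref{AboutSquareInCaseOfMobiusBand} then identifies $Q_W$ with $[\epsilon\ell]$ for some $\ell \in \mathbb{N}$ dividing $\det K$ with $\det K/\ell$ a square.

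Next, using the hypothesis that $G$ is $-\epsilon$-definite, Theorem \ref{GordonLitherlandTheorem} provides a 4-manifold $W(F)$ with $\partial W(F) = Y(K)$ and intersection form $G$. Gluing along the common boundary yields the closed oriented smooth 4-manifold
\[
X := W(F) \cup_{Y(K)} (-W),
\]
which is $-\epsilon$-definite since both constituent pieces are. Because $Y(K)$ is a rational homology 3-sphere, a Mayer--Vietoris computation identifies the direct-sum intersection form $G \oplus [-\epsilon\ell]$ with a full-rank sublattice of $(H_2(X)/Tor, Q_X)$. Donaldson's Theorem \ref{TheoremA} now forces $Q_X$ to be diagonal over $\mathbb{Z}$, and, after the standard sign normalization, the induced embedding of $G \oplus [-\ell]$ realizes exactly the embedding forbidden by hypothesis — the desired contradiction.

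The main obstacle I anticipate is the rigidity needed in the first step: it is precisely the fact that the Möbius band forces $b_1(\Sigma) = 1$, which simultaneously bounds $|\sigma(W)|$ by $1$ and $|\beta(D^4,\Sigma)|$ by $1$, that makes the mod-$8$ Gilmer--Livingston congruence pin down $\sigma(W)$ outright rather than only determining a residue class. Once this is done, the rest of the argument is orientation bookkeeping followed by a direct application of Donaldson's theorem.
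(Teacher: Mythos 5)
Your proposal is correct and follows essentially the same route as the paper's own proof: the Gilmer--Livingston congruence combined with the bounds $|\sigma(W(\Sigma))|\le 1$ and $|\beta(D^4,\Sigma)|\le 1$ forces $(\sigma(W),\beta)=(\eps,-\eps)$, Corollary \ref{AboutSquareInCaseOfMobiusBand} gives $Q_{W(\Sigma)}=[\eps\ell]$, and gluing $-W(\Sigma)$ to the Gordon--Litherland manifold $W(F)$ and invoking Donaldson's Theorem \ref{TheoremA} produces the forbidden embedding. Note that the target lattice your argument produces is the $-\eps$-definite diagonal form, which matches the paper's actual reasoning and its applications (e.g.\ for $-9_2$ and $9_{12}$ the obstructed embedding is into $(\mathbb Z^{n+1},-\mathrm{Id})$), so the ``$\eps\,\mathrm{Id}$'' appearing in the theorem's statement is evidently a sign typo rather than a gap in your proof.
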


If $K$ is a knot with $\sigma (K) +4\cdot \text{Arf}(K) \equiv 4\pmod{8}$ then  Gilmer--Livingston's relation \eqref{YasuharasCongruence} reduces to 
\[
 \sigma (W(\Sigma )) - \beta (D^4, \Sigma ) \equiv 4\pmod{8},
 \]
for any non-orientable surface $\Sigma \subset D^4$ with $\partial \Sigma =K$, and implies that $\gamma_4(K) \ge 2$. If a non-orientable surface $\Sigma$ with $b_1(\Sigma) = 2$  existed, then the above relation would force $\sigma (W(\Sigma)) = \pm 2$ according to Corollary \ref{CorollaryOnTheBrownInvariantOfASurface}. Since $b_2(W(\Sigma)) =2$, we conclude that $W(\Sigma)$ is either positive or negative definite. If $K$ is an alternating knot so that both its Goeritz forms $G_{\pm}$ are definite, then one can again form a smooth, oriented, closed and definite 4-manifold $X$ as 
\begin{equation} \label{XInCaseOfSigmaPlus4ArfCongruent4Or0}
X = \begin{cases}
W(F_-) \cup_{Y(K)} (-W(\Sigma))\text{ ;} & \sigma (W(\Sigma)) >0, \\[5pt]
W(F_+) \cup_{Y(K)} (-W(\Sigma))\text{ ;} & \sigma (W(\Sigma)) <0.
\end{cases}
\end{equation}
The surfaces $F_{\pm}$ are the surfaces formed by the black regions of the checkerboard coloring used to create the Goertiz form $G_{\pm}$; see Theorem \ref{GordonLitherlandTheorem}. In either case, Donaldson's Theorem \ref{TheoremA} implies that the intersection form $Q_X$ of $X$ must be diagonalizable. The difficulty faced in this case is that it is generally hard to determine the intersection form $Q_{W(\Sigma)}$ of $W(\Sigma)$, given that $\Sigma$ is a hypothetical surface. Nevertheless, if $\Sigma$ existed, we would still obtain an embedding of $Q_{W(F_{\pm})}$ into a diagonal definite form of rank two larger, since $b_2(X) = b_2(W(F_{\pm}))+2$. We summarize this in the next theorem.  
\begin{theorem}[Case of $\sigma (K) +4\cdot \text{Arf}(K) \equiv 4\pmod{8}$]       \label{TheoremWithLowerBoundOnGamma4WhenSigmaPlus4ArfEqual4}
	Let $K$ be a knot with $\sigma(K) + 4\cdot \text{Arf}(K) \equiv 4\pmod{8}$ and assume that the Goeritz matrices $G_{\pm}$ of $K$, associated to the two possible checkerboard colorings of a knot projection $D$ of $K$, are positive and negative definite respectively (with the subscript $\pm$ indicating the definiteness type of $G_{\pm}$). 
	
	If no embedding exists of $G_{+}$ into the positive-definite  form $(\mathbb Z^{\text{rank}(G_{+})+2}, \text{Id})$, and no embedding exists of $G_{-}$ into the negative-definite form $(\mathbb Z^{\text{rank}(G_{-})+2}, -\text{Id})$, then $\gamma_4(K) \ge 3$. 
\end{theorem}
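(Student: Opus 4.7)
The plan is to follow the template outlined in the discussion preceding the theorem, arguing by contradiction. Assume $\gamma_4(K)\le 2$, so that $K$ bounds a smoothly and properly embedded non-orientable surface $\Sigma\subset D^4$ with $b_1(\Sigma)\le 2$. Applying the Gilmer--Livingston congruence \eqref{YasuharasCongruence} together with Corollary \ref{CorollaryOnTheBrownInvariantOfASurface} and the analogous bound $|\sigma(W(\Sigma))|\le b_2(W(\Sigma))=b_1(\Sigma)$, I would first observe that $b_1(\Sigma)=0$ or $1$ is impossible: the right hand side of \eqref{YasuharasCongruence} would then be bounded in absolute value by $2$, which cannot be $\equiv 4\pmod 8$. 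Hence $b_1(\Sigma)=2$, $b_2(W(\Sigma))=2$, and the only way for $\sigma(W(\Sigma))-\beta(D^4,\Sigma)\in\{-4,4\}$ to be realized with each summand bounded by $2$ in absolute value is $|\sigma(W(\Sigma))|=|\beta(D^4,\Sigma)|=2$ with opposite signs. In particular $W(\Sigma)$ is definite.

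Next, using the checkerboard surfaces $F_\pm$ guaranteed by the hypothesis and Theorem \ref{GordonLitherlandTheorem}, I would form the closed, smooth, oriented $4$-manifold $X$ as in \eqref{XInCaseOfSigmaPlus4ArfCongruent4Or0}: glue $W(F_-)$ to $-W(\Sigma)$ if $\sigma(W(\Sigma))>0$, and $W(F_+)$ to $-W(\Sigma)$ if $\sigma(W(\Sigma))<0$. Matching the definiteness types on both sides of $Y(K)$ renders $X$ negative definite in the first case and positive definite in the second. Donaldson's Theorem \ref{TheoremA} then forces $(H_2(X;\mathbb Z)/\mathrm{Tor},Q_X)$ to be diagonal of rank $b_2(W(F_\pm))+b_2(W(\Sigma))=\mathrm{rank}(G_\pm)+2$.

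The remaining step, and the one most prone to slips, is to extract from this diagonalization the requisite embedding of the Goeritz form. Because $Y(K)$ is a rational homology sphere (as $\det K\neq 0$, which is implicit in the definiteness of $G_\pm$), the Mayer--Vietoris sequence for $X=W(F_\pm)\cup_{Y(K)}(-W(\Sigma))$ gives an isometric embedding
\[
Q_{W(F_\pm)}\oplus Q_{-W(\Sigma)}\ \hookrightarrow\ Q_X
\]
of non-degenerate integral forms modulo torsion. Composing with Donaldson's diagonalization and projecting onto the first summand yields an embedding of $G_\pm=Q_{W(F_\pm)}$ (via Theorem \ref{GordonLitherlandTheorem}) into the diagonal form $(\mathbb Z^{\mathrm{rank}(G_\pm)+2},\pm\mathrm{Id})$ of the appropriate sign. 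This directly contradicts the hypothesis, so the assumption $\gamma_4(K)\le 2$ fails and $\gamma_4(K)\ge 3$. The main obstacle in practice is not the logical flow but verifying that the Mayer--Vietoris step produces an \emph{isometric} embedding of the integral (non-torsion) lattices; this is where rational homology sphere boundaries and the absence of $b_1$ in the pieces (Matsumoto's lemma cited above) are crucial.
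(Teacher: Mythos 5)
Your proposal is correct and follows essentially the same route as the paper: the Gilmer--Livingston congruence together with $|\sigma(W(\Sigma))|\le b_2(W(\Sigma))=b_1(\Sigma)$ and Corollary \ref{CorollaryOnTheBrownInvariantOfASurface} rules out $b_1(\Sigma)\le 1$ and forces $W(\Sigma)$ to be definite when $b_1(\Sigma)=2$, after which gluing with the checkerboard piece of matching sign as in \eqref{XInCaseOfSigmaPlus4ArfCongruent4Or0}, applying Donaldson's Theorem \ref{TheoremA}, and restricting to the $Q_{W(F_\pm)}$ summand yields the forbidden embedding of $G_\pm$ into a diagonal form of rank two larger. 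The only difference is that you make explicit the Mayer--Vietoris/rank-counting details that the paper leaves implicit, which is a welcome but not substantively different elaboration.
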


Lastly, we turn to the case of a knot $K$ with  $\sigma (K) +4\cdot \text{Arf}(K) \equiv 0\pmod{8}$. For such knots  Gilmer--Livingston's relation offers no information about $\gamma_4(K)$. Suppose that $K$ is alternating so that its Goeritz forms $G_{\pm}$ are definite. If $K$ bounded a M\"obius band $\Sigma\subset D^4$, then $W(\Sigma)$ is either positive or negative definite, with intersection form $[\ell]$ for some non-zero integer $\ell$ dividing $\det K$ and with $\det K/|\ell|$ a square, according to Corollary \ref{AboutSquareInCaseOfMobiusBand}. We then form again the definite, smooth, compact 4-manifold $X$ as in \eqref{XInCaseOfSigmaPlus4ArfCongruent4Or0}. By Donaldson's Theorem, $X$ must have a diagonalizable intersection form $Q_X$. 
\begin{theorem}[Case of $\sigma (K) +4\cdot \text{Arf}(K) \equiv 0\pmod{8}$]       \label{TheoremWithLowerBoundOnGamma4WhenSigmaPlus4ArfEqual0}
	Let $K$ be a knot with $\sigma(K) + 4\cdot \text{Arf}(K) \equiv 0\pmod{8}$ and assume that the Goeritz matrices $G_{\pm}$ of $K$, associated to the two possible checkerboard colorings of a knot projection $D$ of $K$, are positive and negative definite respectively (with the subscript $\pm$ indicating the definiteness type of $G_{\pm}$). 
	
		If no embedding exists of $G_+\oplus [\ell]$ into the positive-definite form $(\mathbb Z^{\text{rank}(G_+)+1},\text{Id})$, and no embedding exists of $G_-\oplus [-\ell]$ into the negative-definite form $(\mathbb Z^{\text{rank}(G_-)+1},-\text{Id})$, for any divisor $\ell\in \mathbb N$ of $\det K$ with $\det K/\ell$ a square, then $\gamma_4(K) \ge 2$.  
\end{theorem}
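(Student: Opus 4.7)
The plan is a proof by contradiction, following the template already established in Theorems \ref{TheoremWithLowerBoundOnGamma4WhenSigmaPlus4ArfEqual2} and \ref{TheoremWithLowerBoundOnGamma4WhenSigmaPlus4ArfEqual4}. We would assume $\gamma_4(K)\le 1$, so that $K$ bounds a smoothly and properly embedded M\"obius band $\Sigma \subset D^4$, and aim to force one of the two prohibited embeddings to exist. Forming the twofold branched cover $W=W(\Sigma)$, we have $b_1(W)=0$ and $b_2(W)=b_1(\Sigma)=1$, so $H_2(W;\mathbb Z)/\text{Tor}\cong\mathbb Z$ and the intersection form $Q_W$ is $[m]$ for some integer $m$.

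The next step is to show that $W(\Sigma)$ is definite and to identify $m$. Proposition \ref{AlgebraicTopologicalTheorem}, or equivalently Corollary \ref{AboutSquareInCaseOfMobiusBand}, applied to the rational homology sphere $Y(K)$ (for which $|H_1(Y(K))|=|\det K|$), forces $|m|$ to be a natural number $\ell$ dividing $\det K$ with $\det K/\ell$ a perfect square. In particular $m\ne 0$, so $W(\Sigma)$ is either positive definite with $Q_W=[\ell]$ or negative definite with $Q_W=[-\ell]$. Note that, in contrast to the previous two cases, the $\equiv 0\pmod{8}$ congruence makes the Gilmer--Livingston relation vacuous and in particular does not pin down the sign of $\sigma(W(\Sigma))$, which is why both alternatives must be carried through.

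We then build a closed, oriented, definite 4-manifold by gluing along $Y(K)$. If $W(\Sigma)$ is positive definite, we set $X=W(F_-)\cup_{Y(K)}(-W(\Sigma))$; by Theorem \ref{GordonLitherlandTheorem} the intersection form of $W(F_-)$ is the negative definite Goeritz form $G_-$, so $Q_X\cong G_-\oplus[-\ell]$ is negative definite of rank $\text{rank}(G_-)+1$. Donaldson's Theorem \ref{TheoremA} then diagonalizes $Q_X$ as $-\text{Id}$ on $\mathbb Z^{\text{rank}(G_-)+1}$, producing an embedding of $G_-\oplus[-\ell]$ into $(\mathbb Z^{\text{rank}(G_-)+1},-\text{Id})$. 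The case where $W(\Sigma)$ is negative definite is symmetric: take $X=W(F_+)\cup_{Y(K)}(-W(\Sigma))$ to obtain an embedding of $G_+\oplus[\ell]$ into $(\mathbb Z^{\text{rank}(G_+)+1},\text{Id})$. Since the integer $\ell$ produced here is precisely one of those quantified over in the hypothesis, either outcome contradicts the assumption, and we conclude $\gamma_4(K)\ge 2$.

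The main subtlety, rather than a serious obstacle, is exactly the point noted above: because Gilmer--Livingston offers no sign information in this case, the hypothesis of the theorem must rule out embeddings on \emph{both} the positive and negative sides and for \emph{every} admissible $\ell$ simultaneously. Once that is accepted, everything reduces to the Goeritz--Donaldson orchestration used in the previous two theorems; no new topological input is needed.
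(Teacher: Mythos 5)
Your proposal follows essentially the same route as the paper: assume $K$ bounds a M\"obius band $\Sigma$, use $b_1(W(\Sigma))=0$, $b_2(W(\Sigma))=1$ and Corollary \ref{AboutSquareInCaseOfMobiusBand} to get $Q_{W(\Sigma)}=[\pm\ell]$ with $\ell\mid\det K$ and $\det K/\ell$ a square, glue to the appropriate Goeritz piece as in \eqref{XInCaseOfSigmaPlus4ArfCongruent4Or0}, and invoke Donaldson; the paper's discussion preceding the theorem is exactly this, including the observation that both signs must be handled since the Gilmer--Livingston congruence is vacuous here.

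One step is misstated, though the conclusion survives: you write $Q_X\cong G_-\oplus[-\ell]$ (and symmetrically $Q_X\cong G_+\oplus[\ell]$). This cannot be an isomorphism in general, since $Q_X$ is unimodular while $\det\bigl(G_-\oplus[-\ell]\bigr)=\pm\ell\det K$, which is typically not $\pm1$; Mayer--Vietoris over the rational homology sphere $Y(K)$ only gives that $H_2(W(F_\mp))\oplus H_2(W(\Sigma))$ maps onto a finite-index sublattice of $H_2(X)/\mathrm{Tor}$, i.e.\ that $G_\mp\oplus[\mp\ell]$ \emph{embeds} into $Q_X$ with equal rank. This weaker (and correct) statement, which is what the paper uses in the $\equiv\pm2$ case, is all you need: Donaldson diagonalizes $Q_X$, and composing yields the forbidden embedding of $G_\mp\oplus[\mp\ell]$ into $(\mathbb Z^{\mathrm{rank}(G_\mp)+1},\mp\mathrm{Id})$, giving the contradiction. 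With that correction the argument is complete and agrees with the paper's.
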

This last theorem is stated only for completeness and possible future applications. By happenstance, all of the knots $K$ with 8 or 9 crossings with $\sigma(K) + 4\cdot \text{Arf}(K) \equiv 0\pmod{8}$ admit non-oriented band moves to slice knots, and accordingly all such knots have $\gamma_4$ equal to 1, see Section \ref{SectionOnNullYasuharaKnots}.  
\section{Computations of $\gamma_4$}  \label{SectionWhereWeCalculateStuff}
This sections computes the value $\gamma_4(K)$ for all knots $K$ with crossing number equal to 8 or 9, thereby proving Theorems \ref{8CrossingKnots} and \ref{9CrossingKnots}. The computations are organized into 4 subsections: Section \ref{SectionOnSliceAndConcordantKnots} considers those 8- and 9-crossing knots that are either slice (and hence have $\gamma_4$ equal to 1), are concordant to a knot with a known value of $\gamma_4$, or admit a single non-orientable band move resulting in a slice knot. Sections \ref{SectionOnYasuharaKnots}, \ref{SectionOnHalfYasuharaKnots} and \ref{SectionOnNullYasuharaKnots} consider knots $K$ with $\sigma (K) +4\cdot \text{Arf}(K)$ congruent to 4, 2 and 0 modulo 8 respectively, and rely on Theorems \ref{TheoremWithLowerBoundOnGamma4WhenSigmaPlus4ArfEqual2} and  \ref{TheoremWithLowerBoundOnGamma4WhenSigmaPlus4ArfEqual4} to work out the their value of $\gamma_4$. We note that while these theorems only apply to alternating knots, by happenstance most of the non-alternating knots \eqref{TheNonAlternatinKnots} with 8 or 9 crossings, are already addressed in Section \ref{SectionOnSliceAndConcordantKnots} as they are all either slice (in the case of $8_{20}$ and $9_{46}$), or admit a single non-oriented band move to a slice knot (in the case of $8_{19}$, $9_{42}$, $9_{43}$, $9_{44}$, $9_{45}$, $9_{47}$ and $9_{48}$). The only remaining non-alternating knots are $8_{21}$ and $9_{49}$, which are addressed in Section \ref{SectionOnHalfYasuharaKnots} and Section \ref{SectionOnYasuharaKnots}, respectively, by relying on Proposition \ref{NonOrientableBandMoveLemma}, which in turn does not use the assumption of the knot being alternating.  

\subsection{Slice knots, concordant knots, and band moves to slice knots}
\label{SectionOnSliceAndConcordantKnots}

Among knots with crossing number 8 or 9, the smoothly slice knots are precisely \cite{Knotinfo} 
\begin{equation} \label{BatchOneSliceKnots} 
8_8, \, 8_9,\, 8_{20} \quad \text{ and } \quad 9_{27}, \, 9_{41}, \, 9_{46}. 
\end{equation}
For each of these 6 knots, $\gamma_4$ equals 1. 

Additionally, there is a smooth concordance \cite{Conway, CollinsKirkLivingston} between the knots $8_{10}$ and $-3_1$ (with $-K$ referring to the reverse mirror of  $K$).
Since $\gamma_4(3_1)=1$, and $\gamma_4$ is a invariant of smooth concordance, we obtain
\begin{equation} \label{BatchTwoConcordances}
\gamma_4(8_{10}) = 1. 
\end{equation}
%
Among knots with 8 or 9 crossings, the 38 knots 
\begin{gather} \label{BatchThreeBoundingMobiusBands}
8_3,\, 8_4,\, 8_5,\, 8_6,\, 8_7, \, 8_{11},\, 8_{14},\, 8_{16},\, 8_{19}, \cr \text{ and } \cr
9_1,\, 9_3,\, 9_4,\,  9_5,\, 9_6,\, 9_7,\, 9_8, \, 9_9,\, 9_{13},\,9_{15},\,9_{17},\,9_{19},\, 9_{21},\, 9_{22},\, 9_{23}, \cr
9_{25}, \, 9_{26},\, 9_{28},\, 9_{29},\, 9_{31},\, 9_{32},\, 9_{35}, \, 9_{36},\, 9_{42}, \, 9_{43},\, 9_{44},\, 9_{45},\, 9_{47},\, 9_{48},
\end{gather}
bound smoothly and properly embedded M\"obius bands in $D^4$. This is seen in 
Subfigures~\ref{FigureFor8_3} -- \ref{FigureFor8_19} of Figure~\ref{Figure1ToSliceKnots}, 
Subfigures~\ref{FigureFor9_1} -- \ref{FigureFor9_15} of Figure~\ref{Figure2ToSliceKnots}, 
Subfigures~\ref{FigureFor9_17} -- \ref{FigureFor9_29} of Figure~\ref{Figure3ToSliceKnots}, and in  
Subfigures~\ref{FigureFor9_31} -- \ref{FigureFor9_48} of Figure~\ref{Figure4ToSliceKnots}
%
where we exhibit band moves from each of the knots in \eqref{BatchThreeBoundingMobiusBands} to a slice knot. The claim about bounding M\"obius bands then follows from Proposition \ref{NonOrientableBandMoveLemma}. The only knot from the list \eqref{BatchThreeBoundingMobiusBands} not found in the above mentioned figures, is the knot $K=9_4$ which was addressed in Figure \ref{HandleLabelingConvention}. 
\subsection{Knots with $\sigma (K) + 4\cdot \text{Arf}(K) \equiv 4\pmod{8}$} \label{SectionOnYasuharaKnots}
Among knots $K$ with crossing number 8 or 9, those that satisfy the congruence $\sigma (K) + 4\cdot \text{Arf}(K) \equiv 4\pmod{8}$ are precisely the 19 knots 
\begin{align} \label{ListOfTheYasuharaKnots}
8_1,\, 8_2,\, 8_{12}, \, 8_{13},\, 8_{15},\, 8_{17},\, 8_{18},\, \quad \text{ and } \cr
9_{10},\, 9_{11}, \, 9_{14}, \, 9_{18}, \, 9_{20}, \, 9_{24}, \, 9_{30}, \, 9_{33}, \, 9_{34}, \, 9_{37},\, 9_{38},\, 9_{49}.   
\end{align}
 Gilmer--Livingston's relation \eqref{YasuharasCongruence} implies that $\gamma_4(K) \ge 2$ for any such knot. We verify that $\gamma_4(K) = 2$ for all knots $K$ in \eqref{ListOfTheYasuharaKnots} with the exception of $K=8_{18}$ and $K=9_{49}$, by constructing an explicit non-orientable smooth and properly embedded surface $\Sigma$ in $D^4$ with $b_1(\Sigma) = 2$ and $\partial \Sigma =K$. The existence of such a surface follows from Proposition \ref{NonOrientableBandMoveLemma} by finding a non-orientable band move from $K$ to $K'$ for a  knot $K'$ with $\gamma_4(K')=1$. Such band moves  are described in Subfigures~\ref{FigureFor8_1}--\ref{FigureFor9_14} of Figure~\ref{8_1, 8_2, 8_12,8_13,8_15,8_17,9_10,9_11,9_14}, and in Subfigures~\ref{FigureFor9_18} -- \ref{FigureFor9_38} in Figure~\ref{9_18,9_20,9_24,9_30,9_34,9_37,9_38,9_49}. The two exceptional knots $8_{18}$ and $9_{49}$ are addressed below, where they are both shown to have $\gamma_4$ equal to 3. 

\begin{proposition} \label{PropositionAbout8_18}
$\gamma_4(8_{18}) = 3$.
\end{proposition}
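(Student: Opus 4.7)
The plan is to establish the upper bound and lower bound separately, $\gamma_4(8_{18}) \le 3$ and $\gamma_4(8_{18}) \ge 3$.

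For the upper bound, I would invoke the general inequality $\gamma_4(K) \le 2 g_4(K)+1$ from \eqref{GeneraInequality}. Since $g_4(8_{18}) = 1$ (as recorded in KnotInfo \cite{Knotinfo}, and realized by a concrete genus-one oriented surface in $D^4$), this immediately gives $\gamma_4(8_{18}) \le 3$.

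For the lower bound, I would proceed in three stages. First, verify that $\sigma(8_{18}) = 0$ and $\mathrm{Arf}(8_{18}) = 1$, so that $\sigma(8_{18}) + 4\cdot\mathrm{Arf}(8_{18}) \equiv 4 \pmod 8$. This puts $8_{18}$ in the regime covered by Theorem~\ref{TheoremWithLowerBoundOnGamma4WhenSigmaPlus4ArfEqual4}, and already the Gilmer--Livingston congruence \eqref{YasuharasCongruence} rules out $\gamma_4(8_{18}) = 1$. Second, I would take the standard alternating projection of $8_{18}$ and compute both associated Goeritz matrices $G_+$ and $G_-$ from formula \eqref{PreGoeritzMatrixCoefficients}, making sure to note their signs of definiteness (which follow from Greene's characterization of alternating knots or can be checked directly). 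This is routine linear algebra from the diagram.

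Third, and this is the main obstacle, I would verify the two lattice embedding obstructions of Theorem~\ref{TheoremWithLowerBoundOnGamma4WhenSigmaPlus4ArfEqual4}: that $G_+$ does not embed in $(\mathbb{Z}^{r+2},\mathrm{Id})$ and $G_-$ does not embed in $(\mathbb{Z}^{r+2},-\mathrm{Id})$, where $r = \mathrm{rank}(G_\pm)$. For each generator $e_i$ of the domain, the image must be an integer vector whose square equals $g_{ii}$, leaving only finitely many sign-and-permutation possibilities within the $(r+2)$-dimensional ambient lattice; compatibility of inner products $e_i \cdot e_j = g_{ij}$ then prunes the search tree further. I would organize this as a systematic case analysis: fix the image of $e_1$ up to automorphism of the diagonal lattice, then determine the possible images of $e_2$, and so on, showing that every branch terminates in a contradiction with some off-diagonal entry of $G_\pm$. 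If all such branches are blocked for both $G_+$ and $G_-$, Theorem~\ref{TheoremWithLowerBoundOnGamma4WhenSigmaPlus4ArfEqual4} yields $\gamma_4(8_{18}) \ge 3$, completing the proof.

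The key difficulty is managing the combinatorial explosion in the embedding search; it is helpful that $8_{18}$ is small enough that $G_\pm$ have modest rank and that $\det(8_{18}) = 45$ forces strong constraints on the possible norms of basis images. The upper-bound half and the congruence check are straightforward; essentially all of the work lies in the finite but delicate enumeration that certifies non-embedding of the Goeritz forms.
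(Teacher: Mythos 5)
Your overall strategy matches the paper's for the lower bound (invoking Theorem~\ref{TheoremWithLowerBoundOnGamma4WhenSigmaPlus4ArfEqual4} for a knot with $\sigma+4\cdot\mathrm{Arf}\equiv 4\pmod 8$ and obstructing lattice embeddings of the Goeritz forms), and your upper bound is a legitimate variant: you use $\gamma_4\le 2g_4+1$ with $g_4(8_{18})=1$, whereas the paper instead exhibits a non-oriented band move $8_{18}\stackrel{0}{\longrightarrow}7_7$ and uses $\gamma_4(7_7)=2$ together with Proposition~\ref{NonOrientableBandMoveLemma}; both give $\gamma_4(8_{18})\le 3$, and yours is arguably the shorter route since $g_4(8_{18})=1$ is already on record.

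The genuine gap is in the lower bound: you describe the embedding search but never carry it out, and the conclusion is stated only conditionally (``if all such branches are blocked\dots''). The non-existence of an embedding is exactly the mathematical content of the proposition and cannot be taken for granted --- plenty of rank-4 definite forms with diagonal entries $-3$ do embed into $(\mathbb Z^6,-\mathrm{Id})$, so the outcome depends delicately on the off-diagonal structure of the specific Goeritz form of $8_{18}$ (a $4$-cycle of $-3$-framed vertices). The paper's proof consists precisely of that finite but nontrivial case analysis: one normalizes $\varphi(e_1)=f_1+f_2+f_3$, shows $\varphi(e_3)$ must share exactly two basis vectors with $\varphi(e_1)$, reduces $\varphi(e_2)$ to two possibilities, and in each case derives Diophantine equations for the coefficients of $\varphi(e_4)$ that have no integer solutions; it then disposes of $G_+$ by noting $G_+=-G_-$, so only one form needs to be checked. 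Until you actually run this analysis to contradiction (or an equivalent argument), you have not established $\gamma_4(8_{18})\ge 3$. A smaller point: in this congruence class the determinant $\det 8_{18}=45$ plays no role in the obstruction --- the constraints on the norms of the images come from the diagonal entries of $G_\pm$, and the divisor-of-the-determinant condition of Corollary~\ref{AboutSquareInCaseOfMobiusBand} only enters the $\equiv\pm 2$ and $\equiv 0$ cases, where a summand $[\pm\ell]$ is adjoined.
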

\begin{proof}
Let $K=8_{18}$. Theorem \ref{TheoremWithLowerBoundOnGamma4WhenSigmaPlus4ArfEqual4} implies that $\gamma_4(K)\ge 3$ provided we can prove that neither of the two Goeritz matrices $G_{\pm}$ of $K$ embed into a diagonal form of equal definiteness (positive or negative) and of rank two larger.  

We start by considering $G_-$, the negative definite Goeritz matrix associated to the checkerboard coloring of $K=8_{18}$ as given in Figure~\ref{8_18}. In that figure, $G_-$ is the incidence matrix of the given graph, whereby each of the vertices $e_1,\dots, e_4$  (corresponding to generators of $\mathbb Z^4$) has square $-3$ (that is $G_-(e_i,e_i)=-3)$ and any pair of vertices sharing an edge pairs to 1, while vertices not sharing an edge pair to 0.  
\begin{figure}[h]
	\centering
	\begin{subfigure}[b]{0.30\textwidth}
		\includegraphics[width=\textwidth]{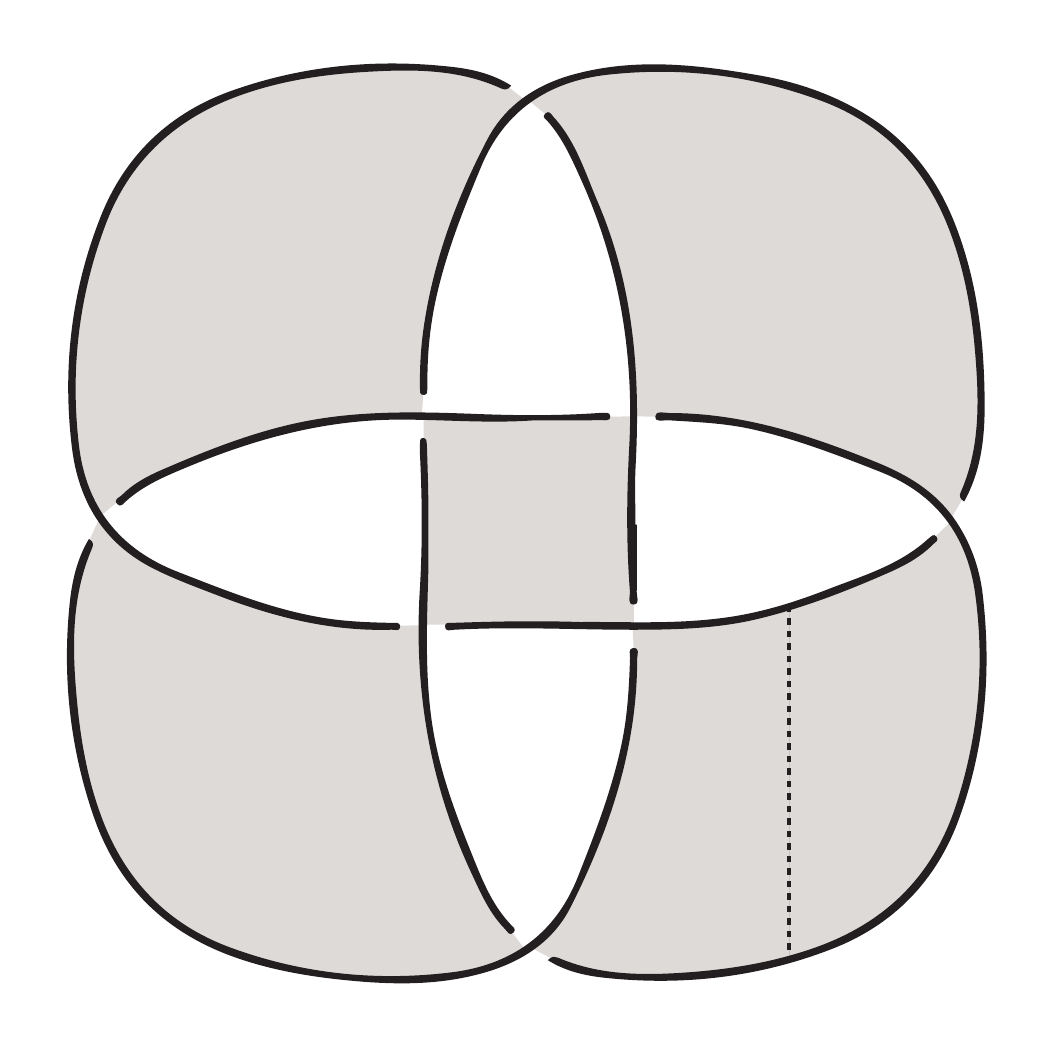}
		\caption{$8_{18}\stackrel{0}{\longrightarrow} 7_7$}
		\label{8_18Checkerboard}
	\end{subfigure}
	\qquad \qquad \qquad \qquad
	\begin{subfigure}[b]{0.3\textwidth}
		\includegraphics[width=\textwidth]{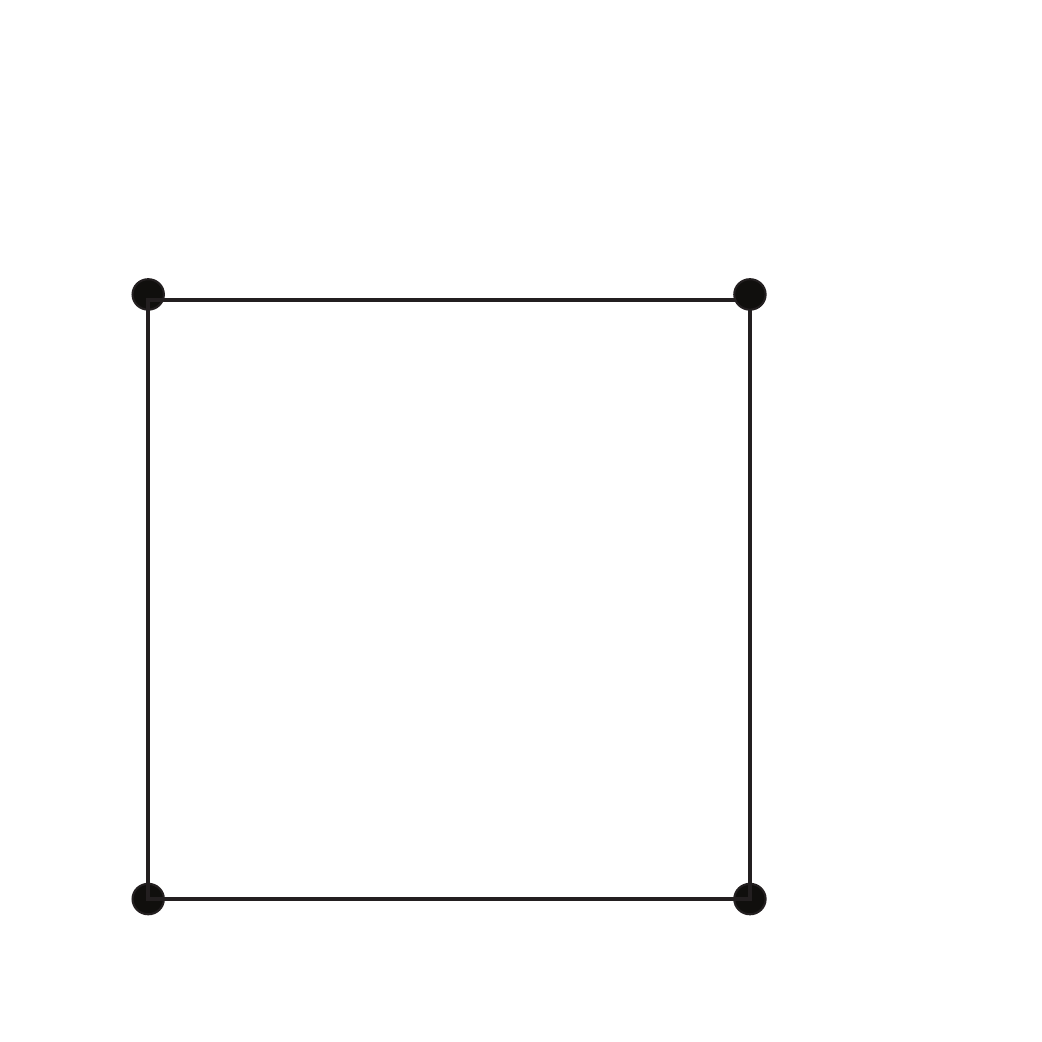}   
		\put(-111,85){$e_1$}
		\put(-131,98){$-3$}        
		\put(-38,98){$-3$}        
		\put(-49,85){$e_2$}
		\put(-38,7){$-3$}
		\put(-49,24){$e_3$}
		\put(-131,7){$-3$}
		\put(-111,24){$e_4$}
		\caption{The negative definite Goeritz form $G_-$ for $8_{18}$.}
		\label{8_18NegativeDefiniteIntersectionForm}
	\end{subfigure}
	\vskip3mm
	\caption{Case of $K=8_{18}$}\label{8_18}
\end{figure}

An embedding 
\[
\varphi:(\mathbb Z^4, G_-) \hookrightarrow (\mathbb Z^6, -\text{Id})
\]
is a monomorphism $\varphi:\mathbb Z^4 \to \mathbb Z^6$ such that $\varphi(a)\cdot \varphi(b) = G_-(a,b)$ for any pair $a,b\in \mathbb Z^4$, where the dot product refers to the product $-\text{Id}$ on $\mathbb Z^6$. Let $\{e_i\}_{i=1}^4$ be the basis for $(\mathbb Z^4, G_-)$ as described by Figure~\ref{8_18NegativeDefiniteIntersectionForm}, and let $\{f_i\}_{i=1}^6$ be the standard basis for $(\mathbb Z^6, -\text{Id})$, that is the basis with $f_i\cdot f_j = -\delta_{ij}$. In a further simplification of notation we shall also write $e_i\cdot e_j$ to mean $G_-(e_i,e_j)$, the nature of the vectors engaging in the dot product determines the particular dot product being used. 

If an embedding $\varphi$ existed, it would have to send $e_1$ (up to a change of basis of $(\mathbb Z^6, -\text{Id})$) to  
\[
\varphi(e_1) = f_1+f_2+f_3.
\]
Since $e_1 \cdot e_3=0$, then $\varphi(e_3)$ must share an even number of basis elements $\{f_i\}_{i=1}^6$ with the basis elements $f_1, f_2, f_3$ occurring in the formula for $\varphi(e_1)$. Thus that shared number is either 0 or 2. If it is 0, then $\varphi(e_3) = f_4+f_5+f_6$. However, since $e_1\cdot e_2=e_3\cdot e_2=1$, $\varphi(e_2)$ must share an odd number of basis elements $\{f_i\}_{i=1}^6$ with those occurring in each of the formulas for $\varphi(e_1)$ and $\varphi(e_3)$. That odd number cannot be 3 and thus must be 1, which is impossible as all six basis elements $\{f_i\}_{i=1}^6$ occur in $\varphi(e_1)$ and $\varphi(e_3)$. 

It follows that $\varphi(e_3)$ must have 2 basis elements in common with $\varphi(e_1)$, and so, again up to a change of basis of $(\mathbb Z^6, -\text{Id})$, it must be that  
\[
\varphi(e_3) = f_1-f_2+f_4.
\]
Suppose that $\varphi(e_2) = \sum_{i=1}^6 \lambda _i f_i$. Then the values of $e_2\cdot e_i$ for $i=1,2,3$ lead to these equations in the integer coefficients $\lambda _1,\dots, \lambda _6$: 
\begin{align*}
-\lambda _1-\lambda _2-\lambda _3 & = 1, \cr
-\lambda _1 +\lambda _2-\lambda _4 & = 1, \cr
\lambda _1^2+\dots + \lambda _6^2 & = 3.
\end{align*}
Writing $\lambda _3 = -\lambda _1-\lambda _2-1$ and $\lambda _4 = -\lambda _1+\lambda _2-1$ by using the first two equations, and plugging these into the third equation, yields 
\[
\lambda _1^2+ 3\lambda _2^2+2(\lambda _1+1)^2+\lambda _5^2 +\lambda _6^2 = 3.
\]
It follows that $\lambda _2=0$ and that either $\lambda _1=0$ or $\lambda _1=-1$, leading to two possibilities for $\varphi(e_2)$: 
\[
\varphi(e_2) = -f_3-f_4+f_5 \quad \text{ or } \quad \varphi(e_2) = -f_1+f_5+f_6.
\]

We suppose first that $\varphi(e_2) = -f_3-f_4+f_5$ and write $\varphi(e_4) = \sum _{i=1}^6 \mu_i f_i$ for some integers coefficients $\mu_1, \dots, \mu_6$ subject to the equations 
\begin{align*}
-\mu_1-\mu_2-\mu_3& = 1, \cr
\mu_3+\mu_4-\mu_5 & = 0, \cr
-\mu_1+\mu_2-\mu_4 & = 1, \cr
\mu_1^2+\dots +\mu_6^2 & = 3.
\end{align*} 
The first three of these equations lead to $\mu_3 = -\mu_1-\mu_2-1$, $\mu_4=-\mu_1+\mu_2-1$ and $\mu_5 = -2(\mu_1+1)$, which when plugged into the fourth equation yield 
\[
\mu_1^2+ 3\mu_2^2+6(\mu_1+1)^2+\mu_6^2 = 3.
\]
It follows immediately that $\mu_1=-1$ and $\mu_2=0$ and $\mu_6^2=2$, and the latter equation of course has no integral solution $\mu_6$. Therefore the choice of $\varphi(e_2) = -f_3-f_4+f_5$ does not lead to an embedding $\varphi$. 

Secondly, suppose that $\varphi(e_2) = -f_1+f_5+f_6$, the only remaining possibility for $\varphi(e_2)$, and write again $\varphi(e_4)=\sum_{i=1}^6 \eta_i f_i$ for integers $\eta_1, \dots, \eta_6$ this time subject to 
\begin{align*}
-\eta_1-\eta_2-\eta_3 & = 1, \cr
\eta_1 -\eta_5-\eta_6 & = 0, \cr
-\eta_1+\eta_2-\eta_4 & = 1, \cr
\eta_1^2+\dots +\eta_6^2 & = 3.
\end{align*}
The first three of these equations imply $\eta _3 = -\eta_1-\eta_2-1$, $\eta_4 = -\eta_1+\eta_2-1$ and $\eta_6 = \eta_1-\eta_5$, which when inserted into the fourth equation lead to 
\[
 \eta_1^2 + 3\eta_2^2+2(\eta_1+1)^2 + \eta_5^2+(\eta_1-\eta_5)^2 = 3.
 \]
We are forced to conclude that $\eta_2=0$ and that either $\eta_1=0$ or $\eta_1=-1$. The case of $\eta_1=0$ forces the equation $2\eta_5^2=1$, while the case of $\eta_1=-1$ leads to $\eta_5+(1+\eta_5)^2=2$, neither of which has integral solutions. 

We find that both possibilities for $\varphi(e_2)$ lead to equations for the coefficients of $\varphi(e_4)$ that have no integral solutions, and thus the embedding $\varphi : (\mathbb Z^4, G_-) \hookrightarrow (\mathbb Z^6, -\text{Id})$ cannot exist. 

It is easy to see that $G_+ = - G_-$ and so an embedding $(\mathbb Z^4, G_+) \hookrightarrow (\mathbb Z^6, \text{Id})$ would lead to an embedding $(\mathbb Z^4, G_-) \hookrightarrow (\mathbb Z^6, -\text{Id})$ which was already shown not to exist. Therefore the conditions of Theorem \ref{TheoremWithLowerBoundOnGamma4WhenSigmaPlus4ArfEqual4} are met and it follows that $\gamma_4(8_{18})\ge 3$. The equality $\gamma_4(8_{18})=3$ follows from the non-oriented band move indicated in Figure~\ref{8_18Checkerboard} which transforms $8_{18}$ into the knot $7_7$, and the fact that $\gamma_4 (7_7)=2$, see \cite{Knotinfo}.  
\end{proof}

{\bf Case of $K=9_{49}$. } Subfigure~\ref{FigureFor9_49} gives a non-orientable band move from the knot $9_{49}$ to the knot $8_{21}$. We show in Section~\ref{SectionOnHalfYasuharaKnots} that $\gamma_4(8_{21})=2$, which implies that $\gamma_4(9_{49})\le 3$. To see that $\gamma_4(9_{49})$ cannot equal 2, we rely on Theorem 4 from \cite{GilmerLivingston}. To state the theorem, let $K$ be a knot such that $H_1(M(K);\mathbb Z) \cong \mathbb Z_p \oplus \mathbb Z_p$ for some prime $p$. Then, by said theorem, if $K$ bounds a puncture Klein bottle in the 4-ball, the discriminant of the linking pairing of $M(K)$ is $\pm 1 \in \mathbb Z_p^\ast /(\mathbb Z_p^\ast)^2$. For the choice of $K=9_{49}$ one obtains $H_1(M(K);\mathbb Z) \cong \mathbb Z_5 \oplus \mathbb Z_5$ and that the discriminant of its liking pairing is $\pm 2 \in \mathbb Z_5^\ast/(\mathbb Z_5^\ast)^2$, which is different from $\pm 1$ (seeing as $\pm 2$ is a nonsquare in $\mathbb Z_5$ while $\pm 1$ is a square). It follows that $\gamma_4(9_{49})>2$ and thus that $\gamma_4(9_{49}) = 3$, as claimed.
\subsection{Knots with $\sigma (K) + 4\cdot \text{Arf}(K) \equiv 2\pmod{8}$} \label{SectionOnHalfYasuharaKnots}
In this section we consider the 8- and 9-crossing knots $K$ that satisfy the congruence relation $\sigma (K) + 4 \cdot \text{Arf}(K) \equiv 2\pmod{8}$. These are precisely the 34 knots 
\begin{gather} \label{ListOfHalfYasuharaKnots}
-\underline{8_4},\, -\underline{8_6},\,  \underline{8_7},\, -\underline{8_{10}}, \,   \underline{8_{11}}, \, -\underline{8_{14}},\, -\underline{8_{16}},\, -\underline{8_{19}},\, -8_{21}, \,\cr \text{ and } \cr
-9_2,\, -\underline{9_3},\, -\underline{9_5},\, -\underline{9_6},\, -\underline{9_8},\,  \underline{9_9},\, 9_{12},\, \underline{9_{15}},\, 9_{16},\, -\underline{9_{17}},\, -\underline{9_{21}},\,   \underline{9_{22}},\, -\underline{9_{25}},\, \cr
\underline{9_{26}}, \,\underline{9_{28}}, \, -\underline{9_{29}},\, -\underline{9_{31}},\, -\underline{9_{32}},\, \underline{9_{35}},\, 9_{39}, \,9_{40},\, \underline{9_{42}},\, \underline{9_{45}},\, \underline{9_{47}}, \,  -\underline{9_{48}}. 
\end{gather}
The 8- and 9-crossing knots $K$ that satisfy the opposite congruence relation $\sigma (K) + 4 \cdot \text{Arf}(K) \equiv -2\pmod{8}$ are the mirror knots of those appearing in \eqref{ListOfHalfYasuharaKnots}. The 29 underlined knots (or their mirror knots) in this list have been shown in Section \ref{SectionOnSliceAndConcordantKnots} to have $\gamma_4$ equal to 1, leaving us only to deal with the remaining 6 knots. 

{\bf Case of $K=8_{21}$. } To compute $\gamma_4(8_{21})$ one can use Corollary 3 from \cite{GilmerLivingston}. Let $K$ be a knot with $\det K=n$ and with $n$ a product of primes all with odd exponent. Suppose that $H_1(M(K);\mathbb Z) \cong \mathbb Z_n$ where $M(K)$ is the 2-fold cover of $S^3$ branched along $K$. Then by the aforementioned corollary, if $K$ bounds a M\"obius band in the 4-ball, the linking form 
\[
\ell k :H_1(M(K);\mathbb Z)\times H_1(M(K);\mathbb Z)\to \mathbb Q/\mathbb Z
\]
has the property that $\ell k(x,x) = \pm 1/n$ for some generator $x\in H_1(M(K);\mathbb Z)$.

For $K=8_{21}$ the linking form $\ell k:\mathbb Z_{15}\times \mathbb Z_{15}\to \mathbb Q/\mathbb Z$ is, up to isomorphism, given by multiplication by $13/15$. It is easy to check that $\pm 1/15$ does not occur as an output value of $\ell k(x,x)$ for any generator $x\in \mathbb Z_{15}$, proving that $2\le \gamma_4(8_{21})$. To see that $\gamma_4(8_{21}) \le 2$  it suffices to exhibit a non-orientable band move that changes $K=8_{21}$ into a knot $K'$ with $\gamma_4(K') = 1$. This is accomplished in Figure \ref{8_21} with $K'=5_2$. 
\begin{figure}[h]
\centering
\includegraphics[width=5cm]{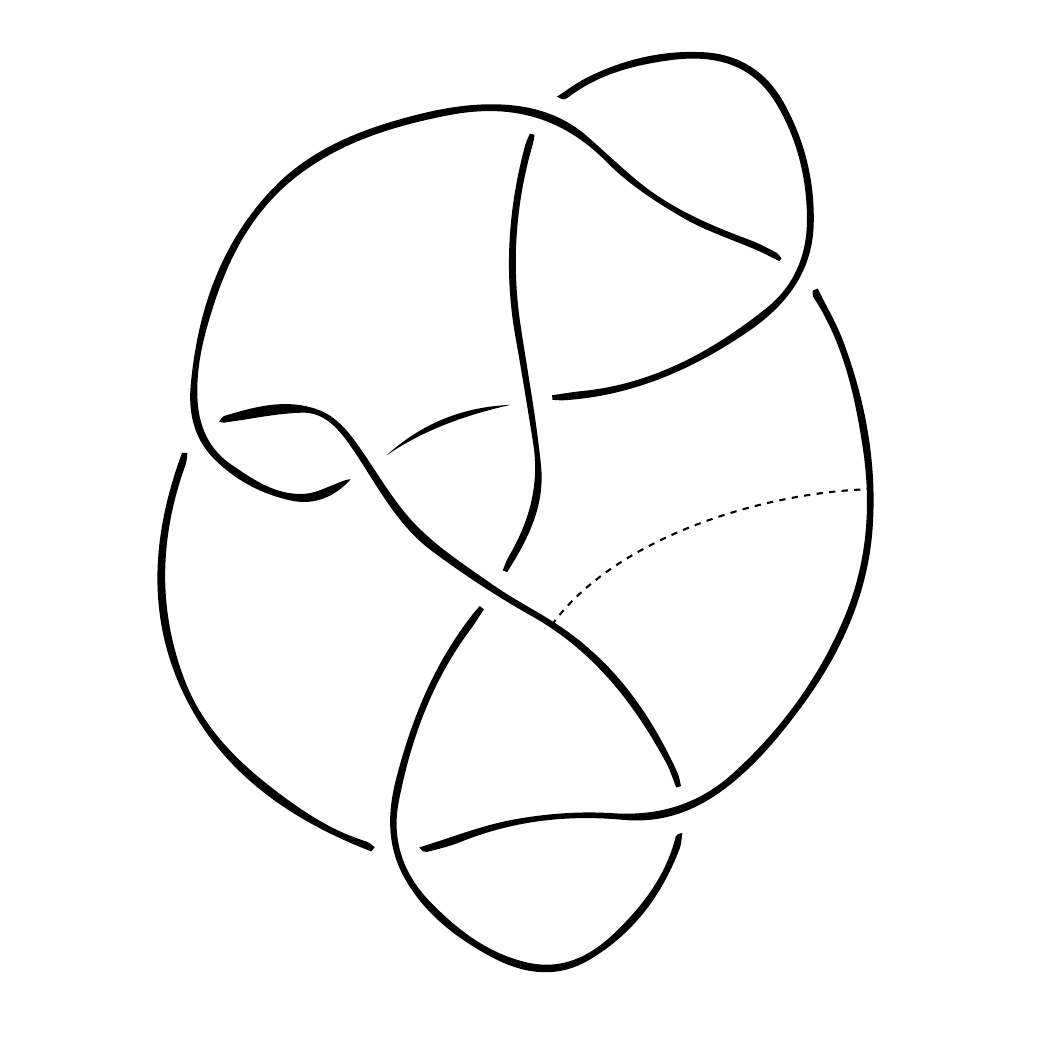}
\vskip2mm
\caption{$8_{21}\stackrel{0}{\longrightarrow} 5_2$.} 
\label{8_21}
\end{figure}

We will show that each of the remaining 5 non-underlined knot $K$ from \eqref{ListOfHalfYasuharaKnots} meets the assumptions of Theorem \ref{TheoremWithLowerBoundOnGamma4WhenSigmaPlus4ArfEqual2}, thereby proving that $\gamma_4(K)\ge 2$. We will  then show that $\gamma_4(K)=2$ by finding a non-oriented band move from $K$ to a knot $K'$ with $\gamma_4(K')=1$. 

{\bf Notational convention } We will represent the Goeritz forms $(\mathbb Z^n, G)$ of the various non-underlined knots from \eqref{ListOfHalfYasuharaKnots} as incidience matrices of weighted graphs. Recall that in such a presentation the generators $e_1,\dots, e_n$ of $\mathbb Z^n$ correspond to the $n$ vertices of the weighted graph, which in turn correspond to the white regions in the checkerboard coloring of the diagram of $K$. Moreover, $G(e_i,e_i)$ is given by the weight of the vertex $e_i$, $i=1,\dots, n$, and if $n_{i,j}$ is the number of edges between the vertices $e_i$ and $e_j$, then $G(e_i, e_j) = n_{ij}$. For simplicity of notation we shall write $e_i\cdot e_j$ to mean $G(e_i,e_j)$. This weighted graph approach to describing $(\mathbb Z^n, G)$ is merely a graphical tool that encodes the form \eqref{PreGoeritzMatrixCoefficients}.     

An embedding of 
\begin{equation} \label{TheGoertizEmbeddingForSigmaPlus4ArfEqual2}
\varphi:(\mathbb Z^{n+1},G\oplus [-d]) \hookrightarrow (\mathbb Z^{n+1},-\text{Id})
\end{equation}
is a monomorphism $\varphi:\mathbb Z^{n+1} \to \mathbb Z^{n+1}$ with the property that $-\text{Id}(\varphi(e_i),  \varphi(e_j)) = e_i\cdot e_j$ for $i,j = 1, \dots, n+1$. Here $e_{n+1}$ is the basis element of $\mathbb Z^n\oplus \mathbb Z$ corresponding to the last summand, and it has the properties: 
\[
e_{n+1}\cdot e_{n+1} = -d \quad \text{ and } \quad e_{n+1}\cdot e_i=0, \quad \text{for } i=1,\dots, n.
\]
Recall that $d\in \mathbb N$ is a divisor of $\det K$ with $\det K/d$ a square; see Corollary \ref{AboutSquareInCaseOfMobiusBand}. With the exception of $K=9_{40}$, all non-underlined knots $K$ from \eqref{ListOfHalfYasuharaKnots} have square-free determinant, forcing $d=\det K$. 

We let $\{f_i\}_{i=1}^{n+1}$ be the standard basis for $(\mathbb Z^{n+1},-\text{Id})$, that is the basis with $-\text{Id}(f_i,f_j) = -\delta_{ij}$, and shall also write $f_i\cdot f_j$ to mean $-\text{Id}(f_i,f_j)$. While the dot notation is used for both $G$ and $-\text{Id}$, the nature of the vectors involved in the dot product makes clear which form is meant. 

Our approach to showing that the embedding $\varphi$ does not exist is to take advantage of the ``rigidities'' presented by vertices $e_i$ with square $-2$ or $-3$. Any such vertex $e_i$ under $\varphi$ maps to either $f_1-f_2$ or $f_1+f_2+f_3$, up to a change of basis of $(\mathbb Z^{n+1},-\text{Id})$. Moreover since each basis element $f_i$ has square $-1$, then a pair of vertices $e_i$ and $e_j$ with $e_i\cdot e_j=1$ must have the property $\varphi(e_i)$ and $\varphi(e_j)$ share an odd number of basis elements $\{f_i\}_{i=1}^{n+1}$, and similarly if  $e_i\cdot e_j=0$ then $\varphi(e_i)$ and $\varphi(e_j)$ must share an even number of basis elements $\{f_i\}_{i=1}^{n+1}$. These requirements are restrictive enough to show that $\varphi$ cannot exist for the Goertiz forms of the non-underlined knots in \eqref{ListOfHalfYasuharaKnots}. 

The non-existence of the embedding \eqref{TheGoertizEmbeddingForSigmaPlus4ArfEqual2} shows that $\gamma_4(K)\ge 2$ for the corresponding knot $K$. The equality $\gamma_4(K) =2$ is derived by finding a non-oriented band move from $K$ to a knot $K'$ with $\gamma_4(K') = 1$. 

\begin{remark}
We would like to emphasize that in each of the following arguments, our computations are valid up to a change of basis of $(\mathbb{Z}^{n+1}, -\text{Id})$ and we will usually take this fact for granted to simplify the exposition.
\end{remark}

\vskip3mm
\begin{figure}[h]
	\centering
	\begin{subfigure}[b]{0.30\textwidth}
		\includegraphics[width=\textwidth]{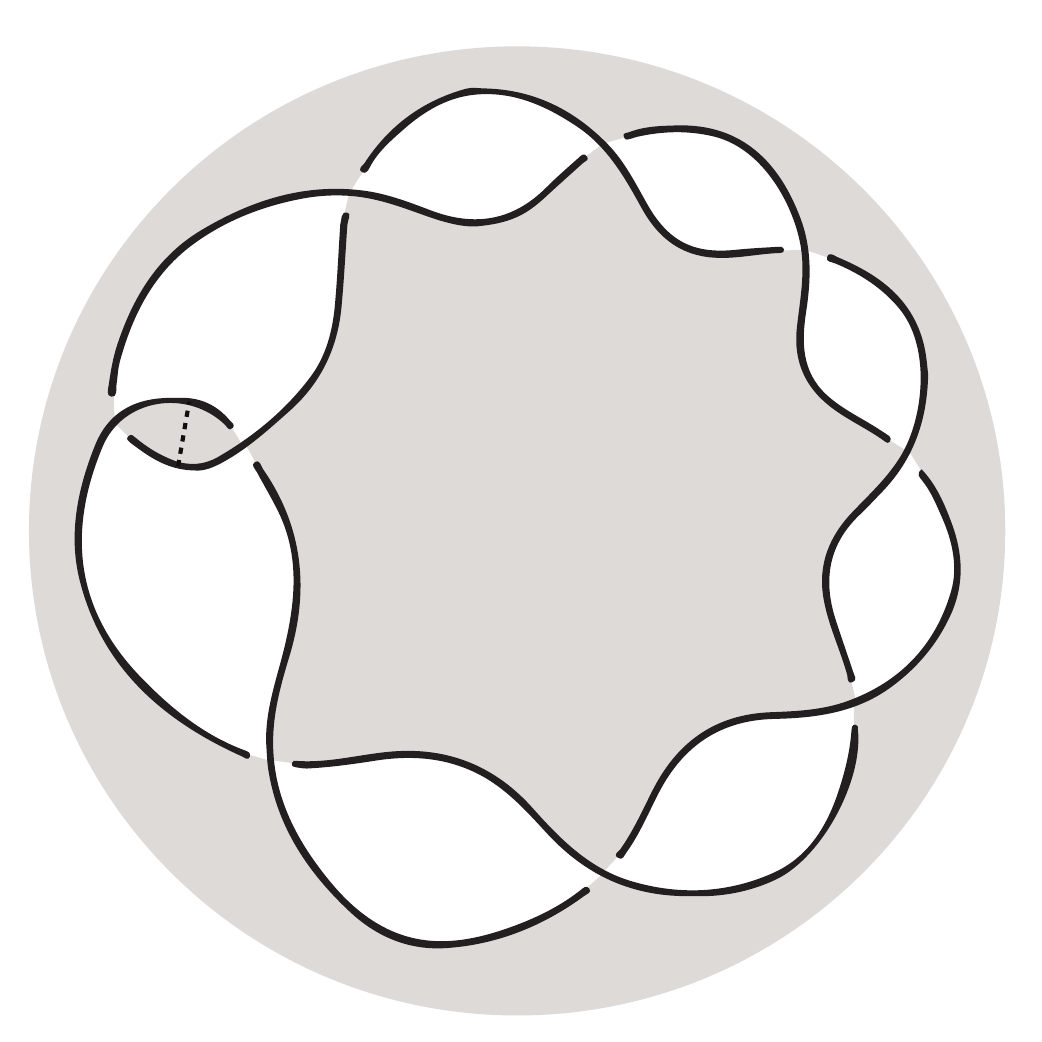}
		\caption{$-9_{2}\stackrel{0}{\longrightarrow} 7_1$}
		\label{9_2mirror}
	\end{subfigure}
	\qquad \qquad \qquad \qquad
	\begin{subfigure}[b]{0.3\textwidth}
		\includegraphics[width=\textwidth]{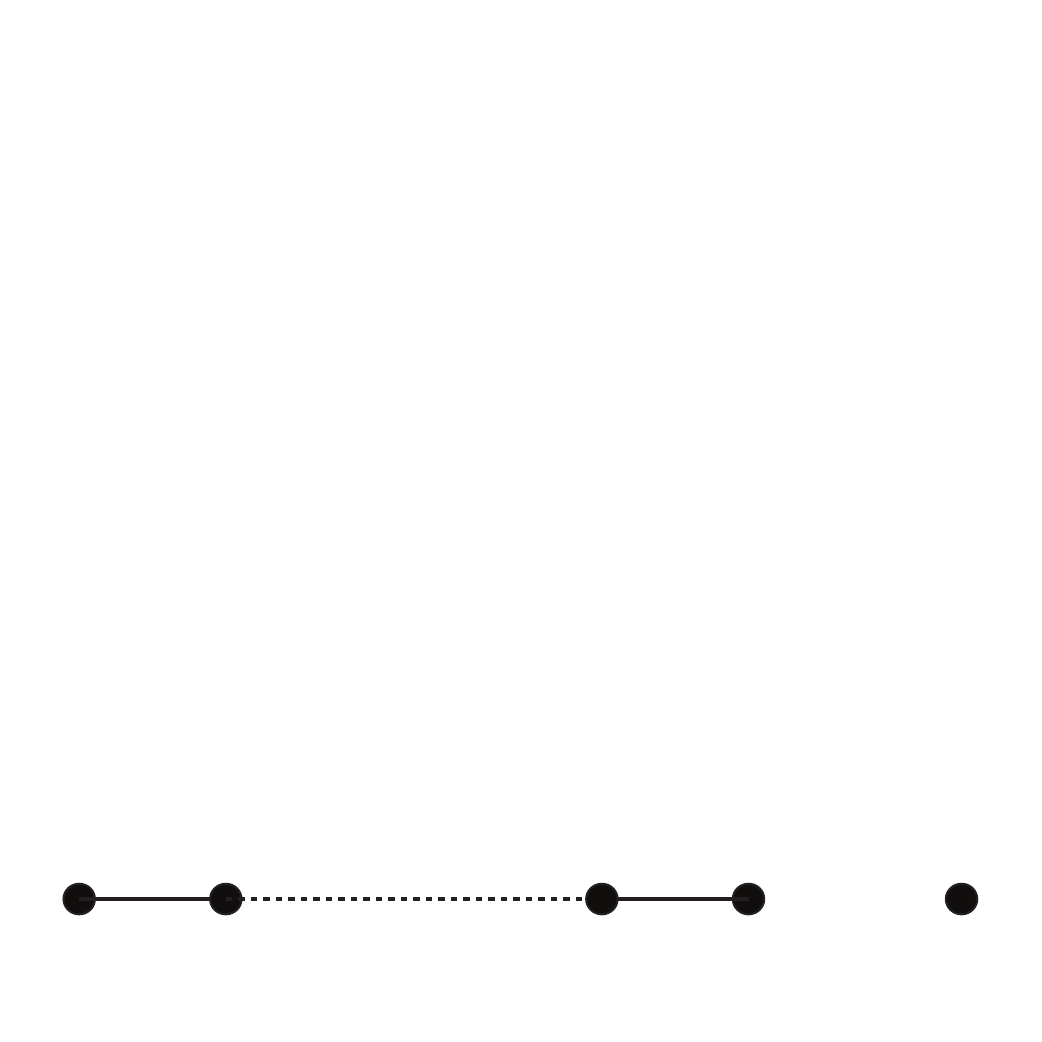}   
		\put(-136,26){$-2$}        
		\put(-128,6){$e_1$}
		\put(-116,26){$-2$}
		\put(-107,6){$e_2$}
		\put(-68,26){$-2$}
		\put(-59,6){$e_6$}
		\put(-48,26){$-3$}
		\put(-39,6){$e_7$}
		\put(-24,26){$-15$}
		\put(-13,6){$e_8$}
		\caption{Goeritz form for $-9_{2}$.}
		\label{9_2MirrorIntersectionform}
	\end{subfigure}
	\vskip3mm
	\caption{Case of $K=-9_{2}$}\label{-9_2}
\end{figure}
{\bf Case of $K=-9_{2}$. } The negative definite Goeritz matrix $G$ associated to the checkerboard coloring of the knot $K=-9_{2}$ from Figure~\ref{9_2mirror}, is given as the incidence matrix of the weighted graph in Figure~\ref{9_2MirrorIntersectionform}, where all the missing vertices, indicated by the dotted line, have weights $-2$. Since $\det 9_2= 15$ is square-free, we are seeking to obstruct the existence of an embedding 
\[
\varphi:(\mathbb Z^8, G\oplus [-15]) \hookrightarrow (\mathbb Z^8, -\text{Id}).
\]
If $\varphi$ existed, we would have to have
\[
\varphi(e_i) = f_i-f_{i+1}, \quad \text{ for } i =1,\dots, 6.
\]
Let $\varphi(e_7) = \sum _{i=1}^8 \mu_i f_i$, then since $e_7\cdot e_6=1$ and $e_7\cdot e_j=0$ for $j=1,\dots, 5$, it follows that 
\[
\mu_1=\mu_2=\mu_3=\mu_4=\mu_5=\mu_6 \quad \text{ and } \quad -\mu_6+\mu_7 = 1.
\]
Since at most 3 of the coefficients $\mu_j$ are nonzero, we conclude that $\mu_j=0$ for $j=1,\dots, 6$ and $\mu_7 = 1$. It follows that $\varphi(e_7) = f_7+\mu_8f_8$ forcing the relation $1+\mu_8^2=3$, which has no integral solution. Thus $\varphi$ cannot exist leading to $\gamma_4(-9_{2})\ge 2$. The equality $\gamma_4(-9_{2}) = 2$ follows from the non-orientable band move in Figure~\ref{9_2mirror} which transforms $-9_{2}$ to $7_1$, and given that $\gamma_4(7_1) = 1$. 
\vskip3mm
\begin{figure}[h]
	\centering
	\begin{subfigure}[b]{0.33\textwidth}
		\includegraphics[width=\textwidth]{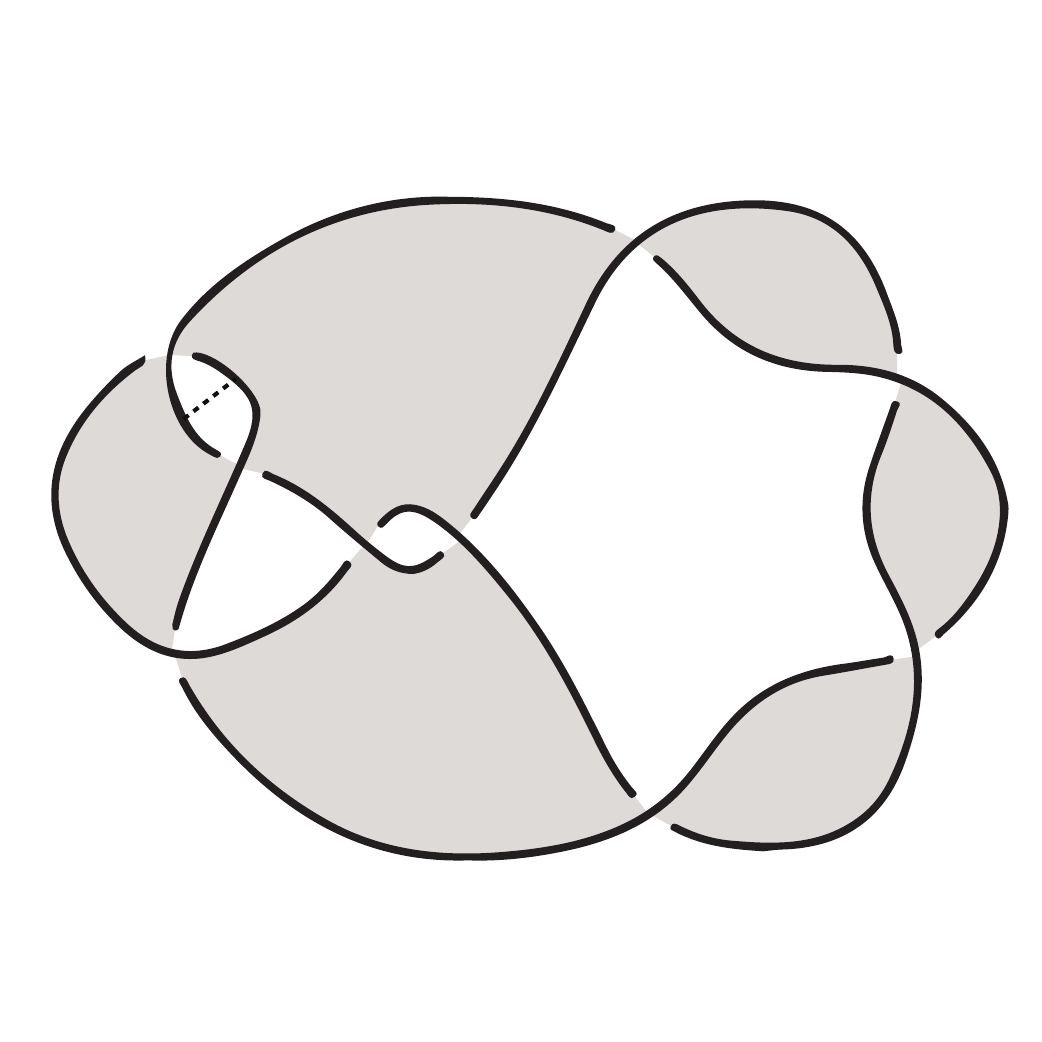}
		\caption{$9_{12}\stackrel{0}{\longrightarrow} 7_3$}
		\label{9_12Subfigure}
	\end{subfigure}
	\qquad \qquad \qquad \qquad
	\begin{subfigure}[b]{0.33\textwidth}
		\includegraphics[width=\textwidth]{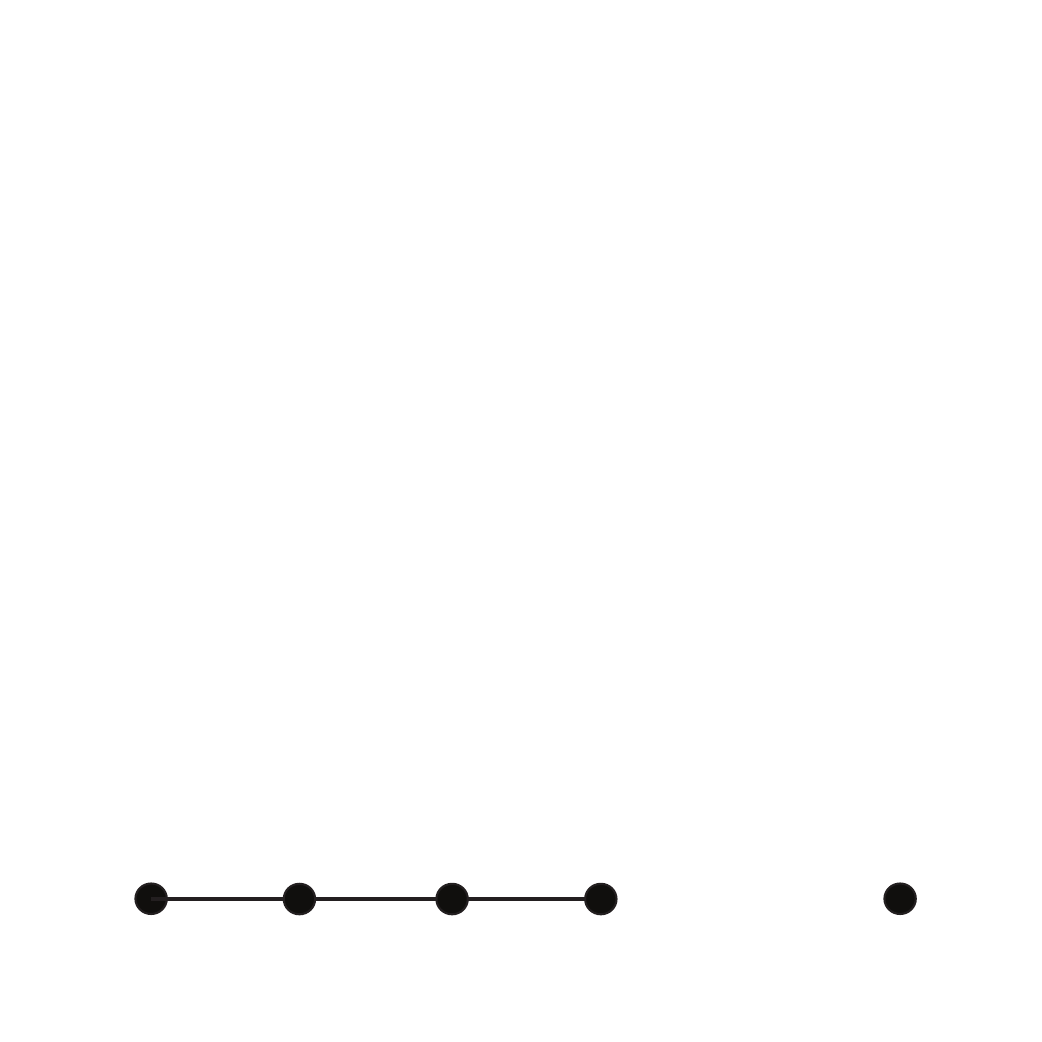}   
		\put(-127,6){$e_1$}
		\put(-137,26){$-2$}        
		\put(-107,6){$e_2$}
		\put(-117,26){$-3$}
		\put(-86,6){$e_3$}
		\put(-95,26){$-2$}
		\put(-68,6){$e_4$}
		\put(-74,26){$-5$}
		\put(-24,6){$e_5$}
		\put(-36,26){$-35$}
		\caption{Goeritz form for $9_{12}$.}
		\label{9_12Intersectionform}
	\end{subfigure}
	\vskip3mm
	\caption{Case of $K=9_{12}$.}\label{9_12}
\end{figure}
{\bf Case of $K=9_{12}$. } The negative definite Goeritz matrix $G$ associated to the checkerboard coloring of the knot $K=9_{12}$ from Figure~\ref{9_12Subfigure} is given by the incidence matrix in Figure~\ref{9_12Intersectionform}. Since $\det 9_{12} = 35$ is square-free, we seek to obstruct an embedding 
\[
\varphi:(\mathbb Z^5,G\oplus [-35]) \hookrightarrow (\mathbb Z^5, -\text{Id}).
\]
If $\varphi$ existed, we would have 
\[
\varphi(e_4) = f_1+f_2+f_3+f_4+f_5 \quad \text{ or } \quad \varphi(e_4) = f_1+2f_2.
\]
\begin{itemize}
\item[(a)] Case of $\varphi(e_4) = f_1+f_2+f_3+f_4+f_5.$ Write $\varphi(e_5) = \sum _{i=1}^5 \lambda _i f_i$ for integers $\lambda _1, \dots, \lambda _5$ to be determined. Since $e_5\cdot e_5=-35$ it follows that $35 = \sum _{i=1}^5 \lambda _i^2$, and $e_4\cdot e_5=0$ implies that $\lambda _1+\lambda _2+\lambda _3+\lambda _4+\lambda _5 = 0$. These two relations are in contradiction with one another because 
$$35 = \lambda _1^2+\dots + \lambda _5^2 \equiv (\lambda _1+\dots +\lambda _5)^2  \pmod{2} \equiv 0 \pmod{2}, $$ 
showing that the choice of $\varphi(e_4) = f_1+f_2+f_3+f_4+f_5 $ does not extend to an embedding $\varphi$. 

\item[(b)] Case of $\varphi(e_4) = f_1+2f_2$. Since $e_3\cdot e_4=1$ and $\varphi(e_3)$ is a sum of only two basis elements $\{f_i\}_{i=1}^5$, it must be that $\varphi(e_3) = -f_1+f_3$ or $\varphi(e_3) = f_1-f_2$. 

We first pursue the case of $\varphi(e_3) = -f_1+f_3$. Since $e_2\cdot e_3=1$, $\varphi(e_2)$ must share exactly one basis element with $\varphi(e_3)$. This shared element cannot be $f_1$ since this would force $\varphi(e_2) \cdot \varphi(e_4)\ne 0$, showing that $\varphi(e_3)$ and $\varphi(e_2)$ must share $f_3$. Note the $\varphi(e_2)$ cannot contain $f_2$ either since this would lead yet again to $\varphi(e_2) \cdot \varphi(e_4) \ne 0$. We are thus forced to conclude that $\varphi(e_2) = -f_3+f_4+f_5$. Write $\varphi(e_1) = \sum _{i=1}^5 \mu_i f_i$, then $\mu_1+2\mu_2=0$, $\mu_1=\mu_3$, $1=\mu_3-\mu_4-\mu_5$ and $\sum _{i=1}^5 \mu_i^2 = 2$. The three linear equations lead to $\mu_1= \mu_3 =  -2\mu_2$, $\mu_5 = -2\mu_2-\mu_4-1$ which when plugged into the quadratic equation gives 
\[
9\mu_2^2 +\mu_4^2 +(2\mu_2+\mu_4+1)^2 = 2.
\]
This forces $\mu_2=0$ and $\mu_4^2+(\mu_4+1)^2=2$, the latter of which has no solution $\mu_4\in \mathbb Z$. 

Next we turn to the only remaining possibility of $\varphi(e_3) = f_1-f_2$. Since $e_2\cdot e_3=1$, $\varphi(e_3)$ and $\varphi(e_2)$ must share exactly one basis element $f_i$, $i=1,\dots, 5$. Accordingly, we must have $\varphi(e_2) = \pm f_i \pm f_j$ with $i\in \{1, 2\}$ and $j\in \{3, 4, 5\}$. However each of these cases leads to $\varphi(e_2)\cdot \varphi(e_4) \ne 0$, contradicting $e_2\cdot e_4=0$. 
\end{itemize}

It follows that the embedding $\varphi$  cannot exist, implying that $\gamma_4(9_{12})\ge 2$. The non-oriented band move from $9_{12}$ to $7_3$ in Figure~\ref{9_12Subfigure} shows that $\gamma_4(9_{12}) = 2$, seeing as $\gamma_4(7_3) = 1$. 
\begin{figure}[h]
	\centering
	\begin{subfigure}[b]{0.34\textwidth}
		\includegraphics[width=\textwidth]{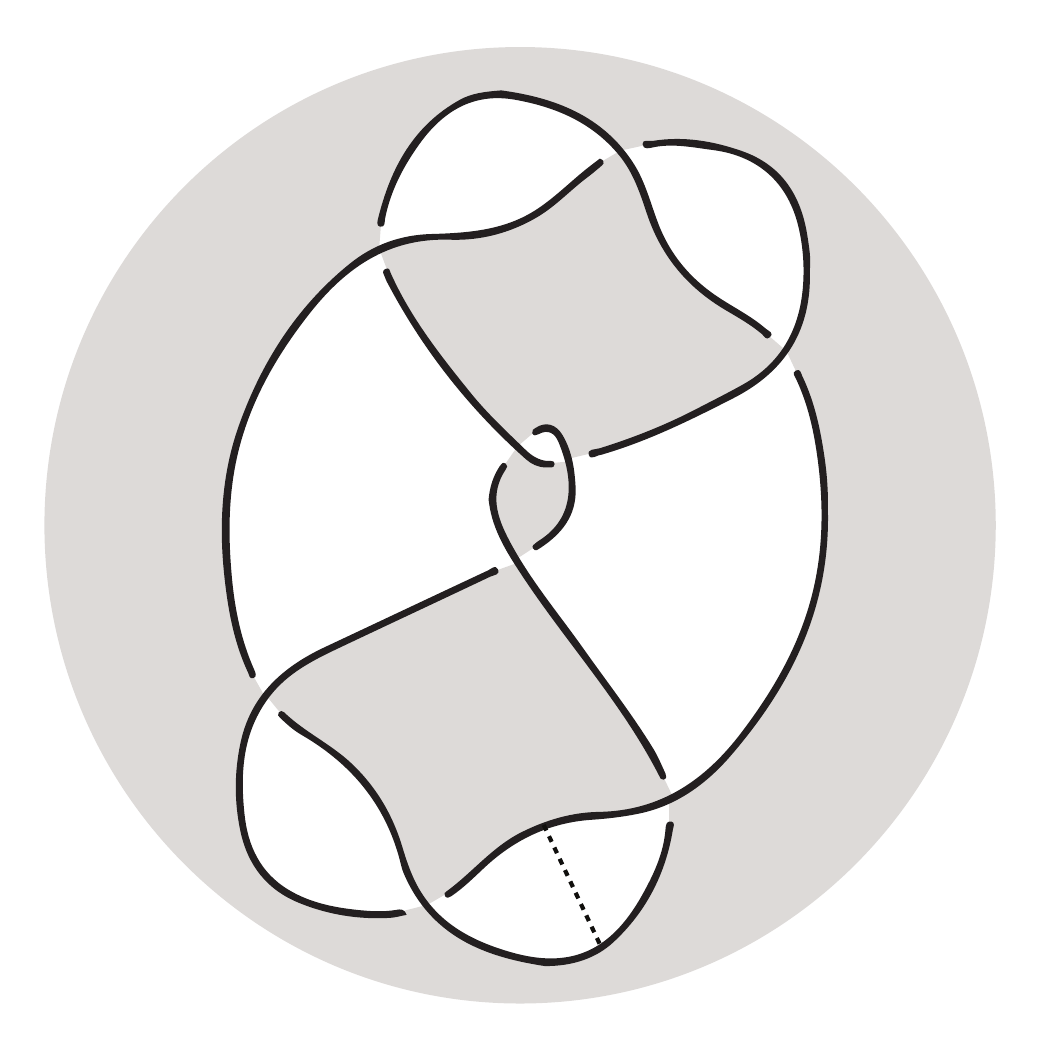}
		\caption{$9_{16}\stackrel{0}{\longrightarrow} 6_2$}
		\label{9_16Subfigure}
	\end{subfigure}
	\qquad \qquad \qquad 
	\begin{subfigure}[b]{0.38\textwidth}
		\includegraphics[width=\textwidth]{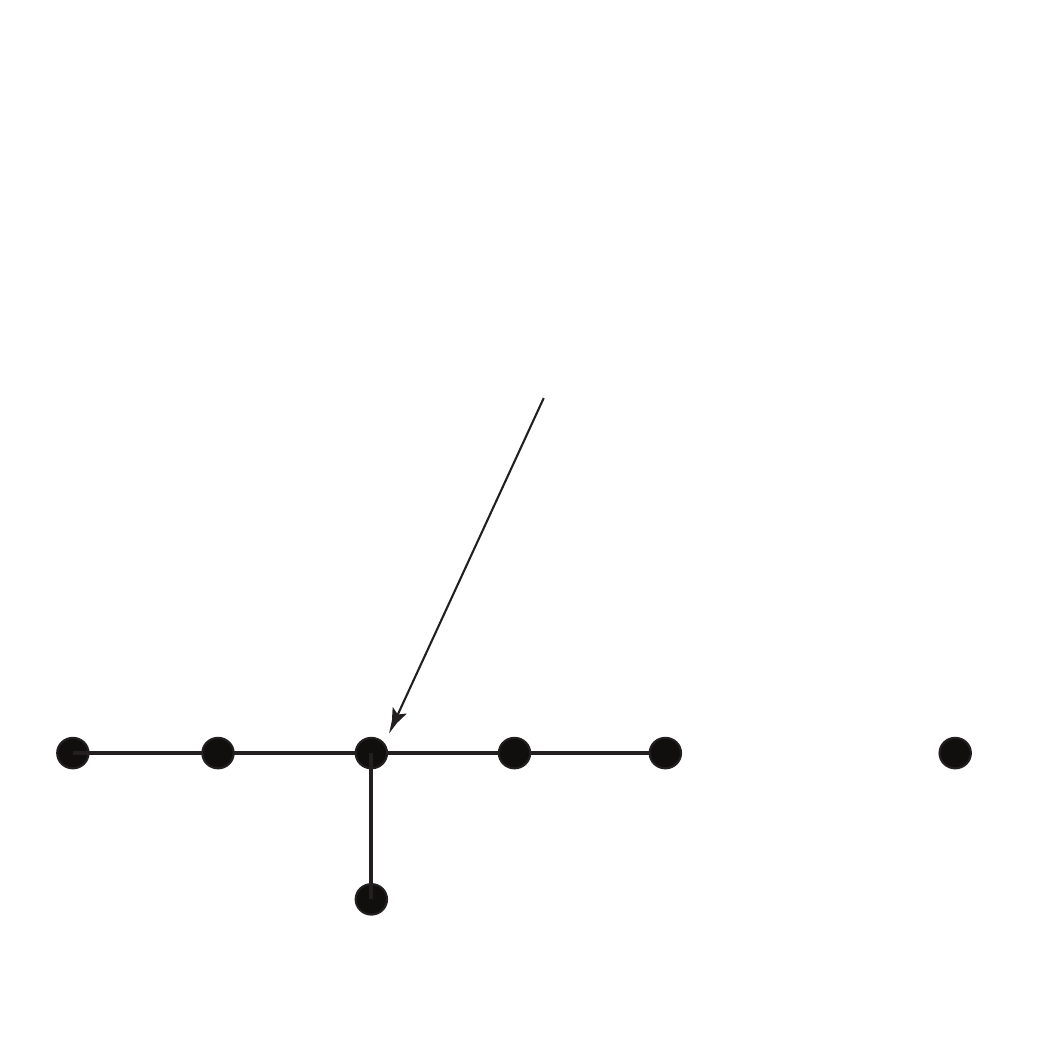}           
		\put(-158,34){$e_1$}
		\put(-168,56){$-2$}
		\put(-136,34){$e_2$}
		\put(-145,56){$-2$}
		\put(-77,105){$e_3$}
		\put(-121,56){$-4$}
		\put(-100,20){$e_4$}
		\put(-129,20){$-2$}
		\put(-88,34){$e_5$}
		\put(-98,56){$-2$}
		\put(-64,34){$e_6$}
		\put(-72,56){$-2$}
		\put(-18,34){$e_7$}
		\put(-29,56){$-39$}
		\caption{Goeritz form for $9_{16}$.}
		\label{9_16Intersectionform}
	\end{subfigure}
	\vskip3mm
	\caption{Case of $K=9_{16}$.}\label{9_16}
\end{figure}
\vskip3mm
{\bf Case of $K=9_{16}$. } The negative definite Goeritz matrix $G$ associated to the checkerboard coloring of the knot $K=9_{16}$ from Figure~\ref{9_16Subfigure} is given in Figure~\ref{9_16Intersectionform}. Since $\det 9_{16} = 39$ is square-free, we wish to obstruct the existence of an embedding 
\[
\varphi :(\mathbb Z^{7},G\oplus [-39]) \hookrightarrow (\mathbb Z^7, -\text{Id}).
\]
Any such $\varphi$ would have
\[
\varphi(e_3) = f_1 + f_2 +f_3 + f_4.
\]
Since $e_i\cdot e_4=1$ and $e_i\cdot e_i=-2$ for $i=2, 4, 5$, then each $\varphi(e_i)$ has exactly one basis element in common with $\varphi(e_3)$, and that common basis element is different for each $i=2, 4, 5$. Indeed if we had for instance $-f_1$ be common to $\varphi(e_2)$ and $\varphi(e_4)$ then we would be forced to have $\varphi(e_2) = -f_1+f_5$ and $\varphi(e_4) = -f_1-f_5$, which would make it impossible to satisfy the two relations $\varphi(e_1)\cdot \varphi(e_2) = 1$ and $\varphi(e_1)\cdot \varphi(e_4)=0$ simultaneously. A similar argument shows that neither of the other two pairs $\{\varphi(e_2), \varphi(e_5)\}$ and $\{\varphi(e_4), \varphi(e_5)\}$ can share the same basis element with $\varphi(e_3)$. Thus we conclude that 
\[
\varphi(e_2) = -f_1+f_5, \quad \varphi(e_4) = -f_2+f_6, \quad \varphi(e_5) = -f_3+f_7.
\]
Since $e_1\cdot e_2=1$, $\varphi(e_1)$ shares exactly one basis element with $\varphi(e_2)$. This shared element cannot be $f_5$ since the other basis element for $\varphi(e_1)$ would have to come from $\{f_2, f_3, f_4, f_6, f_7\}$, each choice of which would lead to $\varphi(e_1)\cdot \varphi(e_i)\ne 0$ for some $i\ne 1,2$. This leaves $\varphi(e_1) = f_1-f_4$ as the only possibility.  Lastly, $e_6\cdot e_5=1$ says that $\varphi(e_6)$  must contain one and only one of $f_3$ or $f_7$. However either choice for the other basis element in $\varphi(e_6)$ leads to one of $\varphi(e_6)\cdot \varphi(e_i)$, $i=1, 2, 3, 4$ being nonzero, a contradiction. Thus $\varphi$ cannot exist and so $\gamma_4(9_{16})\ge 2$, showing that $\gamma_4(9_{16})=2$ given the non-oriented band move from $9_{16}$ to $6_2$ in Figure~\ref{9_16Subfigure}, and seeing as $\gamma_4(6_2) = 1$. 

\begin{figure}[h]
	\centering
	\begin{subfigure}[b]{0.34\textwidth}
		\includegraphics[width=\textwidth]{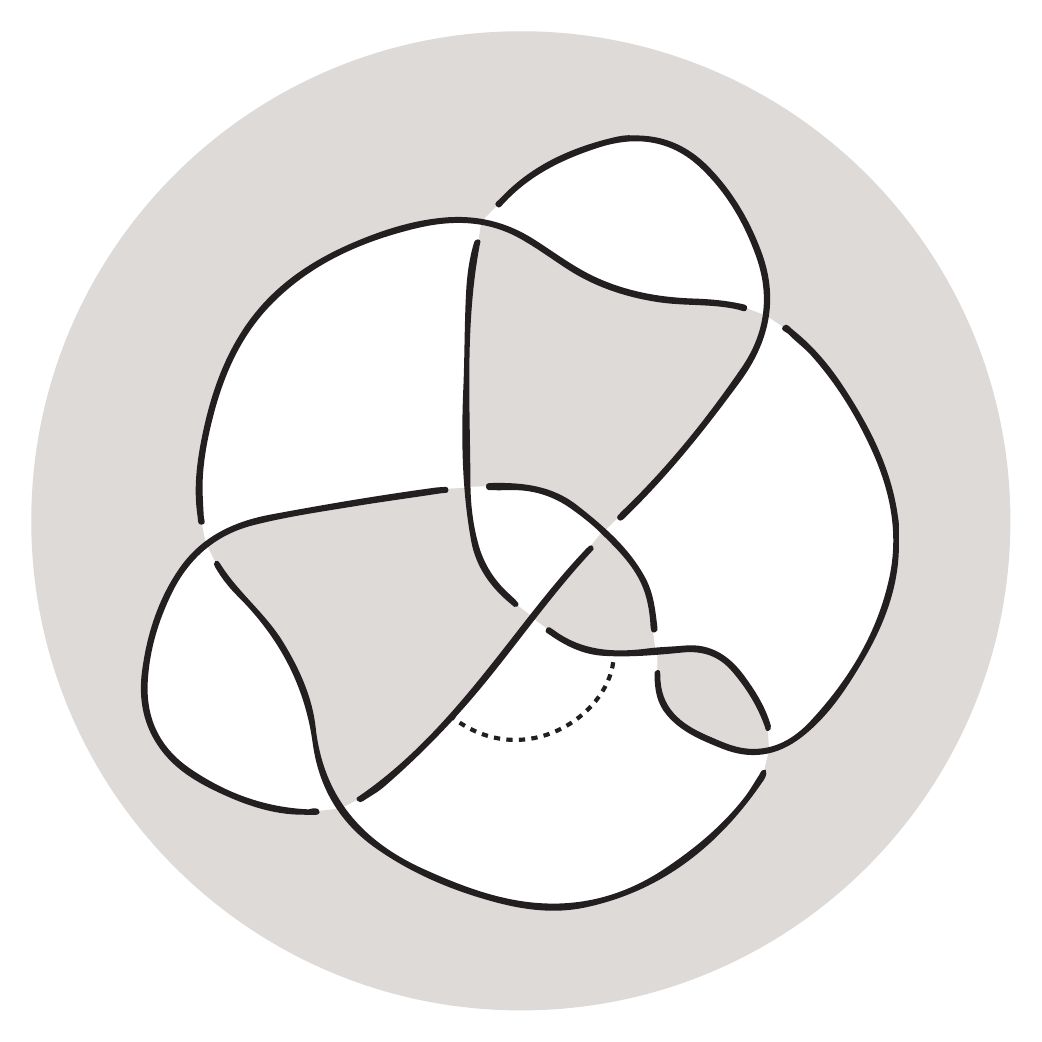}
		\caption{$9_{39}\stackrel{1}{\longrightarrow} 8_{11}$}
		\label{9_39Subfigure}
	\end{subfigure}
	\qquad \qquad \qquad
	\begin{subfigure}[b]{0.38\textwidth}
		\includegraphics[width=\textwidth]{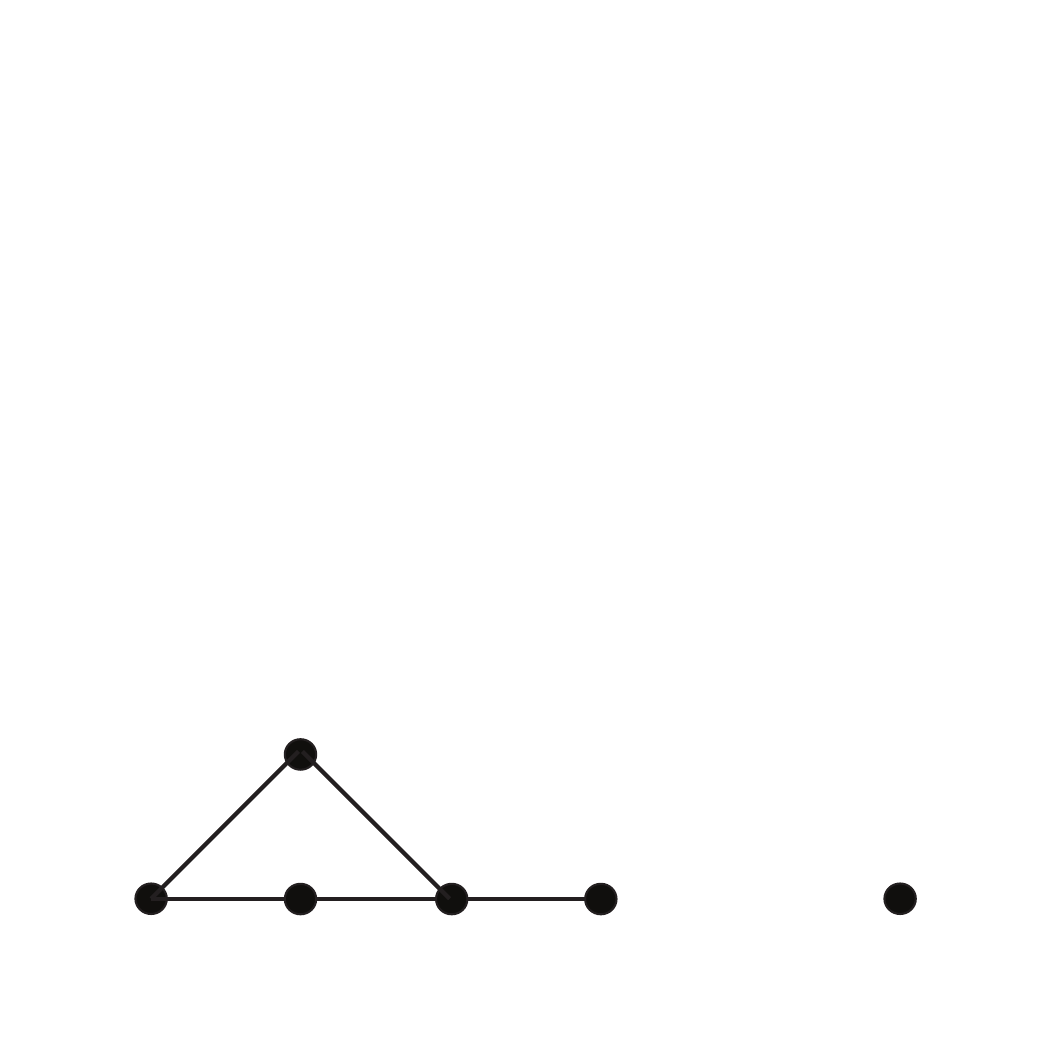}           
		\put(-147,8){$e_1$}
		\put(-155,30){$-4$}
		\put(-123,8){$e_2$}
		%
		\put(-99,8){$e_3$}
		\put(-103,30){$-3$}
		\put(-125,56){$e_4$}
		%
		\put(-75,8){$e_5$}
		\put(-83,30){$-2$}
		\put(-26,8){$e_6$}
		\put(-38,30){$-55$}
		\caption{Goeritz form for $9_{39}$. Here $e_2$ has square $-2$ and $e_4$ has square $-3$.}
		\label{9_22Intersectionform}
	\end{subfigure}
	\vskip3mm
	\caption{Case of $K=9_{39}$.}\label{9_39}
\end{figure}
\vskip3mm
{\bf Case of $K=9_{39}$. } The negative definite Goeritz matrix $G$ associated to the checkerboard coloring of the knot $K=9_{39}$ in Figure~\ref{9_39Subfigure} is the incidence matrix of the weighted graph in Figure~\ref{9_22Intersectionform}. Since $\det 9_{39} = 55$ is squre-free, we aim to show that no embedding 
\[
\varphi :(\mathbb Z^6, G\oplus[-55]) \hookrightarrow (\mathbb Z^6, -\text{Id})
\]
exists. Any such $\varphi$ would have 
\[
\varphi(e_1) = f_1 + f_2 +f_3+f_4 \quad \text{ and } \quad \varphi(e_2) = -f_4+f_5.
\]
Note that $\varphi(e_1) = 2f_1$ is not possible because $e_1\cdot e_2=1$. Since $e_2\cdot e_3=1$ and $e_1\cdot e_3=0$, then $\varphi(e_3)$ shares exactly one basis element with $\varphi(e_2)$ and an even number of basis elements with $\varphi(e_1)$. The shared element among $\varphi(e_3)$ and $\varphi(e_2)$ may be either $f_4$ or $f_5$ leading to the two possibilities $\varphi(e_3) = f_4 -f_1-f_6$ or  $\varphi(e_3) = -f_5-f_1+f_2$. 

\begin{itemize}
\item[(a)] Case of $\varphi(e_3) = f_4 -f_1-f_6$.

Since $e_3\cdot e_4=1=e_1\cdot e_4$ then $\varphi(e_4)$ shares an odd number of basis elements with each of $\varphi(e_1)$ and $\varphi(e_3)$. This shared number of basis elements between $\varphi(e_4)$ and $\varphi(e_3)$ cannot be 3 since if it were then we would obtain $\varphi(e_5)\cdot \varphi(e_4) \equiv \varphi(e_5)\cdot \varphi(e_3) \pmod{2}$ which is not a valid congruence. Thus $\varphi(e_4)$ shares one basis element with $\varphi(e_3)$. Note also that $\varphi(e_4)$ shares an even number of basis elements with $\varphi(e_2)$. 

\begin{itemize}
\item[(i)] Case of $\varphi(e_3)$ and $\varphi(e_4)$ sharing only $f_4$. In this case $f_5$ also appears as a summand in $\varphi(e_4)$ and we are led to $\varphi(e_4) = -f_4-f_5\pm f_i$ for some $i\in \{2, 3\}$. No matter which $i\in \{2,3\}$ we pick, we arrive at an even number of shared basis elements between $\varphi(e_4)$ and $\varphi(e_1)$, a contradiction. 

\item[(ii)] Case of $\varphi(e_3)$ and $\varphi(e_4)$ sharing only $f_1$. In this case $\varphi(e_4)$ cannot contain $f_4$ or $f_6$, and therefore cannot contain $f_5$ either since $e_2\cdot e_4=0$. Thus we are forced to conclude that in this case $\varphi(e_4) = f_1-f_2-f_3$. Moving on to $\varphi(e_5)$, the relation $e_5\cdot e_3=1$ shows that $\varphi(e_5)$ shares with $\varphi(e_3)$ exactly one of $f_1$, $f_4$ or $f_6$. 
\begin{itemize}
\item[($\alpha$)] Case of $\varphi(e_5)$ and $\varphi(e_3)$ sharing $f_1$. In this case we find that $\varphi(e_5) = f_1\pm f_i$ for some $i\in\{2,3,5\}$. The relation $e_5\cdot e_2=0$ shows that $i\ne 5$. Each of the possibilities $\varphi(e_5) = f_1\pm f_2$ or $\varphi(e_5) = f_1\pm f_3$ leads to one of $\varphi(e_5) \cdot \varphi(e_1)$ or $\varphi(e_5) \cdot \varphi(e_4)$ having the wrong value, a contradiction.  

\item[($\beta$)] Case of $\varphi(e_5)$ and $\varphi(e_3)$ sharing $f_4$. Here $\varphi(e_5) = -f_4\pm f_i$ for some $i\in\{2,3,5\}$. The relation $e_5\cdot e_2=0$ forces $i = 5$ and $\varphi(e_5) = -f_4-f_5$. However this leads to the incorrect value of 1 for $\varphi(e_5) \cdot \varphi(e_1)$, a contradiction. 

\item[($\gamma$)] Case of $\varphi(e_5)$ and $\varphi(e_3)$ sharing $f_6$. Here $\varphi(e_5) = f_6\pm f_i$ for some $i\in\{2,3,5\}$. The relation $e_5\cdot e_2=0$ forces $i \ne 5$, and each of the remaining possibilities $\varphi(e_5) = f_6\pm f_2$ and $\varphi(e_5) = f_6\pm f_3$ leads to the incorrect value of $\pm 1$ for $\varphi(e_5) \cdot \varphi(e_1)$, another contradiction.  
\end{itemize}

\item[(iii)] Case of $\varphi(e_3)$ and $\varphi(e_4)$ sharing only $f_6$. Since $e_4\cdot e_2=0$, then $\varphi(e_4)$ cannot contain $f_5$ either, leaving us with the possibility of $\varphi(e_4) = f_6\pm f_2\pm f_3$. No matter the choice of signs, this leads to an even number of shared basis elements between $\varphi(e_4)$ and $\varphi(e_1)$, contradicting $e_4\cdot e_1=1$. We conclude that the case of $\varphi(e_3) = f_4 -f_1-f_6$ does not lead to an embedding $\varphi$ .
\end{itemize}

\item[(b)] Case of $\varphi(e_3) = -f_5-f_1+f_2$. 

As in the previous case, we find that $\varphi(e_4)$ shares one basis element with $\varphi(e_3)$, shares an even number of basis elements with $\varphi(e_2)$, and an odd number with $\varphi(e_1)$. 

\begin{itemize}
\item[(i)] Case of $\varphi(e_3)$ and $\varphi(e_4)$ only sharing $f_1$. In this case $\varphi(e_4)$ cannot contain $f_2$ or $f_5$ and thus also not $f_4$, since $\varphi(e_2) = -f_4+f_5$. This leaves us with $\varphi(e_4) = f_1\pm f_3\pm f_6$, leading to an even number of shared basis elements between $\varphi(e_4)$ and  $\varphi(e_1)$, a contradiction to the relation $e_4\cdot e_1=1$. 

\item[(ii)] Case of $\varphi(e_3)$ and $\varphi(e_4)$ only sharing $f_2$. In this setup $\varphi(e_4)$ cannot contain $f_1$ or $f_5$ and thus also not $f_4$ (again since $e_2\cdot e_4=0$). Similarly to the previous subcase we are left with $\varphi(e_4) = -f_2\pm f_3 \pm f_6$ leading to the same contradiction as in the previous subcase. 

\item[(iii)] Case of $\varphi(e_3)$ and $\varphi(e_4)$ only sharing $f_5$. Here $\varphi(e_4)$ cannot contain $f_1$ or $f_2$, while the relation $e_2\cdot e_4=0$  implies that $\varphi(e_4)$ must contain $f_4$. This implies that $\varphi(e_4) = f_5+f_4 \pm f_i$ for $i\in \{3,6\}$. Since $\varphi(e_4)$ shares an odd number of elements with $\varphi(e_1)$ we conclude that $i=6$ and that $\varphi(e_4) = f_5+f_4 \pm f_6$. This, regardless of the sign choice, implies that $\varphi(e_4) \cdot \varphi(e_1) = -1$,  a contradiction. Having exhausted all possibilities and having been led to a contradiction in each, we conclude that the embedding $\varphi$ cannot exist. 
\end{itemize}
\end{itemize}

It follows that $\gamma_4(9_{39})\ge 2$. Figure~\ref{9_39Subfigure} shows a band move from $9_{39}$ to $8_{11}$ and since $\gamma_4(8_{11}) = 1$, it follows that $\gamma_4(9_{39}) = 2$. 
%
\begin{figure}[h]
	\centering
	\begin{subfigure}[b]{0.34\textwidth}
		\includegraphics[width=\textwidth]{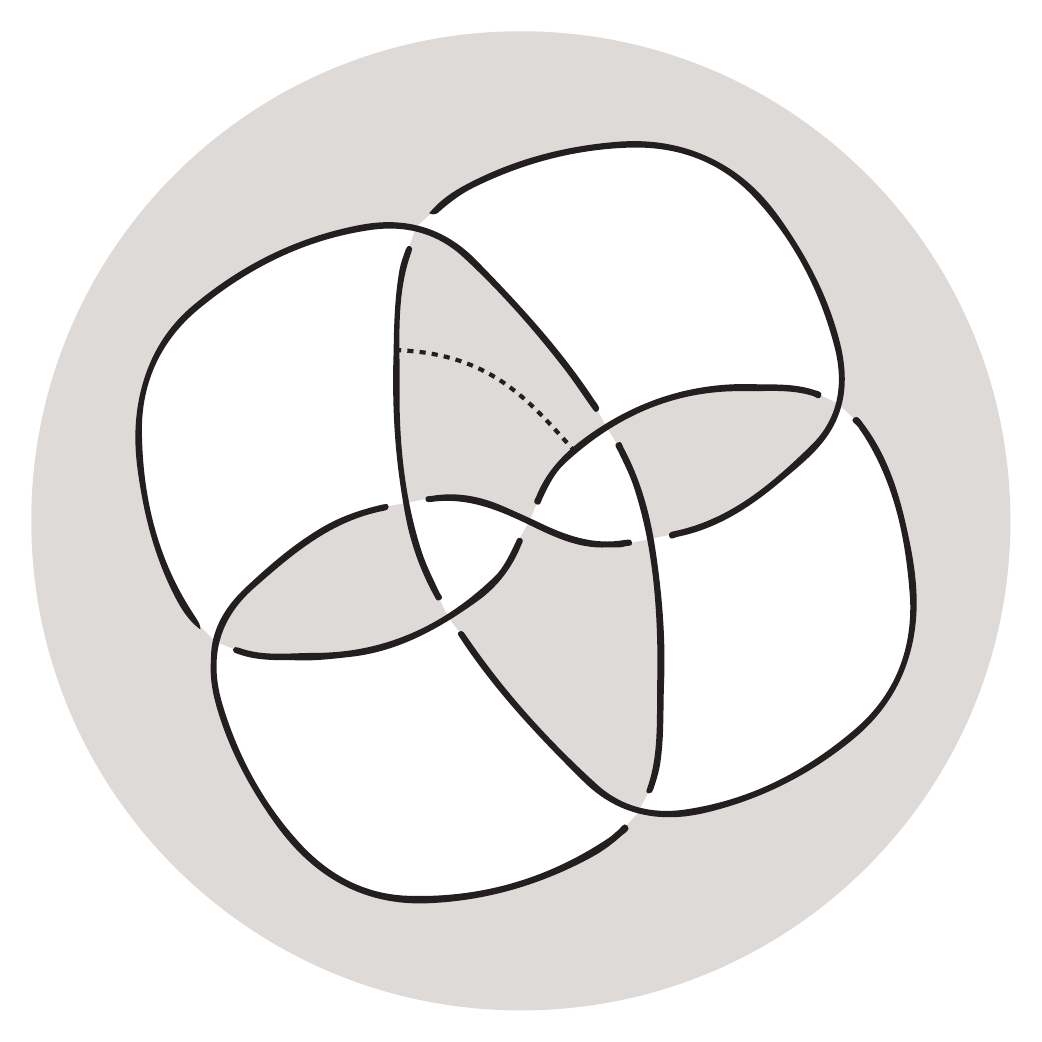}
		\caption{$9_{40}\stackrel{0}{\longrightarrow} 9_{31}$}
		\label{9_40Subfigure}
	\end{subfigure}
	\qquad \qquad \qquad
	\begin{subfigure}[b]{0.38\textwidth}
		\includegraphics[width=\textwidth]{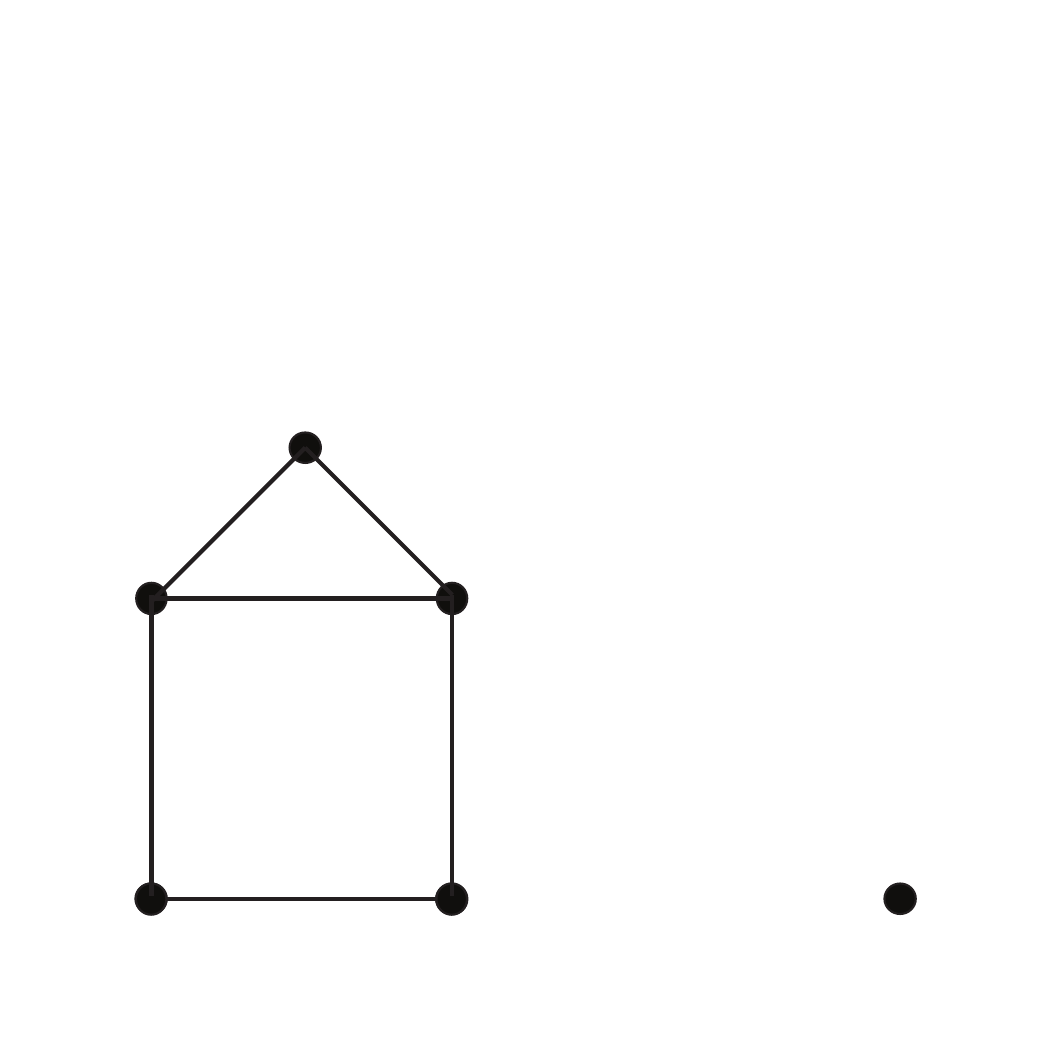}           
		\put(-139,32){$e_1$}
		\put(-156,7){$-3$}
		\put(-109,32){$e_2$}
		\put(-108,7){$-3$}
		\put(-109,60){$e_3$}
		\put(-88,68){$-3$}
		\put(-110,95){$e_5$}
		\put(-141,93){$-3$}
		\put(-139,60){$e_4$}
		\put(-164,68){$-3$}
		\put(-26,8){$e_6$}
		\put(-53,30){$-3 \text{ or } -75$}
		\caption{Goeritz form for $9_{40}$.}
		\label{9_40Intersectionform}
	\end{subfigure}
	\vskip3mm
	\caption{Case of $K=9_{40}$.}\label{9_40}
\end{figure}
\vskip3mm
{\bf Case of $K=9_{40}$. } The negative definite Goeritz matrix $G$ associated to the checkerboard coloring of the knot $K=9_{40}$ in Figure~\ref{9_40Subfigure} is the incidence matrix of the weighted graph in Figure~\ref{9_40Intersectionform}. Since $\det 9_{40}=75 = 3\cdot 5^2$ we wish to obstruct the existence of an embedding 
\[
\varphi :(\mathbb Z^6, G\oplus[-d]) \hookrightarrow (\mathbb Z^6, -\text{Id})
\]
for $d=3$ and for $d=75$. If such a $\varphi$ existed, then its restriction to $V:=\text{Span}(e_1,e_2,e_3,e_4)$ would be an embedding of $(V,G|_{V\times V})$ into $(\mathbb Z^6, -\text{Id})$. However the form $(V,G|_{V\times V})$ is isomorphic to the form considered in Proposition \ref{PropositionAbout8_18}, where it was shown not to embed into $(\mathbb Z^6, -\text{Id})$.
It follows that $\gamma_4(9_{40})\ge 2$, and since the non-oriented band move in Figure~\ref{9_40Subfigure} turns $9_{40}$ into $9_{31}$, a knot with $\gamma_4$ equal to 1, we conclude that $\gamma_4(9_{40}) = 2$. 

\subsection{Knots with $\sigma (K) + 4\cdot \text{Arf}(K) \equiv 0\pmod{8}$} \label{SectionOnNullYasuharaKnots}
The 17 knots $K$ with crossing number 8 or 9, that satisfy the congruence relation $\sigma (K) + 4 \cdot \text{Arf}(K) \equiv  0\pmod{8}$ are, up to passing to mirrors, given by 
\begin{gather} \label{ListOfNullYasuharaKnots}
\underline{8_3},\, \underline{8_5},\, \underline{8_8},\, \underline{8_9},\, \underline{8_{20}},\, \cr
\underline{9_1},\, \underline{9_4},\, \underline{9_7},\, \underline{9_{13}},\, \underline{9_{19}},\, \underline{9_{23}},\, \underline{9_{27}},\, \underline{9_{36}}, \, \underline{9_{41}},\, \underline{9_{43}}, \, \underline{9_{44}}, \, \underline{9_{46}}.
\end{gather}

\begin{figure}[!htbp]
	\centering
	\begin{subfigure}[b]{0.27\textwidth}
		\includegraphics[width=\textwidth]{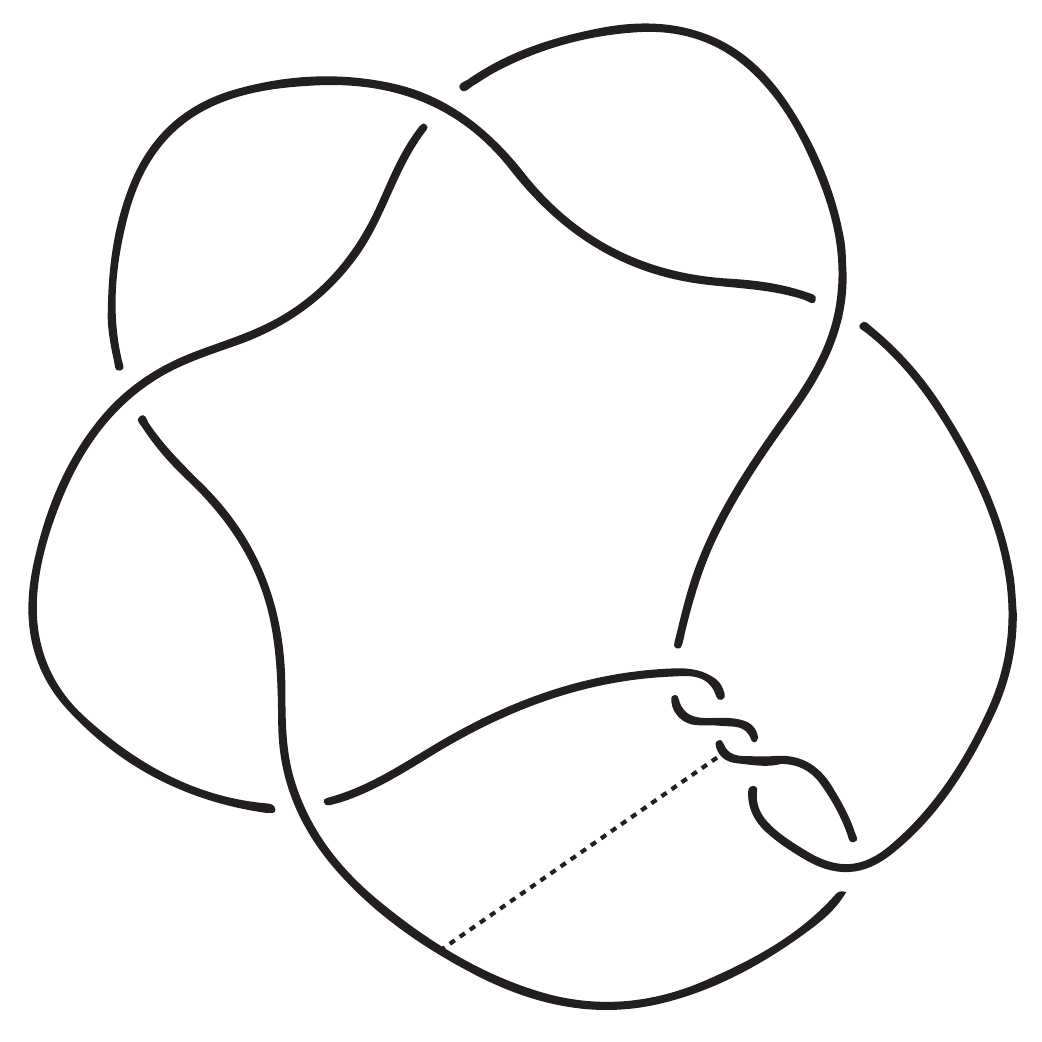}
		\caption{$8_{3}\stackrel{1}{\longrightarrow} 0_1$}
		\label{FigureFor8_3}
	\end{subfigure}
	~
	\begin{subfigure}[b]{0.3\textwidth}
		\includegraphics[width=\textwidth]{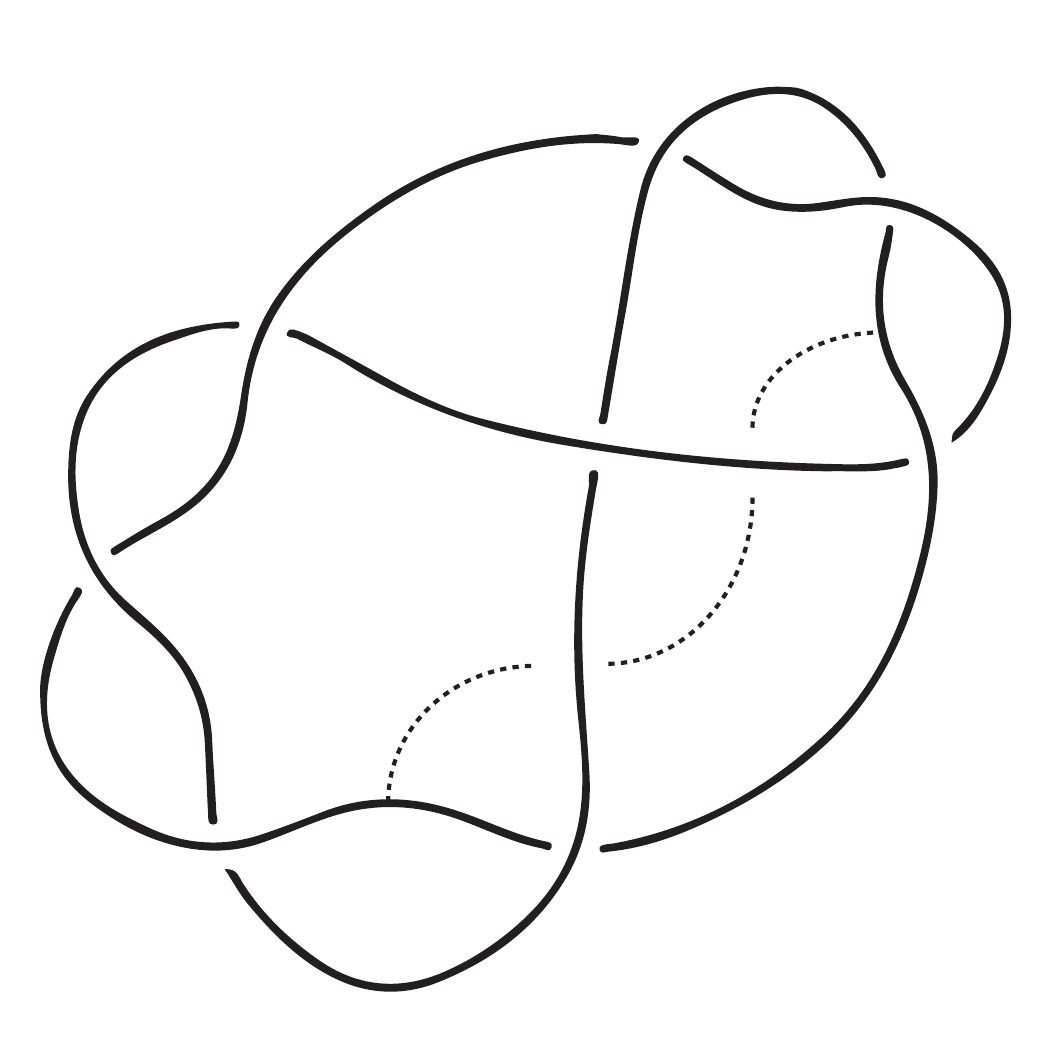}
		\caption{$8_{4}\stackrel{1}{\longrightarrow} 0_1$}
		\label{FigureFor8_4}
	\end{subfigure}
	~
	\begin{subfigure}[b]{0.3\textwidth}
		\includegraphics[width=\textwidth]{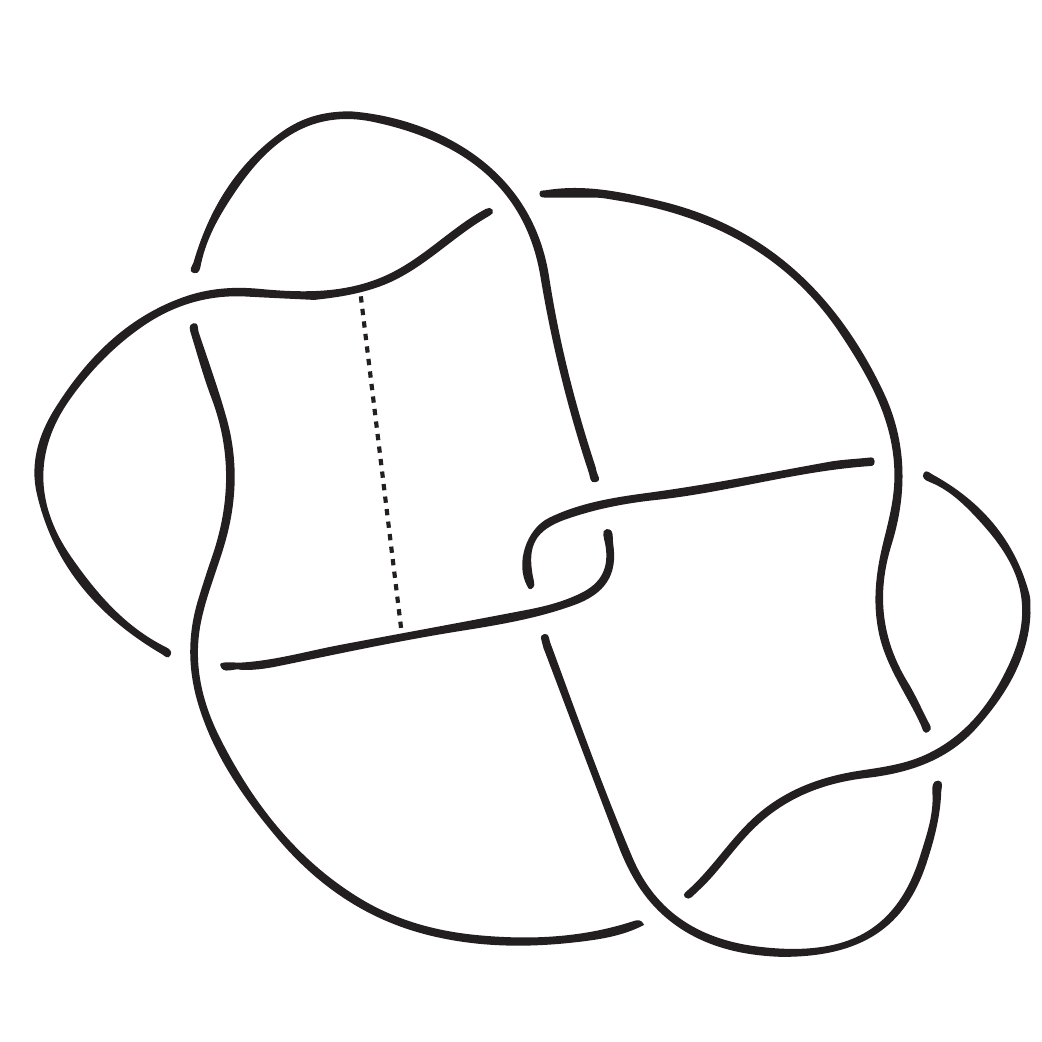}
		\caption{$8_{5}\stackrel{-1\phantom{i}}{\longrightarrow} 0_1$}
		\label{FigureFor8_5}
	\end{subfigure}
	\vskip3mm
	\begin{subfigure}[b]{0.3\textwidth}
		\includegraphics[width=\textwidth]{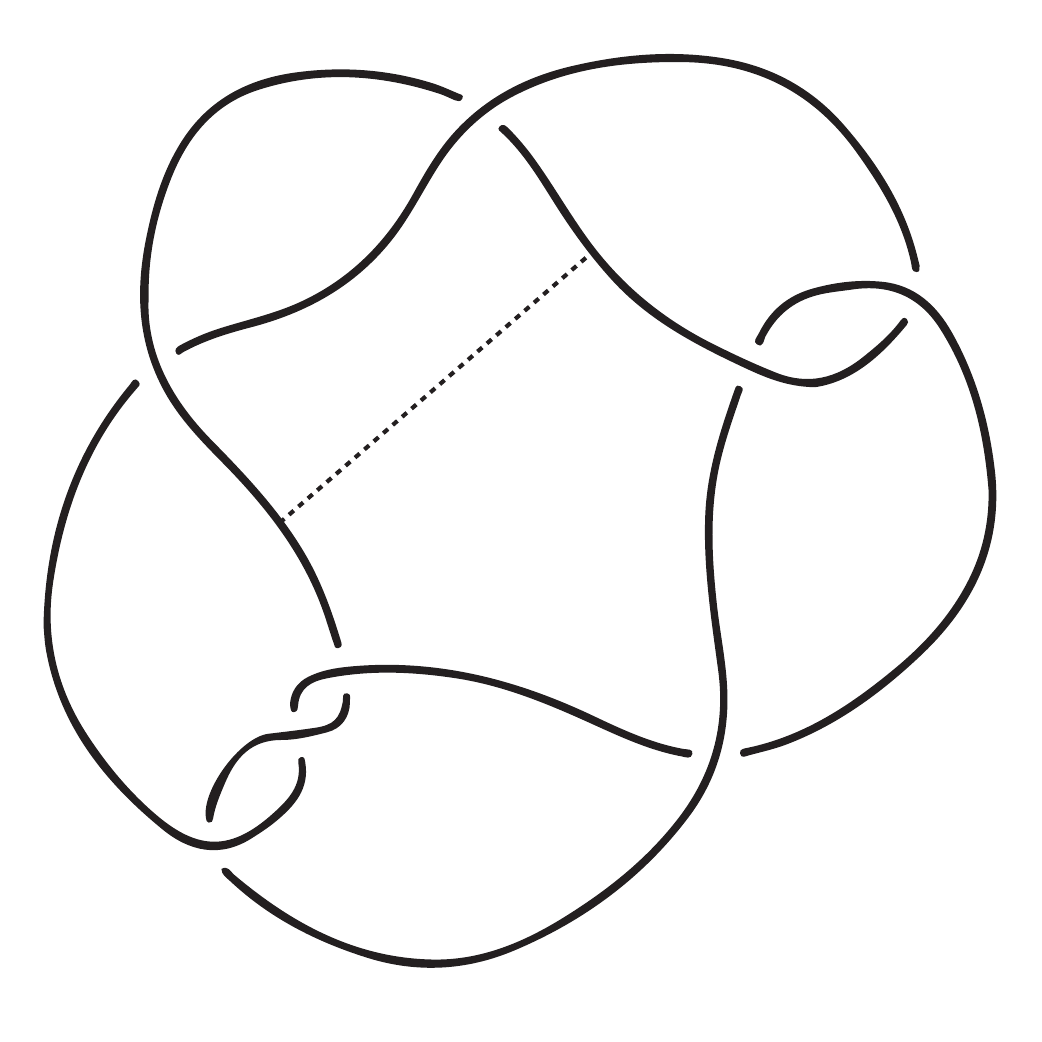}
		\caption{$8_{6}\stackrel{1}{\longrightarrow} 0_1$}
		\label{FigureFor8_6}
	\end{subfigure}
	~
	\begin{subfigure}[b]{0.3\textwidth}
		\includegraphics[width=\textwidth]{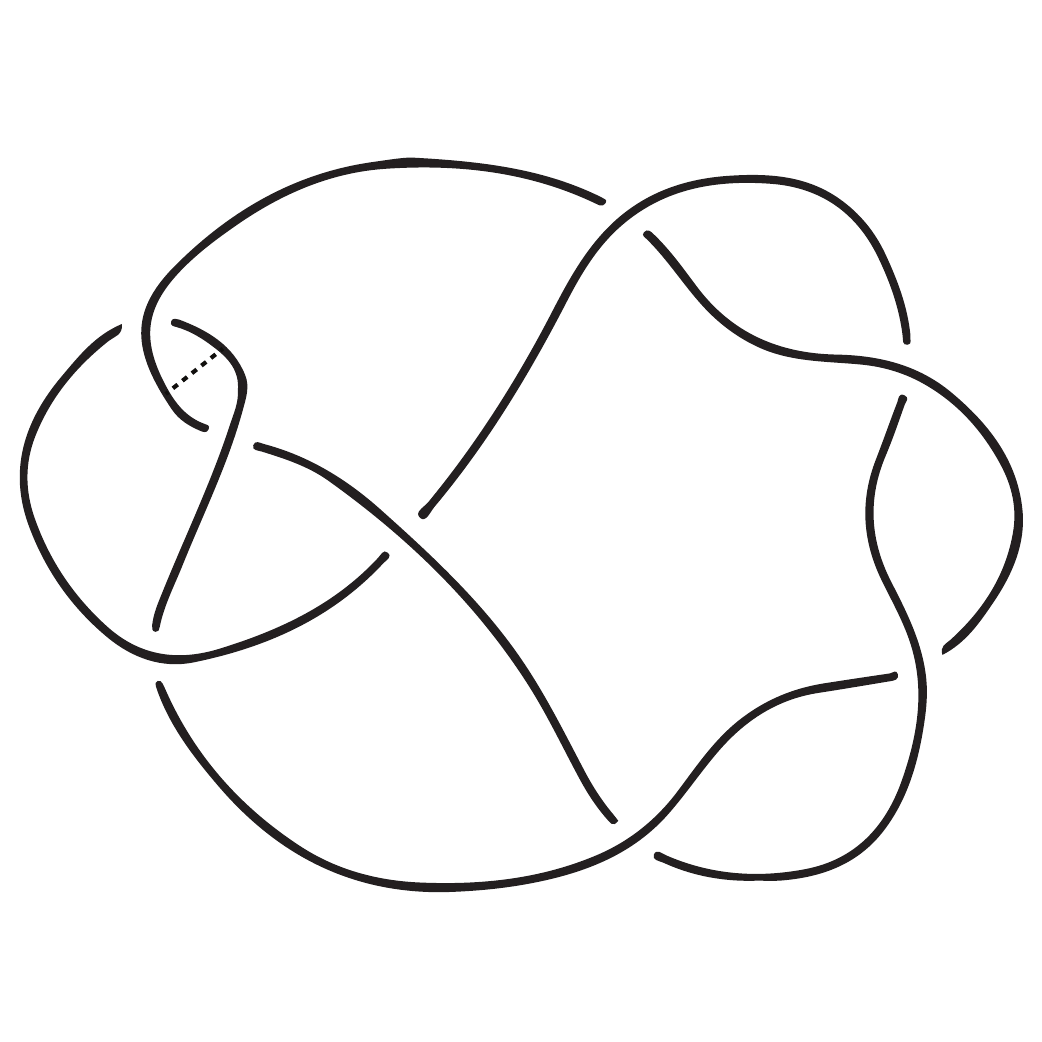}
		\caption{$8_{7}\stackrel{0}{\longrightarrow} 6_1$}
		\label{FigureFor8_7}
	\end{subfigure}
	~
	\begin{subfigure}[b]{0.27\textwidth}
		\includegraphics[width=\textwidth]{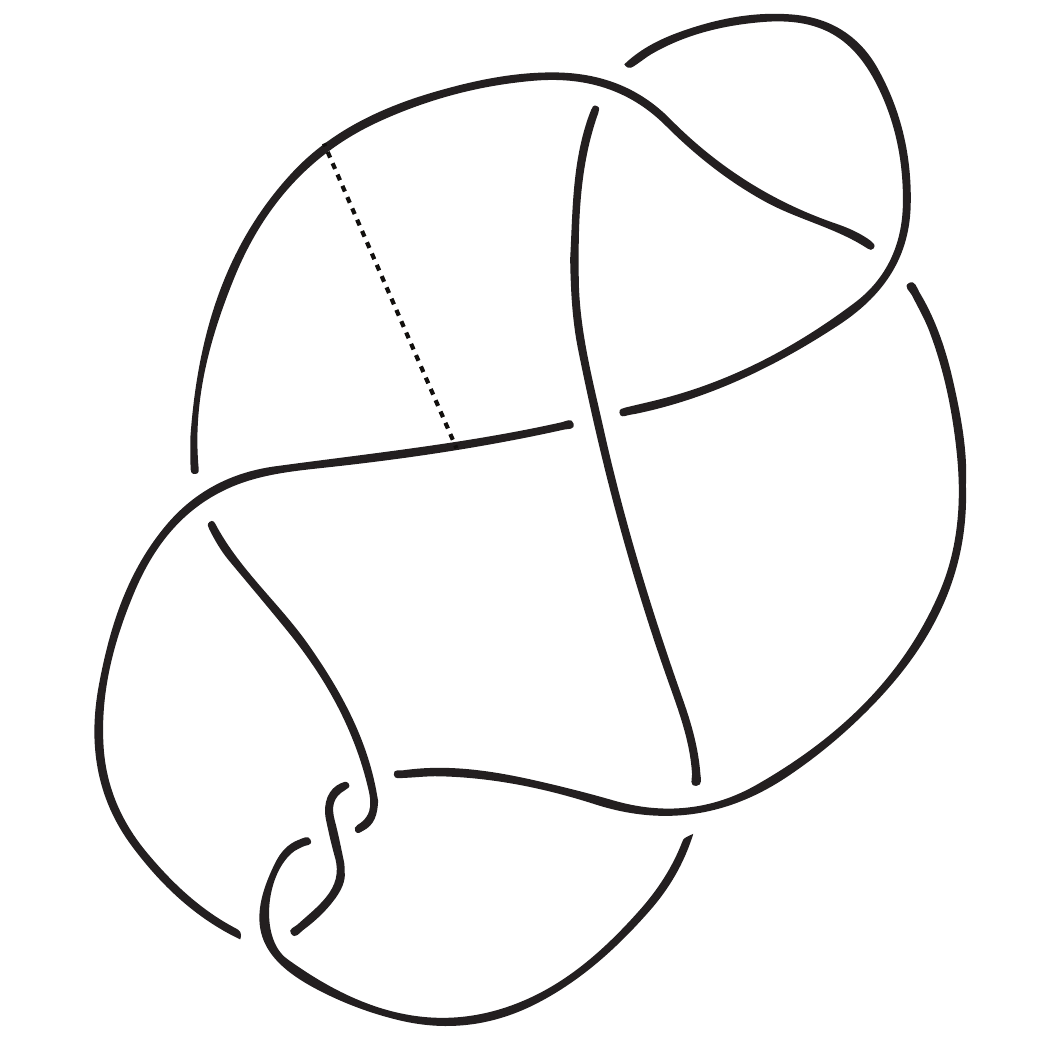}
		\caption{$8_{11}\stackrel{0}{\longrightarrow} 3_1\#(-3_1)$}
		\label{FigureFor8_11}
	\end{subfigure}
	\vskip3mm
	\begin{subfigure}[b]{0.27\textwidth}
		\includegraphics[width=\textwidth]{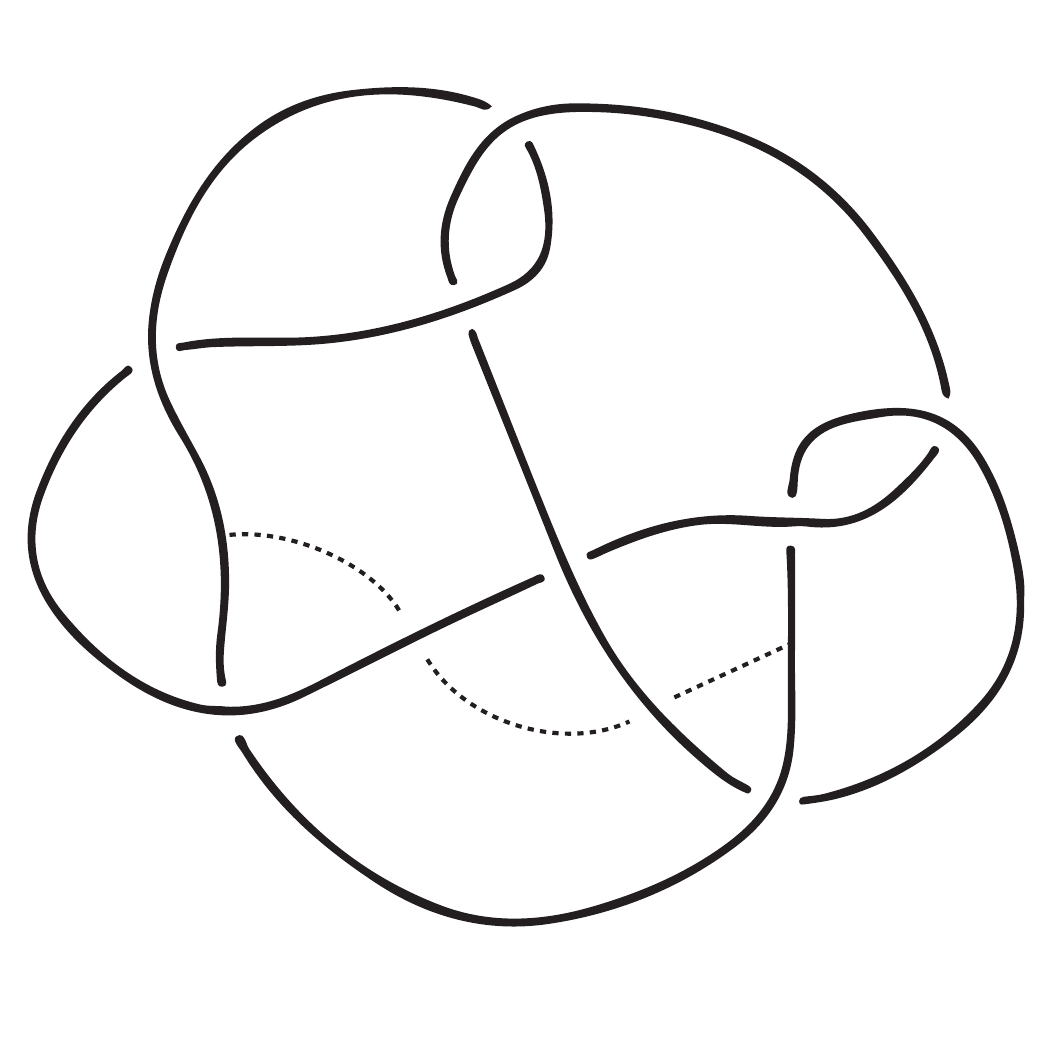}
		\caption{$8_{14}\stackrel{1}{\longrightarrow} 8_8$}
		\label{FigureFor8_14}
	\end{subfigure}
	~
	\begin{subfigure}[b]{0.27\textwidth}
		\includegraphics[width=\textwidth]{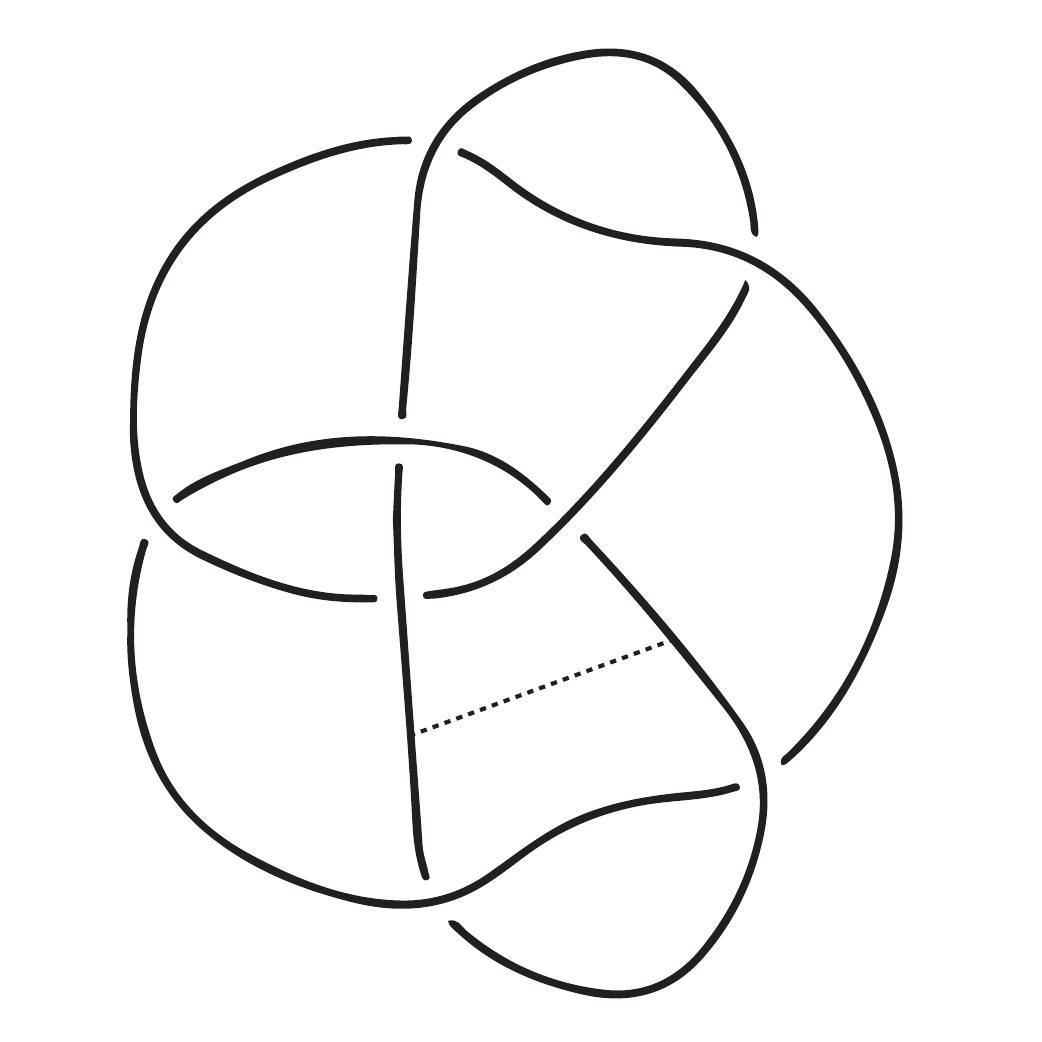}
		\caption{$8_{16}\stackrel{1}{\longrightarrow} 8_{20}$}
		\label{FigureFor8_16}
	\end{subfigure}
	~
	\begin{subfigure}[b]{0.3\textwidth}
		\includegraphics[width=\textwidth]{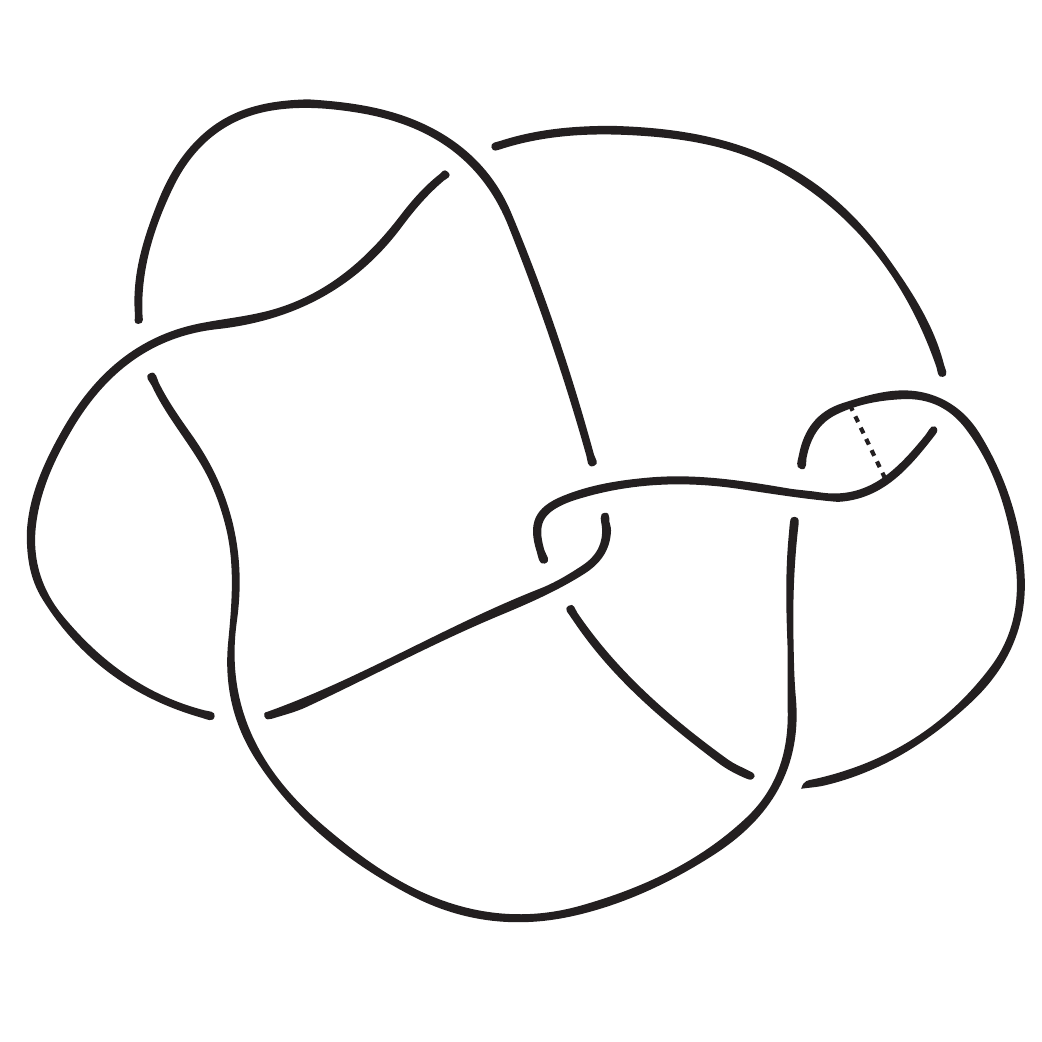}
		\caption{$8_{19}\stackrel{0}{\longrightarrow} 0_1$}
		\label{FigureFor8_19}
	\end{subfigure}
	\vskip3mm
	\caption{Non-oriented band moves from the knots $8_{3}$, $8_4$, $8_{5}$, $8_{6}$, $8_{7}$, $8_{11}$, $8_{14}$, $8_{16}$, $8_{19}$ to smoothly slice knots.}\label{Figure1ToSliceKnots}
\end{figure}
\begin{figure}[!htbp]
	\centering
	\begin{subfigure}[b]{0.27\textwidth}
		\includegraphics[width=\textwidth]{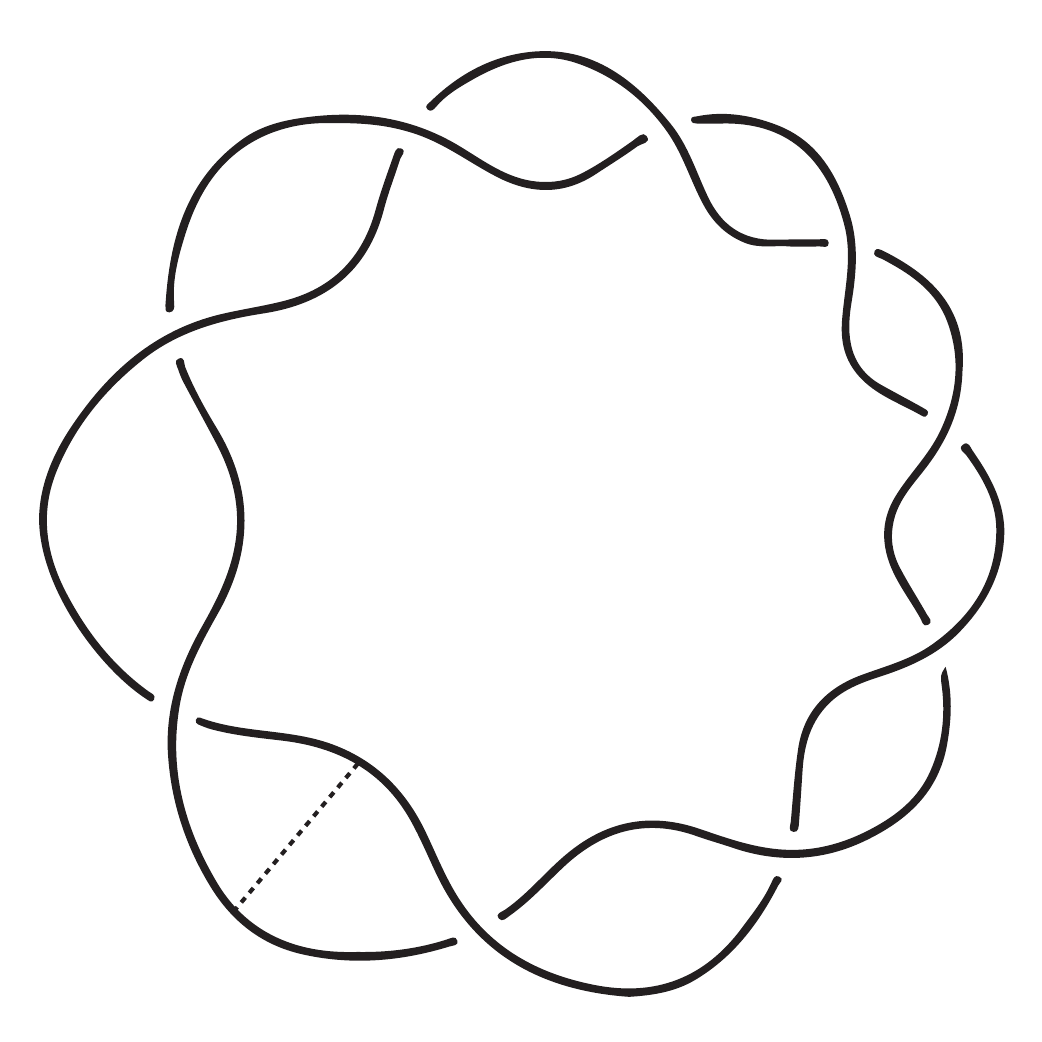}
		\caption{$9_{1}\stackrel{0}{\longrightarrow} 0_1$}
		\label{FigureFor9_1}
	\end{subfigure}
	~
	\begin{subfigure}[b]{0.28\textwidth}
		\includegraphics[width=\textwidth]{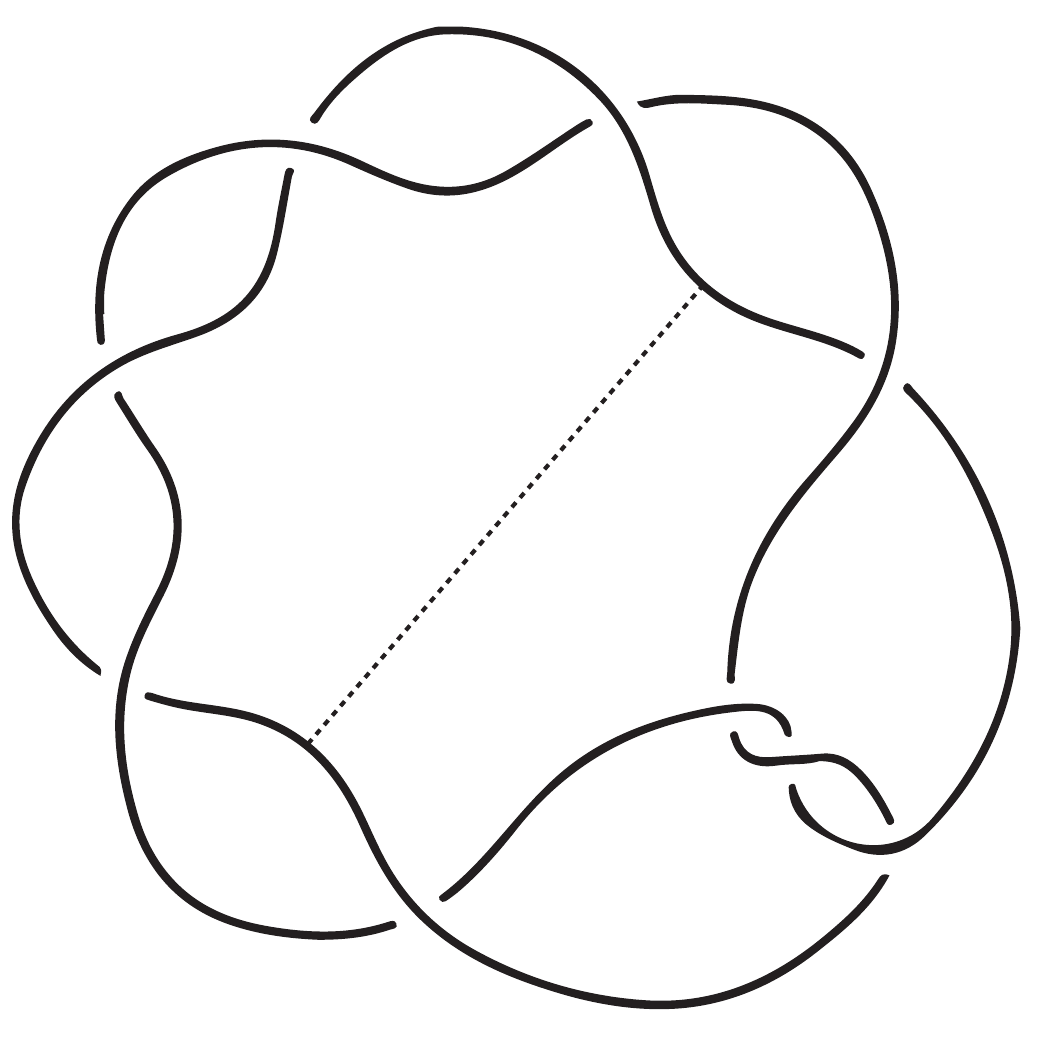}
		\caption{$9_{3}\stackrel{-1\phantom{i}}{\longrightarrow} 6_1$}
		\label{FigureFor9_3}
	\end{subfigure}
	~
	\begin{subfigure}[b]{0.28\textwidth}
		\includegraphics[width=\textwidth]{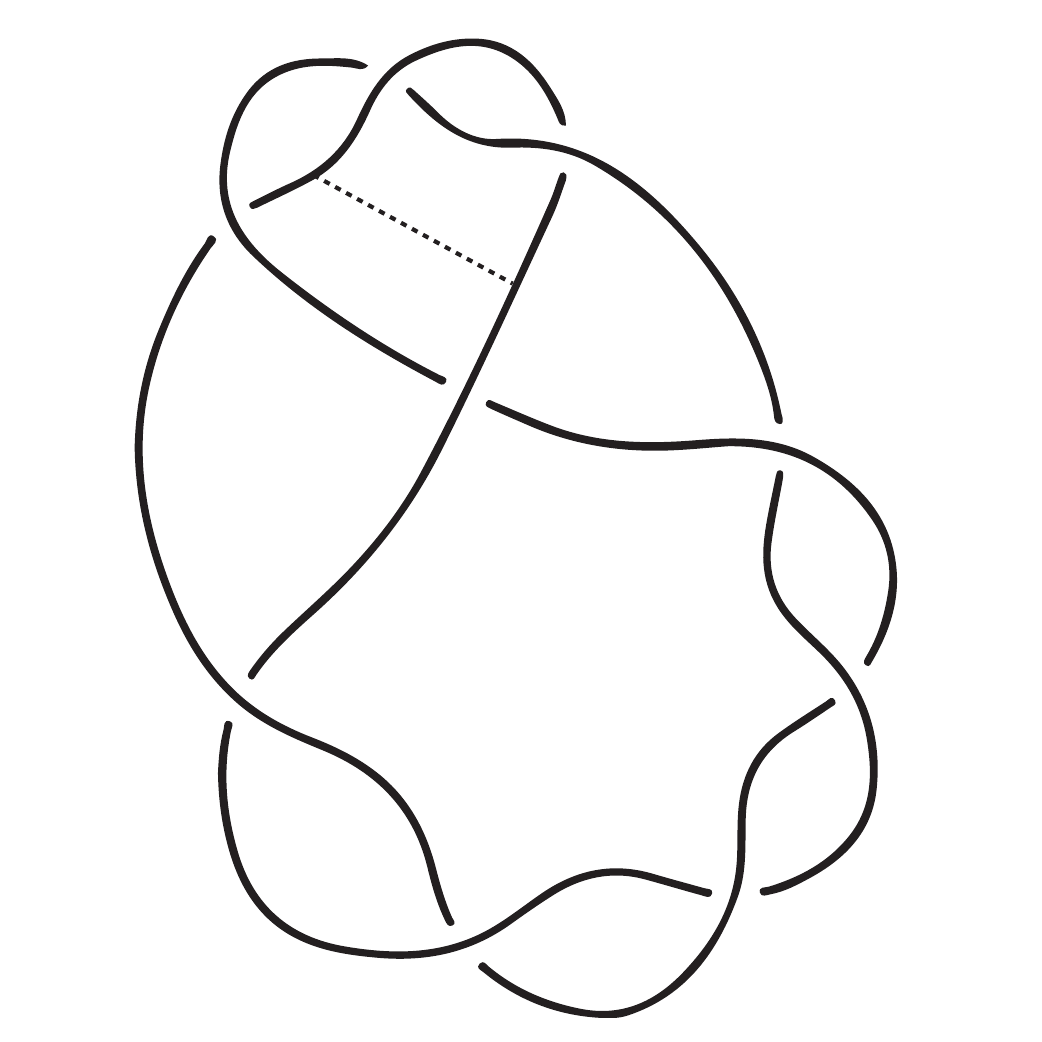}
		\caption{$9_{5}\stackrel{1}{\longrightarrow} 0_1$}
		\label{FigureFor9_5}
	\end{subfigure}
	\vskip3mm
	\begin{subfigure}[b]{0.28\textwidth}
		\includegraphics[width=\textwidth]{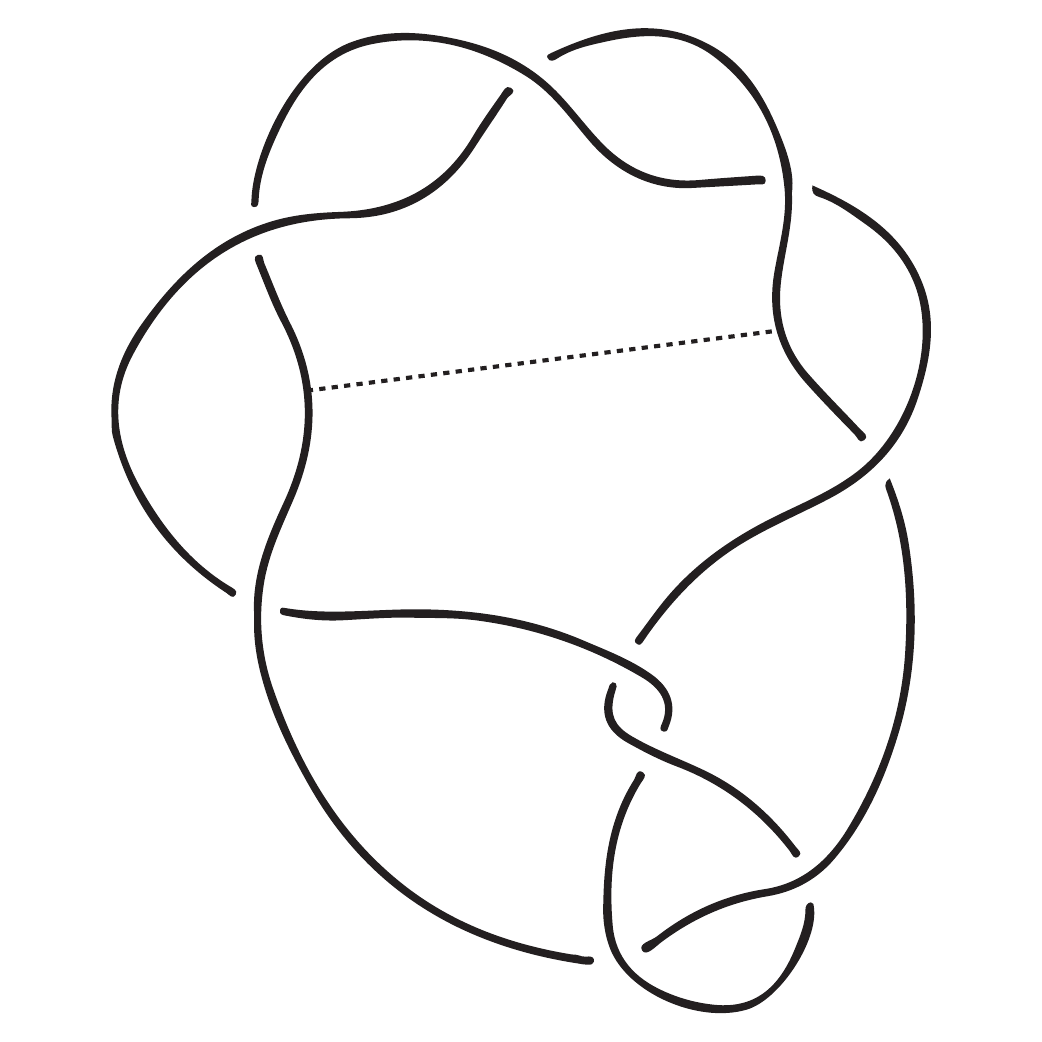}
		\caption{$9_{6}\stackrel{-1\phantom{i}}{\longrightarrow} 6_1$}
		\label{FigureFor9_6}
	\end{subfigure}
	~
	\begin{subfigure}[b]{0.28\textwidth}
		\includegraphics[width=\textwidth]{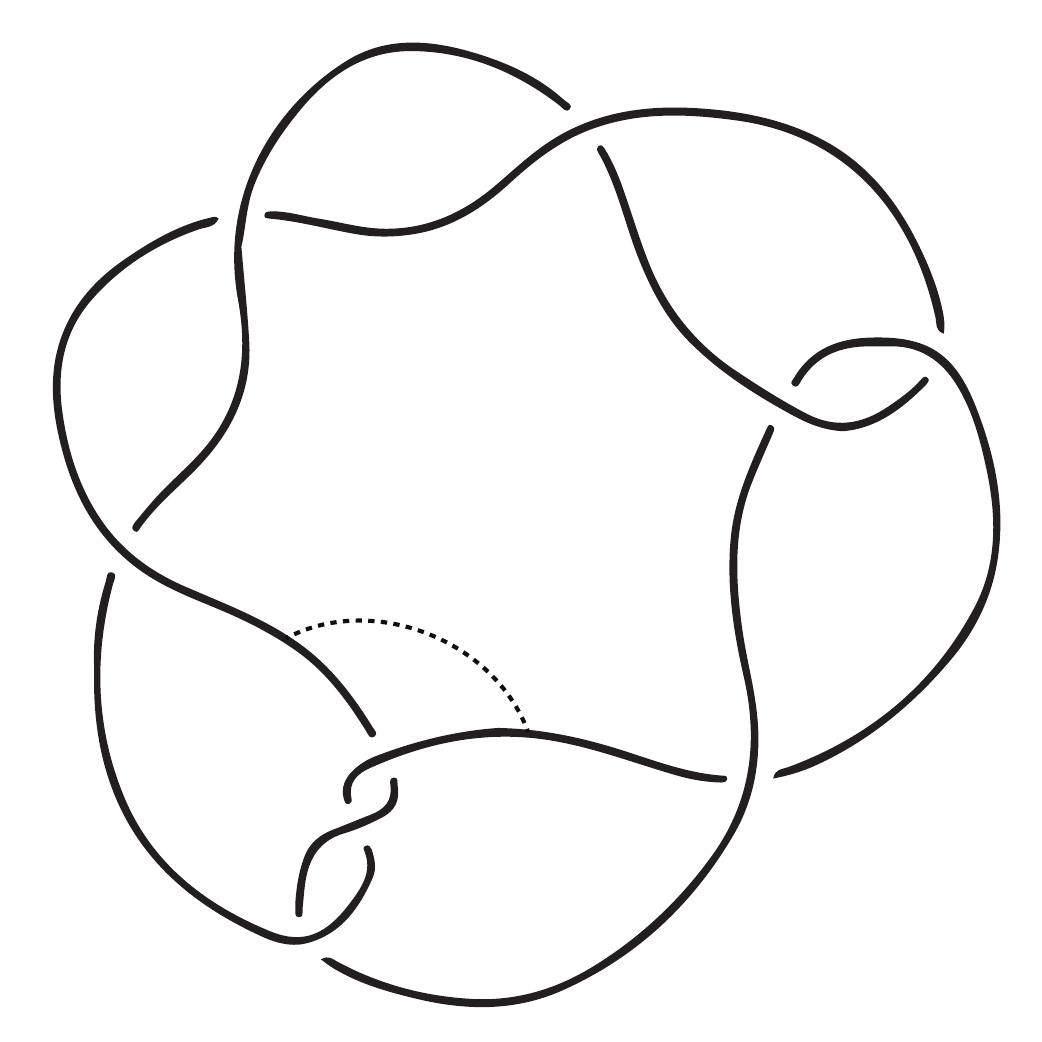}
		\caption{$9_{7}\stackrel{0}{\longrightarrow} 6_1$}
		\label{FigureFor9_7}
	\end{subfigure}
	~
	\begin{subfigure}[b]{0.3\textwidth}
		\includegraphics[width=\textwidth]{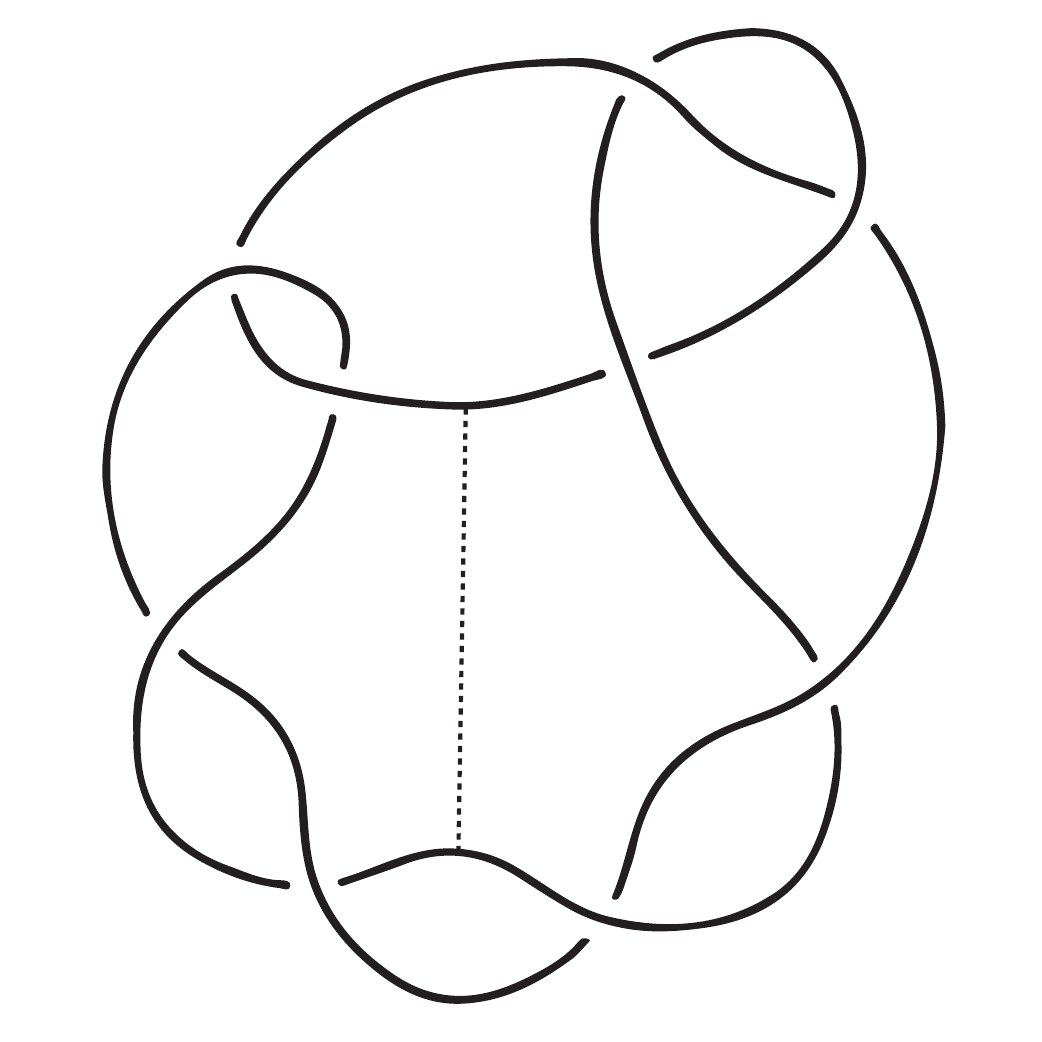}
		\caption{$9_{8}\stackrel{-1\phantom{i}}{\longrightarrow} 6_1$}
		\label{FigureFor9_8}
	\end{subfigure}
	\vskip3mm
	\begin{subfigure}[b]{0.3\textwidth}
		\includegraphics[width=\textwidth]{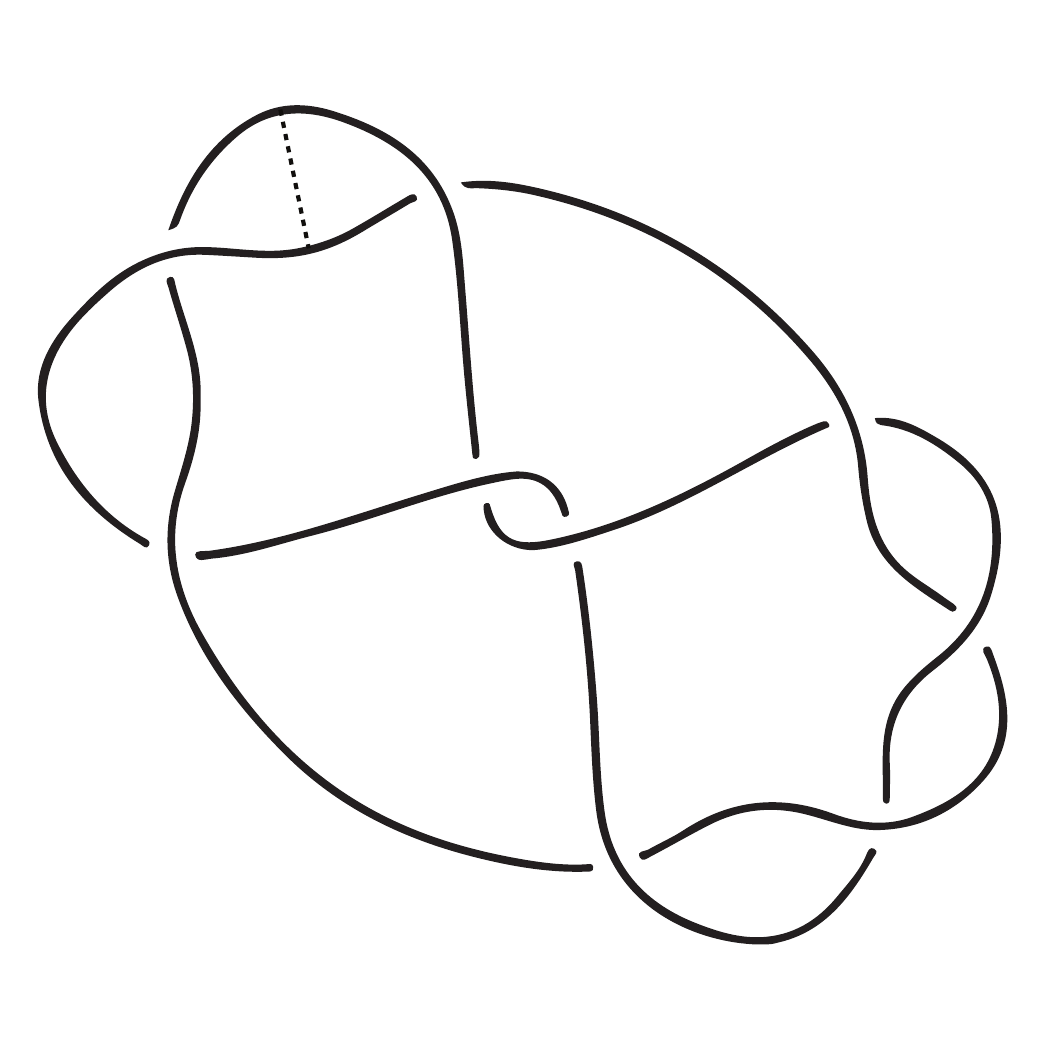}
		\caption{$9_{9}\stackrel{0}{\longrightarrow} 6_1$}
		\label{FigureFor9_9}
	\end{subfigure}
	~
	\begin{subfigure}[b]{0.27\textwidth}
		\includegraphics[width=\textwidth]{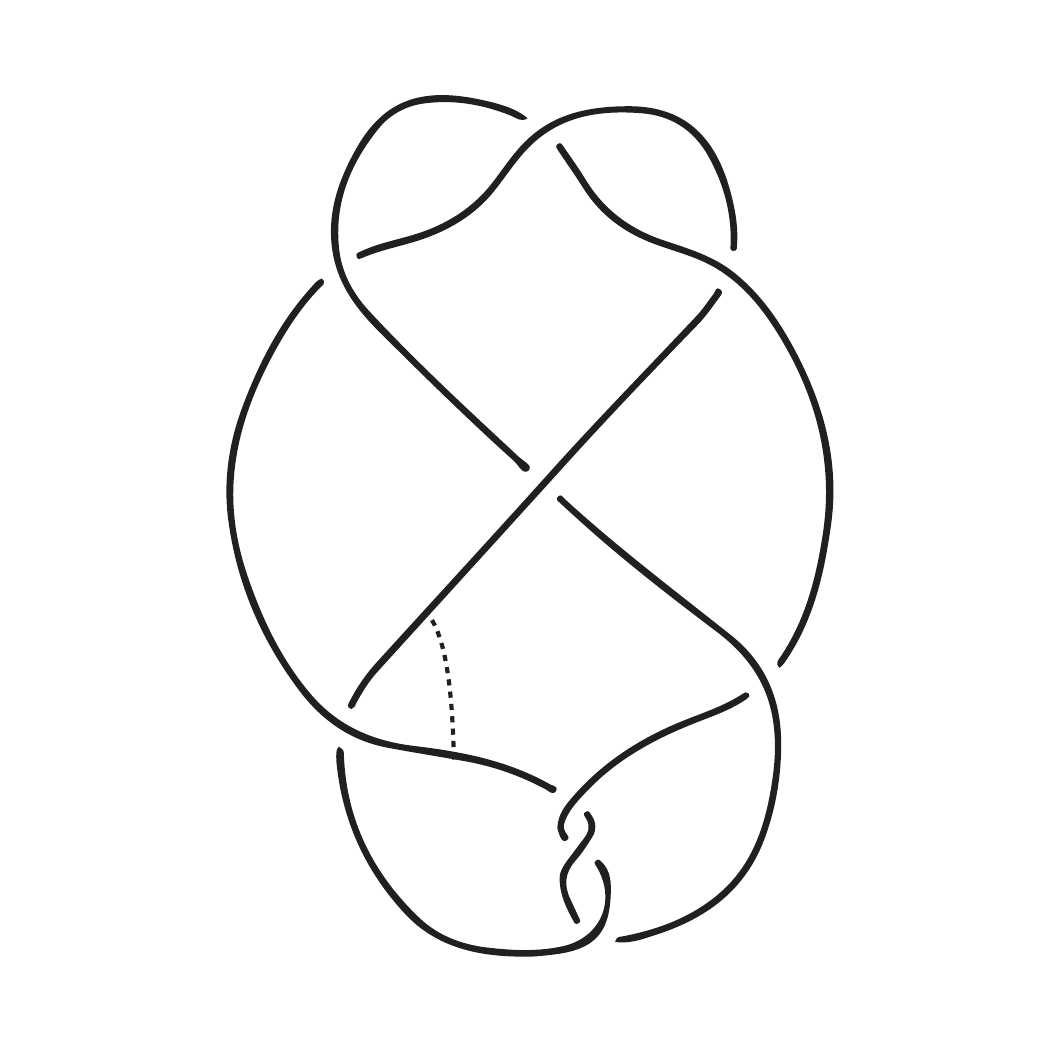}
		\caption{$9_{13}\stackrel{0}{\longrightarrow} 8_9$}
		\label{FigureFor9_13}
	\end{subfigure}
	~
	\begin{subfigure}[b]{0.27\textwidth}
		\includegraphics[width=\textwidth]{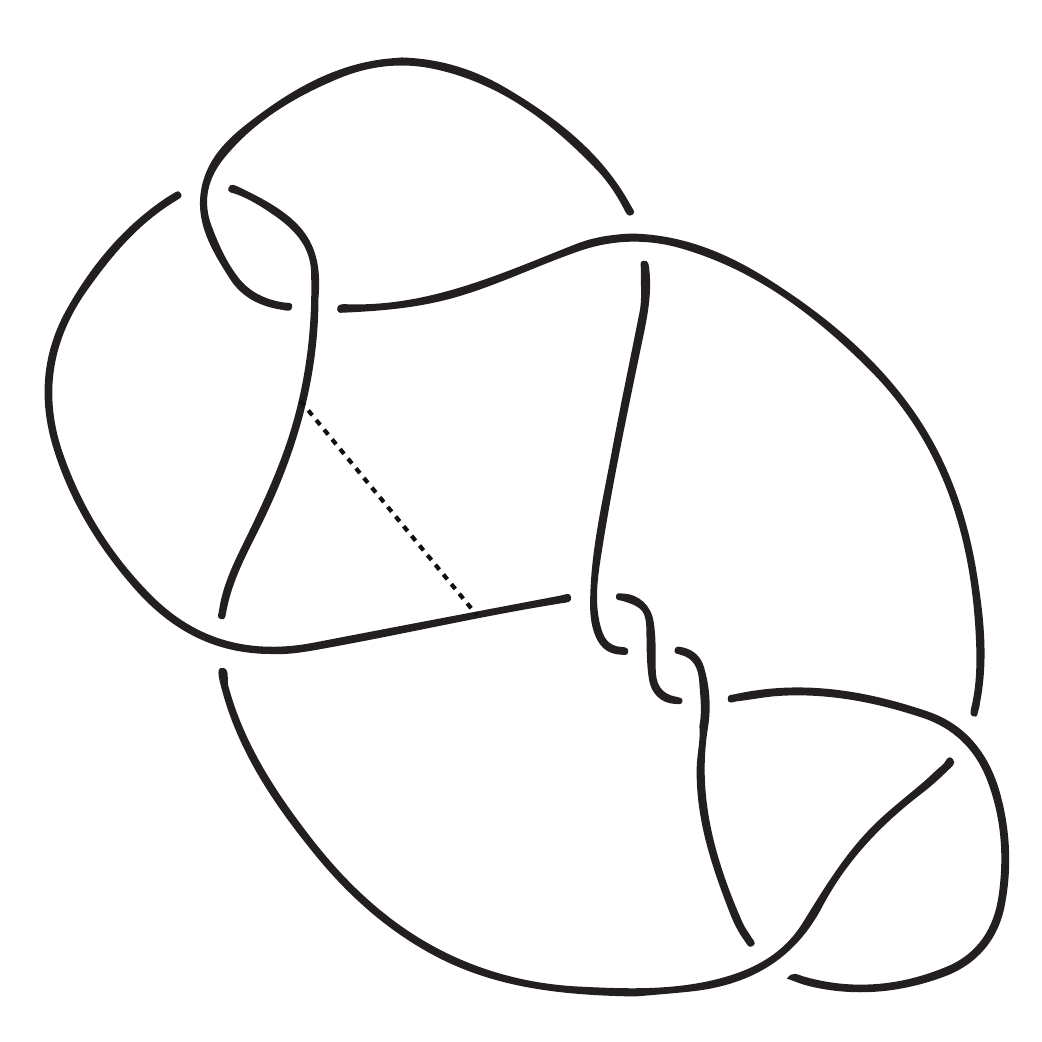}
		\caption{$9_{15}\stackrel{0}{\longrightarrow} 8_8$}
		\label{FigureFor9_15}
	\end{subfigure}
	\vskip3mm
	\caption{Non-oriented band moves from the knots $9_1$, $9_3$, $9_{5}$, $9_{6}$, $9_{7}$, $9_{8}$, $9_{9}$, $9_{13}$, $9_{15}$ to smoothly slice knots.}\label{Figure2ToSliceKnots}
\end{figure}
\begin{figure}[!htbp]
	\centering
	\begin{subfigure}[b]{0.28\textwidth}
		\includegraphics[width=\textwidth]{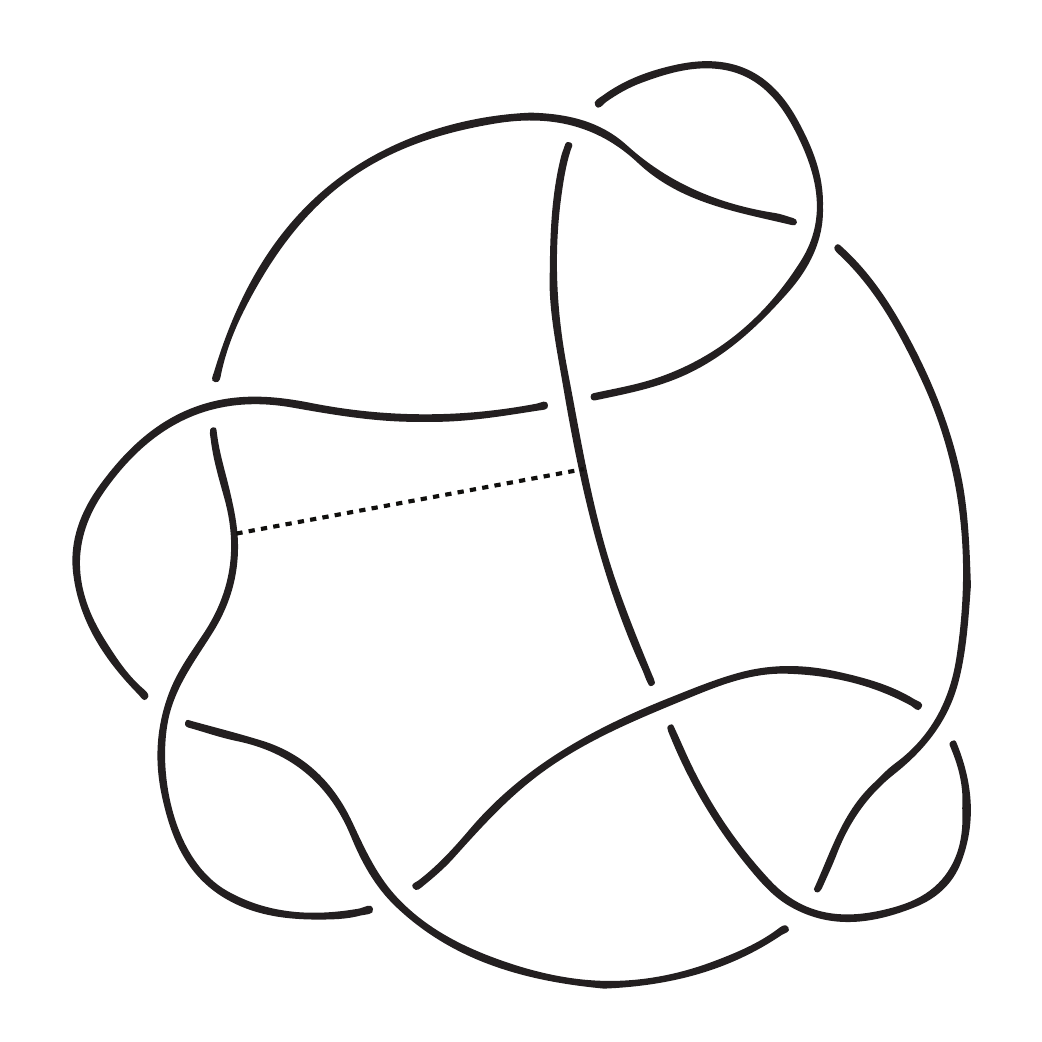}
		\caption{$9_{17}\stackrel{-1\phantom{i}}{\longrightarrow} 0_1$}
		\label{FigureFor9_17}
	\end{subfigure}
	~
	\begin{subfigure}[b]{0.28\textwidth}
		\includegraphics[width=\textwidth]{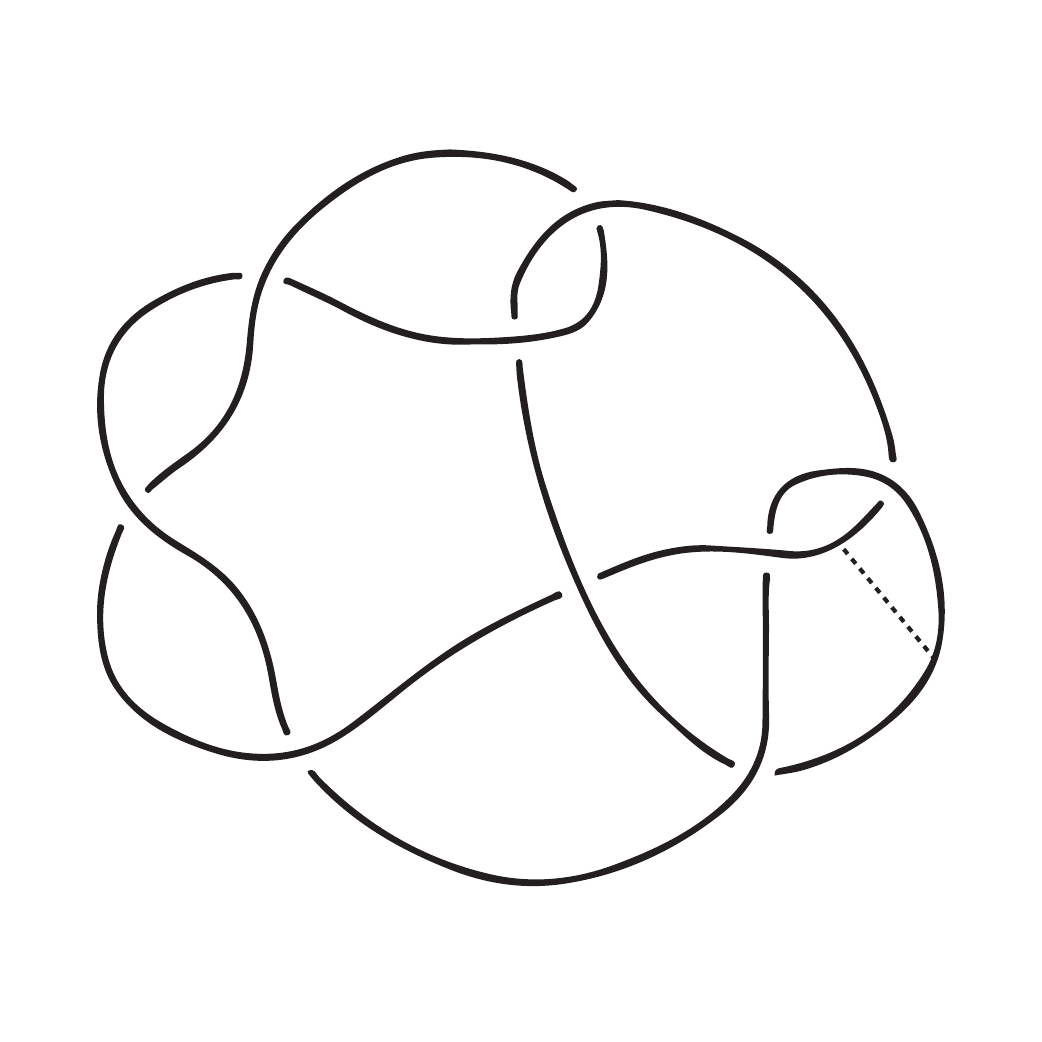}
		\caption{$9_{19}\stackrel{0}{\longrightarrow} 8_8$}
		\label{FigureFor9_19}
	\end{subfigure}
	~
	\begin{subfigure}[b]{0.28\textwidth}
		\includegraphics[width=\textwidth]{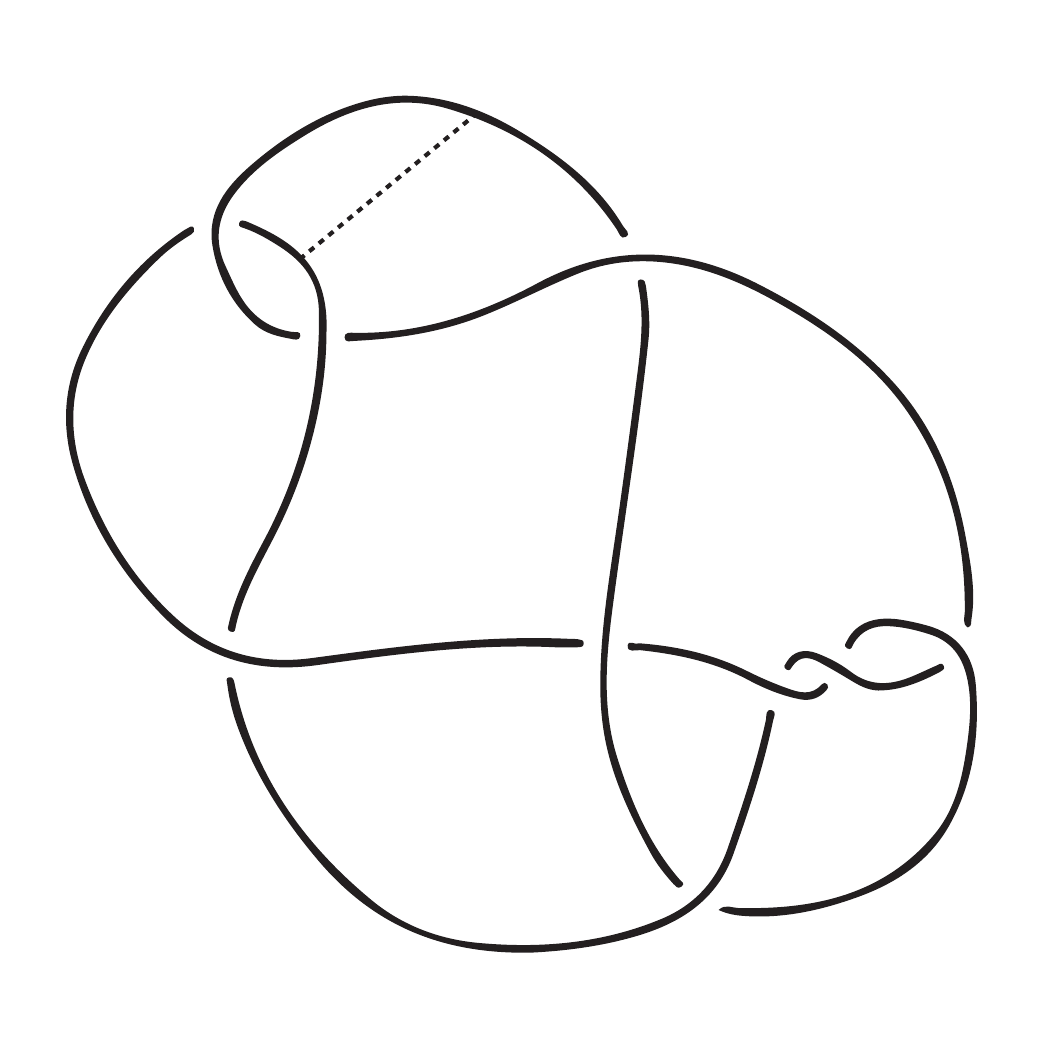}
		\caption{$9_{21}\stackrel{0}{\longrightarrow} 8_9$}
		\label{FigureFor9_21}
	\end{subfigure}
	\vskip3mm
	\begin{subfigure}[b]{0.28\textwidth}
		\includegraphics[width=\textwidth]{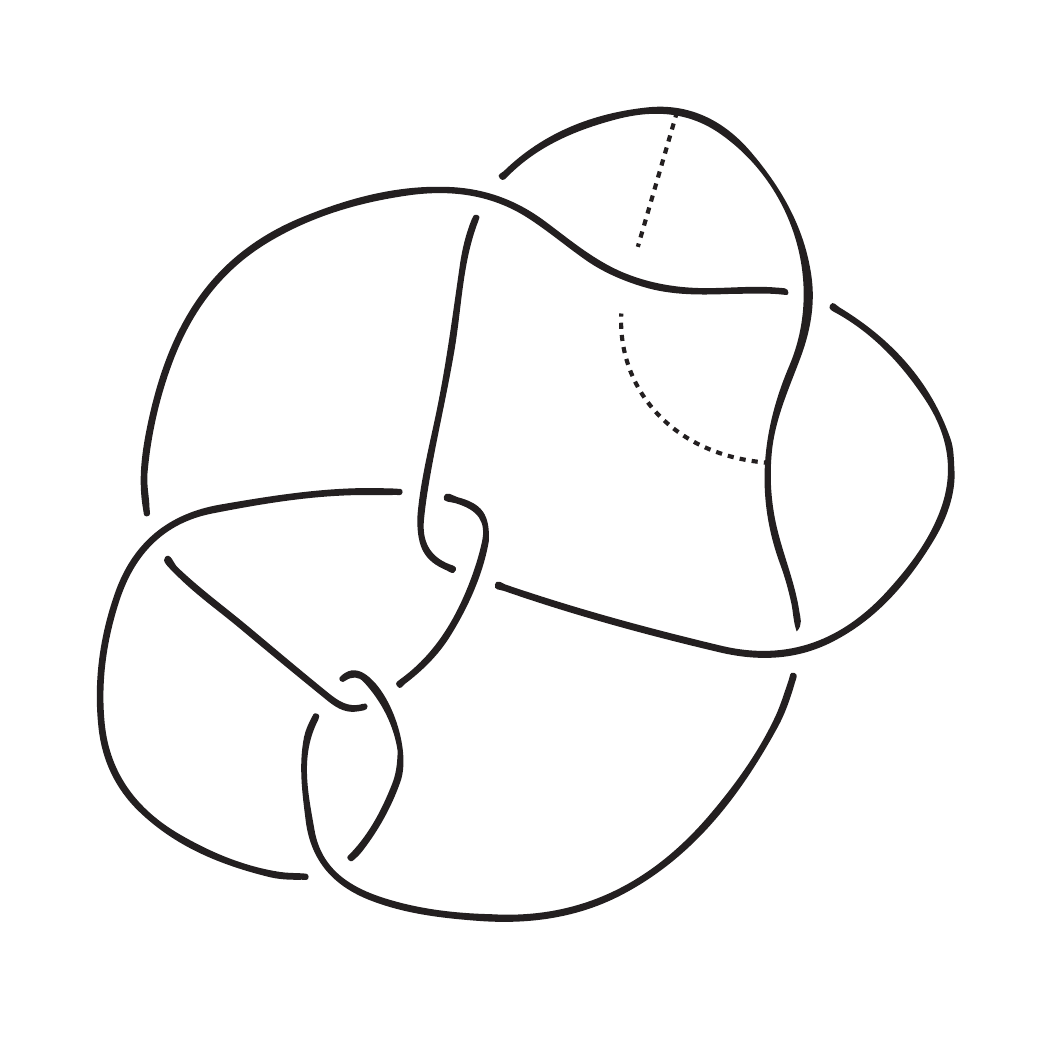}
		\caption{$9_{22}\stackrel{-1\phantom{i}}{\longrightarrow} 0_1$}
		\label{FigureFor9_22}
	\end{subfigure}
	~
	\begin{subfigure}[b]{0.28\textwidth}
		\includegraphics[width=\textwidth]{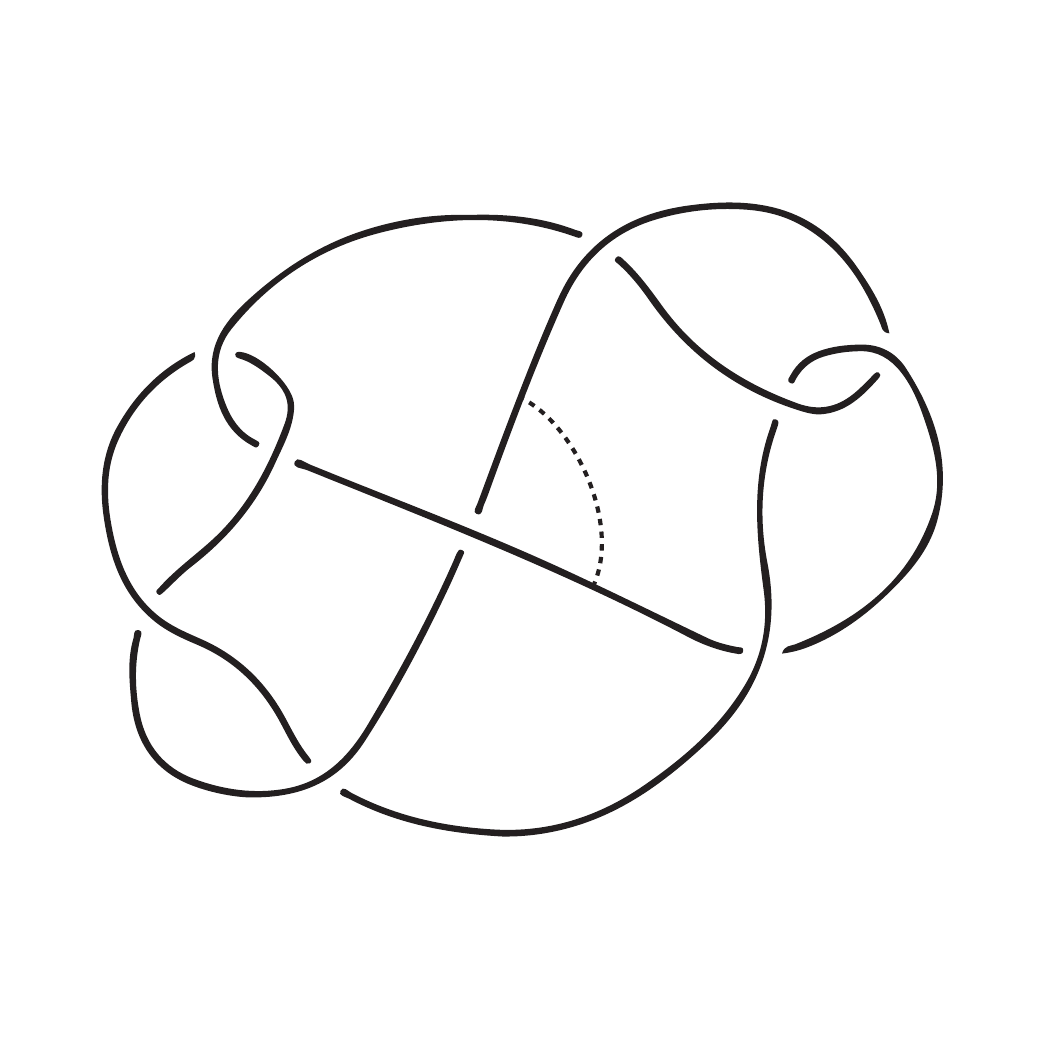}
		\caption{$9_{23}\stackrel{0}{\longrightarrow} 4_1\#4_1$}
		\label{FigureFor9_23}
	\end{subfigure}
	~
	\begin{subfigure}[b]{0.28\textwidth}
		\includegraphics[width=\textwidth]{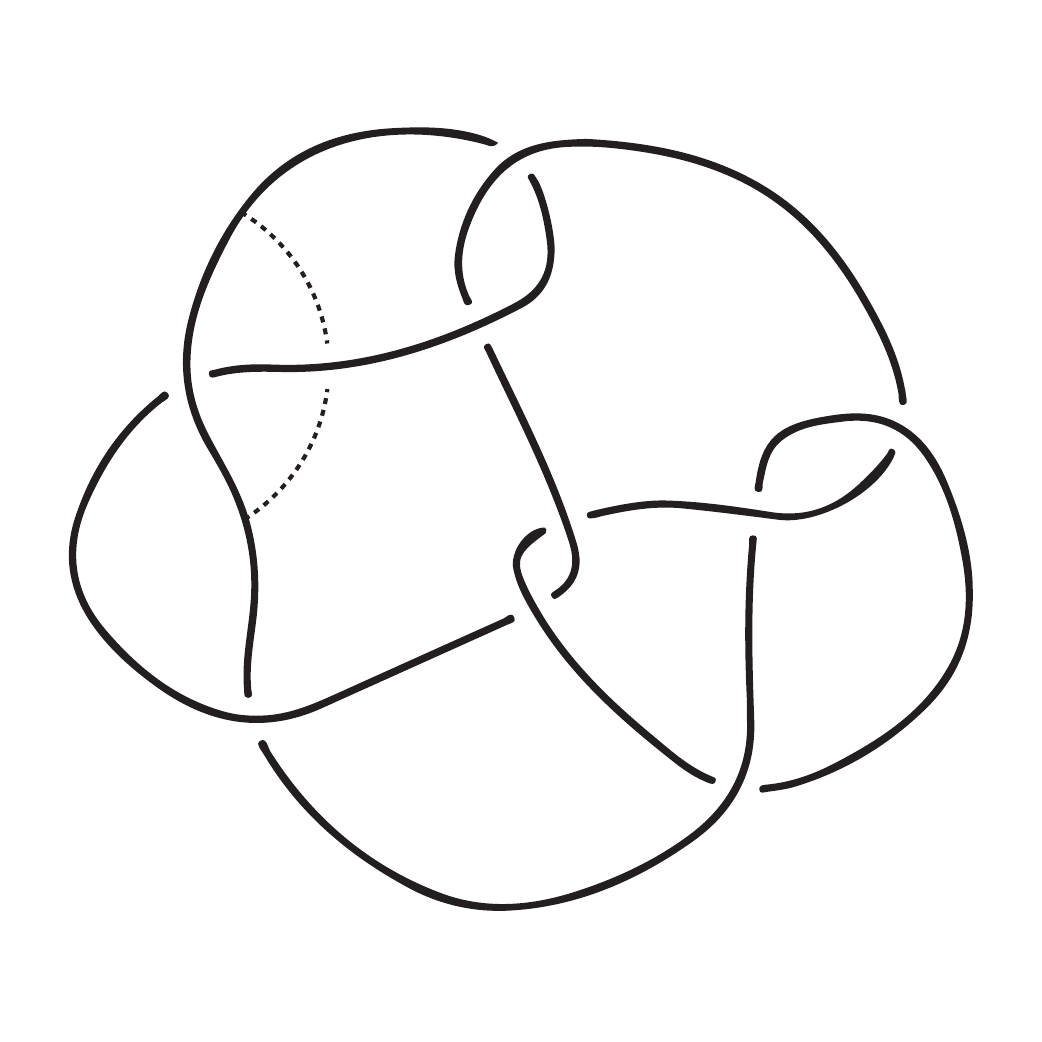}
		\caption{$9_{25}\stackrel{1}{\longrightarrow} 8_{20}$}
		\label{FigureFor9_25}
	\end{subfigure}
	\vskip3mm
	\begin{subfigure}[b]{0.25\textwidth}
		\includegraphics[width=\textwidth]{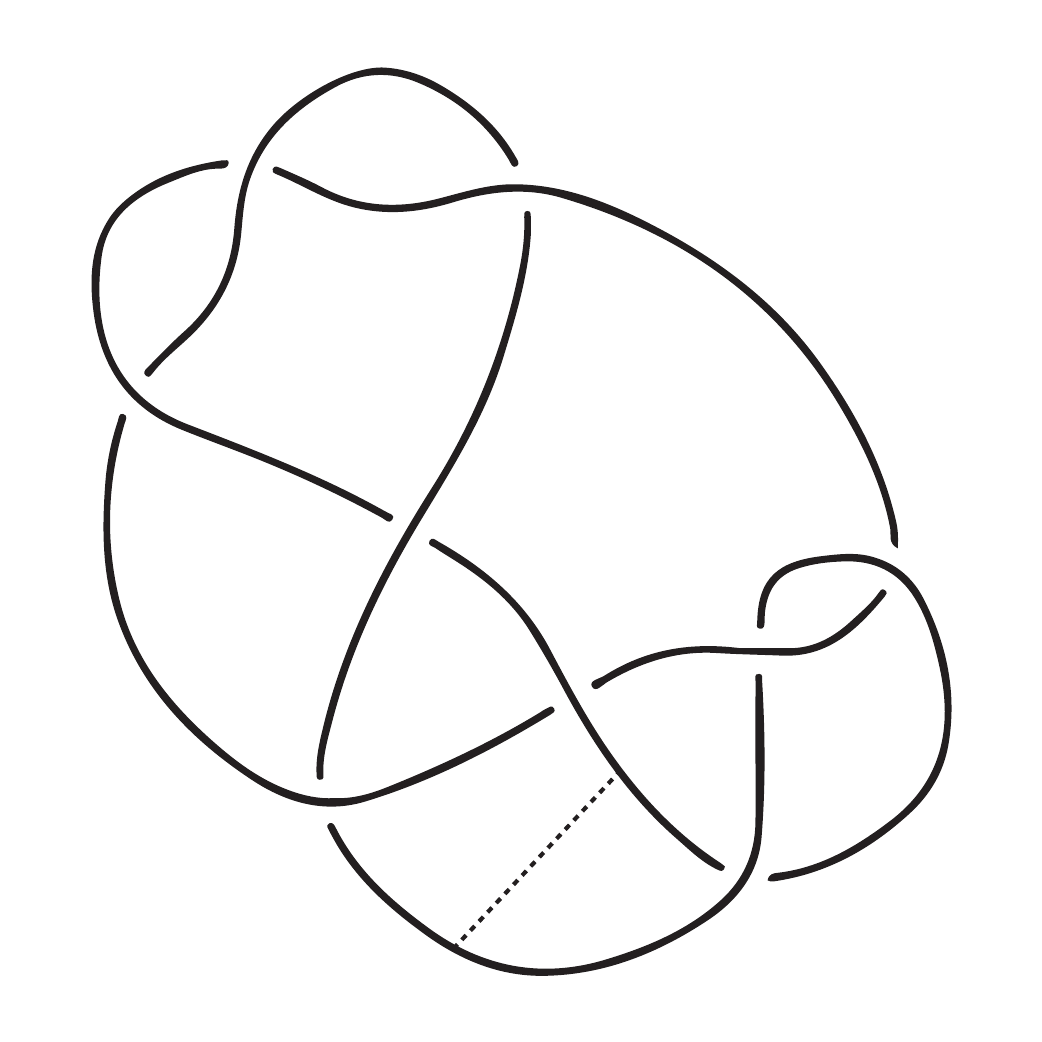}
		\caption{$9_{26}\stackrel{-1\phantom{i}}{\longrightarrow} 8_9$}
		\label{FigureFor9_26}
	\end{subfigure}
	~
	\begin{subfigure}[b]{0.28\textwidth}
		\includegraphics[width=\textwidth]{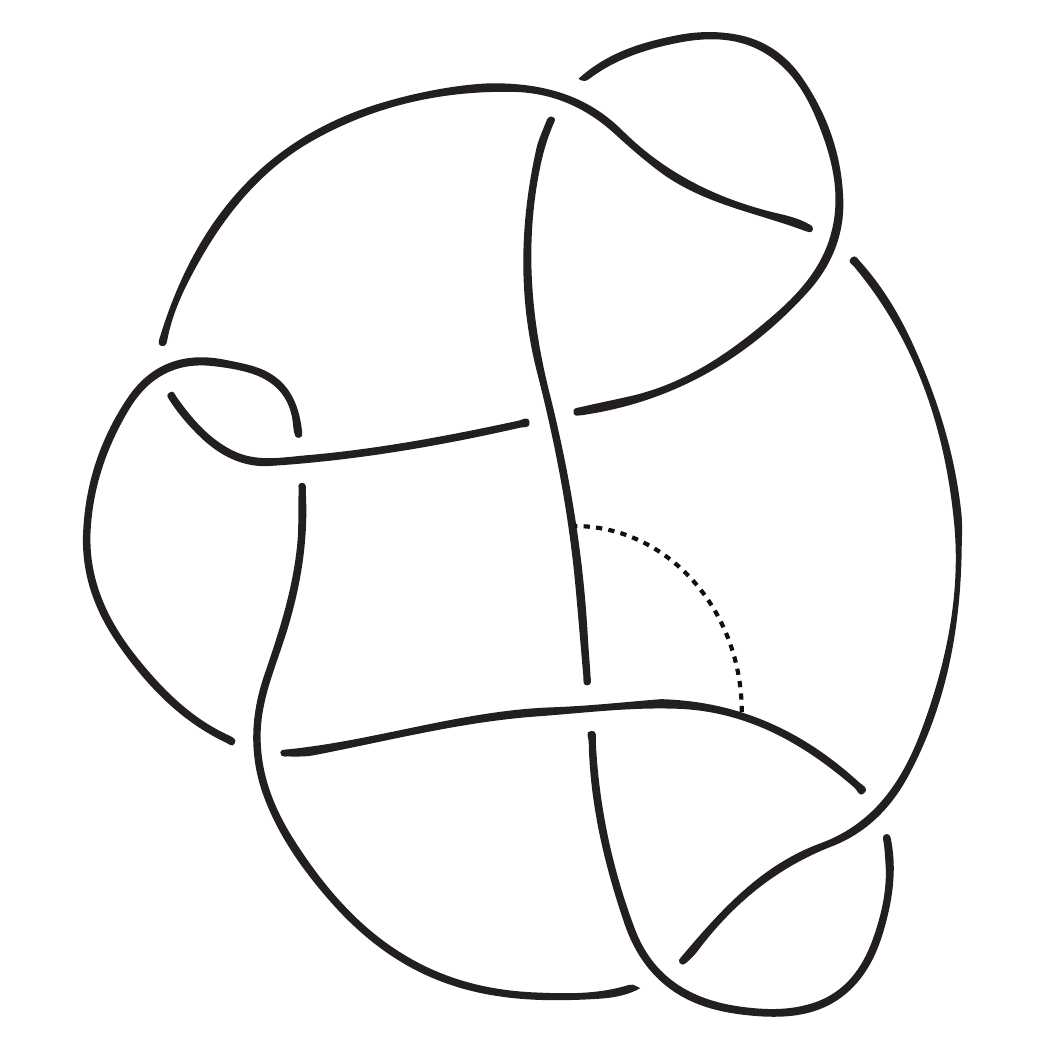}
		\caption{$9_{28}\stackrel{0}{\longrightarrow} 8_8$}
		\label{FigureFor9_28}
	\end{subfigure}
	~
	\begin{subfigure}[b]{0.28\textwidth}
		\includegraphics[width=\textwidth]{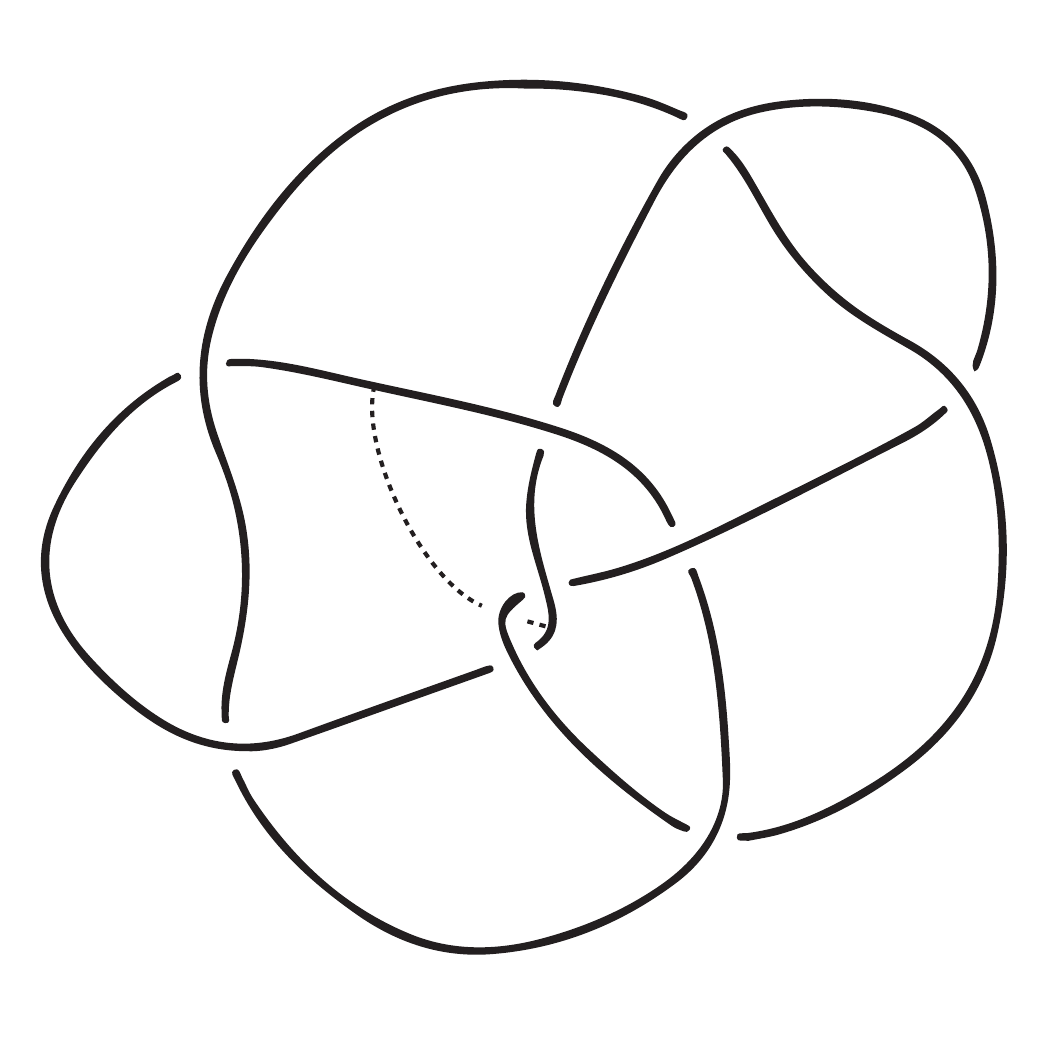}
		\caption{$9_{29}\stackrel{1}{\longrightarrow} 9_{46}$}
		\label{FigureFor9_29}
	\end{subfigure}
	\vskip3mm
	\caption{Non-oriented band moves from the knots $9_{17}$, $9_{19}$, $9_{21}$, $9_{22}$, $9_{23}$, $9_{25}$, $9_{26}$, $9_{28}$, $9_{29}$ to smoothly slice knots.}\label{Figure3ToSliceKnots}
\end{figure}
\begin{figure}[!htbp]
	\centering
	\begin{subfigure}[b]{0.27\textwidth}
		\includegraphics[width=\textwidth]{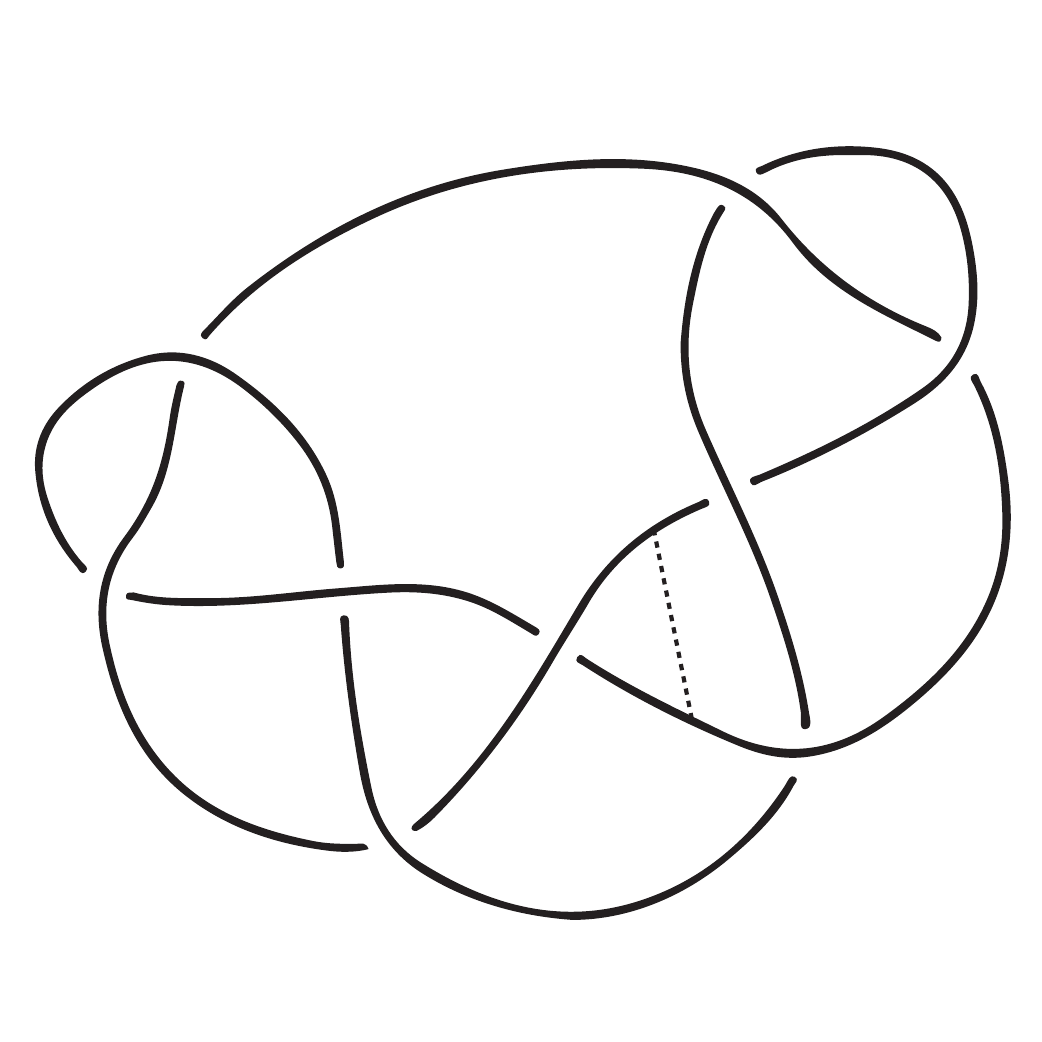}
		\caption{$9_{31}\stackrel{0}{\longrightarrow} 4_1\# 4_1$}
		\label{FigureFor9_31}
	\end{subfigure}
	~
	\begin{subfigure}[b]{0.26\textwidth}
		\includegraphics[width=\textwidth]{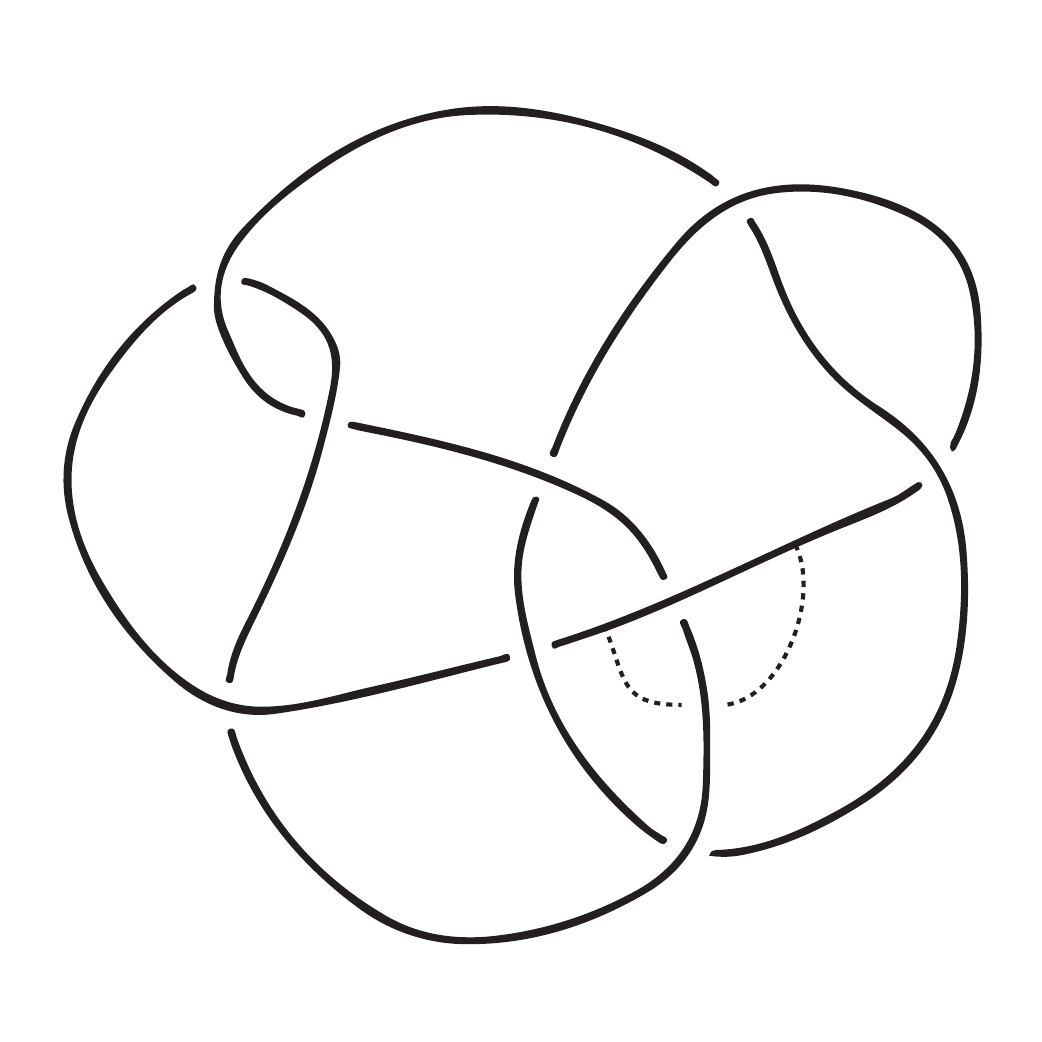}
		\caption{$9_{32}\stackrel{1}{\longrightarrow} 11n_4$}
		\label{FigureFor9_32}
	\end{subfigure}
	~
	\begin{subfigure}[b]{0.27\textwidth}
		\includegraphics[width=\textwidth]{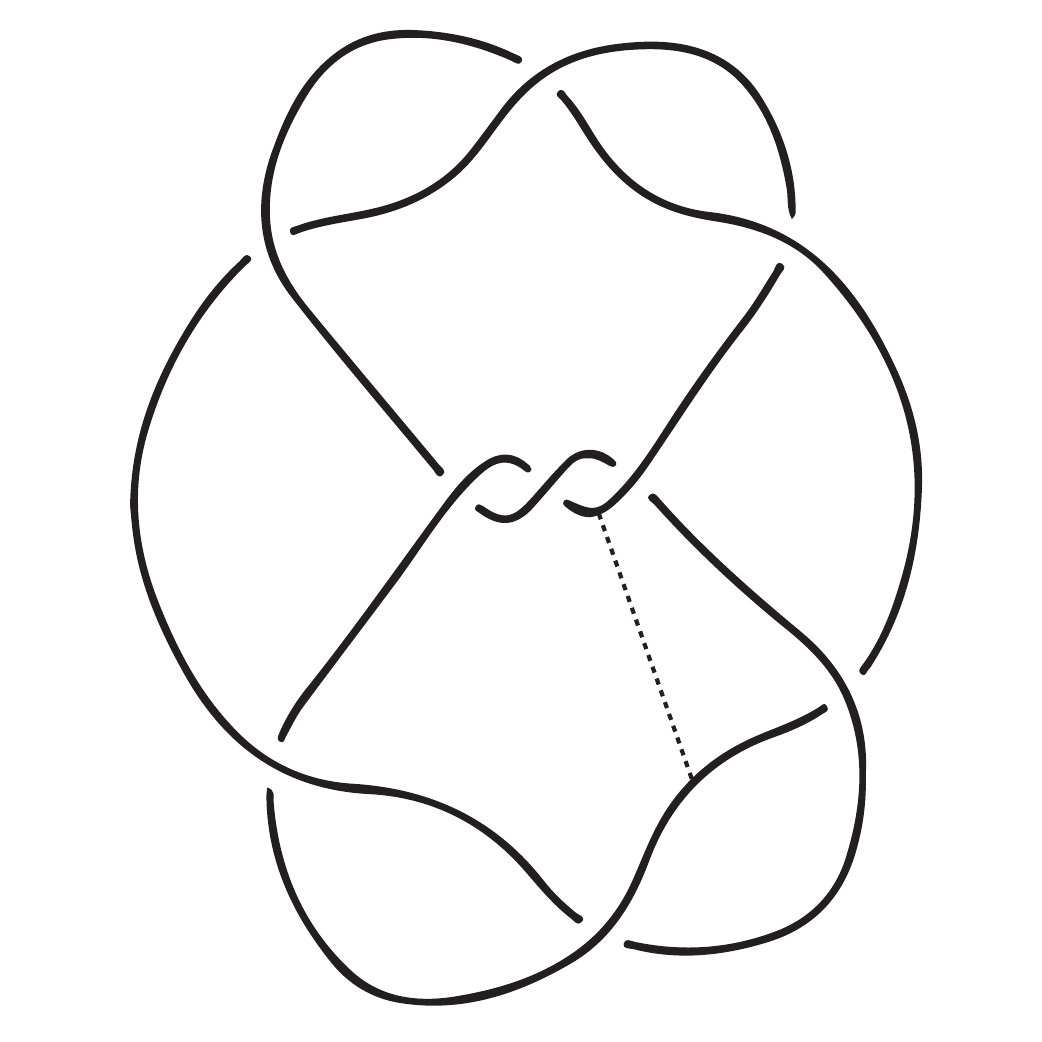}
		\caption{$9_{35}\stackrel{1}{\longrightarrow} 6_1$}
		\label{FigureFor9_35}
	\end{subfigure}
	\vskip3mm
	\begin{subfigure}[b]{0.27\textwidth}
		\includegraphics[width=\textwidth]{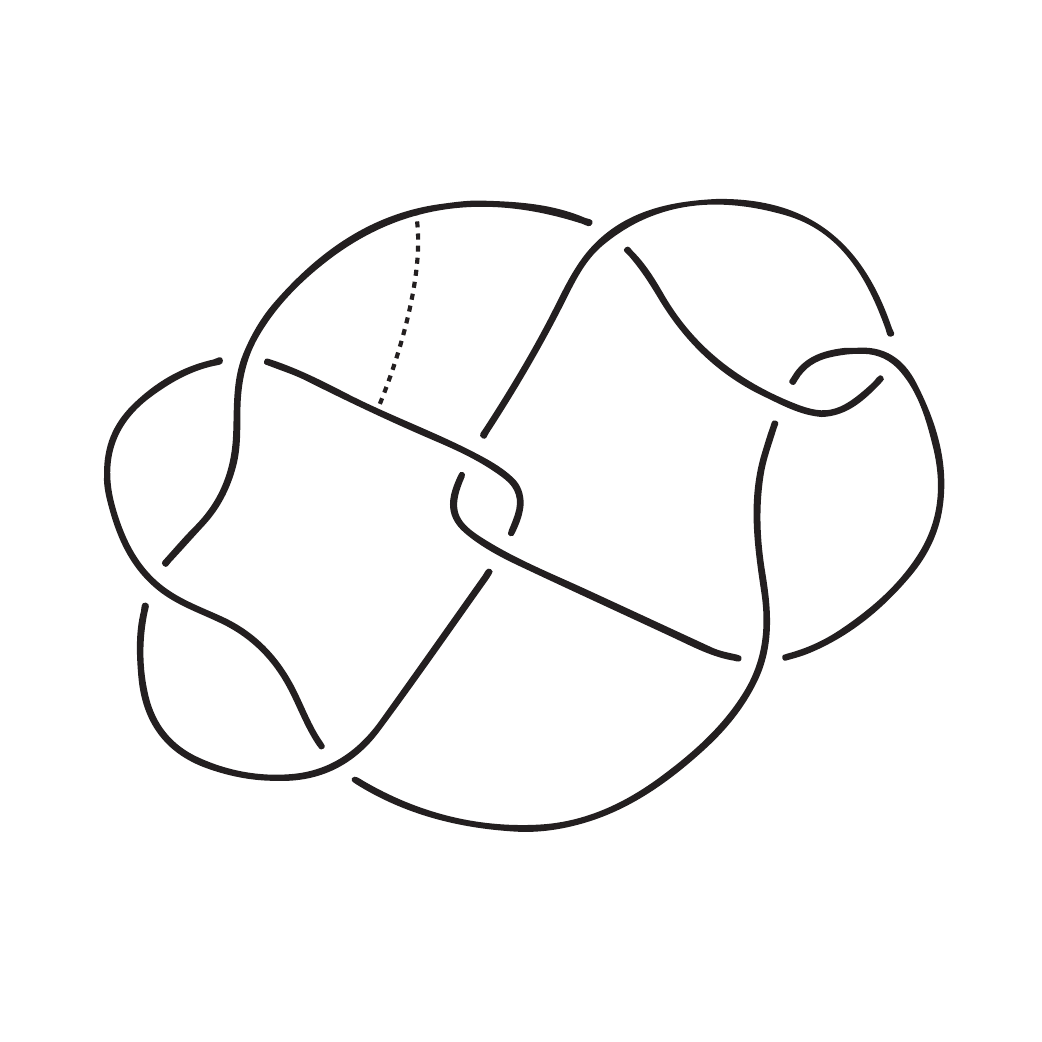}
		\caption{$9_{36}\stackrel{0}{\longrightarrow} 6_1$}
		\label{FigureFor9_36}
	\end{subfigure}
	~
	\begin{subfigure}[b]{0.25\textwidth}
		\includegraphics[width=\textwidth]{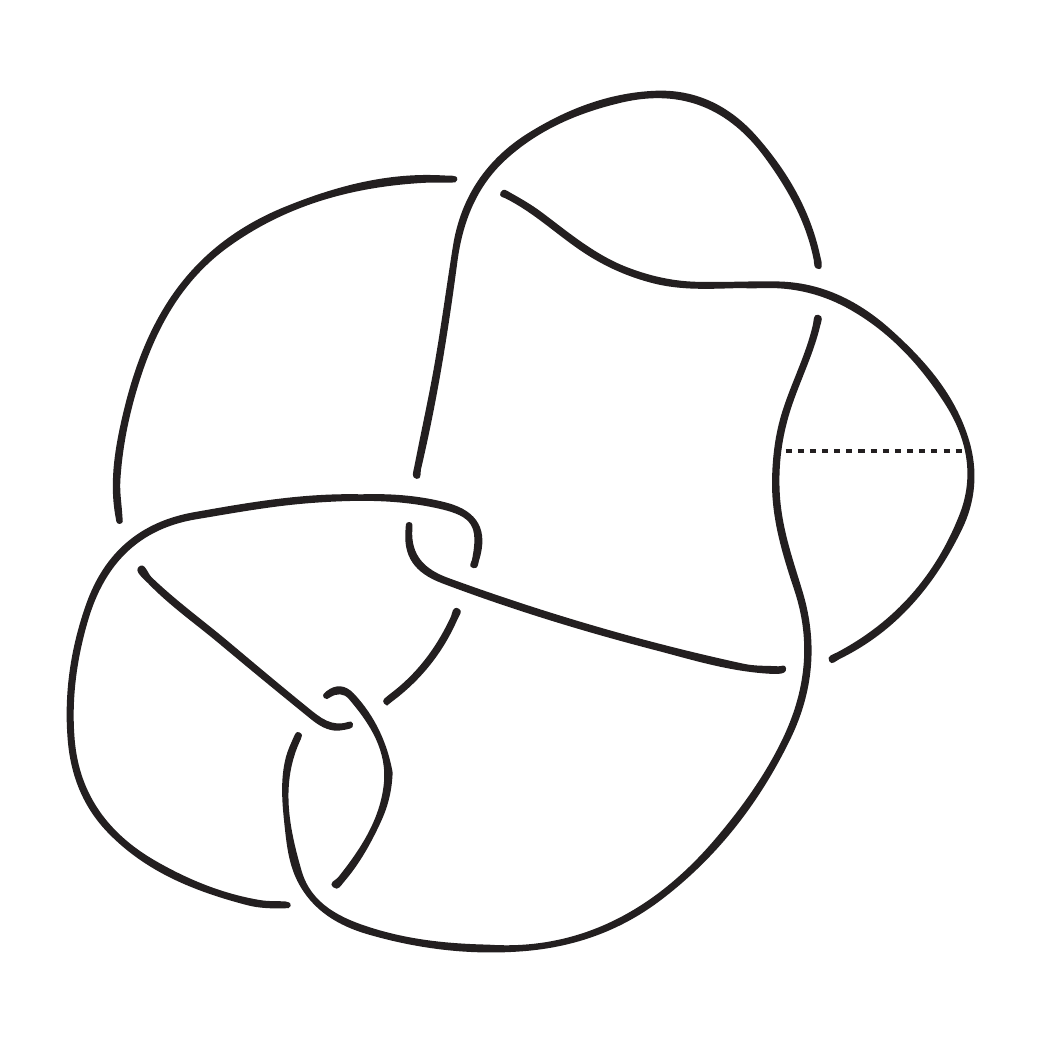}
		\caption{$9_{42}\stackrel{0}{\longrightarrow} 0_1$}
		\label{FigureFor9_42}
	\end{subfigure}
	~
	\begin{subfigure}[b]{0.25\textwidth}
		\includegraphics[width=\textwidth]{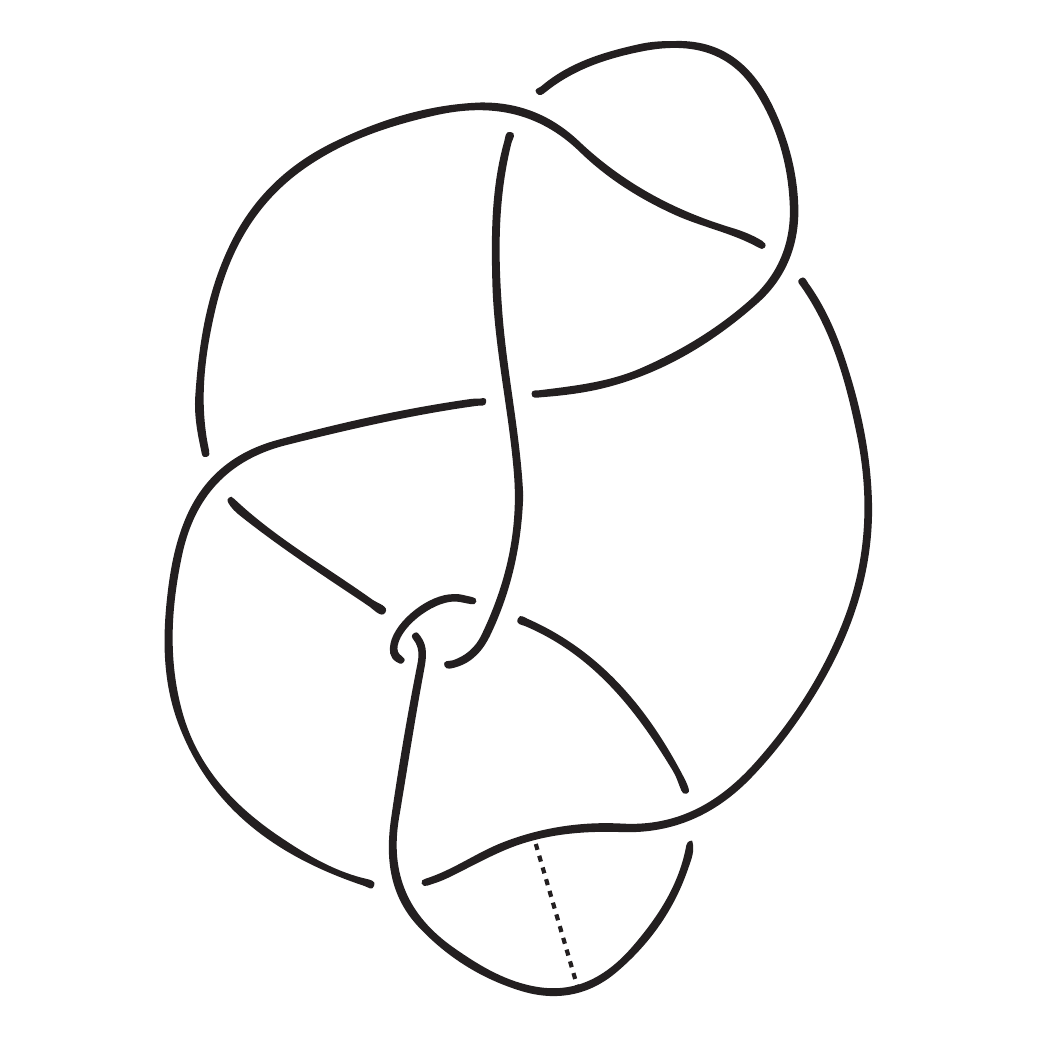}
		\caption{$9_{43}\stackrel{0}{\longrightarrow} 0_{1}$}
		\label{FigureFor9_43}
	\end{subfigure}
	\vskip3mm
	\begin{subfigure}[b]{0.25\textwidth}
		\includegraphics[width=\textwidth]{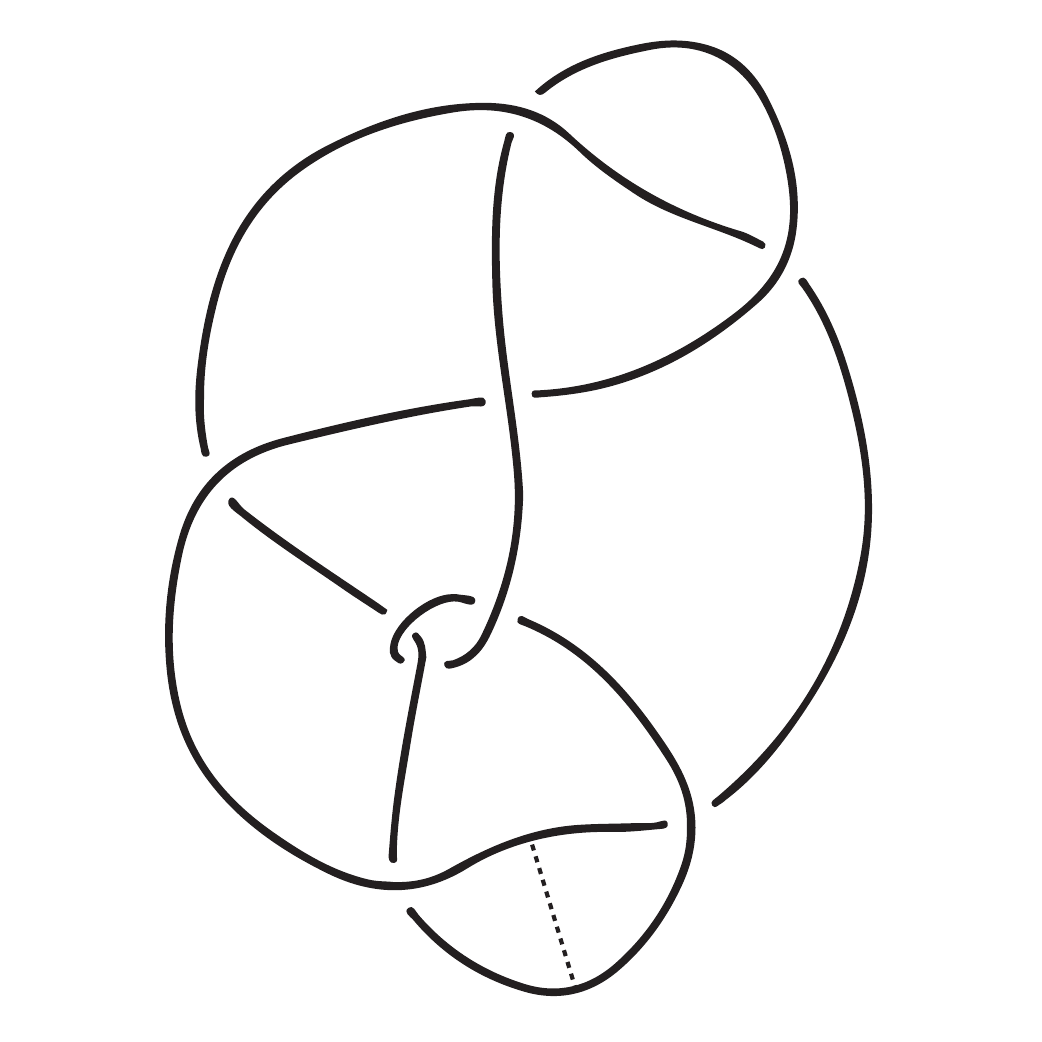}
		\caption{$9_{44}\stackrel{0}{\longrightarrow} 0_{1}$}
		\label{FigureFor9_44}
	\end{subfigure}
	~
	\begin{subfigure}[b]{0.25\textwidth}
		\includegraphics[width=\textwidth]{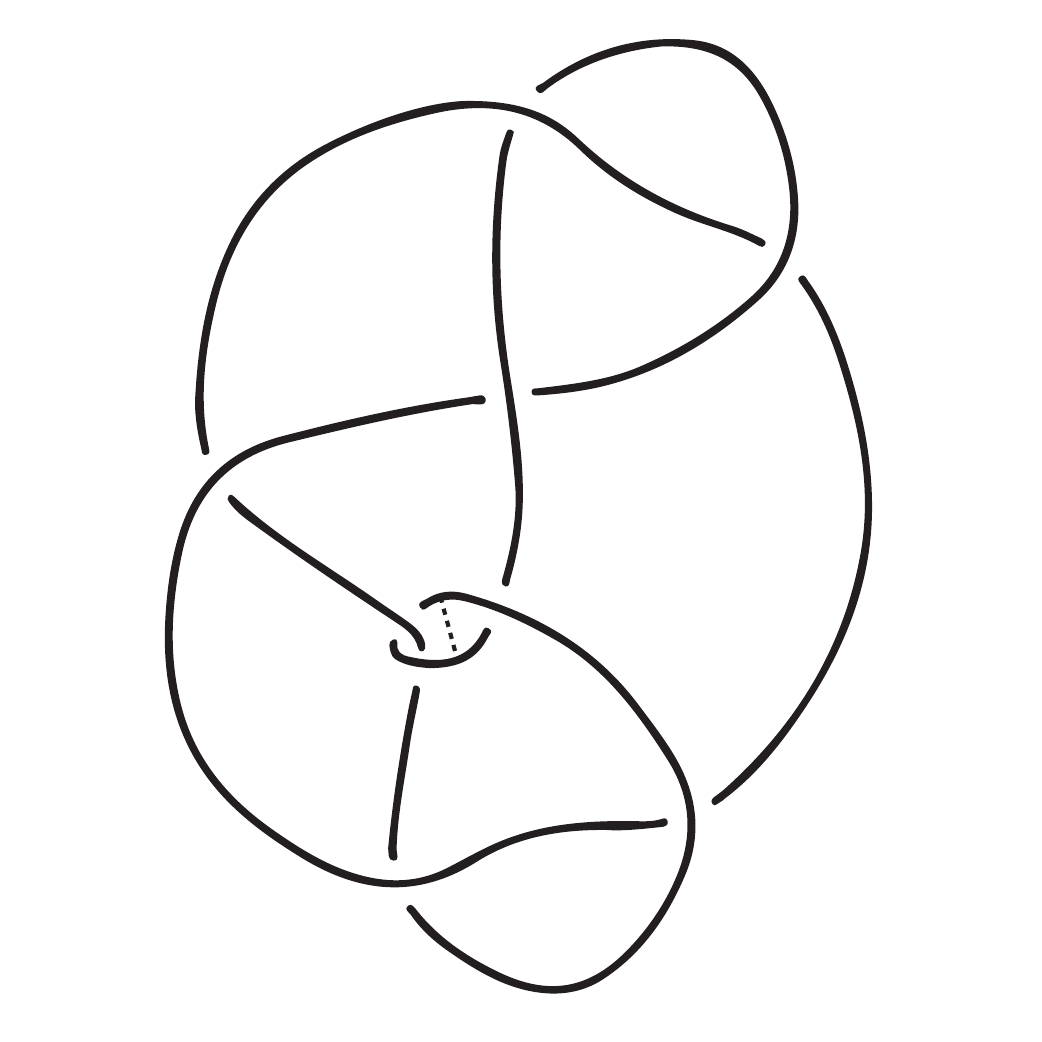}
		\caption{$9_{45}\stackrel{-1\phantom{i}}{\longrightarrow} 10_{137}$}
		\label{FigureFor9_45}
	\end{subfigure}
	~
	\begin{subfigure}[b]{0.24\textwidth}
		\includegraphics[width=\textwidth]{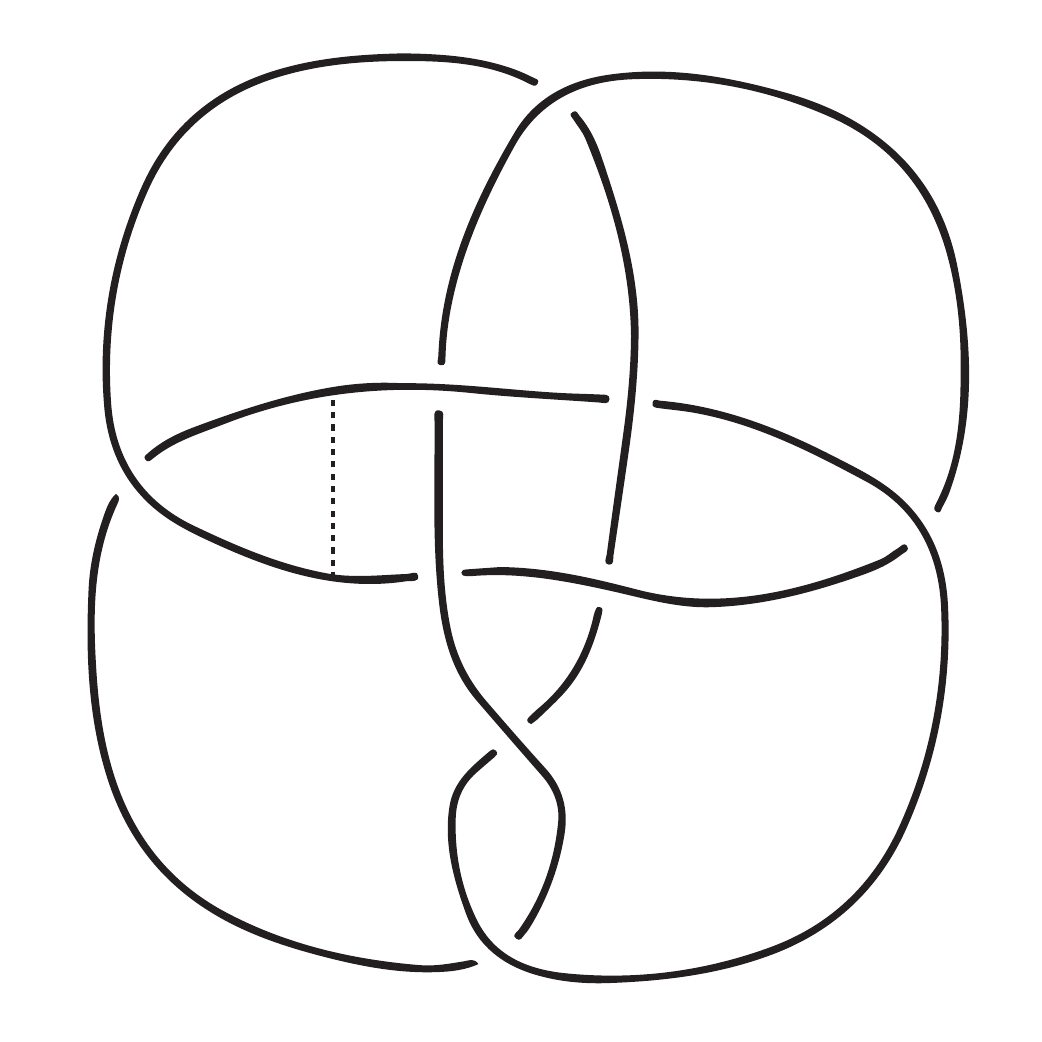}
		\caption{$9_{47}\stackrel{0}{\longrightarrow} 8_{20}$}
		\label{FigureFor9_47}
	\end{subfigure}
	~
	\begin{subfigure}[b]{0.25\textwidth}
		\includegraphics[width=\textwidth]{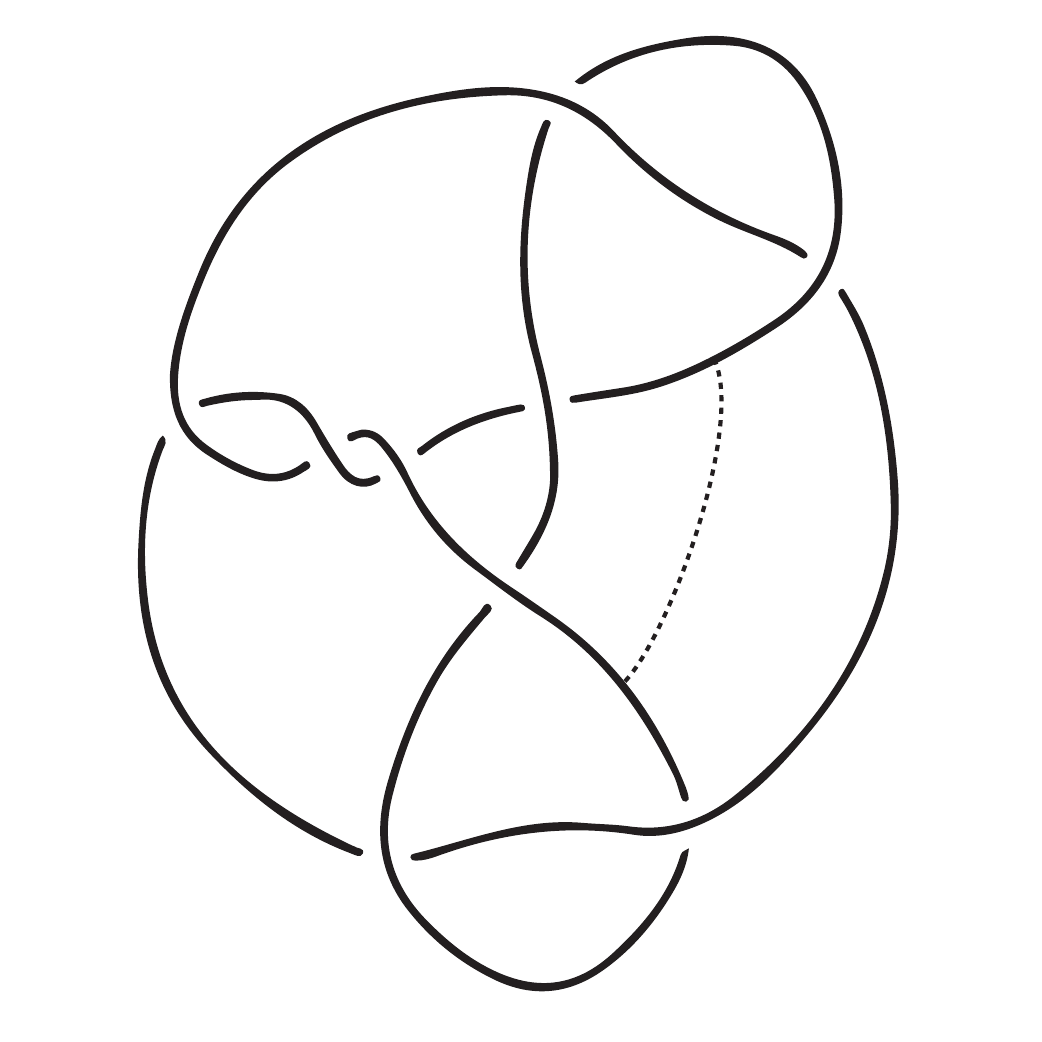}
		\caption{$9_{48}\stackrel{1}{\longrightarrow} 6_{1}$}
		\label{FigureFor9_48}
	\end{subfigure}
	\vskip3mm
	\caption{Non-oriented band moves from the knots $9_{31}$, $9_{32}$, $9_{35}$, $9_{36}$, $9_{42}$, $9_{43}$, $9_{44}$, $9_{45}$, $9_{47}$, $9_{48}$ to smoothly slice knots. }\label{Figure4ToSliceKnots}
	%
\end{figure}
\begin{figure}[!htbp]
	\centering
	\begin{subfigure}[b]{0.27\textwidth}
		\includegraphics[width=\textwidth]{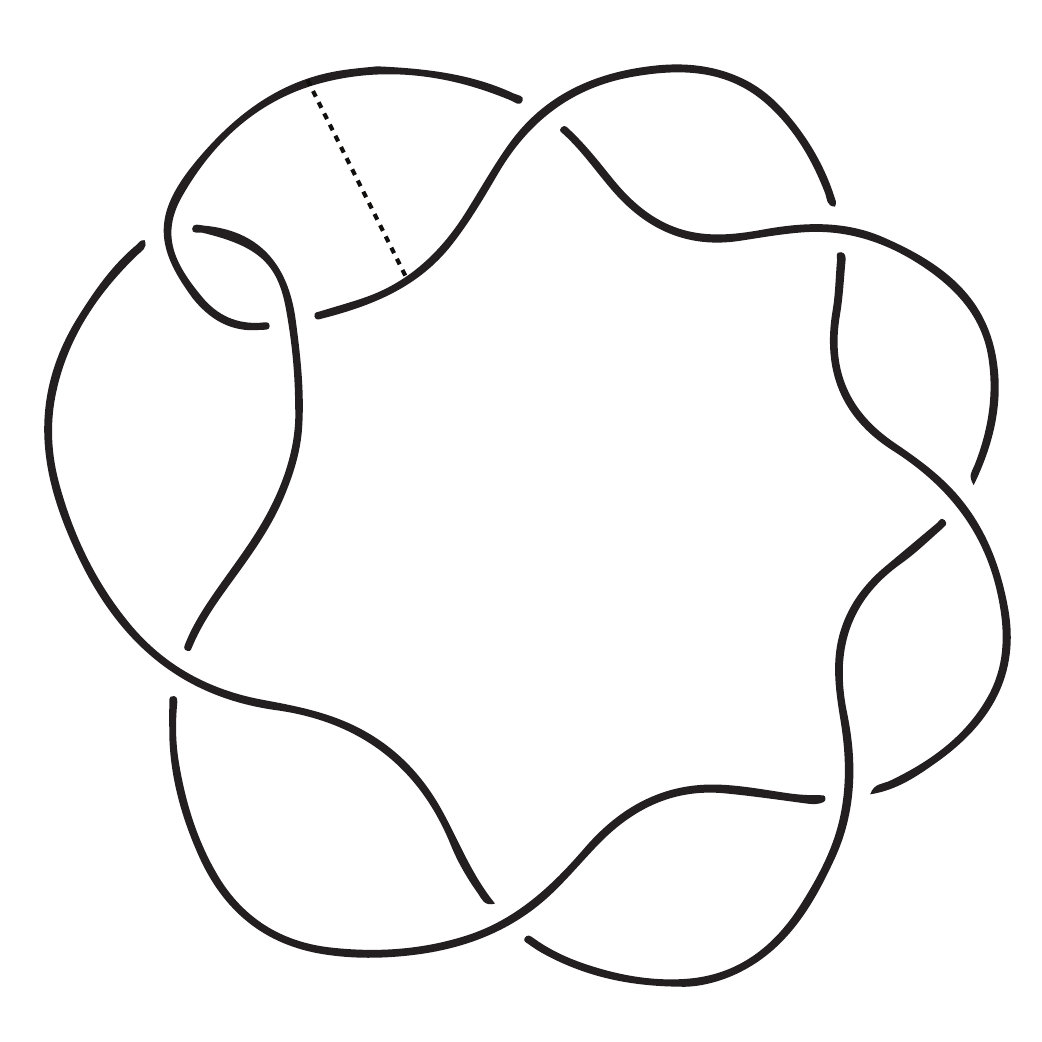}
		\caption{$8_{1}\stackrel{-1\phantom{i}}{\longrightarrow} 7_2$}
		\label{FigureFor8_1}
	\end{subfigure}
	~ 
	\begin{subfigure}[b]{0.28\textwidth}
		\includegraphics[width=\textwidth]{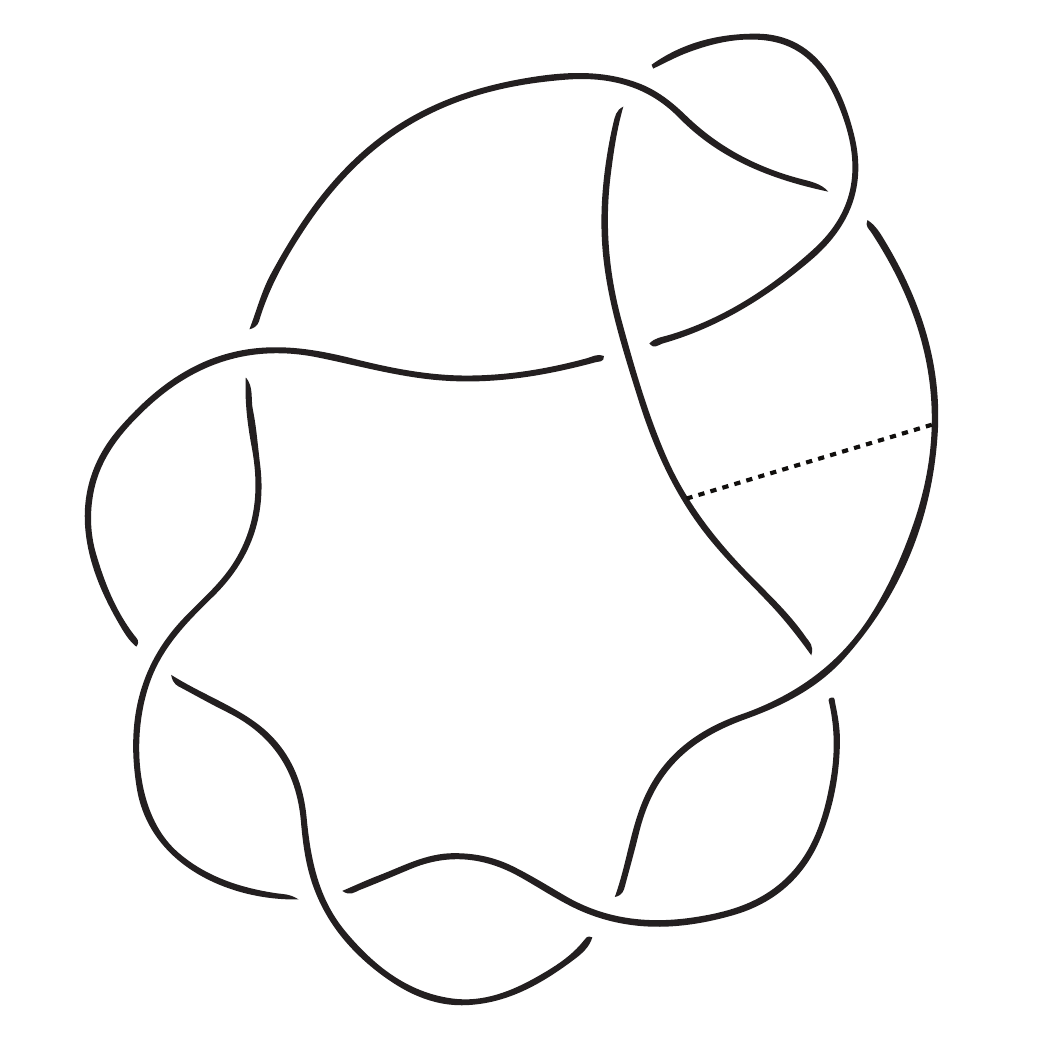}
		\caption{$8_{2}\stackrel{0}{\longrightarrow} 3_1$}
		\label{FigureFor8_2}
	\end{subfigure}
	~ 
	\begin{subfigure}[b]{0.28\textwidth}
		\includegraphics[width=\textwidth]{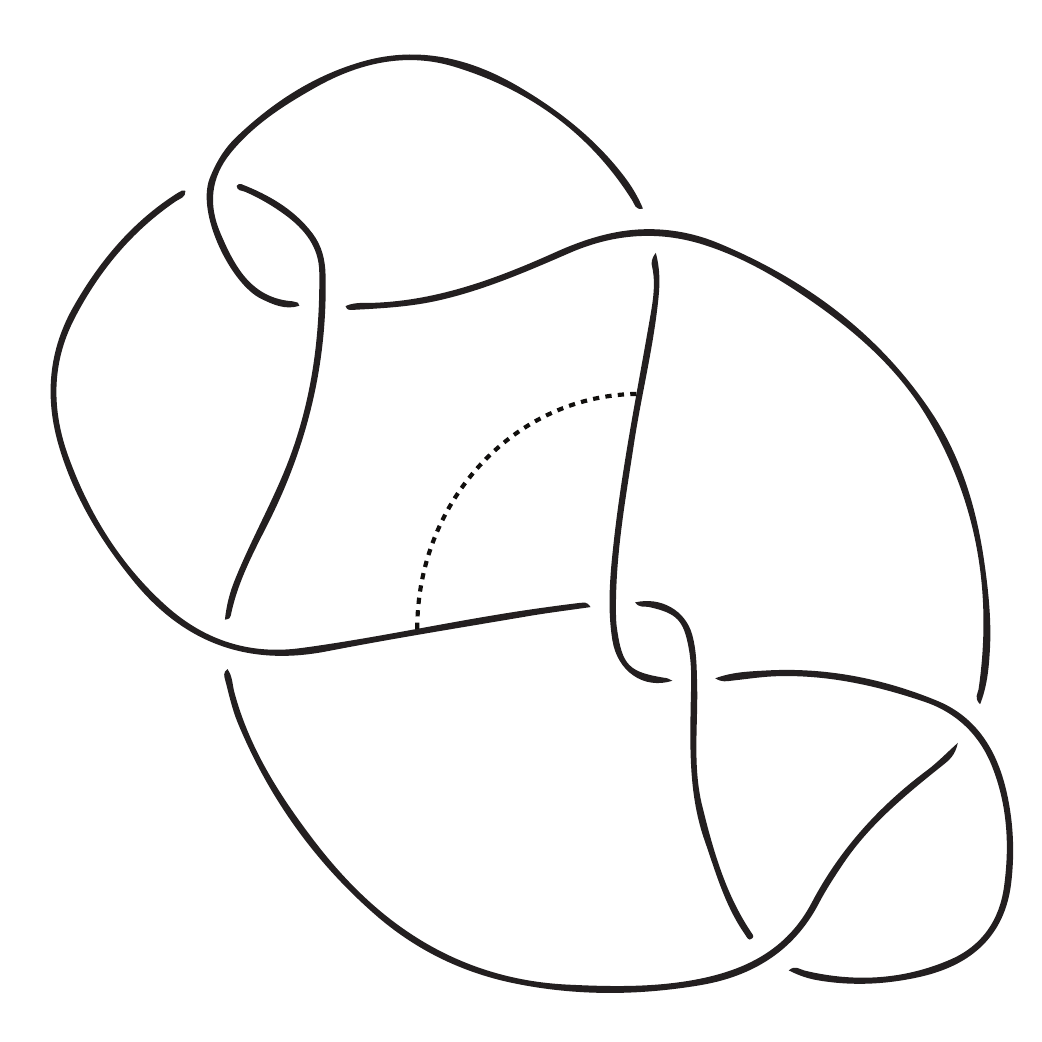}
		\caption{$8_{12}\stackrel{1}{\longrightarrow} 7_6$}
		\label{FigureFor8_12}
	\end{subfigure}
	\vskip3mm
	\begin{subfigure}[b]{0.27\textwidth}
		\includegraphics[width=\textwidth]{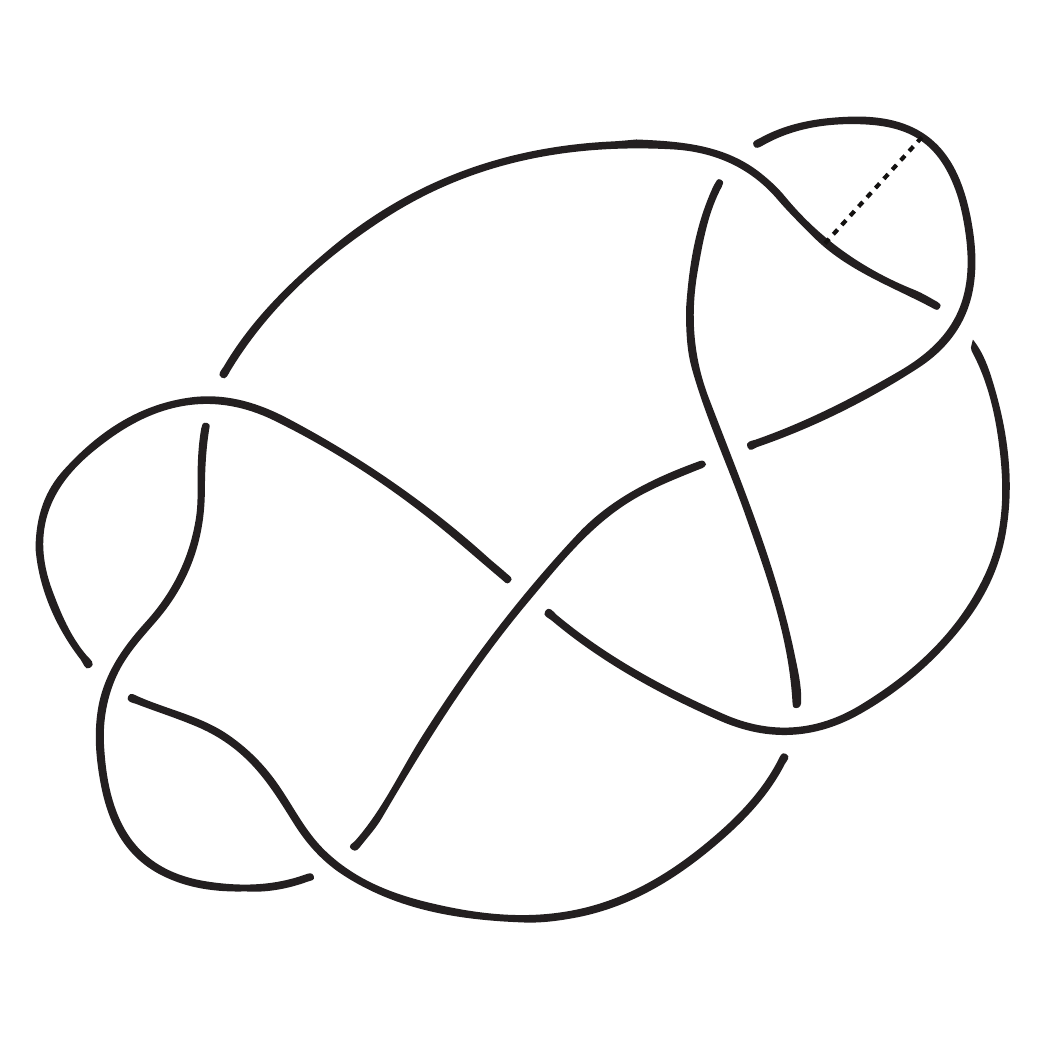}
		\caption{$8_{13}\stackrel{0}{\longrightarrow} 6_2$}
		\label{FigureFor8_13}
	\end{subfigure}
	~ 
	\begin{subfigure}[b]{0.3\textwidth}
		\includegraphics[width=\textwidth]{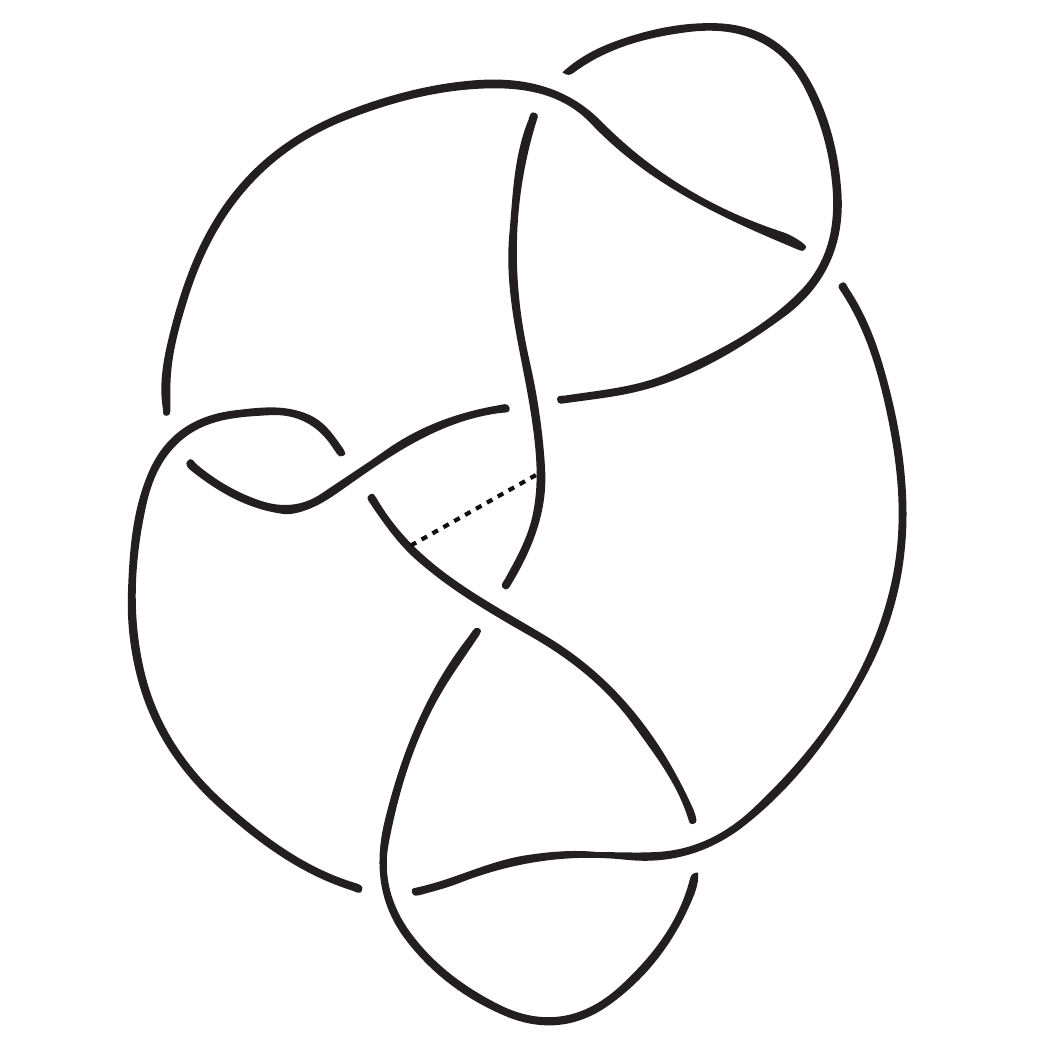}
		\caption{$8_{15}\stackrel{-1\phantom{i}}{\longrightarrow} 7_6$}
		\label{FigureFor8_15}
	\end{subfigure}
	~
	\begin{subfigure}[b]{0.27\textwidth}
		\includegraphics[width=\textwidth]{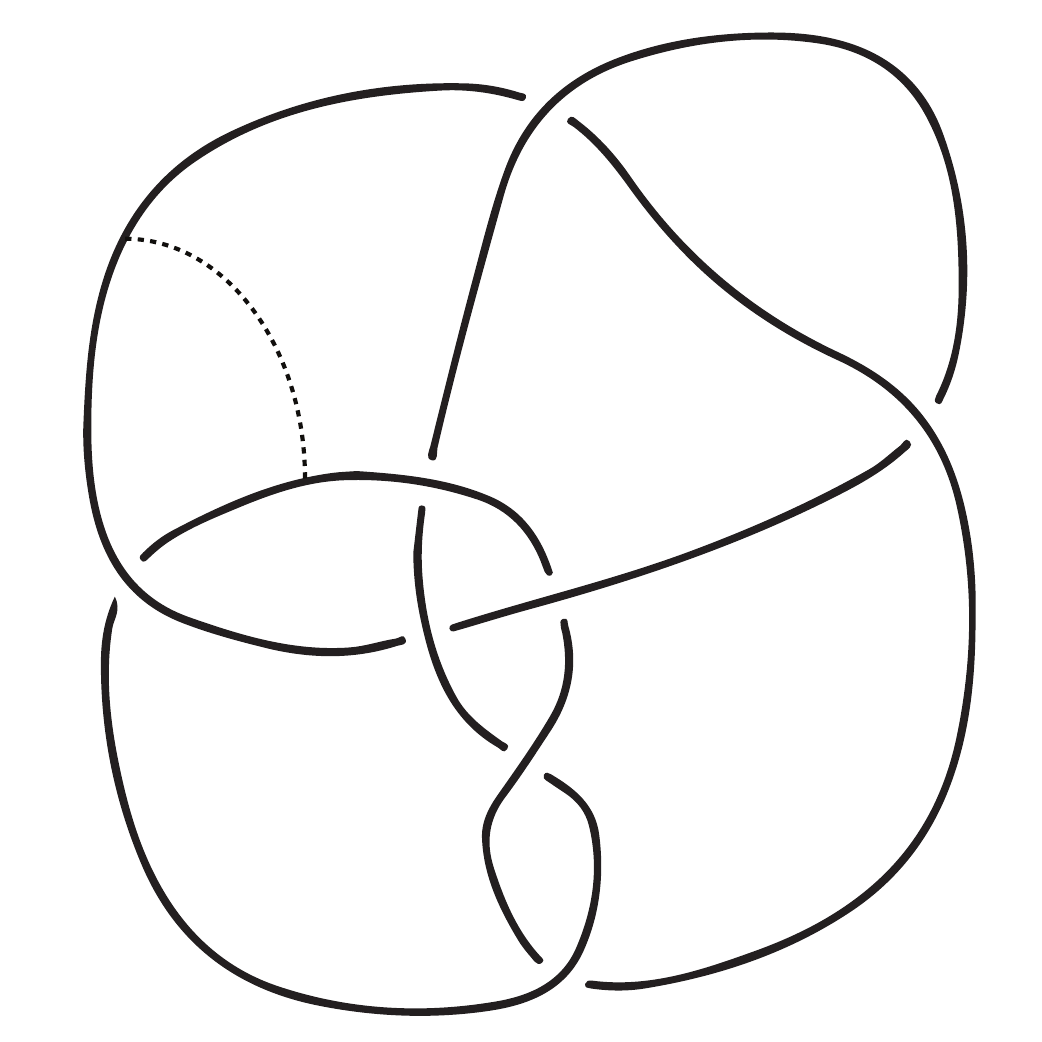}
		\caption{$8_{17}\stackrel{-1\phantom{i}}{\longrightarrow} 7_6$}
		\label{FigureFor8_17}
	\end{subfigure}
	\vskip3mm
	\begin{subfigure}[b]{0.3\textwidth}
		\includegraphics[width=\textwidth]{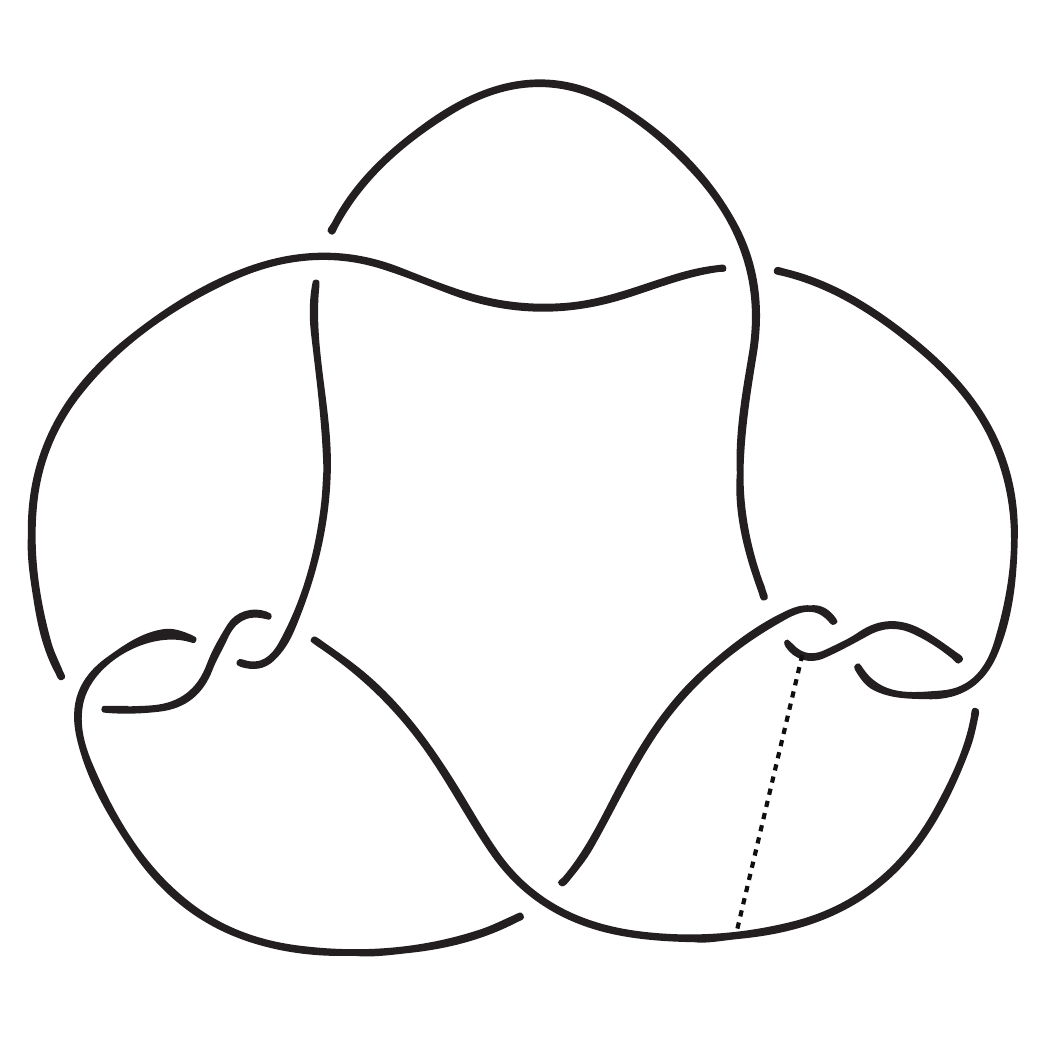}
		\caption{$9_{10}\stackrel{1}{\longrightarrow} 5_2$}
		\label{FigureFor9_10}
	\end{subfigure}
	~ 
	\begin{subfigure}[b]{0.28\textwidth}
		\includegraphics[width=\textwidth]{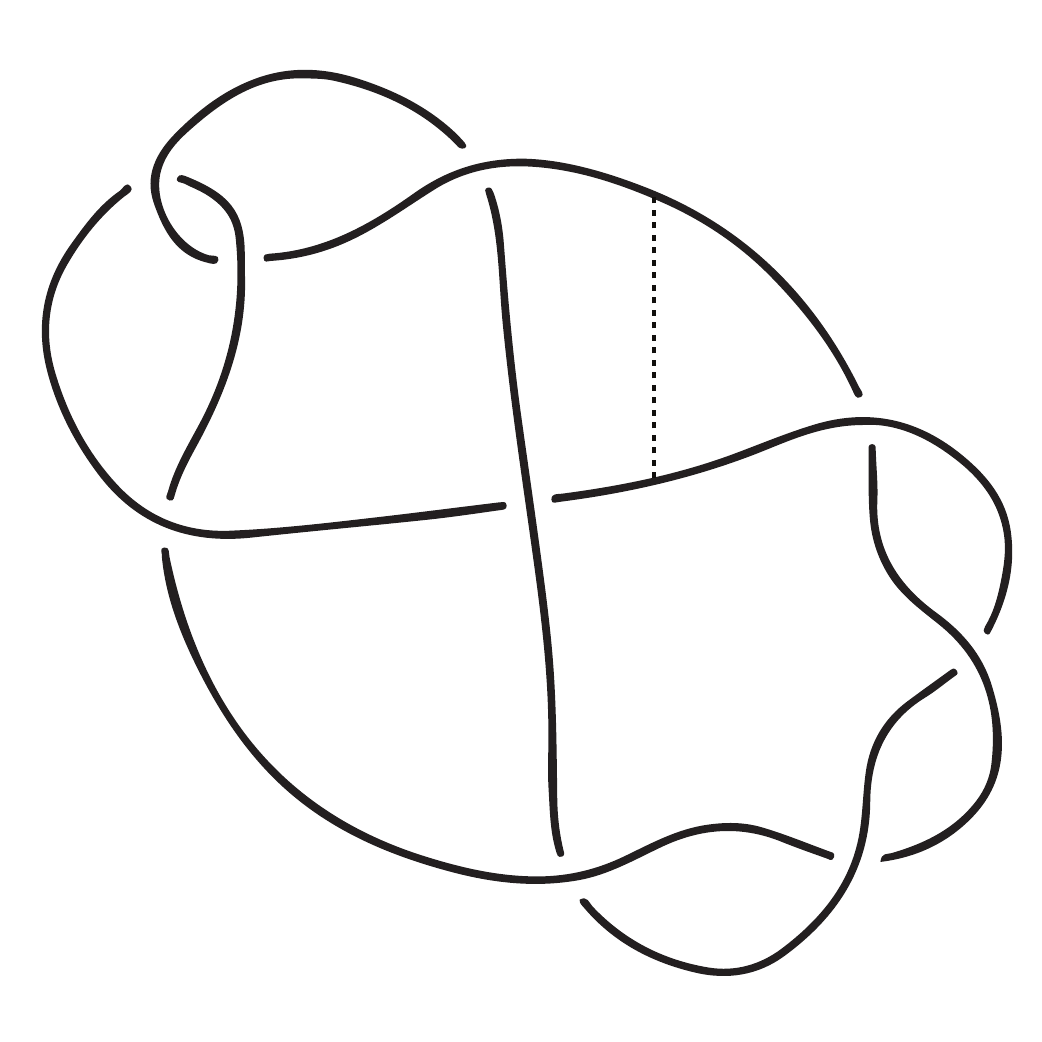}
		\caption{$9_{11}\stackrel{0}{\longrightarrow} 5_2$}
		\label{FigureFor9_11}
	\end{subfigure}
	~ 
	\begin{subfigure}[b]{0.3\textwidth}
		\includegraphics[width=\textwidth]{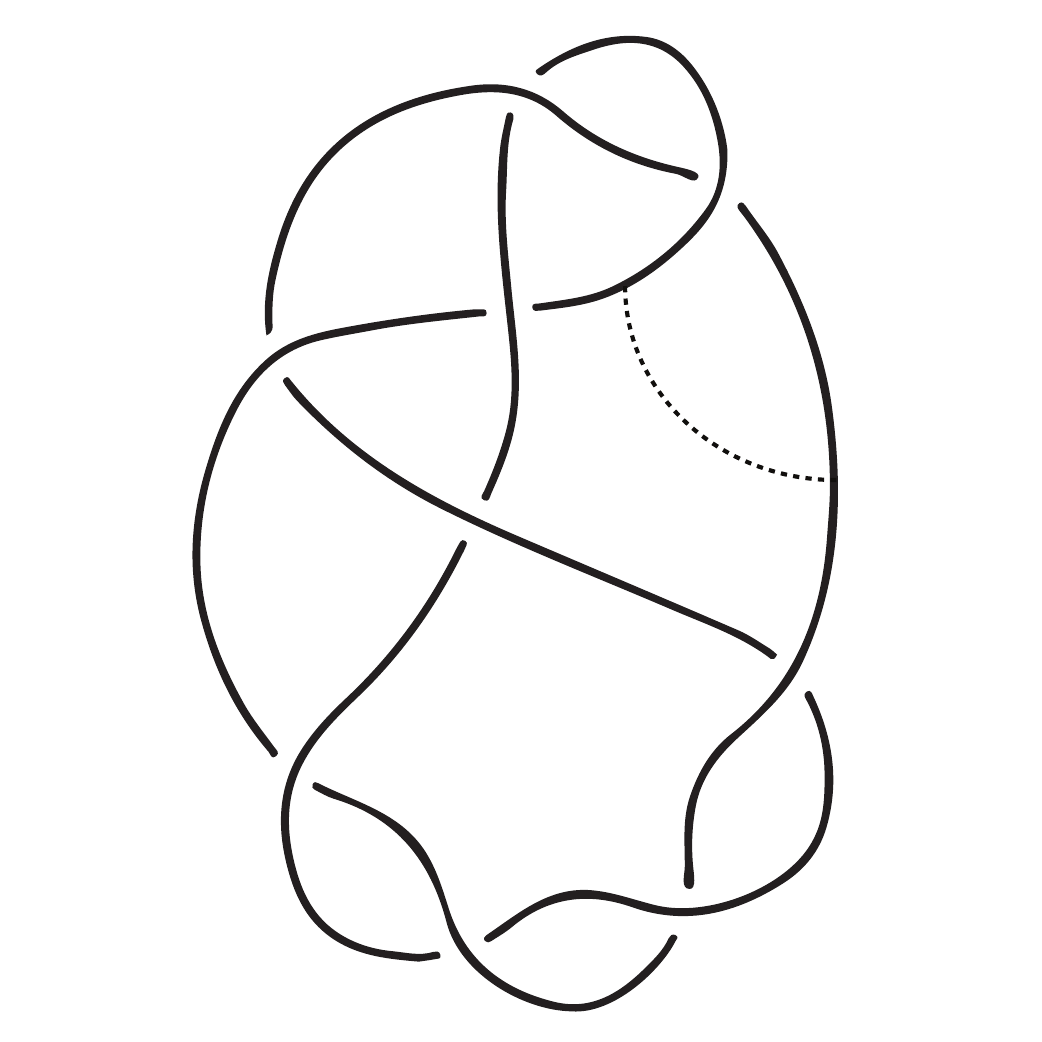}
		\caption{$9_{14}\stackrel{1}{\longrightarrow} 8_7$}
		\label{FigureFor9_14}
	\end{subfigure}
	\vskip3mm
	\caption{Non-oriented band moves from the knots $8_{1}$, $8_{2}$, $8_{12}$, $8_{13}$, $8_{15}$, $8_{17}$, $9_{10}$, $9_{11}$, $9_{14}$ to  knots with $\gamma_4$ equal to 1.}\label{8_1, 8_2, 8_12,8_13,8_15,8_17,9_10,9_11,9_14}
\end{figure}
\begin{figure}[!htbp]
	\centering
	\begin{subfigure}[b]{0.29\textwidth}
		\includegraphics[width=\textwidth]{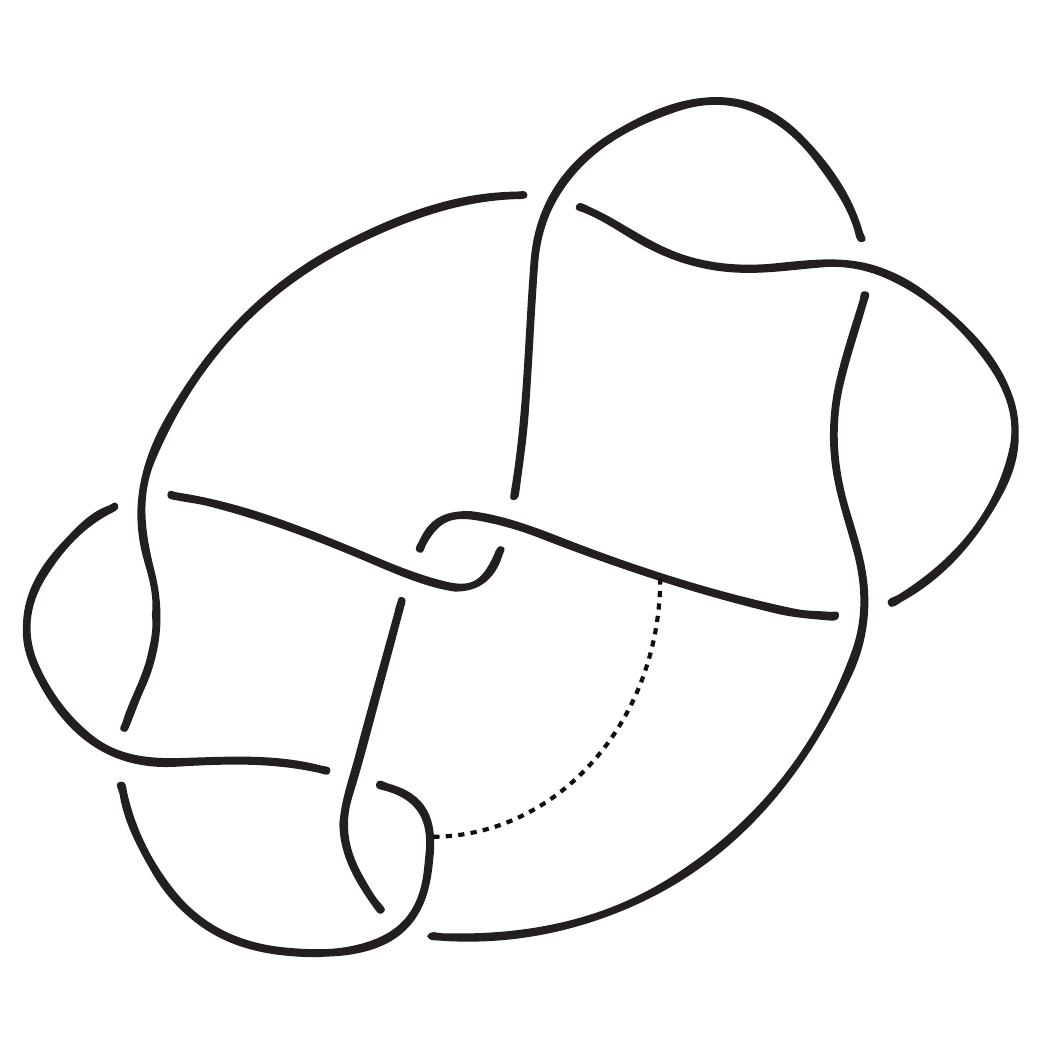}
		\caption{$9_{18}\stackrel{-1\phantom{i}}{\longrightarrow} 5_2$}
		\label{FigureFor9_18}
	\end{subfigure}
	~ 
	\begin{subfigure}[b]{0.29\textwidth}
		\includegraphics[width=\textwidth]{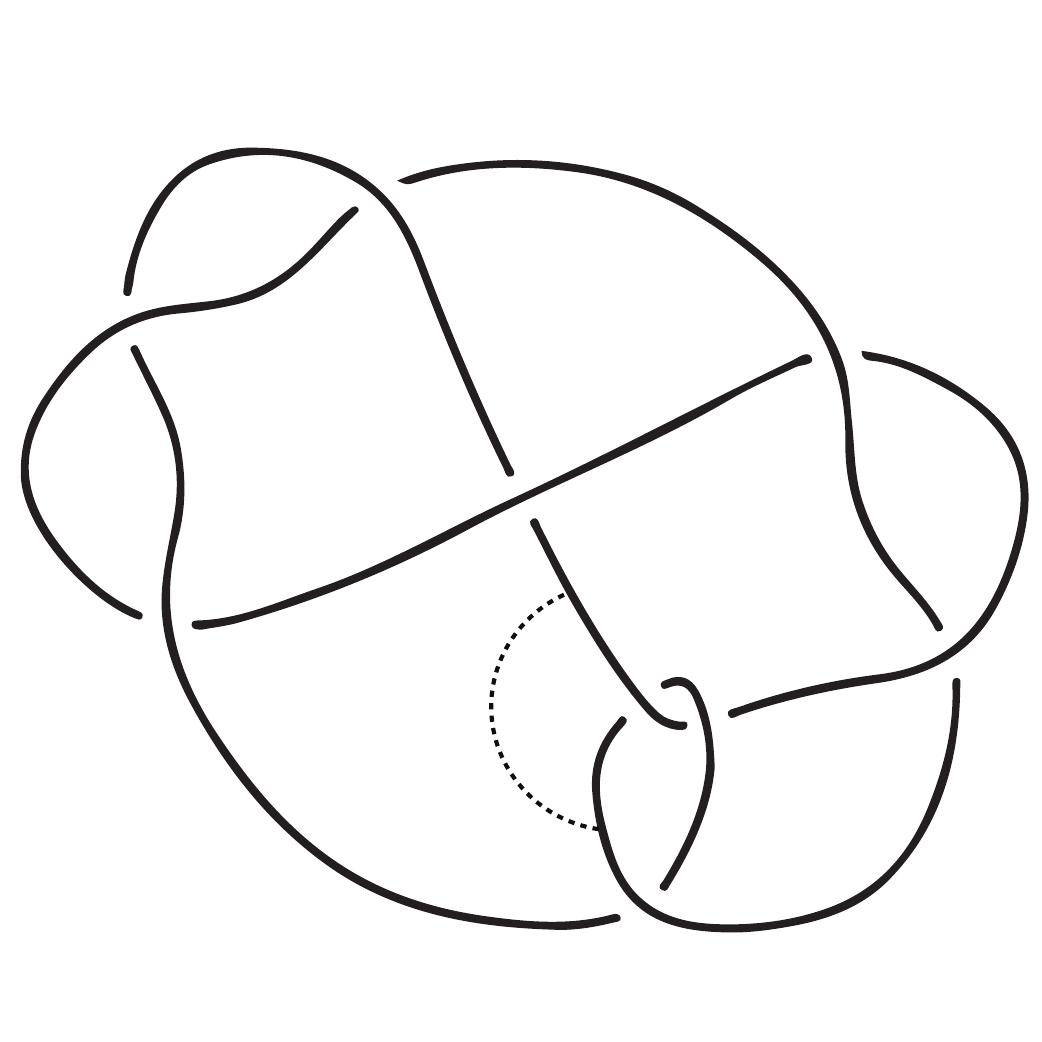}
		\caption{$9_{20}\stackrel{0}{\longrightarrow} 7_4$}
		\label{FigureFor9_20}
	\end{subfigure}
	~ 
	\begin{subfigure}[b]{0.28\textwidth}
		\includegraphics[width=\textwidth]{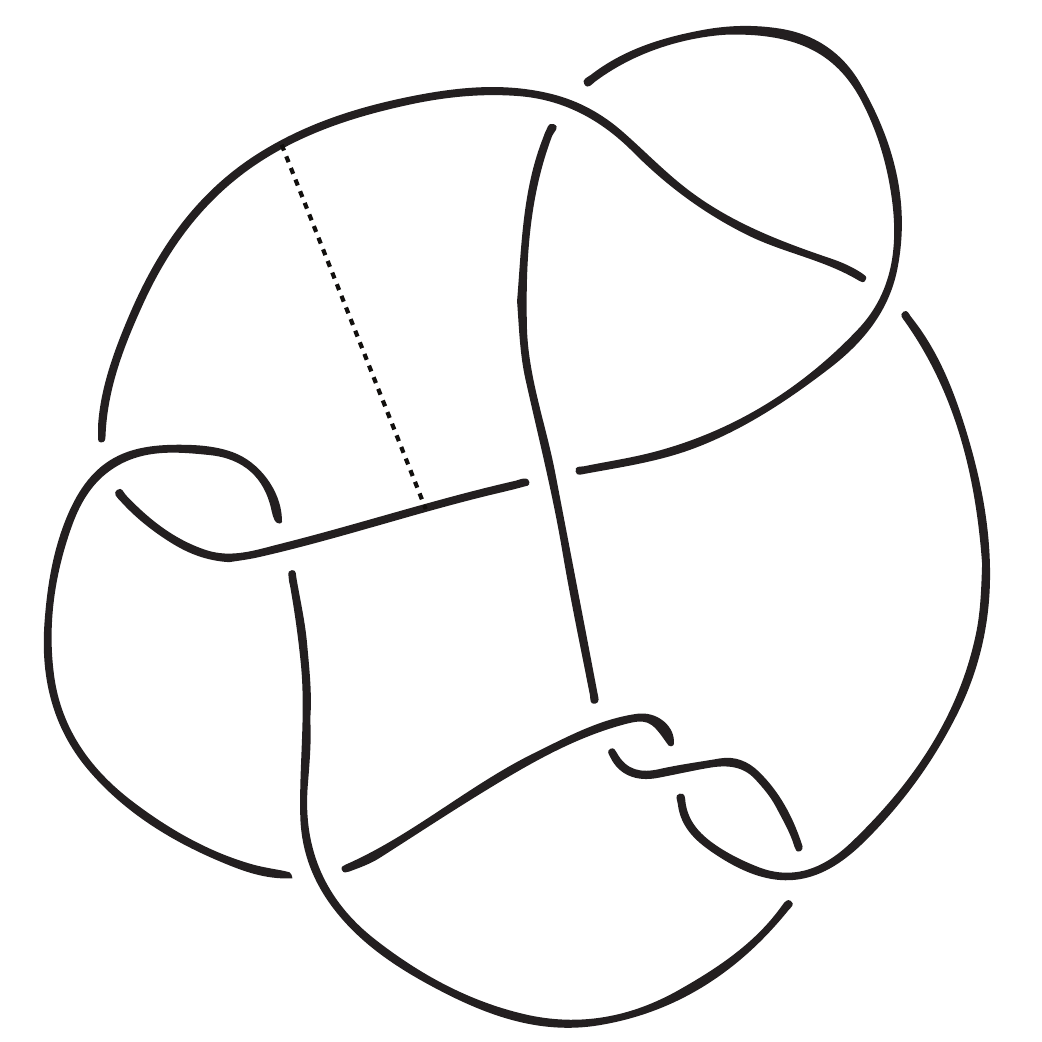}
		\caption{$9_{24}\stackrel{1}{\longrightarrow} 8_{10}$}
		\label{FigureFor9_24}
	\end{subfigure}
	\vskip3mm
	\begin{subfigure}[b]{0.3\textwidth}
		\includegraphics[width=\textwidth]{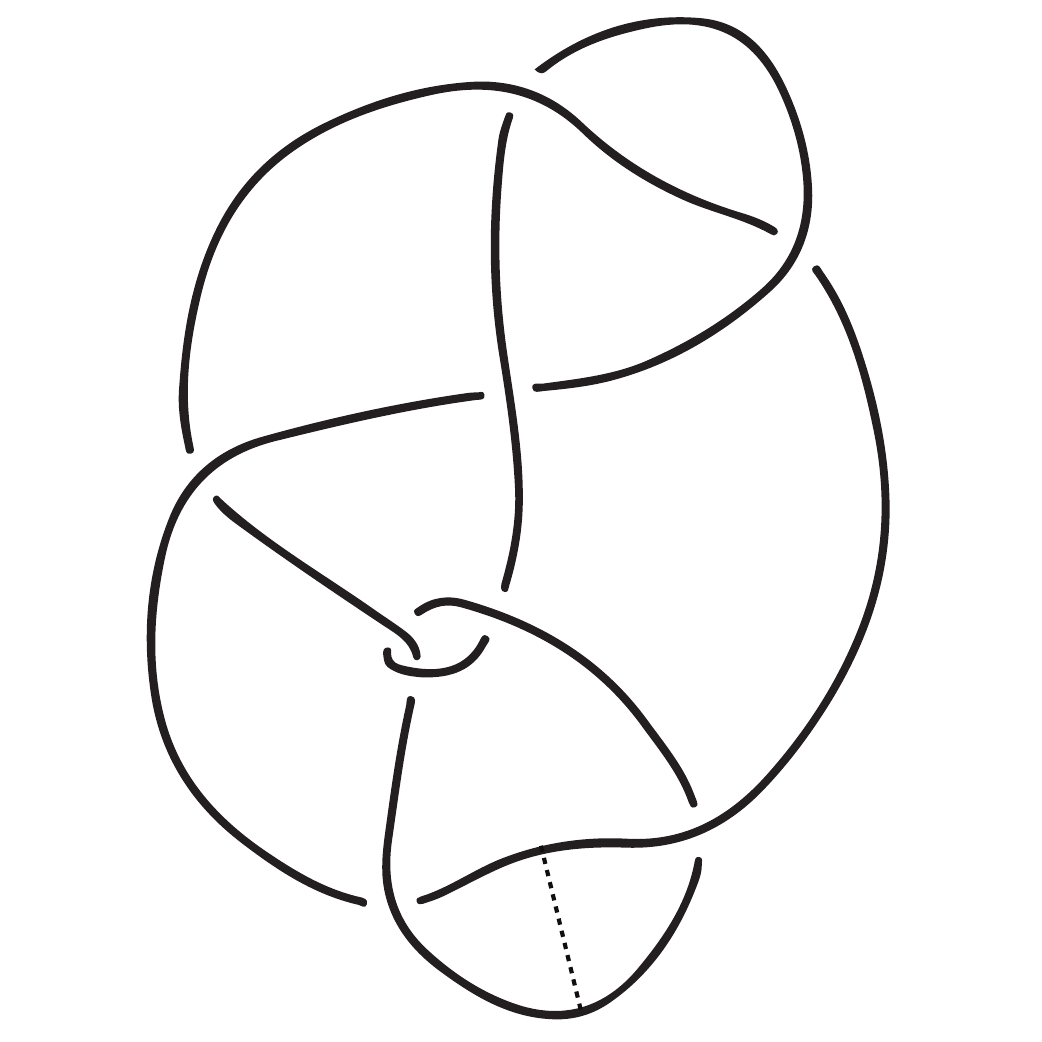}
		\caption{$9_{30}\stackrel{0}{\longrightarrow} 7_6$}
		\label{FigureFor9_30}
	\end{subfigure}
	~ 
	\begin{subfigure}[b]{0.3\textwidth}
		\includegraphics[width=\textwidth]{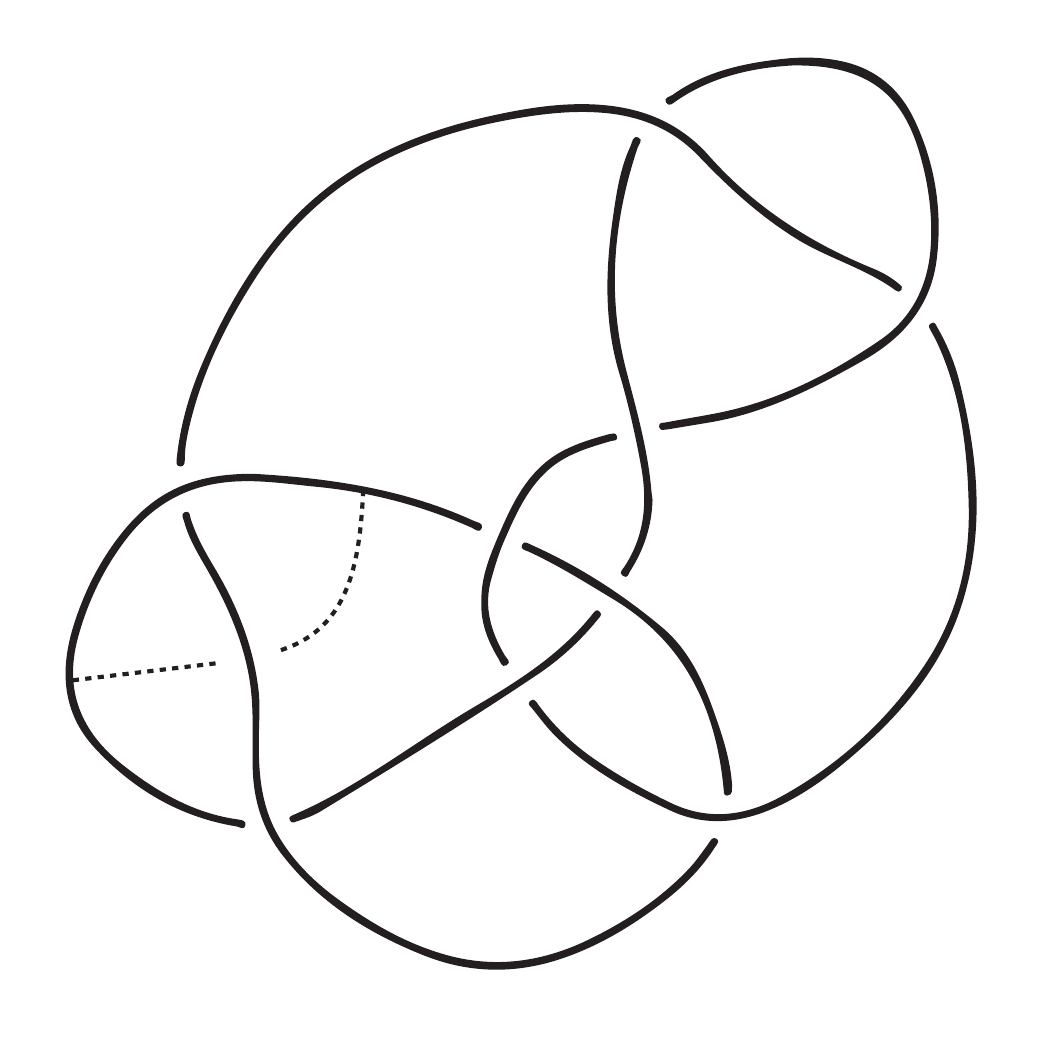}
		\caption{$9_{33}\stackrel{-1\phantom{i}}{\longrightarrow} 9_{45}$}
		\label{FigureFor9_33}
	\end{subfigure}
	~ 
	\begin{subfigure}[b]{0.29\textwidth}
		\includegraphics[width=\textwidth]{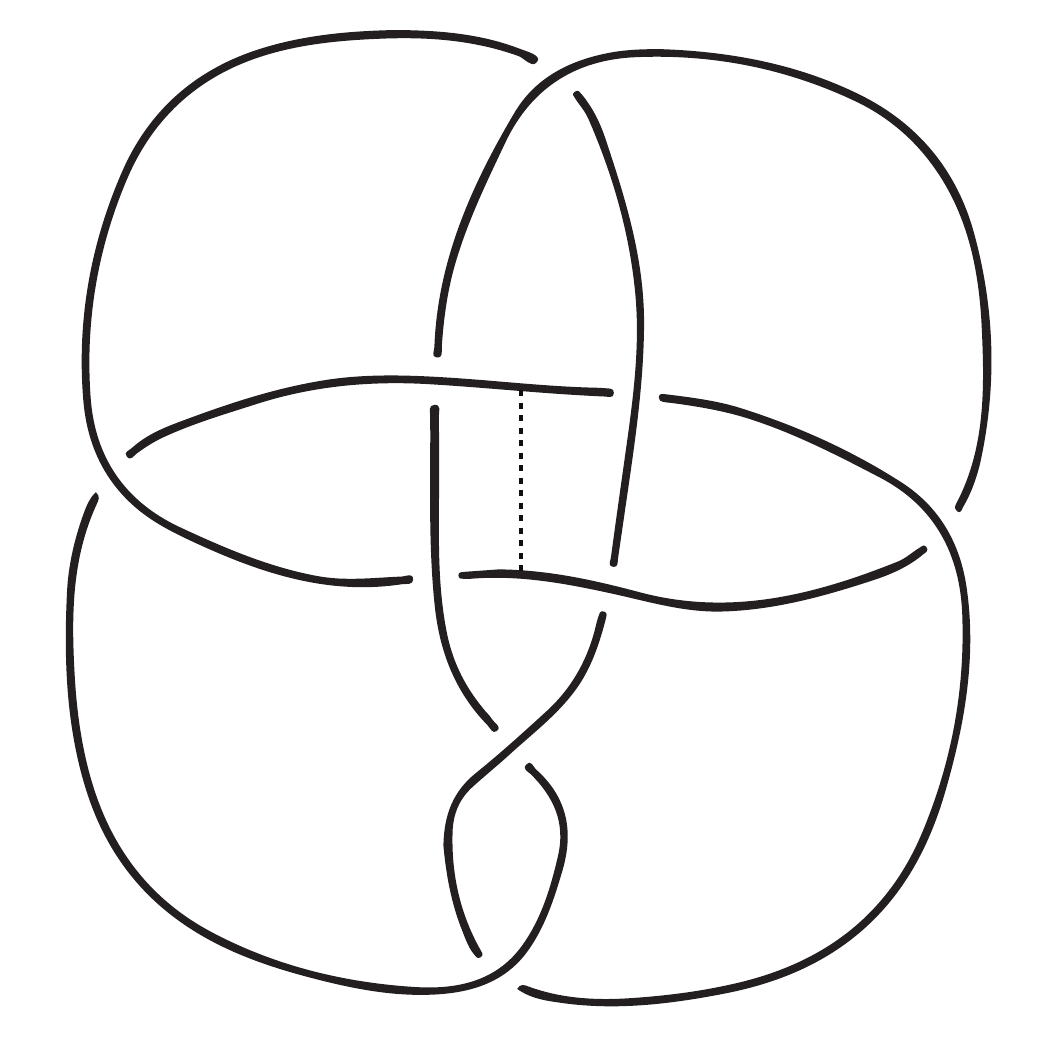}
		\caption{$9_{34}\stackrel{0}{\longrightarrow} 9_{28}$}
		\label{FigureFor9_34}
	\end{subfigure}
	\vskip3mm
	\begin{subfigure}[b]{0.29\textwidth}
		\includegraphics[width=\textwidth]{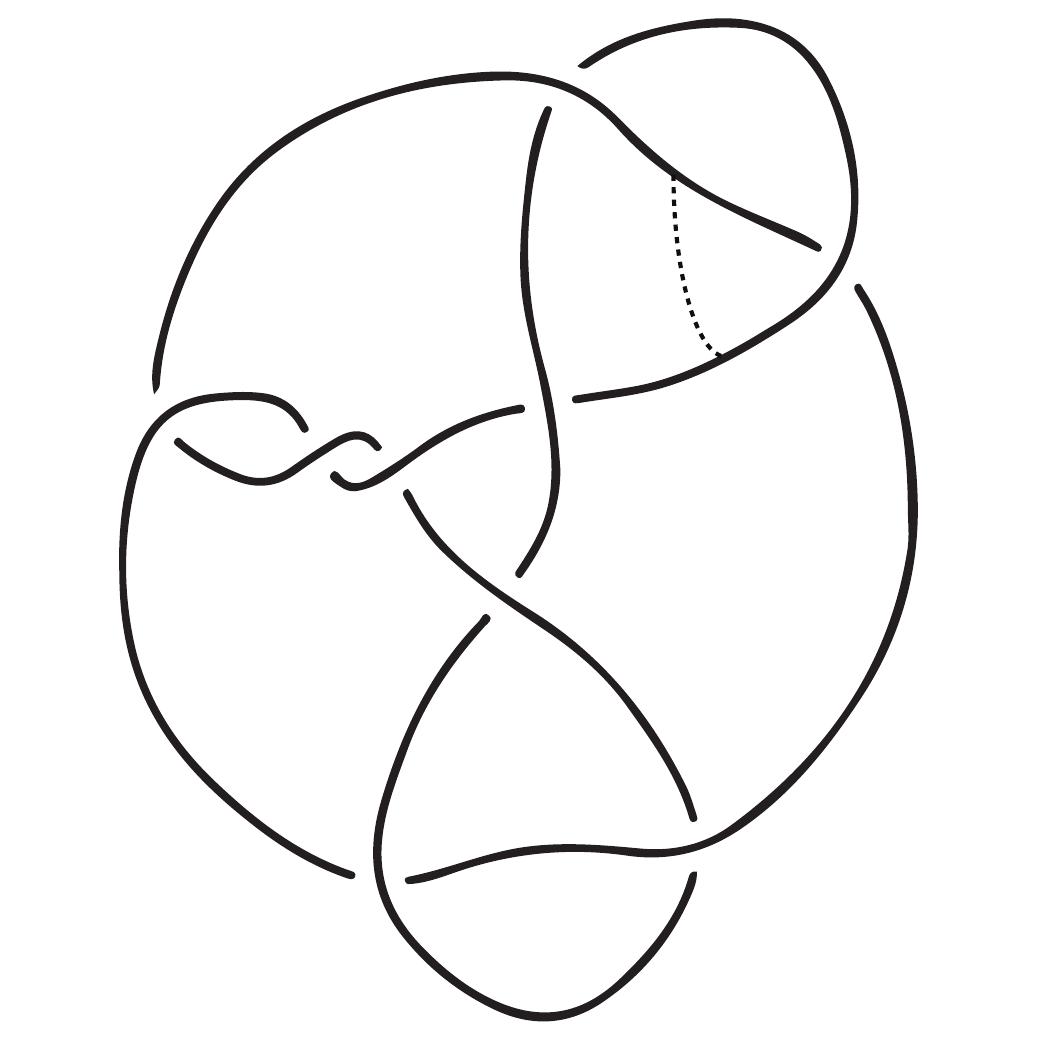}
		\caption{$9_{37}\stackrel{0}{\longrightarrow} 8_{10}$}
		\label{FigureFor9_37}
	\end{subfigure}
	~
	\begin{subfigure}[b]{0.3\textwidth}
		\includegraphics[width=\textwidth]{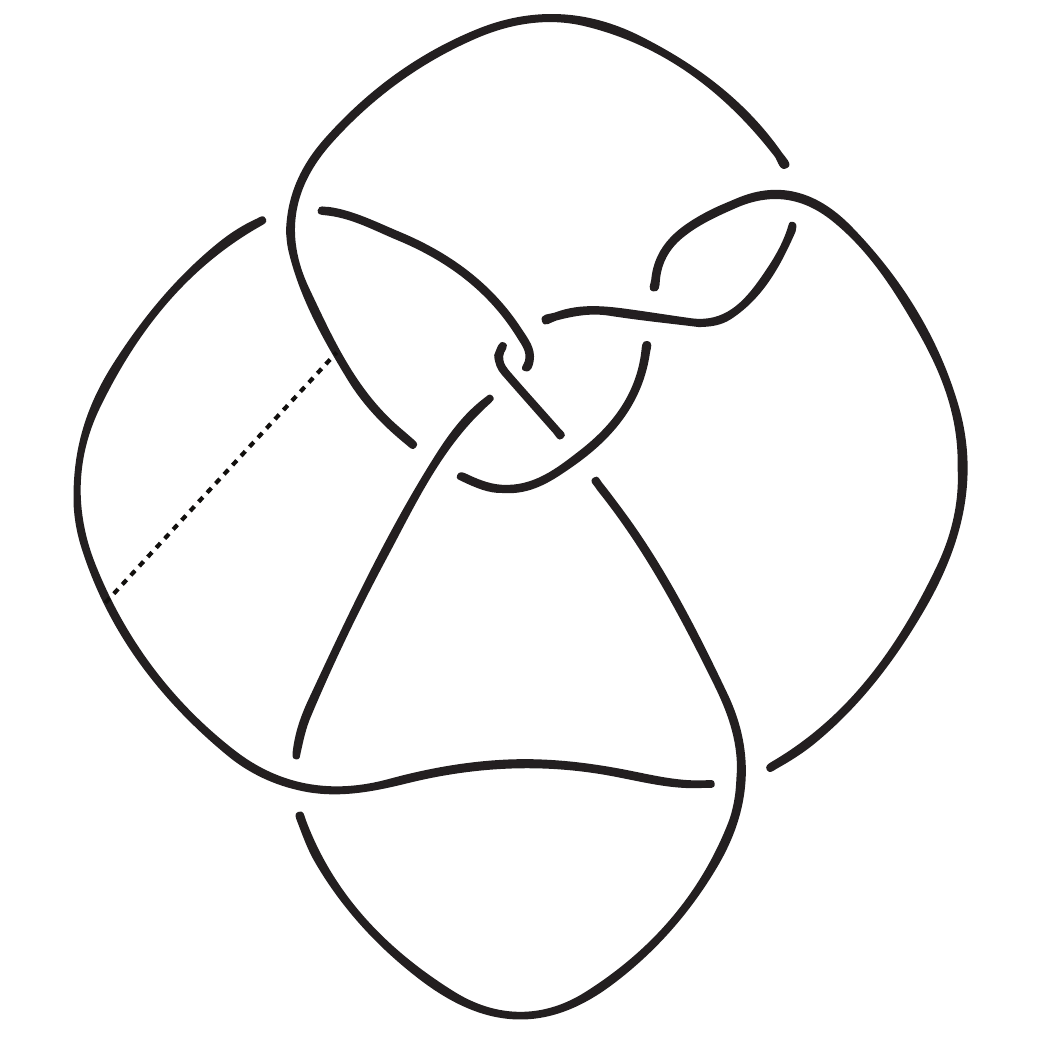}
		\caption{$9_{38}\stackrel{-1\phantom{i}}{\longrightarrow} 8_{14}$}
		\label{FigureFor9_38}
	\end{subfigure}
	~
	\begin{subfigure}[b]{0.28\textwidth}
		\includegraphics[width=\textwidth]{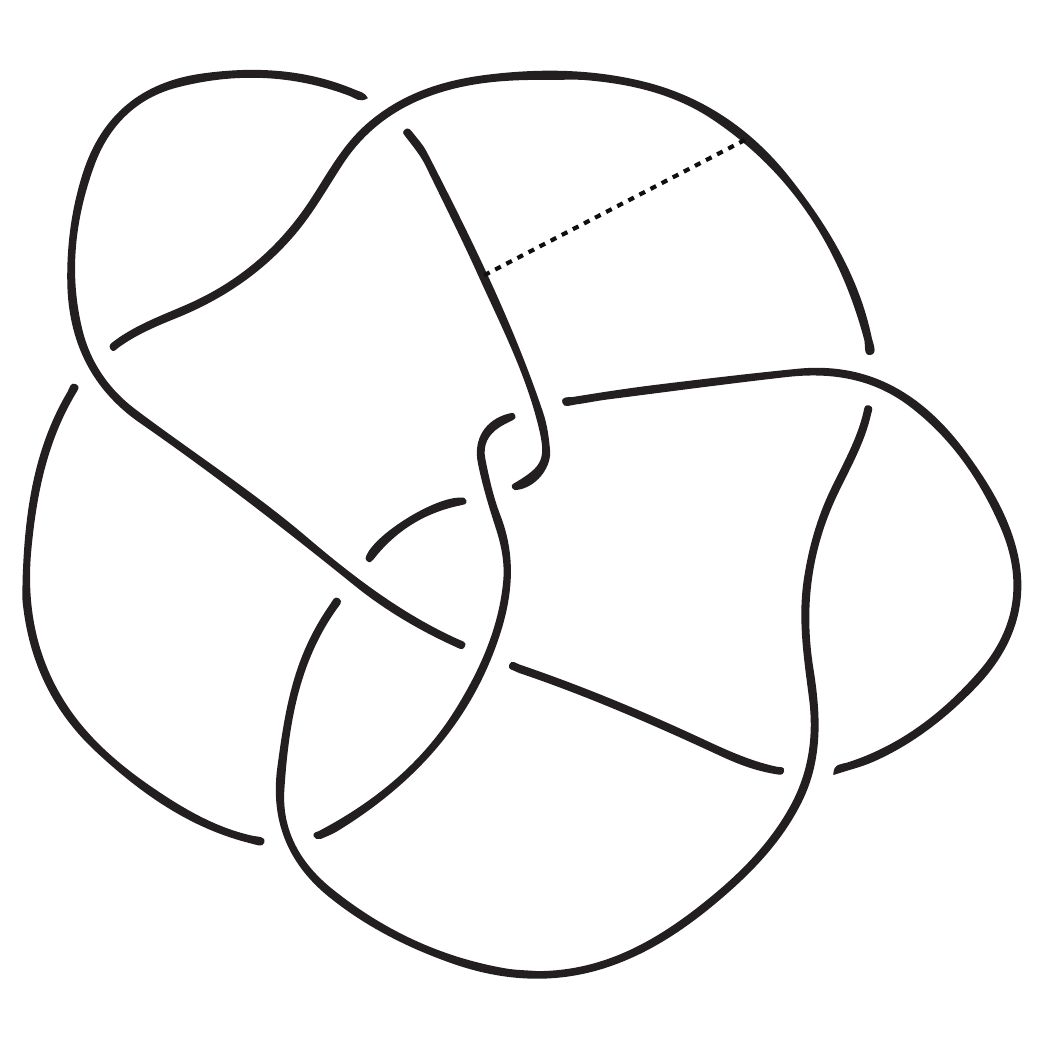}
		\caption{$9_{49}\stackrel{-1\phantom{i}}{\longrightarrow} 8_{21}$}
		\label{FigureFor9_49}
	\end{subfigure}
	\vskip3mm
	\caption{Non-oriented band moves from the knots $9_{18}$, $9_{20}$, $9_{24}$, $9_{30}$, $9_{33}$, $9_{34}$, $9_{37}$, $9_{38}$ to  knots with $\gamma_4$ equal to 1 and from $9_{49}$ to a knot with $\gamma_4$ equal to 2.}\label{9_18,9_20,9_24,9_30,9_34,9_37,9_38,9_49}
\end{figure}

All of these knots have already been considered in Section \ref{SectionOnSliceAndConcordantKnots}, with the exception of $K=9_4$ for which Figure \ref{HandleLabelingConvention} shows a band move to the slice knot $10_3$, demonstrating that $\gamma_4(9_4)=1$. 
%
%
\section{Concluding remarks} \label{SectionOnConcludingRemarks}
Murakami and Yasuhara \cite{MurakamiYasuhara2000} proved that 
\[
\gamma_4(K) \le \left\lfloor\frac{c(K)}{2} \right\rfloor
\]
where $c(K)$ is the crossing number of the knot $K$, and where $x\mapsto \lfloor x \rfloor$ is the ``floor function,'' giving the largest integer $n$ less than or equal to the real number $x$. For the case of a knot $K$ with $c(K) =8$ or $c(K)=9$ this inequality becomes $\gamma_4(K) \le 4$, an inequality which is strict for all such knots as demonstrated by Theorems \ref{8CrossingKnots} and \ref{9CrossingKnots}. The known values of $\gamma_4$ from \cite{Knotinfo} show that this inequality remains strict for all knots $K$ with $c(K)$ equal to either 3, 5, 6 or 7. However the above inequality does become an equality for $K=4_1$. These observations prompt the following question.
\begin{question}
Does there exist a knot $K$ with $c(K)>4$ and with $\gamma_4(K) = \left\lfloor\frac{c(K)}{2} \right\rfloor$? 
\end{question}

The knot $K=8_{18}$ is special among 8- and 9-crossing knots, being the only knot that maximizes $\gamma_4$, with a maximal value of 3. We note that $8_{18}$ is also special among this set of knots as it has the ``largest'' full symmetry group, namley the dihedral group $D_8$ (see \cite{Knotinfo}). Other knots with 8 or 9 crossings have smaller full symmetry groups, given by $\mathbb Z_i$ and $D_j$ with $i=1,2$ and $j=1, 2, 3, 4, 6$. The group $D_8$ does not appear again as the full symmetry group for any knot $K$ with $c(K) \le 11$, and only the knot $10_{123}$ has larger full symmetry group, namely $D_{10}$. However $10_{123}$ is slice and so $\gamma_4(10_{123}) = 1$. 
\begin{question}
Is there a connection between $\gamma_4(K)$ and the full symmetry group of a non-slice knot $K$?
\end{question} 
A beautiful result of Edmonds' \cite{Edmonds} stipulates that a $p$-periodic knot $K$ possesses a Seifert surface $S\subset S^3$ of genus $g_3(K)$ that is preserved under the $\mathbb Z_p$-action on $S^3$, making the connection between symmetries of a knot and its various genera plausible.

\clearpage



\begin{thebibliography}{}
	\bibitem{Batson} J. Batson, {\em Nonorientable slice genus can be arbitrarily large}, Math. Res. Lett. {\bf 21} (2014), no. 3, 423--436. 
	\bibitem{Knotinfo} J. C. Cha and C. Livingston, {\em KnotInfo: Table of Knot Invariants, http://www.indiana.edu/~knotinfo} (January 26, 2017).
	\bibitem{Clark} B. E. Clark, {\em Crosscaps and knots}, Internat. J. Math. Math. Sci. {\bf 1} (1978), no. 1, 113--123. 
	\bibitem{CollinsKirkLivingston} J. Collins, P. Kirk, and C. Livingston, {\em The concordance classification of low crossing number knots},  Proc. Amer. Math. Soc. {\bf 143} (2015), no. 10, 4525--4536. 
	\bibitem{Conway} J. H. Conway, {\em An enumeration of knots and links, and some of their algebraic properties}, in "1970 Computational Problems in Abstract Algebra (Proc. Conf., Oxford, 1967)," 329--358 Pergamon, Oxford.
	\bibitem{Donaldson} S. K. Donaldson, {\em The orientation of Yang-Mills moduli spaces and 4-manifold topology.} J. Differential Geom., {\bf 26} (3):397--428, 1987.
	\bibitem{Edmonds} A. L. Edmonds, {\em Least area Seifert surfaces and periodic knots}, Topology Appl. {\bf 18} (1984), no. 2-3, 109--113. 
	\bibitem{Fox} R. H. Fox, {\em A quick trip through knot theory}, 1962 Topology of 3-manifolds and related topics (Proc. The Univ. of Georgia Institute, 1961) pp. 120--167 Prentice-Hall, Englewood Cliffs, N.J. 
	\bibitem{FoxProblems} R. H. Fox, {\em Some problems in knot theory}, 1962 Topology of 3-manifolds and related topics (Proc. The Univ. of Georgia Institute, 1961) pp. 168--176 Prentice-Hall, Englewood Cliffs, N.J. 
	\bibitem{FoxMilnor} R. Fox and J. Milnor, {\em Singularities of 2-spheres in 4-space and cobordism of knots}, Osaka J. Math. {\bf 3} (1966) 257--267. 
	\bibitem{Gilmer} P. Gilmer, {\em On the slice genus of knots}, Invent. Math. {\bf 66} (1982), no. 2, 191--197. 
	\bibitem{GilmerLivingston} P. Gilmer and C. Livingston, {\em The nonorientable 4-genus of knots}, J. Lond. Math. Soc. (2) {\bf 84} (2011), no. 3, 559--577. 
	\bibitem{Goeritz} L. Goeritz, {\em Knoten und quadratische Formen}, Math. Z. {\bf 36} (1933), no. 1, 647--654. 
	\bibitem{GordonLitherland} C. McA. Gordon and R. A. Litherland, {\em On the signature of a link}, Invent. Math. {\bf 47} (1978), no. 1, 53--69. 
	\bibitem{Greene} J. Greene, {\em Alternating links and definite surfaces}, Duke Math. J. {\bf 166} (2017), no. 11, 2133-2151.
	\bibitem{GuillouMarin} L. Guillou and A. Marin, {\em Une extension d'un th\'eor\`eme de Rohlin sur la signature},  (French) [An extension of Rokhlin's signature theorem] \`A la recherche de la topologie perdue, 97--118,
	Progr. Math., {\bf 62}, Birkh\"auser Boston, Boston, MA, 1986. 
	\bibitem{KirbyMelvin} R. Kirby and P. Melvin, {\em Local surgery formulas for quantum invariants and the Arf invariant}, Proceedings of the Casson Fest, 213--233, Geom. Topol. Monogr., {\bf 7}, Geom. Topol. Publ., Coventry (2004). 
	\bibitem{Livingston} C. Livingston, {\em The slicing number of a knot}, Algebr. Geom. Topol. {\bf 2} (2002), 1051--1060. 
	\bibitem{Matsumoto} Y. Matsumoto, {\em An elementary proof of Rochlin's signature theorem and its extension by Guillou and Marin},  \`{A} la recherche de la topologie perdue, 119--139,
	Progr. Math., {\bf 62}, Birkh\"auser Boston, Boston, MA, 1986. 
	\bibitem{MurakamiYasuhara2000} H. Murakami and A. Yasuhara, {\em Four-genus and four-dimensional clasp number of a knot}, Proc. Amer. Math. Soc. {\bf 128} (2000), no. 12, 3693--3699. 
	\bibitem{Owens} B. Owens, {\em On slicing invariants of knots}, Trans. Amer. Math. Soc. {\bf 362} (2010), no. 6, 3095--3106. 
	\bibitem{OwensStrle2006} B. Owens and S. Strle, {\em Rational homology spheres and the four-ball genus of knots}, Adv. Math. {\bf 200} (2006), no. 1, 196--216. 
	\bibitem{OwensStrle} B. Owens and S. Strle, {\em Immersed disks, slicing numbers and concordance unknotting numbers}, Comm. Anal. Geom. {\bf 24} (2016), no. 5, 1107--1138. 
	\bibitem{OzsvathStipsiczSzabo2014} P. Ozsv\'ath, A. Stipsicz and Z. Szab\'o, {\em Concordance homomorphisms from knot Floer homology}, Adv. Math. {\bf 315} (2017), 366--426.
	\bibitem{OzsvathStipsiczSzabo} P. Ozsv\'ath, A. Stipsicz and Z. Szab\'o, {\em Unoriented knot Floer homology and the unoriented four-ball genus}, Int. Math. Res. Not. IMRN 2017, no. 17, 5137-5181.
		\bibitem{OzsvathSzabo2003} P. Ozsv\' ath and Z. Szab\'o, {\em Absolutely graded Floer homologies and intersection forms for four-manifolds with boundary}, Adv. Math. {\bf 173} (2003), no. 2, 179--261. 
	\bibitem{Scharlemann} M. Scharlemann, {\em Crossing changes}, Knot theory and its applications.
	Chaos Solitons Fractals {\bf 9} (1998), no. 4-5, 693--704. 
	\bibitem{Seifert} H. Seifert, {\em \"Uber das Geschlecht von Knoten. (German)}, Math. Ann. {\bf 110} (1935), no. 1, 571--592. 
	\bibitem{Shibuya} T. Shibuya, {\em Some relations among various numerical invariants for links}, Osaka J. Math. {\bf 11} (1974), 313--322. 
	\bibitem{Viro} O. Viro, {\em Positioning in codimension 2, and the boundary. (Russian)}, Uspehi Mat. Nauk {\bf 30} (1975), no. 1(181), 231--232. 
	\bibitem{Yasuhara} A. Yasuhara, {\em Connecting lemmas and representing homology classes of simply connected 4-manifolds}, Tokyo J. Math. {\bf 19} (1996), no. 1, 245--261. 
\end{thebibliography}
\end{document}